\newtheorem{theorem}{Theorem}[section]
\newtheorem{lemma}[theorem]{Lemma}
\newtheorem{proposition}[theorem]{Proposition}
\newtheorem{corollary}[theorem]{Corollary}
\theoremstyle{definition}
\newtheorem{definition}[theorem]{Definition}
\newtheorem{thm}{\bf Theorem}
\theoremstyle{remark}
\newtheorem{remark}[theorem]{Remark}
\numberwithin{equation}{section}
\def  \Ab       {\mathrm{Ab}}
\def  \acn      {\mathrm{acn}}
\def  \ad       {\mathrm{ad}}
\def  \aperf    {\mathrm{aperf}}
\def  \Aut      {\mathrm{Aut}}
\def  \bDelta   {{\boldsymbol{\Delta}}}
\def  \CAlg     {\mathrm{CAlg}}
\def  \CDiv     {\mathrm{CDiv}}
\def  \cl       {\mathrm{cl}}
\def  \cn       {\mathrm{cn}}
\def  \cpl      {\mathrm{cpl}}
\def  \cts      {\mathrm{c}}
\def  \et       {\mathrm{\acute{e}t}}
\def  \Def      {\mathrm{Def}}
\def  \Der      {\mathrm{Der}}
\def  \dr       {\mathrm{Dr}}
\def  \Dr       {{\mathcal{D}r}}
\def  \einf     {\be_\infty}
\def  \El       {\mathrm{Ell}}
\def  \Ext      {\mathrm{Ext}}
\def  \FFG      {\mathrm{FFG}}
\def  \fg       {\mathrm{FG}}
\def  \fib      {\mathrm{fib}}
\def  \fin      {\mathrm{fin}}
\def  \fm       {\mathfrak{m}}
\def  \Frob     {\mathrm{Frob}}
\def  \Fun      {\mathrm{Fun}}
\def  \Gal      {\mathrm{Gal}}
\def  \GL       {\mathrm{GL}}
\def  \GrpCDiv  {\mathrm{GrpCDiv}}
\def  \h        {\mathrm{h}}
\def  \Hilb     {\mathrm{Hilb}}
\def  \hinf     {\bh_\infty}
\def  \Hom      {\mathrm{Hom}}
\def  \id       {\mathrm{id}}
\def  \IndCoh   {\mathrm{IndCoh}}
\def  \jl       {\mathcal{J\!L}}
\def  \Level    {\mathrm{Level}}
\def  \ll       {{^\mathrm{L}}}
\def  \LMod     {\mathrm{LMod}}
\def  \lt       {\mathrm{LT}}
\def  \LT       {\mathcal{LT}}
\def  \Map      {\mathrm{Map}}
\def  \Mod      {\mathrm{Mod}}
\def  \op       {\mathrm{op}}
\def  \Or       {\mathrm{or}}
\def  \QCoh     {\mathrm{QCoh}}
\def  \ringtop  {{\infty\Top^{\mathrm{loc}}_\CAlg}}
\def  \Sch      {\mathrm{Sch}}
\def  \Set      {\mathrm{Set}}
\def  \Shv      {\mathcal{S}\mathrm{hv}}
\def  \SpDM     {\mathrm{SpDM}}
\def  \Spec     {\mathrm{Spec}\,}
\def  \Spet     {\mathrm{Sp\acute{e}t}\,}
\def  \Spf      {\mathrm{Spf}\,}
\def  \spic     {{\mathscr{P}\mathrm{ic}}}
\def  \taq      {\mathrm{TAQ}} 
\def  \TMF      {\mathrm{TMF}} 
\def  \Top      {\mathrm{Top}} 
\def  \Tor      {\mathrm{Tor}}
\def  \Tot      {\mathrm{Tot}}
\def  \tr       {\mathrm{tr}}
\def  \un       {\mathrm{un}}
\def  \univ     {\mathrm{univ}}
\def  \Var      {\mathrm{Var}}
\def  \cd       {\mathcal{D}}
\def  \ch       {\mathcal{H}}
\def  \ci       {\mathcal{I}}
\def  \cm       {\mathcal{M}}
\def  \co       {\!:}
\def  \cO       {\mathcal{O}}
\def  \cp       {\mathcal{P}}
\def  \cs       {\mathcal{S}}
\def  \cx       {\mathcal{X}}
\def  \cy       {\mathcal{Y}}
\def  \ba       {\mathbf{A}}
\def  \bc       {\mathbf{C}}
\def  \be       {\mathbb{E}}
\def  \bF       {\mathbf{F}}
\def  \bg       {\mathbb{G}}
\def  \bG       {\mathbf{G}}
\def  \hG       {\widehat{\bG}}
\def  \bh       {\mathbb{H}}
\def  \hH       {\widehat{\mathbf{H}}}
\def  \bp       {\mathbf{P}}
\def  \bq       {\mathbf{Q}}
\def  \bs       {\mathbf{S}}
\def  \bz       {\mathbf{Z}}
\def  \sd       {\mathsf{D}}
\def  \se       {\mathsf{E}}
\def  \sE       {\mathscr{E}}
\def  \sf       {\mathscr{F}}
\def  \so       {\mathscr{O}}
\def  \ss       {\mathsf{S}}
\def  \st       {\mathsf{T}}
\def  \sx       {\mathsf{X}}
\def  \sy       {\mathsf{Y}}
\def  \sz       {\mathsf{Z}}
\begin{document}

\title[Spectral moduli problems for level structures]{
Spectral moduli problems for level structures and an integral Jacquet--Langlands dual of Morava E-theory 
}

\author{Xuecai Ma}
\address{
    Westlake Institute for Advanced Study,  Hangzhou, Zhejiang 310024, P.R. China \textcolor{white}{.} \\
    \hskip-.3cm\textcolor{white}{\raisebox{-.09cm}{$\bullet$}\hskip.04cm} Institute for Theoretical Sciences, Westlake University, Hangzhou, Zhejiang 310030, P.R. China
}
\curraddr{}
\email{maxuecai@westlake.edu.cn}
\thanks{}

\author{Yifei Zhu}
\address{Department of Mathematics, College of Science, Southern University of Science and Technology, Shenzhen, Guangdong 518055, P.R. China}
\email{zhuyf@sustech.edu.cn}


\begin{abstract}
    Given an $\einf$-ring spectrum $R$, with motivation from chromatic homotopy theory, we define relative effective Cartier divisors for a spectral Deligne--Mumford stack over $\Spet R$ and prove that, as a functor from connective $R$-algebras to topological spaces, it is representable.  This enables us to solve various moduli problems of level structures on spectral abelian varieties, overcoming difficulty at primes dividing the level.  In particular, we obtain higher-homotopical refinement for finite levels of a Lubin--Tate tower as $\einf$-ring spectra, which generalizes Morava, Hopkins, Miller, Goerss, and Lurie's spectral realization of the deformation ring at the ground level.  Moreover, passing to the infinite level and then descending along the equivariantly isomorphic Drinfeld tower, we obtain a Jacquet--Langlands dual to the Morava E-theory spectrum, along with homotopy fixed point spectral sequences dual to those studied by Devinatz and Hopkins.  These serve as potential tools for computing higher-periodic homotopy types from pro-\'etale cohomology of $p$-adic general linear groups.  
    
\end{abstract}

\maketitle

\tableofcontents

\section{Introduction}

The stable homotopy category is a central object of study in algebraic topology, and chromatic homotopy theory provides it with an organizing principle through the hight filtration for the moduli stack of one-dimensional formal groups.  Structured ring spectra are the most common examples therein, such as $\hinf$ and $\einf$ structures encoding homotopy coherence.  In \cite{lurie2009survey,lu-EC2}, Lurie uses methods from spectral algebraic geometry, i.e., algebraic geometry over the sphere spectrum $\bs$, to give a proof for the Goerss--Hopkins--Miller theorem of topological modular forms.  Besides applications to elliptic cohomology, Lurie also proves the $\einf$ structures on Morava E-theory spectra \cite{lu-EC2}, which relies on a spectral version of deformation theory for certain $p$-divisible groups.  

The earlier proof of $\einf$ structures on E-theories was due to Goerss, Hopkins, and Miller \cite{goerss2004moduli}.  They turned the problem into a moduli problem of realizations for commutative algebras in certain comodules as $\einf$-ring spectra, and developed an obstruction theory, completing the proof by computing Andr\'e–Quillen cohomology groups.  Compared with their methods, Lurie's proof is more conceptual, realizing the chromatic point of view with suitable tools from higher category theory.  There have been more and more applications of spectral algebraic geometry in algebraic topology, such as topological automorphic forms \cite{behrens2010topological}, Morava E-theories over any $\bF_p$-algebra (not just perfect fields of characteristic $p$) \cite{lu-EC2}, equivariant topological modular forms \cite{gepner2020equivariant,TJF}, and elliptic Hochschild homology \cite{sibilla2023equivariant}, among others.  (Meanwhile, the Goerss--Hopkins obstruction theory has also been generalized to higher categorical settings \cite{PVK,MG}, but see the historical footnote 3 of \cite{MG}.)  

In classical algebraic geometry, moduli problems concerning deformations of formal groups with level structure are well studied.  In particular, moduli spaces for all levels form a Lubin--Tate tower \cite{rapoport1996period,fargues2008isomorphisme,scholze2013moduli}.  We know from the Goerss--Hopkins--Miller--Lurie theorem that the universal objects of deformations of formal groups have higher-homotopical analogues which are the Morava E-theories.  A natural question is then what higher-homotopical analogues can be for moduli problems of deformations {\em with level structure}.  Moreover, can we find such analogues for Lubin--Tate towers?  Although $\einf$ structures on topological modular forms with level structure have been studied in \cite{hill2016topological}, even functorially over the compactified moduli stacks through logarithmic structures, we aim to find derived stacks of spectral elliptic curves with level structure that allow the level to not be inverted therein, as opposed to the properties of \'etale or log-\'etale required by earlier methods using (logarithmic) obstruction theory.  Besides, in the computation of chromatic unstable homotopy groups of spheres, after applying the EHP sequence and Bousfield--Kuhn functor, we observe that certain terms on the $E_2$-page of a homotopy fixed point spectral sequence arise from universal deformations of isogenies of formal groups.  These are computed through the Morava E-theory on classifying spaces of symmetric groups \cite{strickland1997finite,strickland1998morava}.  They can be suitably interpreted as sheaves on a Lubin--Tate tower.  We aim to provide a more conceptual perspective on this fact via the proposed higher-homotopical Lubin--Tate tower and its geometry.  

In this paper, we address these questions by studying specific moduli problems in spectral algebraic geometry.  A main ingredient of our work is the derived analogue of Artin's representability theorem established in \cite{lurie2004derived,toen2008homotopical}.  We will apply the spectral algebraic geometry version from \cite{lu-SAG}.  Specifically, we define and study relative effective Cartier divisors in this context.  By imposing certain conditions, we further define derived level structures on relevant geometric objects in spectral algebraic geometry.  Using the spectral Artin representability theorem, we prove representability results for moduli problems that arise from our derived level structures.  Moreover, we give some examples of applications involving derived level structures.  In particular, we consider the moduli problem of spectral deformations with derived level structures on $p$-divisible groups.  We prove that these moduli problems are representable by certain formal affine spectral Deligne--Mumford stacks, and the corresponding $\einf$-ring spectra provide us with interesting generalized cohomology theories.  

\begin{remark}
    We note that the Goerss--Hopkins--Miller--Lurie sheaf of $\einf$-ring spectra does not directly apply to the moduli problems we consider here, due to the latter's failure of \'etaleness over the base moduli problems without level structure (cf.~the moduli problem in \cite{Devalapurkar} and see also Remarks \ref{rmk:nonlocal} and \ref{rmk:LT} below).  This is fixed by relative effective Cartier divisors analogous to Drinfeld's original approach to arithmetic moduli of (classical) elliptic curves (see \cite[Introduction]{katz1985arithmetic}).  
\end{remark}

\subsection*{Notation and terminology}
\begin{itemize}
    \item Let $\CAlg$ denote the $\infty$-category of $\einf$-rings and $\CAlg^{\cn}$ denote the $\infty$-category of connective $\einf$-rings.  

    \item Let $\cs$ denote the $\infty$-category of spaces ($\infty$-groupoids).  

    \item Given a spectral Deligne--Mumford stack $\sx=(\cx,\so_{\cx})$, let $\tau_{\leq n}\sx$ denote its $n$-truncation $(\cx,\tau_{\leq n} \so_{\cx})$ and $\sx^{\heartsuit}$ denote its underlying ordinary stack $(\cx^{\heartsuit},\tau_{\leq0}\so_{\cx})$.  

    \item By a spectral Deligne--Mumford stack $\sx$ over an $\einf$-ring $R$, we mean a morphism of spectral Deligne--Mumford stacks $\sx\to\Spet R$.  Given an $R$-algebra $S$, we sometimes write $\sx\times_R S$ for the fiber product $\sx \times_{\Spet R} \Spet S$.  

    \item Let $\cm_\El$ denote the spectral Deligne--Mumford stack of spectral elliptic curves, as defined in \cite{lu-EC1}, and $\cm^\cl_\El$ denote the (classical) Deligne--Mumford stack of (classical) elliptic curves.  
\end{itemize}

\subsection*{Overview and statement of results}

Throughout the paper, we work in the $\infty$-categorical setting of spectral algebraic geometry after Lurie, while referencing and comparing with classical moduli problems as treated in, e.g., \cite{katz1985arithmetic,kollar2013rational}.  

In Section \ref{sec:divisor}, we define derived isogenies between spectral elliptic curves and prove that the kernel of a derived isogeny in some cases has properties as in the classical case.  This provides evidence that our derived versions of level structures must induce classical level structures.  For representability reasons, we use moduli associated with sheaves to detect higher homotopy of derived versions of level structures.  

As a main tool, we define relative effective Cartier divisors.  Given a spectral Deligne--Mumford stack $\sx$ over a spectral Deligne--Mumford stack $\ss$, a relative effective Cartier divisor is a morphism $\sd\to\sx$ of spectral Deligne--Mumford stacks such that it is a closed immersion, the ideal sheaf of $\sd$ is a line bundle over $\sx$, and the morphism is flat, proper, and locally almost of finite presentation (Definition \ref{def:cdiv}).  Denote by $\CDiv(\sx/\ss)$ the space of such divisors.  We use Lurie's representability theorem to prove that relative effective Cartier divisors are representable in certain cases.  A major part of our proof involves computing with the associated cotangent complex.  

\begin{thm}[Theorem \ref{thm:cdiv}]
    Suppose that $\se$ is a spectral algebraic space over a connective $\einf$-ring $R$, such that $\se\to R$ is flat, proper, locally almost of finite presentation, geometrically reduced, and geometrically connected.  Then the functor 
    \[
        \CDiv_{\se/R}\co\CAlg^\cn_R\to\cs,\quad R'\mapsto\CDiv(\se_{R'}/R') 
    \]
    is representable by a spectral algebraic space which is locally almost of finite presentation over $R$.  
\end{thm}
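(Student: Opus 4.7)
The plan is to apply Lurie's spectral analogue of Artin's representability theorem from \cite{lu-SAG} to the functor $F := \CDiv_{\se/R}\co\CAlg^\cn_R\to\cs$. This reduces the theorem to verifying: (i) $F$ is an \'etale sheaf; (ii) $F$ is nilcomplete, infinitesimally cohesive, and integrable; (iii) the underlying classical functor $F^\cl$ on ordinary $\pi_0 R$-algebras is representable by an algebraic space locally of finite presentation over $\pi_0 R$; and (iv) $F$ admits a cotangent complex at each point that is connective and almost perfect.

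\textbf{Formal conditions and classical input.} Conditions (i) and (ii) follow from the corresponding properties of $\se$ together with the observation that the defining conditions of a relative effective Cartier divisor (closed immersion, invertible ideal sheaf, flatness, properness, local almost finite presentation) are \'etale-local and stable under the relevant Postnikov and square-zero limits; integrability in particular reduces to the spectral Grothendieck existence theorem applied to the proper family $\sd\to\Spet S$ for $S$ a complete local Noetherian $\einf$-algebra. For condition (iii), $F^\cl$ parametrizes classical relative effective Cartier divisors on the proper flat algebraic space $\se^\cl$, which is representable as an open subfunctor of the classical Hilbert functor of $\se^\cl/\pi_0 R$ by Artin's theorem; the geometric reducedness and connectedness hypotheses ensure the resulting algebraic space is locally of finite type.

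\textbf{Cotangent complex.} For condition (iv), fix an $S$-valued point $x\co\Spet S\to F$ classifying a divisor $\sd\hookrightarrow\se_S$ with structure map $f\co\sd\to\Spet S$ and invertible ideal $\cL^{-1}\subset\cO_{\se_S}$. For a connective $S$-module $M$, the deformation space
\[
    \fib_x\bigl(F(S\oplus M)\to F(S)\bigr) \simeq \Map_{\QCoh(\sd)}\bigl(\cL^{-1}|_\sd,\,\cO_\sd\otimes_S M\bigr) \simeq \Map_S\bigl((f_*(\cL|_\sd))^\vee,\,M\bigr)
\]
parametrizes flat deformations of $\sd$ inside $\se_{S\oplus M}$, identified via the standard normal bundle description for closed subspaces whose ideal is a line bundle. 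This yields $x^*L_{F/R} \simeq (f_*(\cL|_\sd))^\vee$. Since $f$ is proper, flat, and locally almost of finite presentation and $\cL|_\sd$ is a line bundle, the coherence theorem for pushforward in spectral algebraic geometry ensures $f_*(\cL|_\sd)$ is almost perfect over $S$, and its cohomological amplitude together with flat base change force the $S$-linear dual to be connective and almost perfect.

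\textbf{Main obstacle.} The hardest step is the cotangent-complex identification. Two subtleties will require attention: first, one must verify that deforming an effective Cartier divisor (rather than an arbitrary closed subspace) is controlled precisely by $\cL|_\sd$, by checking that invertibility of the ideal sheaf propagates automatically under flat deformation from invertibility on the central fiber; and second, one must justify the cohomological amplitude of $f_*(\cL|_\sd)$ underlying connectivity of the dual, via flat base change along $f$ and the proper flat hypothesis. With (iv) established, Lurie's spectral representability theorem produces the desired spectral algebraic space, locally almost of finite presentation over $R$.
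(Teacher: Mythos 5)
Your strategy is the same as the paper's: apply Lurie's spectral Artin representability theorem and reduce the work to the cotangent complex. Within that shared framework there are two genuine differences worth flagging.

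First, you verify a different list of conditions. The paper cites \cite[Theorem 18.3.0.1]{lu-SAG}, whose hypotheses include $0$-truncation on discrete rings and that the structure map is locally almost of finite presentation; the paper proves these directly (its Lemmas \ref{lem:1} and \ref{lem:5}). You instead invoke a version where classical representability of the restriction to discrete rings is a hypothesis, and cite the classical fact that $\CDiv$ is an open subfunctor of the Hilbert functor. That is a legitimate alternative form of spectral Artin representability, but make sure to cite the correct theorem: the one the paper uses does not have ``classical representability'' among its conditions, so you would need a different reference (or else derive $0$-truncation and local almost finite presentation from it, as the paper does).

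Second, and more interestingly, your cotangent complex identification is sharper than the paper's and --- if justified --- actually simplifies the hardest step. You use the explicit description $L_{\sd/\se_S}\simeq\Sigma\,\cL^{-1}|_\sd$ of the relative cotangent complex of a closed immersion whose ideal is a line bundle, which the paper keeps implicit. With this in hand, the deformation space is controlled by a connective sheaf, so the pullback of the cotangent complex is $f_+(\cL^{-1}|_\sd)$ (equivalently your $(f_*(\cL|_\sd))^\vee$), and connectivity follows at once because $f_+$, being left adjoint to pullback along the flat map $f$, preserves connective objects. The paper does not make this identification; it only records that $L_{\sd/\se_S}$ is connective, obtains $(-1)$-connectivity of the cotangent complex, and then must prove $\pi_{-1}=0$ separately via a Nakayama reduction to algebraically closed residue fields and a classical statement about automorphisms of line bundles over dual numbers. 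So your route, correctly executed, circumvents that extra step. You correctly single out the key subtlety --- that invertibility of the ideal sheaf propagates along square-zero extensions, which is exactly the paper's Lemma \ref{lem:def} --- but your stated reason for connectivity (``cohomological amplitude together with flat base change force the $S$-linear dual to be connective'') is the wrong mechanism; spell out instead the $1$-connectivity of $L_{\sd/\se_S}$ and the right-$t$-exactness of $f_+$. Finally, your formula $(f_*(\cL|_\sd))^\vee$ tacitly assumes $f_*(\cL|_\sd)$ is dualizable; this does hold because $f$ is proper, flat, and locally almost of finite presentation so $f_*$ preserves perfect complexes, but it should be stated, or better yet avoided altogether by working directly with $f_+$ as the paper does.
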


In Section \ref{sec:level}, we first define derived level structures for spectral elliptic curves.  Roughly speaking, given an abstract finite abelian group $A$, e.g., $\bz/N\bz$ and $\bz/N\bz\times\bz/N\bz$, a derived level-$A$ structure on a spectral elliptic curve $\se$ over an $\einf$-ring $R$ is just a relative effective Cartier divisor $\sd\to\se$ whose restriction to the heart comes from a classical level-$A$ structure (see Remark \ref{rmk:derived} for more details concerning the {\em derived} nature of such structures).  We let $\Level(A,\se/R)$ denote the space of derived level-$A$ structures on $\se/R$ and show that moduli problems associated with such structures are representable relative to $\se/R$.  

\begin{thm}[Theorem \ref{thm:elevel}]
    Suppose that $\se$ is a spectral elliptic curve over a connective $\einf$-ring $R$.  Then the functor 
    \[
        \Level^A_{\se/R}\co\CAlg^\cn_R\to\cs,\quad R'\mapsto\Level(A,\se_{R'}/R') 
    \]
    is representable by an affine spectral Deligne--Mumford stack which is locally almost of finite presentation over $R$.  
\end{thm}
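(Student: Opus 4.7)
My plan is to realize $\Level^A_{\se/R}$ as a monomorphic subfunctor of $\CDiv_{\se/R}$ and then invoke Theorem \ref{thm:cdiv} together with Lurie's spectral Artin representability theorem from \cite{lu-SAG}. For each $R' \in \CAlg^\cn_R$, a derived level-$A$ structure on $\se_{R'}$ is by definition a relative effective Cartier divisor $\sd \to \se_{R'}$ whose restriction to the heart is a classical Drinfeld level-$A$ structure in the sense of \cite{katz1985arithmetic}. Since this is a condition depending only on data over $\pi_0 R'$, the inclusion $\Level^A_{\se/R} \hookrightarrow \CDiv_{\se/R}$ is a monomorphism of functors cut out by a classical closed condition on $0$-truncations.

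Given this setup, the étale sheaf property, nilcompleteness, cohesiveness, and infinitesimal cohesiveness of $\Level^A_{\se/R}$ are inherited from the corresponding properties of $\CDiv_{\se/R}$ established in the proof of Theorem \ref{thm:cdiv}, since restricting to level-$A$ structures only further constrains values on ordinary rings, where the condition itself defines an étale sheaf. For the cotangent complex, I would argue that any derived square-zero extension of a level-$A$ structure, when viewed inside $\CDiv_{\se/R}$, remains a level-$A$ structure, because the Drinfeld axiom is a condition on the heart and square-zero deformations do not alter the heart. Consequently, the cotangent complex of $\Level^A_{\se/R}$ is obtained from that of $\CDiv_{\se/R}$ by pullback along the inclusion, and is in particular almost perfect and connective.

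For the classical truncation, $\tau_{\leq 0}\Level^A_{\se/R}$ coincides with the classical Drinfeld moduli of level-$A$ structures on $\se^\heartsuit/\pi_0 R$. By the Katz--Mazur theory \cite{katz1985arithmetic}, this functor is representable by a scheme finite, hence affine, and locally of finite presentation over $\pi_0 R$. Lurie's representability theorem then yields a spectral Deligne--Mumford stack representing $\Level^A_{\se/R}$ and locally almost of finite presentation over $R$. Affineness of the resulting spectral stack follows from affineness of its classical truncation, using the standard fact that a spectral Deligne--Mumford stack whose $0$-truncation is an affine scheme is itself of the form $\Spet C$ for $C$ its $\einf$-ring of global sections.

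The main obstacle will be the cotangent-complex step, where I must verify carefully that the Drinfeld condition is preserved under derived square-zero extensions. Classically, the stability of this axiom under nilpotent thickenings is treated in \cite{katz1985arithmetic}, but porting it into the spectral setting requires one to check compatibility between formation of the $0$-truncation for Cartier divisors on spectral elliptic curves and formation of square-zero extensions, together with the uniqueness of a spectral lift once the classical Drinfeld condition is satisfied. A secondary subtlety is the upgrade from affineness of the classical truncation to affineness of the spectral stack itself, which rests on the standard fact that affineness for spectral Deligne--Mumford stacks is detected on the underlying classical topos.
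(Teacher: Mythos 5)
The central gap is in the cotangent-complex step. You assert that ``square-zero deformations do not alter the heart,'' but for a connective $S$-module $M$ one has $\pi_0(S\oplus M)\simeq\pi_0 S\oplus\pi_0 M$, a genuine square-zero extension of discrete rings whenever $\pi_0 M\neq 0$. The cotangent complex must control deformations along $S\to S\oplus M$ for \emph{all} connective $M$, not just $1$-connective ones, and for such $M$ the Drinfeld datum $\phi\co A\to\se^\heartsuit(\pi_0 S)$ must itself be deformed over $\pi_0 S\oplus\pi_0 M$ --- a nontrivial classical deformation problem governed by the Katz--Mazur theory, not something that is automatic. Put differently, the inclusion $\Level^A_{\se/R}\hookrightarrow\CDiv_{\se/R}$ is a closed immersion rather than an \'etale morphism, so its relative cotangent complex is $1$-connective but generally nonzero (its $\pi_1$ is the conormal), and $L_{\Level^A_{\se/R}}$ is \emph{not} simply the restriction of $L_{\CDiv_{\se/R}}$.

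The paper avoids this issue entirely. Rather than rerunning Lurie's representability criterion on $\Level^A_{\se/R}$, the proof of Theorem~\ref{thm:elevel} constructs the representing object $\ss(A)$ directly inside $\CDiv_{\se/R}$: the incidence Lemma~\ref{lem:incidence} applied to the universal divisor yields a closed substack $\GrpCDiv_{\se/R}$ of divisors whose heart is a finite flat subgroup scheme; within it the clopen locus $\CDiv^A_{\se/R}$ isolates rank $\#A$; and a second incidence argument via \cite[Proposition~1.6.5]{katz1985arithmetic} cuts out the full-set-of-sections condition as a closed substack. Representability then follows formally from the fact that closed and open substacks of a spectral Deligne--Mumford stack are again spectral Deligne--Mumford stacks, and affineness comes from finiteness of $\ss(A)\to\Spet R$, detected on the $0$-truncation via \cite[Remark~5.2.0.2]{lu-SAG} and the classical Katz--Mazur finiteness. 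Your affineness-from-the-heart observation at the end is a reasonable variant, but to make your overall route go through you would need to replace the false heart-invariance claim with an actual computation of the relative cotangent complex of $\Level^A_{\se/R}\to\CDiv_{\se/R}$ (i.e.\ import the classical tangent-space analysis of Drinfeld level structures), a step the paper's direct construction renders unnecessary.
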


In classical algebraic geometry, closely related to elliptic curves and more general abelian varieties, we are interested in level structures on $p$-divisible groups.  They can be defined through full sets of sections of commutative finite flat group schemes.  In Section \ref{subsec:plevel}, we consider derived level structures on spectral $p$-divisible groups.  Let $\Level(r,\bG/R)$ denote the space of derived level-$(\bz/p^r\bz)^h$ structures on a height-$h$ spectral $p$-divisible group $\bG/R$.  Again, this moduli problem is relatively representable.  

\begin{thm}[Theorem \ref{thm:plevel}]
    Suppose that $\bG$ is a spectral $p$-divisible group of height $h$ over a connective $\einf$-ring $R$.  Then the functor 
    \[
        \Level^r_{\bG/R}\co\CAlg^\cn_R\to\cs,\quad R'\to\Level(r,\bG_{R'}/R') 
    \]
    is representable by an affine spectral Deligne--Mumford stack $\Spet\cp^r_{\bG/R}$.  
\end{thm}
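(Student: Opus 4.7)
The plan is to parallel the proof of Theorem \ref{thm:elevel}, taking advantage of the fact that the $p^r$-torsion $\bG[p^r]$ of a spectral $p$-divisible group of height $h$ is a finite flat commutative spectral group scheme over $\Spet R$, and in particular is affine over $R$.  A derived level-$(\bz/p^r\bz)^h$ structure on $\bG$ is then to be encoded as a pair consisting of a homomorphism $\phi\co(\bz/p^r\bz)^h\to\bG[p^r]$ in the spectral sense together with the ``full set of sections'' datum expressing the equality $\sum_{a\in(\bz/p^r\bz)^h}[\phi(a)] = [\bG[p^r]]$ of derived relative effective Cartier divisors over $R$.

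First, I would present $\Level^r_{\bG/R}$ as a subfunctor of $\Map((\bz/p^r\bz)^h,\bG[p^r])$ cut out by the group homomorphism condition and by the full-set-of-sections condition; the ambient mapping space is itself representable by an affine spectral Deligne--Mumford stack over $R$, since $\bG[p^r]/R$ is affine.  Next, I would verify the hypotheses of Lurie's spectral Artin representability theorem from \cite{lu-SAG}---nilcompleteness, infinitesimal cohesiveness, integrability, existence of an almost perfect cotangent complex, and local almost finite presentation of the classical truncation---in a manner parallel to Theorem \ref{thm:elevel}, invoking Theorem \ref{thm:cdiv} wherever the Cartier divisor moduli arises.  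The classical truncation recovers the Drinfeld--Katz--Mazur moduli of full level-$p^r$ structures on the underlying ordinary $p$-divisible group of $\bG$, which is known to be representable by a finite affine scheme over the ordinary ring $\pi_0 R$.  Affineness of the resulting $\Spet\cp^r_{\bG/R}$ then follows because it is a closed substack of the ambient affine mapping space.

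The main obstacle will be that $\bG[p^r]/R$ is neither geometrically reduced nor geometrically connected in general---its connected-\'etale sequence is typically nontrivial when $p$ is not invertible in $\pi_0 R$---so Theorem \ref{thm:cdiv} does not apply verbatim with $\bG[p^r]$ in the role of $\se$.  This mirrors precisely the classical difficulty at primes dividing the level that motivated Drinfeld's full-set-of-sections formalism.  To overcome it in the derived setting, I would exploit the finite flatness, and hence affineness, of $\bG[p^r]/R$, so that the relevant Cartier divisor and homomorphism constraints become closed conditions on the affine spectral scheme parametrizing $(p^r)^h$-tuples of sections of $\bG[p^r]$.  The required cotangent complex and deformation-theoretic inputs can then be assembled from those of the ambient affine spectral scheme, together with the divisor-moduli computations from the proof of Theorem \ref{thm:cdiv} applied \'etale-locally on the connected components of $\bG[p^r]$ where the geometric hypotheses do hold.
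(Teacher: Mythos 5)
Your proposal diverges from the paper's actual route in a way that hits a real obstruction.  The paper's proof of Theorem~\ref{thm:plevel} is a one-line reduction to Proposition~\ref{prop:glevel}, which in turn builds the moduli of derived level structures on a finite flat group scheme $G/R$ \emph{not} as a substack of a $\CDiv$-moduli but as a closed substack of the Hilbert functor $\Hilb^N_{G/R}$ (the derived Grassmannian of rank-$N$ flat closed substacks).  Definition~\ref{def:glevel} is purpose-built for this: a level-$A$ structure is a pair $(\sd,\phi)$ with $\sd\in\Hilb^N(G/R)$ a flat closed substack of the right rank and $\phi$ a purely \emph{classical} homomorphism $A\to G^\heartsuit(\pi_0 R)$, subject to the Drinfeld full-set-of-sections condition on the heart.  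Representability then comes for free from Lurie's Hilbert scheme theorem (which has no geometric reducedness or connectedness hypotheses), an incidence lemma identifying the locus where $(\sd,\phi)$ is an $A$-structure, and Proposition~\ref{prop:astr} on the heart for finiteness/affineness.  No cotangent complex is computed; Theorem~\ref{thm:cdiv} never enters.

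The gap in your proposal is precisely the obstacle you flag and then try to patch.  You are right that Theorem~\ref{thm:cdiv} fails to apply with $\bG[p^r]$ in place of $\se$: at primes dividing the level, $\bG[p^r]/R$ is typically neither geometrically reduced nor geometrically connected, and the interesting part (the connected part, e.g.~a Frobenius kernel such as $\mu_{p^r}$ over $\bF_p$) is \emph{intrinsically} non-reduced.  Passing \'etale-locally to connected components does not restore the hypotheses---the failure is on the individual connected components, not on the way they are glued.  So the workaround in your final paragraph would not close the gap.  A second, smaller issue: encoding the moduli as a substack of $\Map\big((\bz/p^r\bz)^h,\bG[p^r]\big)$ misplaces where the derived data lives.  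In the paper's formulation, $\phi$ has values in the classical truncation only; the higher-homotopical refinement resides entirely in the choice of the closed substack $\sd$ (cf.~Remark~\ref{rmk:derived}).  There is no derived mapping space to parametrize here.  The fix is the one the paper makes: abandon the $\CDiv$-picture for finite flat $G$ and work with $\Hilb^N_{G/R}$, whose representability needs only finite flatness and not the geometric conditions required by Theorem~\ref{thm:cdiv}.
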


The remaining Sections \ref{sec:moduli} and \ref{sec:dual} give applications of derived level structures of relevance to algebraic geometry, algebraic topology, and number theory.  We first prove in Section \ref{subsec:mella} that the moduli problem of spectral elliptic curves with derived level-$A$ structure is (absolutely) representable by a spectral Deligne--Mumford stack, based on and generalizing \cite[Theorem 2.4.1]{lu-EC1} (cf.~Remark \ref{rmk:tlevel}).  

\begin{thm}[Theorem \ref{thm:mella}]
    Let $\El^A(R)$ denote the $\infty$-category of spectral elliptic curves with derived level-$A$ structure over a connective $\einf$-ring $R$, with $\El^A(R)^\simeq$ the largest Kan complex contained in $\El^A(R)$.  Then the functor 
    \[
        \cm^A_\El\co\CAlg^\cn\to\cs,\quad R\mapsto\El^A(R)^\simeq 
    \]
    is representable by a spectral Deligne--Mumford stack, and this stack is locally almost of finite presentation over the sphere spectrum $\bs$.  
\end{thm}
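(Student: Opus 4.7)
The plan is to exhibit $\cm^A_\El$ as relatively representable over Lurie's spectral Deligne--Mumford stack $\cm_\El$ via the forgetful transformation that discards the derived level-$A$ structure, and then to invoke the ambient representability of $\cm_\El$.  I will use that by \cite[Theorem 2.4.1]{lu-EC1}, $\cm_\El$ is a spectral Deligne--Mumford stack locally almost of finite presentation over $\bs$, and that by Theorem \ref{thm:elevel}, for every spectral elliptic curve $\se/R$ the moduli $\Level^A_{\se/R}$ is representable by an affine spectral Deligne--Mumford stack locally almost of finite presentation over $R$.

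First I would construct the forgetful transformation $\pi\co\cm^A_\El\to\cm_\El$ sending a pair $(\se,\alpha)\in\El^A(R)^\simeq$ to its underlying spectral elliptic curve $\se$, and identify its fibers.  Given an $R$-point $\Spet R\to\cm_\El$ classifying $\se/R$, unwinding the functor definitions should yield a canonical equivalence
\[
    \cm^A_\El\times_{\cm_\El}\Spet R\simeq\Level^A_{\se/R}
\]
of functors $\CAlg^\cn_R\to\cs$: an $R'$-point of the left side consists of a spectral elliptic curve $\se'/R'$ equipped with a derived level-$A$ structure together with an equivalence $\se'\simeq\se_{R'}$, which is equivalent to just a derived level-$A$ structure on the base change $\se_{R'}$.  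Combined with Theorem \ref{thm:elevel}, this would show that $\pi$ is relatively representable by affine spectral Deligne--Mumford stacks locally almost of finite presentation.

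Finally I would choose an \'etale cover $\{\Spet R_i\to\cm_\El\}$ and base-change along $\pi$ to obtain an \'etale cover of $\cm^A_\El$ by affine spectral Deligne--Mumford stacks, witnessing $\cm^A_\El$ as a spectral Deligne--Mumford stack.  Local almost finite presentation of both $\cm_\El$ over $\bs$ and of $\pi$, together with stability of this property under composition, would complete the argument.

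The hard part will be the fiber identification above, whose substance is the compatibility of derived level-$A$ structures with arbitrary base change along a morphism $R\to R'$ of connective $\einf$-rings.  The relative effective Cartier divisor component of the definition pulls back correctly by the construction of $\CDiv_{\se/R}$ used in Theorem \ref{thm:cdiv}.  The remaining heart-level condition requiring the induced divisor on $\se^\heartsuit$ to give a classical level-$A$ structure should be preserved because $\tau_{\leq0}$ commutes with the relevant pullbacks, reducing the matter to classical base change for level structures.  Once this compatibility is in place, the rest of the proof is formal from the ambient framework of spectral algebraic geometry.
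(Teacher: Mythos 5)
Your approach and the paper's share the same core content --- the fiber identification
\[
  \cm^A_\El\times_{\cm_\El}\Spet R\simeq\Level^A_{\se/R}
\]
and the relative representability from Theorem \ref{thm:elevel} --- but the packaging is genuinely different.  The paper applies Lurie's spectral Artin representability criterion (Theorem \ref{thm:Lurie}) directly to $\cm^A_\El$ and verifies its five conditions one by one, using the fibration $\El^A(R)\to\El(R)$ and the reduction to the relative case via \cite[Propositions 17.3.8.4 and 17.2.4.7]{lu-SAG} in the proofs of the individual lemmas.  You instead exhibit $\cm^A_\El$ as relatively representable over $\cm_\El$ and propose to glue an \'etale atlas.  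Both work; your route is shorter on the page but silently leans on a characterization of spectral Deligne--Mumford stacks by \'etale atlases rather than by the Artin criteria, and thus moves the technical burden elsewhere rather than removing it.

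The one place where your proposal has an actual gap is the \'etale sheaf condition on $\cm^A_\El$.  Having a collection of \'etale maps from affines does not by itself ``witness'' a functor as a spectral Deligne--Mumford stack: one must also know the functor is an \'etale sheaf, so that the Čech nerve of the atlas recovers it.  This is not formal from the setup --- the paper devotes a separate lemma to it, deploying the fact that $\El^A(R)^\simeq\to\El(R)^\simeq$ is a left fibration of Kan complexes (hence a Kan fibration), comparing fiberwise to the sheaf statement for $\CDiv_{\se/R}$ from Lemma \ref{lem:2}, and invoking sheaf descent for $\cm_\El$ itself.  Your fiber identification actually gives the ingredient needed here: if $\cm_\El$ is a sheaf and every base change $\cm^A_\El\times_{\cm_\El}\Spet R$ is a sheaf (being representable), then a fiberwise descent argument over the Čech nerve shows $\cm^A_\El$ is a sheaf.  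But this step should be stated, not waved away as part of the ``ambient framework.''  Likewise you should note the $1$-truncatedness of $\cm^A_\El$ on discrete rings (the paper's first lemma), which is part of what makes the atlas-glued object a Deligne--Mumford $1$-stack rather than something higher.  With those two points made explicit, your proposal is a correct alternative to the paper's Artin-criterion verification.
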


In \cite{lu-EC2}, Lurie considers spectral deformations of classical $p$-divisible groups.  As we develop the concept of derived level structures, it is natural to consider the moduli of spectral deformations with derived level structures for certain $p$-divisible groups.  Suppose $\bG_0$ is a $p$-divisible group of height $h$ over a perfect $\bF_p$-algebra $R_0$.  For each nonnegative integer $r$, we consider a functor 
\[
    \Def_{\bG_0}^{\Or,r}\co\CAlg^\ad_\cpl\to\cs,\quad R\mapsto\Def^\Or(r,\bG_0,R) 
\]
where $\Def^\Or(r,\bG_0,R)$ is the $\infty$-category spanned by quadruples $(\bG,\alpha,e,\lambda)$ with 
\begin{itemize}
    \item $\bG$ a spectral $p$-divisible group over $R$, 
    
    \item $\alpha$ an equivalence class of $\bG_0$-taggings of $\bG$, 
    
    \item $e$ an orientation of the identity component of $\bG$, and 
    
    \item $\lambda\in\Level(r,\bG/R)$ a derived level-$(\bz/p^r\bz)^h$ structure on $\bG$ as above.  
\end{itemize}
Our next main result is the following (absolute) representability.  

\begin{thm}[Theorem \ref{thm:jlr}]
    Let $R^\Or_{\bG_0}$ be the oriented deformation ring of $\bG_0/R_0$ from \cite{lu-EC2}.  Then the functor $\Def_{\bG_0}^{\Or,r}$ above is corepresentable by an $\einf$-ring $\jl_r$, where $\jl_r$ is an $R^\Or_{\bG_0}$-algebra such that $\pi_0\jl_r$ is finite over $\pi_0R^\Or_{\bG_0}$.  
\end{thm}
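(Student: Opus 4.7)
The plan is to combine Lurie's spectral representability of oriented deformations (without level structure) from \cite{lu-EC2} with Theorem \ref{thm:plevel} above on relative representability of derived level structures. By \cite{lu-EC2}, the forgetful functor $\Def^{\Or}_{\bG_0}\co\CAlg^\ad_\cpl\to\cs$ sending $R$ to the space of triples $(\bG,\alpha,e)$ is corepresented by $R^\Or_{\bG_0}$; let $\bG^\univ$ denote the resulting universal oriented spectral $p$-divisible group over $R^\Or_{\bG_0}$. Applying Theorem \ref{thm:plevel} to $\bG^\univ/R^\Or_{\bG_0}$ furnishes an affine spectral Deligne--Mumford stack $\Spet\cp^r_{\bG^\univ/R^\Or_{\bG_0}}$ corepresenting $\Level^r_{\bG^\univ/R^\Or_{\bG_0}}$ on connective $R^\Or_{\bG_0}$-algebras. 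I would then set $\jl_r := \cp^r_{\bG^\univ/R^\Or_{\bG_0}}$ and verify that it has the desired universal property.

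To check corepresentability of $\Def^{\Or,r}_{\bG_0}$, I would show that for a complete adic $\einf$-ring $R$, an $\einf$-ring map $\jl_r\to R$ factors canonically as an adic map $R^\Or_{\bG_0}\to R$ followed by a map $\jl_r\to R$ over $R^\Or_{\bG_0}$. By the universal property of $R^\Or_{\bG_0}$, the first factor yields the triple $(\bG,\alpha,e)$ with $\bG\simeq\bG^\univ\times_{R^\Or_{\bG_0}}R$; by Theorem \ref{thm:plevel}, the second factor yields a derived level-$(\bz/p^r\bz)^h$ structure $\lambda$ on this $\bG$, via base change from the universal such structure over $\jl_r$. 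Functoriality of level-structure moduli under base change then assembles these ingredients into a natural equivalence between $\Map_{\CAlg}(\jl_r,R)$ and the space of quadruples $\Def^{\Or,r}_{\bG_0}(R)$.

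For the finiteness of $\pi_0\jl_r$ over $\pi_0R^\Or_{\bG_0}$, I would pass to the heart: by design (see Section \ref{sec:level} and Remark \ref{rmk:derived}), a derived level-$(\bz/p^r\bz)^h$ structure restricts on the underlying ordinary stack to a classical Drinfeld level-$p^r$ structure on the classical $p$-divisible group underlying $\bG^\univ$. Drinfeld's foundational result (see \cite[Introduction]{katz1985arithmetic}) represents the classical moduli of such level structures by a finite flat algebra over $\pi_0R^\Or_{\bG_0}$, and this classical algebra must agree with $\pi_0\jl_r$ by the compatibility of our derived construction with its classical restriction.

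The main obstacle I anticipate is the compatibility with adic topologies, since $\Def^{\Or,r}_{\bG_0}$ is posed on $\CAlg^\ad_\cpl$ while Theorem \ref{thm:plevel} produces $\jl_r$ only as a plain connective $\einf$-ring over $R^\Or_{\bG_0}$. Here the finiteness of $\pi_0\jl_r$ over the complete adic Noetherian ring $\pi_0R^\Or_{\bG_0}$ is crucial: it forces the induced topology on $\pi_0\jl_r$ (generated by the image of a defining ideal) to be complete, so $\jl_r$ inherits a canonical complete adic structure in the sense of \cite{lu-SAG}. One must then confirm that the corepresentability statement genuinely descends from arbitrary connective $R^\Or_{\bG_0}$-algebras to those that are complete adic, using that finiteness of the map $R^\Or_{\bG_0}\to\jl_r$ on $\pi_0$ turns plain $R^\Or_{\bG_0}$-algebra maps into adic ones whenever the target is adic complete.
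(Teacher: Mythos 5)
Your plan mirrors the paper's proof of Theorem~\ref{thm:jlr}: Lurie's corepresentability of oriented deformations by $R^\Or_{\bG_0}$ produces a universal oriented deformation $\bG^\Or_\univ$, and $\jl_r$ is declared to be the affine $\einf$-ring from Theorem~\ref{thm:plevel} classifying derived level structures on it, with the corepresentability then assembled from these two pieces as in the unoriented case (Theorem~\ref{thm:dlevel}). The finiteness conclusion on $\pi_0$ is obtained, as in the paper, by passage to the heart and Katz--Mazur's classical finiteness for level-structure moduli (your ``finite flat'' is a small overstatement---Katz--Mazur only give \emph{finite}, which is all that is needed).

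There is one real gap you should close. Theorem~\ref{thm:plevel} as stated is a result about $\CAlg^\cn_R$, i.e.~it requires $R$ \emph{connective}; but the oriented deformation ring $R^\Or_{\bG_0}$ is nonconnective (it is $2$-periodic). Your phrase ``a plain connective $\einf$-ring over $R^\Or_{\bG_0}$'' does not make sense as written, and more to the point you cannot invoke Theorem~\ref{thm:plevel} directly over $R^\Or_{\bG_0}$ without extending the definition and representability of derived level structures to the nonconnective base. The paper does this via Remark~\ref{rmk:nonconnective}, setting $\Level(r,\bG/R):=\Level(r,\tau_{\geq0}\bG/\tau_{\geq0}R)$ and noting the resulting functor on $\CAlg_R$ is still representable. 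You need to insert this step (or an equivalent device) before applying the relative representability to $\bG^\Or_\univ/R^\Or_{\bG_0}$.

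On the adic-topology compatibility, your route (finiteness of $\pi_0 R^\Or_{\bG_0}\to\pi_0\jl_r$ forces the induced adic structure to be complete, so plain $\einf$-ring maps out of $\jl_r$ to complete adic targets are automatically adic) is a somewhat different strategy from the paper's, which instead cites claim~(ii) in Lurie's proof of \cite[Theorem 3.4.1]{lu-EC2} for the passage between $\CAlg^\ad_\cpl$ and $\CAlg$. Your argument is a reasonable and arguably more self-contained way to dispose of the same point; just make sure you actually prove the completeness claim (a finite module over a complete Noetherian local ring is complete) rather than just asserting it.
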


As our notation indicates, the significance of these {\em Jacquet--Langlands spectra} $\jl_r$ is that they serve as a higher-homotopical realization of the Lubin--Tate tower for $\bG_0/R_0$ (see Remark \ref{rmk:LT}).  

We will give another example in Theorem \ref{thm:ehr} of $\einf$-ring spectra obtained by considering moduli of spectral deformations with (non-full) level structure of a $p$-power order subgroup.  They can be viewed as topological realizations for Strickland's deformation rings of Frobenius.  We note some features here in Remarks \ref{rmk:nonlocal} and \ref{rmk:pile}.  

In Section \ref{sec:dual}, we construct for each classical $p$-divisible group an $\einf$-ring spectrum $\jl$, a Jacquet--Langlands spectrum at infinite level (Definition \ref{def:JL} and Proposition \ref{prop:JL}).  By taking homotopy fixed points, we get a {\em Jacquet--Langlands dual} of Morava E-theory (Definition \ref{def:dual} and Theorem \ref{thm:dual}).  

As an illustration, given a formal group $\hG_0$ of height $h$ over a perfect field $k$ of characteristic $p$, there is a diagram of moduli spaces from arithmetic algebraic geometry: 
\[
    \xymatrix{
        & \cx \ar[ld]_{\GL_h(\cO_K)} \ar[rd]^{\cO_D^\times} & \\
	\LT_{\!K} & & \ch 
    }  
\]
where $K$ is the maximal unramified extension of the $p$-adic completion of $k$, $\LT_{\!K}$ is the Lubin--Tate moduli space of deformations of $\hG_0$ over complete local rings with residue field containing $k$, $\cx$ is the moduli space of deformations at infinite level, $D$ is the central division algebra over $K$ of invariant $1/h$, and $\ch$ is the Drinfeld upper half-space of dimension $h-1$.  The above diagram naturally lifts to the following diagram of $\einf$-ring spectra (or rather, spectral moduli spaces, as we keep the directions of the arrows): 
\[
    \hskip.4cm
    \xymatrix{
        & \jl \ar[ld]_{\GL_h(\bz_p)} \ar[rd]^{\bg_h} & \\
	E_h & & \ll E_h 
    }  
\]
where $E_h$ is the Morava E-theory spectrum associated to $\hG_0/k$, $\bg_h$ is a Morava stabilizer group, and $\ll E_h$ the Jacquet--Langlands dual of $E_h$ from Definition \ref{def:dual}.  

Given this higher-homotopical realization of moduli spaces, we discuss its consequences and implications in Proposition \ref{prop:ss} (a square of homotopy fixed point spectral sequences) and Remark \ref{rmk:ss}, as well as in Section \ref{subsec:further}, where we list questions for further investigation.

\section{Effective Cartier divisors of spectral Deligne--Mumford stacks}
\label{sec:divisor}

A main construct of this paper concerns derived level structures.  We begin with a derived version of isogenies and prove that, in certain cases, the kernel of a derived isogeny behaves similarly as in the classical setting.  This gives evidence that our derived version of level structures must induce classical level structures.  In Section \ref{subsec:cdiv}, we define relative effective Cartier divisors in the setting of spectral algebraic geometry.  We then use Lurie's representability theorem to prove that certain functors associated with relative effective Cartier divisors are representable by spectral Deligne--Mumford stacks.  This paves the way for Section \ref{sec:level}, where we establish specifically the representability of derived level structures for spectral elliptic curves and spectral $p$-divisible groups.  

\subsection{Isogenies of spectral elliptic curves}
\label{subsec:isog}

To define derived level structures, the first question we must address is what higher-categorical analogues of finite abelian groups are.  Let us recall from \cite[Section 7.2.4]{lu-HA} and \cite[Section 2.7]{lu-SAG} some finiteness conditions in the context of $\einf$-rings.  

Let $A$ be an $\einf$-ring and $M$ be an $A$-module.  We say that $M$ is 
\begin{itemize}
    \item {\em perfect}, if it is a compact object of the $\infty$-category $\LMod_A$ of left $A$-modules; 

    \item {\em almost perfect}, if there exists an integer $k$ such that $M\in (\LMod_A)_{\geq k}$ and $M$ is an almost compact object of $ (\LMod_A)_{\geq k}$, that is, $\tau_{\leq n}M$ is a compact object of $\tau_{\leq n}\big((\LMod_A)_{\geq k}\big)$ for all $n\geq0$; 

    \item {\em perfect to order $n$}, if given any filtered diagram $\{N_\alpha\}$ in $(\LMod_A)_{\leq 0}$, the canonical map 
    \[
        \varinjlim_\alpha\Ext^{i}_A(M,N_\alpha)\to\Ext^i_A(M,\varinjlim_\alpha N_\alpha) 
    \]
    is injective for $i=n$ and bijective for $i<n$; 

    \item {\em finitely $n$-presented}, if $M$ is $n$-truncated and perfect to order $n+1$.  
\end{itemize}

Next we recall finiteness conditions on algebras.  We say that a morphism $\phi\co A\to B$ of connective $\einf$-rings is 
\begin{itemize}
    \item {\em of finite presentation}, if $B$ belongs to the smallest full subcategory of $\CAlg_A$ which contains $\CAlg_A^{\rm free}$ and is stable under finite colimits; 
    
    \item {\em locally of finite presentation}, if $B$ is a compact object of $\CAlg_A$; 

    \item {\em almost of finite presentation}, if $B$ is an almost compact object of $\CAlg_A$; 

    \item {\em of finite generation to order $n$}, if the following condition holds: 
    \begin{quote}
        Let $\{C_\alpha\}$ be a filtered diagram of connective $\einf$-rings over $A$ having colimit $C$.  Assume that each $C_\alpha$ is $n$-truncated and that each of the transition maps $\pi_nC_\alpha\to\pi_nC_\beta$ is a monomorphism.  Then the canonical map 
        \[
            \varinjlim_\alpha\Map_{\CAlg_A}(B,C_\alpha)\to\Map_{\CAlg_A}(B,C) 
        \]
        is a homotopy equivalence.  
    \end{quote}

    \item {\em of finite type}, if it is of finite generation to order 0.
\end{itemize}

\begin{proposition}[{\cite[Propositions 2.7.2.1 and 4.1.1.3]{lu-SAG}}]
    Let $\phi\co A\to B$ be a morphism of connective $\einf$-rings.  Then the following conditions are equivalent: 
    \begin{itemize}
        \item The morphism $\phi$ is perfect to order $0$ (resp.~of finite type).  

        \item The commutative ring $\pi_0B$ is finite (resp.~of finite type) over $\pi_0A$.  
    \end{itemize}
\end{proposition}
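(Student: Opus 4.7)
The plan is to reduce both equivalences to the corresponding classical finiteness statements about $\pi_0 A$-algebras and $\pi_0 A$-modules. This is possible because both ``of finite type'' and ``perfect to order $0$'' are defined by compatibility with filtered colimits of objects that are only $0$-truncated (in the algebra case) or $\leq 0$-truncated (in the module case), so the homotopy of $B$ above degree zero cannot interact with the test diagrams. Concretely, one expects each statement to be proved by a ``generator chase'' at the level of $\pi_0$, with the homotopical definition serving only to produce the same chase one step removed.

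For the algebra statement, the key observation is that for any discrete connective $\einf$-ring $C$, a map $B \to C$ factors uniquely through $\tau_{\leq 0} B = \pi_0 B$, yielding
\[
\pi_0 \Map_{\CAlg_A}(B, C) \;\cong\; \Hom_{\CAlg^\heartsuit_{\pi_0 A}}(\pi_0 B, C).
\]
For the $(\Leftarrow)$ direction, I would assume $\pi_0 B$ is finitely generated as a $\pi_0 A$-algebra with generators $b_1, \ldots, b_k$; given a filtered diagram $\{C_\alpha\}$ of discrete rings over $A$ with monomorphism transition maps on $\pi_0$ and colimit $C$, any ring map $\pi_0 B \to C$ is determined by the images of the $b_i$, each of which lies in some $C_{\alpha_i}$, and passing to a common upper bound $\alpha$ and using that $C_\alpha \hookrightarrow C$ preserves and reflects the relations among the $b_i$, the map factors uniquely through $C_\alpha$. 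For the $(\Rightarrow)$ direction, I would apply the finite-generation-to-order-$0$ condition to the filtered poset of finitely generated $\pi_0 A$-subalgebras $C_\alpha \subseteq \pi_0 B$ ordered by inclusion, whose colimit is $\pi_0 B$; the canonical truncation $B \to \pi_0 B$ must then factor through some $C_\alpha$, and composition with $C_\alpha \hookrightarrow \pi_0 B$ forces $C_\alpha = \pi_0 B$.

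For the module statement, the analogous observation is that for $N \in (\LMod_A)_{\leq 0}$ and $B$ connective, the mapping spectrum $\Map_A(B, N)$ lies in $\Sp_{\leq 0}$ by $t$-structure considerations, so the bijectivity conditions on $\Ext^i_A(B, N)$ for $i < 0$ in the definition of ``perfect to order $0$'' are automatic; only the injectivity at $i = 0$ is substantive, and by the adjunction $\tau_{\leq 0} \dashv \mathrm{incl}$ this reduces to injectivity of $\varinjlim_\alpha \Hom_{\pi_0 A}(\pi_0 B, \pi_0 N_\alpha) \to \Hom_{\pi_0 A}(\pi_0 B, \pi_0 N)$. The classical characterization that a $\pi_0 A$-module is finitely generated iff its $\Hom$-functor is injective on filtered colimits of this form then delivers both directions by the same generator-chase argument, applied to filtered systems of finitely generated submodules of $\pi_0 B$ (for $\Rightarrow$) or to images of a finite generating set (for $\Leftarrow$). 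The main obstacle will be this spectral-to-classical reduction: one must verify carefully that the higher $\Ext$ terms vanish because of the $t$-structure bounds on $B$ and $N$, and that filtered colimits interact with the truncation $\tau_{\leq 0}$ as expected, so that a definition which a priori involves mapping into complexes of $A$-modules collapses cleanly to a statement about ordinary homomorphisms between $\pi_0$-level objects.
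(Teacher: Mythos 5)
The paper does not prove this proposition; it cites it directly from Lurie's SAG (Propositions 2.7.2.1 and 4.1.1.3), so there is no ``paper's proof'' to compare against. That said, your argument is correct and is essentially the standard reduction Lurie carries out: you identify that all test objects are $0$-truncated, so both homotopical conditions collapse to statements about $\pi_0$. The algebra case rests on $\Map_{\CAlg_A}(B,C)\cong\Hom_{\CAlg^\heartsuit_{\pi_0 A}}(\pi_0 B,C)$ for discrete $C$, and the module case on $\Ext^0_A(B,N)\cong\Hom_{\pi_0 A}(\pi_0 B,\pi_0 N)$ with the negative $\Ext$'s vanishing by the $t$-structure bounds; both then reduce to the classical characterizations of finite generation via compatibility with filtered colimits, using that filtered colimits are $t$-exact and commute with $\pi_0$.

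One spot where your sketch is slightly imprecise: in the module $(\Rightarrow)$ direction, the argument should be run not on the submodule system $\{P_\alpha\}$ itself but on the quotient system $\{\pi_0 B/P_\alpha\}$ (with $P_\alpha$ ranging over finitely generated submodules), which has filtered colimit $0$. The quotient maps $q_\alpha\colon\pi_0 B\to\pi_0 B/P_\alpha$ give classes in $\varinjlim_\alpha\Hom(\pi_0 B,\pi_0 B/P_\alpha)$ mapping to zero in $\Hom(\pi_0 B,0)$, and injectivity forces some $q_\beta=0$, i.e., $P_\beta=\pi_0 B$. Your algebra $(\Rightarrow)$ argument, by contrast, correctly uses the subalgebra system directly, because there the hypothesis is a \emph{bijection} on a filtered system with monic transition maps, so the identity $\pi_0 B\to\pi_0 B$ must literally factor through some $C_\alpha$. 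These two characterizations of finite generation differ in exactly this way, matching the asymmetry between ``perfect to order $0$'' (injectivity at $i=0$, arbitrary $\leq 0$-truncated test diagram) and ``of finite generation to order $0$'' (equivalence, monic $0$-truncated test diagram).
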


\begin{definition}[{cf.~\cite[Definition 4.2.0.1]{lu-SAG}}]
    Let $f\co\sx\to\sy$ be a morphism of spectral Deligne--Mumford Stacks.  We say that $f$ is {\em locally of finite type} (resp.~{\em locally of finite generation to order $n$}, {\em locally almost of finite presentation}, {\em locally of finite presentation}) if the following condition holds.  Given any commutative diagram 
    \[
        \xymatrix{
		\Spet B \ar[r] \ar[d] & \sx \ar[d]^f \\
		\Spet A \ar[r]        & \sy 
	}
    \]
    where the horizontal morphisms are \'etale, the $\einf$-ring $B$ is of finite type (resp.~of finite generation to order $n$, almost of finite presentation, locally of finite presentation) over $A$.  
\end{definition}

\begin{definition}[{\cite[Definition 5.2.0.1]{lu-SAG}}]
    Let $f\co(\cx,\so_\cx)\to(\cy,\so_\cy)$ be a morphism of spectral Deligne--Mumford stacks.  We say that $f$ is {\em finite} if the following conditions hold: 
    \begin{enumerate}
	\item The morphism $f$ is affine, and 
 
	\item The pushforward $f_*\so_\cx$ is perfect to order 0 as a $\so_\cy$-module.  
    \end{enumerate}
\end{definition}

\begin{remark}
\label{rmk:finite}
    By \cite[Example 4.2.0.2]{lu-SAG}, a morphism $f\co\sx\to\sy$ of spectral Deligne--Mumford stack is locally of finite type if and only if the underlying map of ordinary stacks is locally of finite type in the sense of classical algebraic geometry.  Moreover, by \cite[Remark 5.2.0.2]{lu-SAG}, a morphism of $f\co\sx\to\sy$ is finite if and only if the underlying map $f^\heartsuit\co\sx^\heartsuit\to\sy^\heartsuit$ is finite.  In particular, if $\sx$ and $\sy$ are spectral algebraic spaces, then $f$ is finite if and only if $f^\heartsuit$ is finite in the classical sense.  
\end{remark}

Recall that a morphism $f\co\sx\to\sy$ of spectral Deligne--Mumford stacks is {\em surjective}, if for every field $k$ and any map $\Spet k\to\sy$, the fiber product $\Spet k\times_\sy\sx$ is nonempty \cite[Definition 3.5.5.5]{lu-SAG}.  

\begin{definition}
\label{def:isog}
    Let $R$ be a connective $\einf$-ring and $f\co\sx\to\sy$ be a morphism of spectral abelian varieties over $R$.  We call $f$ an {\em isogeny} if it is finite, flat, and surjective.  
\end{definition}

\begin{lemma} 
    Let $f\co\sx\to\sy$ be an isogeny of spectral abelian varieties.  Then $f^\heartsuit\co\sx^\heartsuit\to\sy^\heartsuit$ is an isogeny in the classical sense.  
\end{lemma}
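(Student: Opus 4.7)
The plan is to verify that each of the three defining properties in Definition \ref{def:isog} --- finiteness, flatness, and surjectivity --- descends from $f$ to the underlying morphism $f^\heartsuit$ on ordinary stacks, and that compatibility with the commutative group structures survives passage to the heart. Once these are checked, $f^\heartsuit$ will be a finite, flat, surjective homomorphism between the classical abelian varieties $\sx^\heartsuit$ and $\sy^\heartsuit$, i.e., an isogeny in the classical sense.

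Finiteness of $f^\heartsuit$ is immediate from Remark \ref{rmk:finite}, since spectral abelian varieties are in particular spectral algebraic spaces. For flatness, I would appeal to the \'etale-local characterization of flatness for morphisms of spectral Deligne--Mumford stacks: flatness of $f$ means that on \'etale charts it arises from a flat morphism $A\to B$ of connective $\einf$-rings, which by definition requires in particular that $\pi_0 A\to\pi_0 B$ be flat in the classical sense. This is exactly flatness of $f^\heartsuit$ on the corresponding classical \'etale charts of $\sy^\heartsuit$. For surjectivity, I would unwind the definition via maps $\Spet k\to\sy$ from spectra of fields $k$. Since $k$ is discrete we have $\Spet k=\Spec k$, and any map $\Spec k\to\sy^\heartsuit$ corresponds canonically to a map $\Spet k\to\sy$ through the canonical closed inclusion $\sy^\heartsuit\hookrightarrow\sy$. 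The underlying ordinary stack of $\Spet k\times_\sy\sx$ is $\Spec k\times_{\sy^\heartsuit}\sx^\heartsuit$, so nonemptiness of the former (given by surjectivity of $f$) yields nonemptiness of the latter. Finally, since $\tau_{\leq 0}$ preserves finite products, the group laws on $\sx$ and $\sy$ restrict to those on $\sx^\heartsuit$ and $\sy^\heartsuit$, and $f^\heartsuit$ remains a group homomorphism.

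No step presents a serious obstacle; the whole proof is a routine compatibility check between spectral and classical properties under $0$-truncation. The only subtlety worth emphasising is the flatness clause, where the spectral definition of flatness is engineered precisely so that the induced map on $\pi_0$ is classically flat, making the passage to the heart essentially definitional rather than requiring any spectral-sequence or base-change argument.
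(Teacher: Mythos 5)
Your proposal is correct and follows essentially the same strategy as the paper: reduce to checking finite, flat, surjective on the heart, with the first two being near-definitional and surjectivity requiring a small argument via the truncation adjunction. The only mild difference is in the surjectivity step, where you invoke directly that $\tau_{\leq 0}$ preserves pullbacks (so that $(\Spet k\times_\sy\sx)^\heartsuit\simeq\Spec k\times_{\sy^\heartsuit}\sx^\heartsuit$), whereas the paper instead unwinds nonemptiness to a commuting square $\Spet k'\to\sx$, $\Spet k\to\sy$ and truncates the top map to exhibit a preimage point in $\sx^\heartsuit$; these are two phrasings of the same idea, and yours is arguably a bit slicker. You also spell out what the paper leaves tacit --- that finiteness descends via Remark \ref{rmk:finite}, that flatness of $A\to B$ gives flatness of $\pi_0A\to\pi_0B$ on \'etale charts, and that $f^\heartsuit$ remains a group homomorphism because truncation preserves finite products --- all of which are correct and worth recording.
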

\begin{proof}
    For ordinary abelian varieties, $f^\heartsuit$ being an isogeny means that it is surjective and its kernel is finite.  This is equivalent  to $f^{\heartsuit}$ being finite, flat, and surjective \cite[Proposition 7.1]{milne1986abelian}.  From Definition \ref{def:isog}, it is clear that $f^{\heartsuit}$ is finite and flat.  We need only show that $f^\heartsuit$ is surjective.  
    
    By definition of surjectivity above for morphisms of spectral Deligne--Mumford stacks, we get a commutative diagram
    \[
        \xymatrix{
		\Spet k' \ar[d] \ar[r] & \sx \ar[d] \\
		\Spet k \ar[r]         & \sy 
	}
    \]
    The upper horizontal morphism corresponds to a morphism $\Spet k'\to\sx^\heartsuit$ by the inclusion--truncation adjunction \cite[Proposition 1.4.6.3]{lu-SAG}.  On underlying topological spaces, this then corresponds to a point $|\Spet k'|\to|\sx^\heartsuit|$.  It is clear that this point in $|\sx^\heartsuit|$ is a preimage of $|\Spet k|$ in $|\sy^\heartsuit|$.  Therefore $f^\heartsuit$ is surjective.  
\end{proof}

\begin{lemma}
\label{lem:fib}
    Let $f\co\sx\to\sy$ be an isogeny of spectral elliptic curves over a connective $\einf$-ring $R$.  Then $\fib(f)$ exists and is a finite and flat nonconnective spectral Deligne--Mumford stack over $R$.  
\end{lemma}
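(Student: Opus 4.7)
The plan is to realize $\fib(f)$ as a pullback along the zero section of the spectral elliptic curve $\sy$, and to deduce its claimed properties from base-change stability of finite and flat morphisms in spectral algebraic geometry. First I would set
\[
    \fib(f)\;:=\;\sx\times_\sy\Spet R,
\]
using the zero section $e\co\Spet R\to\sy$ (which exists because $\sy$ is in particular a commutative group object). Existence of this fiber product is standard, since the $\infty$-category of nonconnective spectral Deligne--Mumford stacks admits all small limits (cf.~\cite[Section 1.4]{lu-SAG}); this settles the first claim and places $\fib(f)$ naturally in the nonconnective setting of the lemma.

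Next I would verify the finiteness and flatness claims separately, both by base change. Finite morphisms are in particular affine, and both affineness and finiteness are preserved under arbitrary base change of spectral Deligne--Mumford stacks---the latter either checked directly from the perfect-to-order-$0$ definition (which is preserved under tensor products), or reduced to classical algebraic geometry via Remark \ref{rmk:finite}. Hence $\fib(f)\simeq\Spet A$ for some $\einf$-$R$-algebra $A$ with $\pi_0 A$ finite over $\pi_0 R$. Similarly, flatness of a morphism of spectral Deligne--Mumford stacks is a local property reducing to flatness of a morphism of $\einf$-rings, which is stable under pushouts in $\CAlg$. Applying this with $f$ (flat by hypothesis) yields flatness of $\fib(f)\to\Spet R$.

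The key local point, which will occupy most of the write-up but is not a conceptual obstacle, is the computation that locally $A\simeq C\otimes_B R$ as a derived tensor product of $\einf$-rings---where $\Spet B\to\sy$ is an étale chart, $\Spet C=\Spet B\times_\sy\sx$, and $B\to R$ encodes the zero section---together with the observation that flatness of $B\to C$ gives $\pi_*A\cong\pi_*C\otimes_{\pi_0 B}\pi_0 R$. From this identification, finiteness of $\pi_0 A$ over $\pi_0 R$ and flatness of $A$ over $R$ follow at once. The main work is thus organizational: assembling the appropriate base-change statements from \cite{lu-SAG} and invoking Remark \ref{rmk:finite}, which directly reduces the finite/flat conditions on the fiber product to the classical case.
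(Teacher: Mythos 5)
Your proof is correct and takes essentially the same route as the paper: both realize $\fib(f)$ as the pullback $\sx\times_\sy\Spet R$ along the zero section, both get flatness by base-change stability of flat morphisms (the paper factors the structure map through $\ast$ and cites \cite[Example 2.8.3.12]{lu-SAG}), and both ultimately reduce finiteness to the heart via Remark \ref{rmk:finite}. The one stylistic difference worth noting is in the finiteness step: you invoke base-change stability of \emph{finite} morphisms directly (correct, since truncation preserves limits, so the heart of the base change is the classical base change of the finite map $f^\heartsuit$), whereas the paper passes to the heart and then cites Milne's Proposition 7.1 that the kernel of a classical isogeny is finite. These are siblings of the same argument, and yours is marginally more direct since it does not re-invoke Milne.

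Two small inaccuracies in your write-up. First, you invoke that nonconnective spectral Deligne--Mumford stacks admit all small limits; the paper's reference \cite[Proposition 1.4.11.1]{lu-SAG} only gives finite limits, which is all that is needed here. Second, the flat base change formula in your ``key local point'' is transposed: for $C$ flat over $B$ one has $\pi_*(C\otimes_B R)\cong\pi_0 C\otimes_{\pi_0 B}\pi_*R$, not $\pi_*C\otimes_{\pi_0 B}\pi_0 R$. This local computation is dispensable anyway, since the base-change stability statements you cite already give the result without it, and your étale chart $\Spet B\to\sy$ need not receive the zero section of $\sy$ without some further refinement of the cover.
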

\begin{proof}
    By \cite[Proposition 1.4.11.1]{lu-SAG}, finite limits of nonconnective spectral Deligne--Mumford stacks exist, so we can define $\fib(f)$.  Let us consider the commutative diagram 
    \[
        \xymatrix{
		\fib(f) \ar[r] \ar[d]^{f'} & \sx \ar[d]^f \ar[rdd] & \\
		\!\,\ast \ar[r] \ar[rrd]^i & \sy \ar[rd]           & \\
                                       &                       & \Spet R 
	}
    \]
    where the square is a pullback diagram.  We find that $\fib(f)$ is over $\Spet R$. By \cite[Remark 2.8.2.6]{lu-SAG}, $f'\co\fib(f)\to\ast$ is flat because it is a pullback of a flat morphism.  Clearly $i\co\ast\to\Spet R$ is flat, so by \cite[Example 2.8.3.12]{lu-SAG} (being a flat morphism is a property local on the source with respect to the flat topology), $i\circ f'\co\fib(f)\to\Spet R$ is flat.  

    Next we show that $\fib(f)$ is finite over $R$.  Since $\ast$, $\sx$, and $\sy$ are all spectral algebraic spaces, so is $\fib(f)$.  Moreover, $\Spet R$ is a spectral algebraic space \cite[Example 1.6.8.2]{lu-SAG}.  By Remark \ref{rmk:finite}, we need only prove that the underlying morphism is finite.  Since the truncation functor is a right adjoint, it preserves limits.  Thus we get a pullback diagram 
    \[
        \xymatrix{
		\fib(f)^\heartsuit \ar[r] \ar[d] & \sx^\heartsuit \ar[d] \\
		\!\,\ast \ar[r]                  & \sy^\heartsuit 
	}
    \]
    So we are reduced to showing that given an isogeny $f^\heartsuit\co\sx^\heartsuit\to\sy^\heartsuit$ of ordinary abelian varieties over a commutative ring $R$, its kernel is finite over $R$.  This is true in classical algebraic geometry \cite[Proposition 7.1]{milne1986abelian}.  
\end{proof}

\begin{lemma}
    Given an integer $N\geq1$, let $f_N\co\se\to\se$ be an isogeny of spectral elliptic curves over a connective $\einf$-ring $R$ such that the underlying morphism is the multiplication-by-$N$ map $[N]\co\se^\heartsuit\to\se^\heartsuit$.  Then $\fib(f_N)$ is finite flat of degree $N^2$ in the sense of \cite[Definition 5.2.3.1]{lu-SAG}.  Moreover, if $N$ is invertible in $\pi_0R$, then $\fib(f_N)$ is an \'etale-locally constant sheaf.  
\end{lemma}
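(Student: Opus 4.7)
The plan is to reduce both assertions to classical facts about the multiplication-by-$N$ map on elliptic curves, by exploiting the compatibility of truncation with finite limits and the topological invariance of the étale site.

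For the degree claim, I would start from Lemma \ref{lem:fib}: $\fib(f_N)$ is already known to be finite and flat over $R$, so it is affine, say $\fib(f_N)\simeq\Spet A$, with $A$ flat over $R$ and perfect to order $0$.  The degree in the sense of \cite[Definition 5.2.3.1]{lu-SAG} is then the (locally constant) rank of $\pi_0 A$ as a finitely generated projective $\pi_0 R$-module, and so depends only on the underlying ordinary stack.  Since $\tau_{\leq 0}$ is a right adjoint it preserves finite limits, so the pullback diagram forming $\fib(f_N)$ truncates to the analogous pullback computing $\fib(f_N^\heartsuit)=\ker([N])$ on the classical elliptic curve $\se^\heartsuit$.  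The classical statement that $[N]$ on an elliptic curve has degree $N^2$ (cf.~\cite[Chapter 2]{katz1985arithmetic}) then gives the required rank.

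For the second part, assume $N$ is invertible in $\pi_0 R$.  Classically $[N]\co\se^\heartsuit\to\se^\heartsuit$ is étale; since $f_N$ is flat and (by finiteness) locally almost of finite presentation, invoking the characterization of étale morphisms of spectral Deligne--Mumford stacks in terms of their underlying ordinary morphism (see \cite[Section 4.2]{lu-SAG}) upgrades this to $f_N$ itself being étale.  Pulling back along the unit section yields that $\fib(f_N)\to\Spet R$ is étale, so $\fib(f_N)$ is a finite étale spectral Deligne--Mumford stack of constant degree $N^2$ over $\Spet R$.

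To conclude étale-local constancy, I would appeal to the topological invariance of the small étale site (\cite[Corollary 3.1.4.2 and Theorem 1.4.10.2]{lu-SAG}): the $\infty$-category of finite étale spectral Deligne--Mumford stacks over $\Spet R$ is equivalent to the ordinary category of finite étale $\pi_0 R$-schemes.  Classically, $\se^\heartsuit[N]$ has geometric fibers isomorphic to $(\bz/N\bz)^2$ and is therefore étale-locally isomorphic to the constant group scheme $(\bz/N\bz)^2$ over $\Spec\pi_0R$; lifting a trivializing étale cover through the above equivalence produces an étale cover of $\Spet R$ over which $\fib(f_N)$ is constant.  The main obstacle I anticipate is the second step: verifying cleanly that flatness together with étaleness of the underlying ordinary morphism (plus the available finiteness) implies étaleness of $f_N$ in Lurie's spectral sense, since this is the only place where we must argue at the derived level rather than simply pushing a classical fact through truncation.
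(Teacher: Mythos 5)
Your proof of the first assertion is essentially the paper's argument, just compressed: the paper, after invoking Lemma \ref{lem:fib} to reduce to an affine map $R\to S$, explicitly verifies both conditions of \cite[Definition 2.9.2.1]{lu-SAG} — namely that $S$ is locally free of finite rank as a spectral $R$-module (via projectivity of $\pi_0S$ over $\pi_0R$ together with flatness, appealing to \cite[Propositions 7.2.2.18 and 7.2.2.9]{lu-HA}), and that $\pi_0(k\otimes_RS)\simeq k\otimes_{\pi_0R}\pi_0S$ has dimension $N^2$.  You fold both of these into the single assertion that the degree ``depends only on the underlying ordinary stack.''  That is true, but the nontrivial half of it is precisely the paper's lift of projectivity from $\pi_0$ to the spectral level; a careful write-up should include that step.

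Your second assertion is proved by a genuinely different route.  The paper shows that the functor of points of $\fib(f_N)$ is nilcomplete and locally almost of finite presentation (since it is a spectral Deligne--Mumford stack), and then proves a separate general lemma: any étale sheaf with those two properties whose restriction to the heart is locally constant is itself locally constant.  You instead observe that when $N$ is invertible the classical $[N]$ is étale, so $f_N$ is flat with étale $\pi_0$-truncation and hence étale in the spectral sense; pulling back along the unit section makes $\fib(f_N)\to\Spet R$ finite étale, and the topological invariance of the small étale site then lets you lift the classical trivializing cover of $\se^\heartsuit[N]$ directly.  This is correct — the worry you flag at the end (that flatness plus $\pi_0$-étaleness gives spectral étaleness) is exactly the definition/characterization in \cite[Section 4.2]{lu-SAG}, so it poses no obstacle.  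Your argument is more direct and stays entirely inside finite étale objects; the paper's argument is less elementary but proves a reusable general principle (nilcomplete $+$ locally almost of finite presentation $+$ heart locally constant $\Rightarrow$ locally constant) that does not require knowing $\fib(f_N)$ is étale in advance, so it generalizes to situations where étaleness is unavailable.
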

\begin{proof}
    By \cite[Theorem 2.3.1]{katz1985arithmetic}, we know that $[N]\co\se^\heartsuit\to\se^\heartsuit$ is finite locally free of rank $N^2$ in the classical sense.  When $N$ is invertible in $\pi_0R$, its kernel is an \'etale-locally constant sheaf.  Now, from Lemma \ref{lem:fib}, $\fib(f_N)$ is a spectral algebraic space that is finite and flat, and its underlying space $\fib(f_N)^\heartsuit= \ker[N]$ is locally free of rank $N^2$.  We need to prove that $\fib(f_N)\to\Spet R$ is locally free of rank $N^2$ in spectral algebraic geometry.  Observe that since $\fib(f_N)$ is finite and flat, it is affine.  We are thus reduced to proving the above for affines, i.e., $f_N|_{\Spet S}\co\Spet S\to\Spet R$ is locally free of rank $N^2$ for any affine substack $\Spet S$ of $\fib(f_N)$.  This is equivalent to proving that $R\to S$ is locally free of rank $N^2$ in the sense of \cite[Definition 2.9.2.1]{lu-SAG}.  Therefore we need to prove the following: 
    \begin{enumerate}
        \item The ring $S$ is locally free of finite rank over $R$ (by \cite[Proposition 7.2.4.20]{lu-HA}, this is equivalent to saying that $S$ is a flat and almost perfect $R$-module).  

        \item For every $\einf$-ring maps $R\to k$ with $k$ a field, the vector space $\pi_0(k\otimes_RS)$ is an $N^2$-dimensional $k$-vector space.  
    \end{enumerate}

    For (1), we know that $\pi_0S$ is a projective $\pi_0R$-module and that $S$ is a flat $R$-module, so by \cite[Proposition 7.2.2.18]{lu-HA}, $S$ is a projective $R$-module.  By \cite[Corollary 7.2.2.9]{lu-HA}, since $\pi_0S$ is a finitely generated $\pi_0R$-module, $S$ is a retract of a finitely generated free $R$-module, and is therefore locally free of finite rank.  
    
    For (2), by \cite[Corollary 7.2.1.23]{lu-HA}, since $R$ and $S$ are connective, we have $\pi_0(k\otimes_RS)\simeq k\otimes_{\pi_0R}\pi_0S$, which is an $N^2$-dimensional $k$-vector space, as $\pi_0S$ is a rank-$N^2$ free $\pi_0R$-module from above.  
    
    We next show that if $N$ is invertible in $\pi_0R$, then $\fib(f_N)$ is a locally constant sheaf.  Since $\fib(f_N)$ is a spectral Deligne--Mumford stack, its associated functor of points $\fib(f_N)\co\CAlg_R\to\cs$ is nilcomplete and locally almost of finite presentation.  By \cite[Theorem 2.3.1]{katz1985arithmetic}, $\fib(f_N)|_{\CAlg_{\pi_0R}^\heartsuit}$ is a locally constant sheaf.  The desired result then follows from the lemma below.  
\end{proof}

\begin{lemma}
    Let $R$ be a connective $\einf$-ring.  Let $\sf\in\Shv^\et(\CAlg^{\cn}_R)$ be nilcomplete and locally almost of finite presentation.  Suppose that $\sf|_{(\CAlg^\cn_R)^\heartsuit}$ is a locally constant presheaf.  Then $\sf$ is a (homotopy) locally constant sheaf (i.e., sheafification of a homotopy-locally constant presheaf).  
\end{lemma}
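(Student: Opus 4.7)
The plan is to reduce via topological invariance of the small étale site to the case where $\sf|_\heartsuit$ is equivalent to the constant étale sheaf with value some space $X$, and then to extend this equivalence to all connective $R$-algebras using nilcompleteness and the lafp hypothesis.

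By the locally constant hypothesis, there is a classical étale cover $\{\pi_0 R \to S_\alpha\}$ over which $\sf|_\heartsuit$ becomes constant. Using topological invariance of the small étale site for connective $\einf$-rings, this cover lifts uniquely to an étale cover $\{R \to R_\alpha\}$ of $R$, and since being a homotopy locally constant sheaf is itself étale-local it suffices to establish the conclusion over a single $R_\alpha$. After this reduction $\sf|_\heartsuit$ agrees with $\underline{X}|_\heartsuit$, where $\underline{X}$ denotes the étale sheafification of the constant presheaf $A \mapsto X$ on $\CAlg^\cn_R$. Topological invariance also gives $\underline{X}(A) \simeq \underline{X}(\pi_0 A)$ for every connective $A$, so $\underline{X}$ is nilcomplete and itself locally constant; the equivalence on hearts then promotes to a natural morphism $\underline{X} \to \sf$ in $\Shv^\et(\CAlg^\cn_R)$.

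To conclude I must show the map $\underline{X}(A) \to \sf(A)$ is an equivalence for every $A \in \CAlg^\cn_R$. Both sides being nilcomplete, it suffices to check this on each Postnikov truncation $\tau_{\leq n} A$ and induct on $n$; the base case $n = 0$ is the reduction above. For the inductive step I would express $\tau_{\leq n} A$ as a pullback of $\einf$-rings involving $\tau_{\leq n-1} A$ and the trivial square-zero extension $\tau_{\leq n-1} A \oplus \pi_n A[n+1]$, and aim to show both $\sf$ and $\underline{X}$ send this to a pullback of spaces with matching value on the square-zero extension. For $\underline{X}$ both assertions follow from topological invariance. The main obstacle is establishing the same two assertions for $\sf$: cohesiveness on this square together with the equivalence $\sf(\tau_{\leq n-1} A \oplus \pi_n A[n+1]) \simeq \sf(\tau_{\leq n-1} A)$. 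Here the lafp hypothesis is essential, allowing one to reduce to $A$ almost of finite presentation, approximate the square-zero extension by a filtered system of finitely presented truncated $R$-algebras, and use the étale sheaf property of $\sf$ to supply the requisite cohesiveness on these approximants while the inductive hypothesis propagates the equivalence. Combined with nilcompleteness, this yields $\sf \simeq \underline{X}$ globally, exhibiting $\sf$ as a (homotopy) locally constant sheaf.
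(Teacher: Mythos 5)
Your proof takes a route that differs from the paper's. You build a comparison map between $\sf$ and the constant sheaf $\underline X$ and try to show it is an equivalence by induction along the Postnikov tower of $A$. The paper instead works directly at the level of the \'etale cover $\{U_i\}$: it relates $\sf(\tau_{\leq n}U_i)$ to $\sf(\tau_{\leq 0}U_i)$ via the co-Cartesian base-change square $\tau_{\leq 0}U_i\simeq\tau_{\leq 0}R\otimes_{\tau_{\leq n}R}\tau_{\leq n}U_i$ (available because \'etale algebras are flat and hence truncate compatibly), invokes lafp on this square, and then passes to the Postnikov limit by nilcompleteness. The uses of topological invariance and nilcompleteness are parallel; the organizing square is different. (A side remark: the natural comparison map runs $\sf\to\underline X$, given by $\sf(A)\to\sf(\pi_0A)\simeq\underline X(\pi_0A)\simeq\underline X(A)$, not $\underline X\to\sf$; this does not affect the shape of your argument.)

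The inductive step, however, has a genuine gap. Passing from $\tau_{\leq n-1}A$ to $\tau_{\leq n}A$ requires $\sf$ to carry the Cartesian square exhibiting $\tau_{\leq n}A$ as a square-zero extension of $\tau_{\leq n-1}A$ to a Cartesian square of spaces --- that is, you need $\sf$ to be infinitesimally cohesive. This is not among the lemma's hypotheses and is not implied by them: \'etale descent concerns \'etale covers, not square-zero pullbacks; lafp concerns filtered colimits of $n$-truncated rings, and the square-zero Cartesian square is not a filtered colimit; nilcompleteness is a statement about the Postnikov limit of a single ring. These four conditions (\'etale sheaf, nilcomplete, infinitesimally cohesive, lafp) appear as \emph{independent} hypotheses in Lurie's Artin representability criterion precisely because none of them implies the others. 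Your suggestion that filtered approximation plus the \'etale sheaf property ``supplies the requisite cohesiveness'' is where the argument breaks down; no amount of approximation by finitely presented rings converts the square-zero Cartesian square into \'etale descent data. The paper's choice of the co-Cartesian truncation square sidesteps this entirely, since it never needs $\sf$ to preserve any Cartesian square of rings. (In the lemma's sole application in the paper, $\sf$ happens to be the functor of points of a spectral Deligne--Mumford stack and so \emph{is} cohesive there, but the lemma as stated does not provide this, and your proof requires it.)
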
	
\begin{proof}
    Let us choose an \'etale cover $\{U^0_{i}\}$ of $\pi_0R$ such that $\sf|_{U^0_{i}}$ is a constant sheaf for each $i$.  By \cite[Theorem 7.5.1.11]{lu-HA}, this corresponds to an \'etale cover $\{U_i\}$ of $R$ such that $\pi_0U_i=U^0_i$.  For each $i$ and $n$, we consider the diagram 
    \[
        \xymatrix{
		\tau_{\leq0}R \ar[r] \ar[d] & \tau_{\leq0}U_i \ar[d] \\
		\tau_{\leq n}R \ar[r]       & \tau_{\leq n}U_i 
	}
    \]
    which is a pushout diagram, since $U_i$ is an \'etale $R$-algebra.  This is a colimit diagram in $\tau_{\leq n}\CAlg_R$.  Since $\sf$ is a sheaf locally almost of finite presentation, we then get a pushout diagram
    \[
        \xymatrix{
		\sf(\tau_{\leq0}R) \ar[r] \ar[d] & \sf(\tau_{\leq0}U_i \ar[d]) \\
		\sf(\tau_{\leq n}R) \ar[r]       & \sf(\tau_{\leq n}U_i) 
	}
    \]
    Without loss of generality, we may assume that each $U_i$ is connective. Thus the values $\sf(\tau_{\leq 0}U_i)$ is independent of $i$.  This implies that $\sf(\tau_{\leq n}U_i)$ are all equivalent.  Since $\sf$ is nilcomplete, we have $\sf(U_i)\simeq\varinjlim_n\sf(\tau_{\leq n}U_i)$, and so all $\sf(U_i)$ are equivalent.  
\end{proof}

\subsection{Cartier divisors and an exercise of spectral Artin representability}
\label{subsec:cdiv}

In this subsection, we define relative effective Cartier divisors in the context of spectral algebraic geometry.  We then use Lurie's spectral Artin representability theorem to prove that relative effective Cartier divisors are representable in certain cases.  Let us first recall this spectral analogue of Artin's representability criterion in classical algebraic geometry \cite[Theorem 18.3.0.1]{lu-SAG}.  

\begin{theorem}[Lurie]
\label{thm:Lurie}
    Let $X\co\CAlg^\cn\to\cs$ be a functor.  Suppose that we have a natural transformation $f\co X\to\Spec R$, where $R$ is a Noetherian $\einf$-ring with $\pi_0R$ a Grothendieck ring.  Given $n\geq0$, $X$ is representable by a spectral Deligne--Mumford $n$-stack which is locally almost of finite presentation over $R$ if and only if the following conditions are satisfied: 
    \begin{enumerate}
        \item For every discrete commutative ring $A$, the space $X(A)$ is $n$-truncated.  

        \item The functor $X$ is a sheaf for the \'etale topology.  

        \item The functor $X$ is nilcomplete, infinitesimally cohesive, and integrable.  

        \item The functor $X$ admits a connective cotangent complex $L_X$.  

        \item The natural transformation f is locally almost of finite presentation.  
    \end{enumerate}
\end{theorem}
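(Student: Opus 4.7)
The plan is to prove necessity and sufficiency separately. For necessity, if $X$ is representable by a spectral Deligne--Mumford $n$-stack $\sx$ locally almost of finite presentation over $R$, then each of (1)--(5) follows from general properties of such stacks developed in \cite{lu-SAG}: the $n$-truncatedness of values on discrete rings reflects $\sx$ being an $n$-stack; the \'etale sheaf property, nilcompleteness, infinitesimal cohesion, and integrability all hold for functors represented by spectral Deligne--Mumford stacks (integrability via a form of Grothendieck's existence theorem in the spectral setting); the connective cotangent complex comes from the relative cotangent complex of $\sx \to \Spec R$, which is connective because $\sx$ has a connective structure sheaf; and local almost finite presentation of $f$ is assumed.

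The substantive direction is sufficiency. My approach is to bootstrap from classical Artin representability applied to the restriction $X^\heartsuit := X|_{\CAlg^\heartsuit}$, and then build up the spectral structure inductively using deformation theory. First, I would verify that conditions (1)--(5) imply the hypotheses of the classical Artin criterion for $X^\heartsuit$: infinitesimal cohesion and nilcompleteness in (3) yield Schlessinger-type conditions together with effectivity of formal deformations, the connective cotangent complex in (4) supplies the Artin deformation--obstruction calculus controlled by $\Ext$ groups, and (2), (5) give \'etale descent and local finite presentation. This produces a classical Deligne--Mumford $n$-stack $\sx_0$ representing $X^\heartsuit$ and locally of finite presentation over $\pi_0 R$.

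Second, I would lift $\sx_0$ to a spectral Deligne--Mumford $n$-stack representing $X$ by inductively constructing the tower of structure-sheaf truncations $\{\tau_{\leq m}\sx\}_{m\geq 0}$, with $\tau_{\leq 0}\sx = \sx_0$. At each stage, the extension from $\tau_{\leq m-1}\sx$ to $\tau_{\leq m}\sx$ is a square-zero extension by a shifted quasi-coherent module, and its existence and classification is governed by cohomology of the cotangent complex via infinitesimal cohesion, while the requisite module is determined by $X$ applied to square-zero extensions of discrete rings. Since \'etale morphisms lift uniquely along nilpotent thickenings, any \'etale atlas for $\tau_{\leq m-1}\sx$ extends compatibly to one for $\tau_{\leq m}\sx$. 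Nilcompleteness of $X$ then identifies $X$ with $\varprojlim_m X|_{\tau_{\leq m}(-)}$, allowing us to glue the $\tau_{\leq m}\sx$ into the desired spectral Deligne--Mumford $n$-stack.

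The main obstacle will be the classical Artin step, specifically the verification of openness of versality, which ensures that a formally smooth point extends to a smooth neighborhood in the atlas. This requires a delicate interplay between the cotangent complex, integrability (to pass from formal neighborhoods to actual \'etale neighborhoods via the spectral analogue of Grothendieck's existence theorem), and the Noetherian hypothesis with $\pi_0 R$ a Grothendieck ring, which together enable a suitable Artin approximation argument. This is where the full strength of conditions (3), (4), and the hypothesis on $R$ is consumed, and where one expects — as in Lurie's original proof in \cite{lu-SAG} — the bulk of the technical work to reside.
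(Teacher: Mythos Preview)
The paper does not prove this theorem at all: it is stated as a result of Lurie and cited as \cite[Theorem 18.3.0.1]{lu-SAG}, then used as a black box to verify representability of $\CDiv_{\se/R}$, $\cm_\El^A$, and related functors. There is therefore no ``paper's own proof'' to compare your proposal against.

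Your outline is a reasonable high-level sketch of how Lurie's argument in \cite{lu-SAG} actually proceeds---necessity is routine, and sufficiency goes by first applying a form of classical Artin representability to $X|_{\CAlg^\heartsuit}$ and then climbing the Postnikov tower of the structure sheaf using infinitesimal cohesion and nilcompleteness, with integrability and the Grothendieck hypothesis feeding into Artin approximation. But none of this is needed for the present paper, which treats the theorem purely as input. If your intent was to supply a proof where the paper gives none, you should be aware that the actual argument in \cite{lu-SAG} occupies a substantial portion of Chapter~18 and relies on a careful development of the spectral deformation theory and approximation results built up earlier in that book; your sketch correctly identifies the architecture but would require significant expansion to stand on its own.
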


Given a locally spectrally ringed topos $\sx=(\cx,\so_\cx)$, we can consider its functor of points 
\[
    h_\sx\co\ringtop\to\cs,\quad\sy\mapsto\Map_\ringtop(\sy,\sx) 
\]
In particular, by \cite[Remark 3.1.1.2]{lu-SAG}, a closed immersion $f\co(\cy,\so_\cy)\to(\cx,\so_\cx)$ of locally spectrally ringed topoi corresponds to a morphism $\so_\cx\to f_*\so_\cy$ of sheaves over $\cx$ of connective $\einf$-rings such that $\pi_0\so_\cx\to\pi_0f_*\so_\cy$ is an epimorphism.  We denote this epimorphism by $\alpha$.  Given a closed immersion $f\co\sd\to\sx$ of spectral Deligne--Mumford stacks, we let $\ci(\sd)$ denote $\ker(\alpha)$, called the ideal sheaf of $\sd$.  

To prove relative representability for effective Cartier divisors below, we need the representability of Picard functors.  Given a map $f\co\sx\to\Spet R$ of spectral Deligne--Mumford stacks, we can define a functor 
\[
    \spic_{\sx/R}\co\CAlg^\cn_R\to\cs,\quad R'\mapsto\spic(\Spet R'\times_{\Spet R}\sx) 
\]
If $f$ admits a section $x\co\Spet R\to\sx$, then pullback along $x$ gives a natural transformation of functors $\spic_{\sx/R}\to\spic_{R/R}$.  We let $\spic^x_{\sx/R}\co\CAlg^\cn_R\to\cs$ denote the fiber of this map.  

\begin{theorem}[{\cite[Theorem 19.2.0.5]{lu-SAG}}]
    Let $f\co\sx\to\Spet R$ be a map of spectral algebraic spaces which is flat, proper, locally almost of finite presentation, geometrically reduced, and geometrically connected over an $\einf$-ring $R$.  Suppose that $x\co\Spet R \to\sx$ is a section of $f$.  Then the functor $\spic^x_{\sx/R}$ is representable by a spectral algebraic space which is locally of finite presentation over $R$.  
\end{theorem}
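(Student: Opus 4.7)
The plan is to verify the five hypotheses of the spectral Artin representability criterion (Theorem~\ref{thm:Lurie}) for the functor $\spic^x_{\sx/R}$, whose $R'$-points are pairs $(\cL,\alpha)$ consisting of a line bundle $\cL$ on $\sx\times_R R'$ together with a trivialization $\alpha\co x^*\cL\simeq\so_{R'}$.  Since $\spic^x_{\sx/R}$ is a fiber of mapping stacks into $B\bg_m$, much of the formal structure will be inherited from the source $\sx$ and the target $B\bg_m$.

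The descent, truncatedness, nilcompleteness, and infinitesimal cohesiveness conditions should be formal.  \'Etale descent for line bundles on spectral algebraic spaces gives the sheaf property.  On a discrete commutative ring $A$, the category of rigidified line bundles on $\sx\times_R A$ has no nontrivial automorphisms: because $f$ is geometrically reduced and geometrically connected, $H^0(\sx\times_R A,\so^\times)=A^\times$, and fixing the trivialization along $x$ forces any such $u\in A^\times$ to equal $1$, so $\spic^x_{\sx/R}(A)$ is discrete.  Because $B\bg_m$ is itself nilcomplete and infinitesimally cohesive, and these properties are preserved by mapping out of the fixed object $\sx$ and by the fiber construction along $x$, the functor inherits both.

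For the cotangent complex, the tangent complex of $B\bg_m$ at the base point is $\so[1]$, so the tangent complex of $\spic_{\sx/R}$ at $\cL$ is $f_*\so_{\sx_{R'}}[1]$, and rigidification along $x$ gives
\[
    T_{\spic^x_{\sx/R}/R}\big|_{(\cL,\alpha)}\simeq\fib\bigl(f_*\so_{\sx_{R'}}\longrightarrow R'\bigr)[1].
\]
The assumptions that $f$ is flat, proper, geometrically reduced, and geometrically connected force the unit $R'\to f_*\so_{\sx_{R'}}$ to be an equivalence on $\pi_0$, so the fiber has vanishing $\pi_i$ for $i\geq 0$; after the shift by one, $T$ lies in degrees $\leq 0$ and the cotangent complex is connective.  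Almost perfection of $L_{\spic^x_{\sx/R}/R}$ follows from spectral Grothendieck finiteness for proper morphisms locally almost of finite presentation applied to $f_*\so_{\sx_{R'}}$, which simultaneously yields the local-almost-finite-presentation condition (5).

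The principal obstacle, in my view, is integrability.  Concretely, given a complete adic Noetherian $\einf$-ring $A$ with finitely generated ideal of definition $I$ and a compatible system of rigidified line bundles on each $\sx\times_R(A/I^n)$, one must construct an actual rigidified line bundle on $\sx\times_R A$.  This is a formal GAGA statement for line bundles on the proper spectral algebraic space $\sx\times_R A$, which I would deduce from the spectral Grothendieck existence theorem of \cite[Chapter 8]{lu-SAG} applied to the underlying almost perfect rank-one modules, with the trivializations along $x$ supplying compatible rigidification data in the limit.  Once integrability is established, all five hypotheses of Theorem~\ref{thm:Lurie} hold with $n=0$, producing a spectral algebraic space locally almost of finite presentation over $R$; strengthening this to locally of finite presentation, as in the stated conclusion, would require the further check that the cotangent complex is in fact perfect, which again uses the flatness and properness of $f$.
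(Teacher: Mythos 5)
The paper does not give its own proof of this statement; it is quoted verbatim from Lurie's \emph{Spectral Algebraic Geometry} (Theorem 19.2.0.5), whose proof indeed runs through the spectral Artin representability criterion exactly as you outline. So there is no internal proof to compare against, but your sketch is a reasonable reconstruction of Lurie's argument, and I'll comment on it on those terms.

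Your identification of the five ingredients is correct, and so is the emphasis on where the real work is. The truncatedness claim on discrete $A$ relies precisely on the geometric reducedness and connectedness forcing $\pi_0(f_*\so_{\sx_A})\simeq A$ (Lurie's Lemma 19.2.3.2 in SAG), which kills automorphisms of a rigidified line bundle. Your tangent-complex formula
$T_{\spic^x_{\sx/R}/R}\big|_{(\cL,\alpha)}\simeq\fib(f_*\so_{\sx_{R'}}\to R')[1]$
is the right computation, and the same input (unit is a $\pi_0$-equivalence) gives connectivity after the shift, so condition (4) holds. Deducing almost perfection and the local-almost-finite-presentation condition from properness, flatness, and coherence of $f_*\so$ is also the correct route. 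Your treatment of integrability via Grothendieck existence / formal GAGA for almost perfect sheaves is the same move Lurie makes, and you are right that it is the nontrivial geometric input. Finally, you correctly flag the gap between the Artin criterion's output (locally \emph{almost} of finite presentation) and the stated conclusion (locally of finite presentation), and correctly identify that closing it amounts to showing the cotangent complex is perfect rather than merely almost perfect; flatness and properness ensure $f_*\so_{\sx_{R'}}$ is a perfect $R'$-module, and the dualization argument then gives a perfect cotangent complex. In short, your proposal tracks the actual proof; the only caution I would add is that the $\infty$-categorical bookkeeping around the fiber sequence of cotangent complexes for $\spic^x\to\spic_{\sx/R}\to\spic_{R/R}$ deserves to be written out rather than asserted, since it is easy to dualize in the wrong direction.
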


In the classical setting, schemes representing relative effective Cartier divisors are open subschemes of Hilbert schemes \cite[Theorem 1.13]{kollar2013rational}.  However, in the derived setting, the Hilbert functor is representable by a spectral algebraic space \cite[Theorem 8.3.3]{lurie2004derived}, and it is hard to establish an analogous relationship.  We will directly study relative effective Cartier divisors and their spectral moduli as follows.  

\begin{definition}[Relative effective Cartier divisor]
\label{def:cdiv}
    Let $\sx$ be a spectral Deligne--Mumford stack over a spectral Deligne--Mumford stack $\ss$.  Define a {\em relative effective Cartier divisor of $\sx/\ss$} to be a closed immersion $\sd\to\sx$ such that it is flat, proper, locally almost of finite presentation and that the associated ideal sheaf of $\sd$ over $\sx$ is locally free of rank 1.  We let $\CDiv(\sx/\ss)$ denote the $\infty$-category of such closed immersions.  
\end{definition}

\begin{remark}
\label{rmk:Kan}
    It is not hard to see that given any spectral Deligne--Mumford stack $\sx$ over $\ss$, $\CDiv(\sx/\ss)$ is a Kan complex, since all objects are closed immersions of $\sx$.  Let $\sd\to\sd'$ be a morphism.  Then we have a diagram 
    \[
        \xymatrix{
    	\sd \ar[rr]^f \ar[rd] &     & \sd' \ar[ld] \\
                                  & \sx & 
        }
    \]
    By the definition of closed immersions, they are all equivalent to the same substack of $\sx$, so $f$ is an isomorphism (cf.~\cite[Remark 3.1.1.2]{lu-SAG}).  
\end{remark}

\begin{lemma}
\label{lem:bc}
    Let $\sx/\ss$ be a spectral Deligne--Mumford stack as above, and $\st\to\ss$ be a map of spectral Deligne--Mumford stacks.  If we have a relative effective Cartier divisor $\sd\to\sx$, then $\sd_\st$ is a relative effective Cartier divisor of $\sx_\st$.  
\end{lemma}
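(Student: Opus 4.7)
The plan is to verify directly that each of the five conditions in Definition \ref{def:cdiv} is stable under base change along $\st\to\ss$. Writing $g\co\sx_\st\to\sx$ and $f_\st\co\sd_\st\to\sx_\st$ for the base-change maps, one obtains the pullback square
\[
\xymatrix{
\sd_\st \ar[r] \ar[d]_{f_\st} & \sd \ar[d]^f \\
\sx_\st \ar[r]^g              & \sx
}
\]
and the task reduces to checking that $f_\st$ inherits from $f$ the defining properties of a relative effective Cartier divisor.

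First I would dispose of the four morphism-theoretic properties. Being a closed immersion, flat, proper, and locally almost of finite presentation are all stable under arbitrary base change for morphisms of spectral Deligne--Mumford stacks; these are standard facts collected in \cite[Chapters 2, 3, and 5]{lu-SAG} (in particular, flatness is \cite[Remark 2.8.2.6]{lu-SAG}). This handles four of the five conditions uniformly without calculation.

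The remaining point, and the only one requiring an actual argument, is to identify the ideal sheaf $\ci(\sd_\st)$ and show it is a line bundle on $\sx_\st$. The plan is to produce an equivalence $\ci(\sd_\st)\simeq g^*\ci(\sd)$; the target is automatically a line bundle since pullback of quasi-coherent sheaves preserves locally free sheaves of rank one. Applying the symmetric monoidal left adjoint $g^*$ to the defining fiber sequence
\[
\ci(\sd)\to\so_\sx\to f_*\so_\sd
\]
yields
\[
g^*\ci(\sd)\to\so_{\sx_\st}\to g^*f_*\so_\sd,
\]
and flat base change along the flat morphism $f$ identifies $g^*f_*\so_\sd\simeq(f_\st)_*\so_{\sd_\st}$. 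Comparing with the defining fiber sequence for $\ci(\sd_\st)$ gives the sought equivalence.

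I do not expect a serious obstacle; the proof is essentially a matter of assembling the right base-change statements from \cite{lu-SAG}. The one step meriting care is the flat base change for pushforwards, which works precisely because $f$ is flat---the morphism $g$ along which we base change need not be.
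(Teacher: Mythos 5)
Your overall plan — base change the defining fiber sequence and identify $\ci(\sd_\st)$ with $g^*\ci(\sd)$ — is exactly the argument the paper uses, and the reduction of the four morphism-theoretic conditions to base-change stability is correct. The one step you flag as meriting care, however, is exactly where your justification goes wrong.

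You claim the identification $g^*f_*\so_\sd\simeq(f_\st)_*\so_{\sd_\st}$ follows from ``flat base change along the flat morphism $f$,'' where $f\co\sd\to\sx$ is the closed immersion. This is incorrect on two counts. First, $f$ is not flat: a closed immersion whose ideal sheaf is a line bundle is essentially never flat over $\sx$ (locally it kills a nonzerodivisor $t$, and $\Tor_1^{\so_\sx}(\so_\sd,\so_\sd)$ is governed by $\ci/\ci^2\neq0$). The word ``flat'' in Definition~\ref{def:cdiv} refers to the composite $\sd\to\sx\to\ss$, not to $\sd\to\sx$; this is made explicit later in the paper (e.g.~in Lemma~\ref{lem:1}, ``$\sd$ is flat and proper \emph{over} $R_0$'') and in the paper's own proof of this lemma (``$\sd_\st$ is flat, proper, and locally almost of finite presentation \emph{over} $\st$''). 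Second, even setting that aside, flat base change for pushforwards requires flatness of the morphism \emph{along which one pulls back} --- here $g\co\sx_\st\to\sx$ --- not of the morphism whose pushforward is being identified. Since $\st\to\ss$ carries no flatness hypothesis, neither does $g$, so you cannot invoke flat base change in either direction.

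The conclusion $g^*f_*\so_\sd\simeq(f_\st)_*\so_{\sd_\st}$ is nevertheless correct, but for a different reason: $f$ is a closed immersion, hence affine, and in spectral/derived algebraic geometry pushforward along an affine morphism commutes with \emph{arbitrary} base change of quasi-coherent sheaves --- no flatness is required on either leg of the Cartesian square, precisely because $\sd_\st$ is the \emph{derived} fiber product, whose structure sheaf is by construction $\so_\sd\otimes_{\so_\sx}\so_{\sx_\st}$. (Alternatively, one can appeal to the flatness of $\sd$ over $\ss$ as the paper does.) So your plan is sound, but the mechanism you offer for the crucial step is the wrong one, and you should replace it with affine base change or with the correct flatness.
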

\begin{proof}
    This is straightforward to check.  We simply note that $\sd_\st$ is a closed immersion of $\sx_\st$ \cite[Corollary 3.1.2.3]{lu-SAG}.  After base change, $\sd_\st$ is flat, proper, and locally almost of finite presentation over $\st$.  It remains to show that $\ci(\sd_\st)$ is a line bundle over $\sx_\st$.  Indeed, we have a fiber sequence 
    \[
        \ci(\sd)\to\so_\cx\to\so_\cd 
    \]
    By the flatness of $\sd$, pullback along the base change $f\co\st\to\ss$ gives another fiber sequence 
    \[
        f^*\big(\ci(\sd)\big)\to\so_{\cx_\st}\to\so_{\cd_\st} 
    \]
    So we have that $\ci(\sd_\st)$ is just $f^*\big(\ci(\sd)\big)$, which is invertible.  
\end{proof}

Suppose that $\sx$ is a spectral Deligne--Mumford stack over an affine spectral Deligne--Mumford stack $\ss=\Spet R$. From Definition \ref{def:cdiv}, we then have a functor 
\[
    \CDiv_{\sx/R}\co\CAlg^\cn_R\to\cs,\quad R'\mapsto\CDiv(\sx_{R'}/R') 
\]
Our main goal in this section is to prove that this functor is representable when $\sx/R$ is a spectral algebraic space satisfying certain conditions.  To achieve this, we need some preparations for computing the cotangent complex of a relative effective Cartier divisor functor.  The main issue has to do with square-zero extensions, for which we need the following facts about pushouts of two closed immersions.  

By \cite[Theorem 16.2.0.1 and Proposition 16.2.3.1]{lu-SAG}, given a pushout square of spectral Deligne--Mumford stacks 
\[
    \xymatrix{
        \sx_{01} \ar[r]^i \ar[d]^j & \sx_0 \ar[d]^{j'} \\
        \sx_1 \ar[r]^-{i'}         & \sx 
    }
\]
such that $i$ and $j$ are closed immersions, the induced square of $\infty$-categories 
\[
    \xymatrix{
        \QCoh(\sx_{01})     & \QCoh(\sx_0) \ar[l] \\
        \QCoh(\sx_1) \ar[u] & \QCoh(\sx) \ar[l] \ar[u] 
    }
\]
determines an embedding $\theta\co\QCoh(\sx)\to\QCoh(\sx_0)\times_{\QCoh(\sx_{01})}\QCoh(\sx_1)$, which restricts to an equivalence between connective objects 
\[
    \QCoh(\sx)^\cn\to\QCoh(\sx_0)^\cn\times_{\QCoh(\sx_{01})^\cn}\QCoh(\sx_1)^\cn 
\]
Moreover, let $\sf\in\QCoh(\sx)$, and set $\sf_0=j'^*\sf\in\QCoh(\sx_0)$ and $\sf_1=i'^*\sf\in\QCoh(\sx_1)$.  Then $\sf$ is $n$-connective if and only if $\sf_0$ and $\sf_1$ are $n$-connective, and this statement is also true for the conditions of almost connective, Tor-amplitude $\leq n$, flat, perfect to order $n$, almost perfect, perfect, and locally free of finite rank, respectively.  

Also, by \cite[Theorem 16.3.0.1]{lu-SAG}, we have a pullback square of $\infty$-categories 
\[
    \xymatrix{
        \SpDM_{/\sx} \ar[r] \ar[d] & \SpDM_{/\sx_0} \ar[d] \\
        \SpDM_{/\sx_1} \ar[r]      & \SpDM_{/\sx_{01}} 
    }
\]
Let $f\co\sy\to\sx$ be a map of spectral Deligne--Mumford stacks.  Let $\sy_0=\sx_0\times_\sx\sy$, $\sy_1=\sx_1\times_\sx\sy$, and let $f_0\co\sy_0\to\sx_0$ and $f_1\co\sy_1\to\sx_1$ be the projection maps.  Then we have that $f$ is locally almost of finite presentation if and only if both $f_0$ and $f_1$ are locally almost of finite presentation.  The statement remains true for the following individual conditions: locally of finite generation to order $n$, locally of finite presentation, \'etale, equivalence, open immersion, closed immersion, flat, affine, separated, and proper \cite[Proposition 16.3.2.1]{lu-SAG}.  

Now, let $\sx=(\cx,\so_\cx)$ be a spectral Deligne--Mumford stack, $\sE\in\QCoh(\sx)^\cn$ be a connective quasi-coherent sheaf, and $\eta\in\Der(\so_\cx,\Sigma\sE)$ be a derivation, i.e., a morphism $\eta\co\so_\cx\to\so_\cx\oplus\Sigma\sE$.  We let $\so_\cx^\eta$ denote the square-zero extension of $\so_\cx$ by $\sE$ determined by $\eta$, so that we  have a pullback diagram 
\[
    \xymatrix{
        \so_\cx^\eta \ar[r] \ar[d] & \so_\cx \ar[d]^\eta \\
        \so_\cx \ar[r]^-0          & \so_\cx\oplus\Sigma\sE 
    }
\]
By \cite[Proposition 17.1.3.4]{lu-SAG}, $(\cx,\so_\cx^\eta)$ is a spectral Deligne--Mumford stack, which we will denote by $\sx^\eta$.  In the case of $\eta=0$, we denote it by $\sx^\sE=(\cx,\so_\cx\oplus\sE)$.  We then have a pushout square of spectral Deligne--Mumford stacks
\[
    \xymatrix{
        \sx^\sE    & \sx \ar[l] \\
        \sx \ar[u] & \sx^{\Sigma\sE} \ar[l]^-g \ar[u]^f 
    }
\]
such that $f$ and $g$ are closed immersions.  In turn, by \cite[Theorem 16.2.0.1]{lu-SAG}, there is a pullback diagram 
\[
    \xymatrix{
        \QCoh(\sx^\sE)^\acn \ar[d] \ar[r] & \QCoh(\sx)^\acn \ar[d] \\
        \QCoh(\sx)^\acn \ar[r]            & \QCoh(\sx^{\Sigma\sE})^\acn 
    }
\]
of categories spanned by almost connective quasi-coherent sheaves.  Passing to homotopy fibers over some $\sf\in\QCoh(\sx)^\acn$, we obtain an equivalence 
\[
    \QCoh(\sx^\sE)^\acn\times_{\QCoh(\sx)}\{\sf\}\simeq\Map_{\QCoh(\sx)}\big(\sf,\Sigma(\sE\otimes\sf)\big) 
\]
as in \cite[Proposition 19.2.2.2]{lu-SAG}.  Similarly, by passing to the homotopy fibers over some $\sz\in\SpDM_{/\sx}$ with $f\co\sz\to\sx$, we obtain the classification of first-order deformations of $\sx$: 
\[
    \SpDM_{/\sx^\sE}\times_{\SpDM_{/\sx}}\{\sz\}\simeq\Map_{\QCoh(\sz)}(L_{\sz/\sx},\Sigma f^*\sE) 
\]
\cite[Proposition 19.4.3.1]{lu-SAG}.  

\begin{lemma}
    Let $f\co\sx\to\Spet R$ be a morphism of spectral Deligne--Mumford stacks, and $M$ be a connective $R$-module.  Consider the $\infty$-category of Deligne--Mumford stacks $\sx'$ equipped with a morphism $f'\co\sx'\to\Spet(R\oplus M)$ that fits into the pullback diagram 
    \[
        \xymatrix{
            \sx \ar[r] \ar[d]_f & \sx' \ar[d]^{f'} \\
            \Spet R \ar[r]      & \Spet(R\oplus M) 
        }
    \]
    Then this $\infty$-category is a Kan complex, and it is canonically homotopy equivalent to the mapping space $\Map_{\QCoh(\sx)}(L_{\sx/{\Spet R}},\Sigma f^*M)$.  Moreover, if $f$ is flat, proper, and locally almost of finite presentation, then so is $f'$.  
\end{lemma}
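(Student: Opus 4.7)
The plan is to realize $\Spet(R\oplus M)$ as the square-zero extension $(\Spet R)^{\sE}$, where $\sE\in\QCoh(\Spet R)^{\cn}$ is the quasi-coherent sheaf associated to the connective $R$-module $M$, and then extract the statement from the general machinery of square-zero extensions recalled just above the lemma.  First I would write down the pushout square of spectral Deligne--Mumford stacks
\[
    \xymatrix{
        (\Spet R)^{\sE} & \Spet R \ar[l] \\
        \Spet R \ar[u]  & (\Spet R)^{\Sigma\sE} \ar[l]\ar[u]
    }
\]
whose horizontal and vertical maps are closed immersions.  Under the equivalence $\Spet R \mapsto (R,M\mapsto\widetilde{M})$ this is precisely the pushout presenting $\Spet(R\oplus M)$ via its trivial derivation.

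The desired homotopy equivalence then falls out of the classification of first-order deformations \cite[Proposition 19.4.3.1]{lu-SAG} applied to $\sx$ viewed as an object of $\SpDM_{/\Spet R}$: the cited proposition yields a canonical equivalence
\[
    \SpDM_{/(\Spet R)^{\sE}}\times_{\SpDM_{/\Spet R}}\{\sx\}\simeq\Map_{\QCoh(\sx)}(L_{\sx/\Spet R},\Sigma f^{*}M).
\]
The left-hand side is precisely the $\infty$-category described in the lemma, namely spectral Deligne--Mumford stacks $f'\co\sx'\to\Spet(R\oplus M)$ whose base change along $\Spet R\to\Spet(R\oplus M)$ recovers $f$.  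Since the right-hand side is a mapping space in an $\infty$-category and therefore a Kan complex, this simultaneously proves that the deformation $\infty$-category is a Kan complex and produces the advertised equivalence.

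For preservation of flatness, properness, and local almost finite presentation, I would invoke the pullback square of $\infty$-categories
\[
    \xymatrix{
        \SpDM_{/(\Spet R)^{\sE}} \ar[r] \ar[d] & \SpDM_{/\Spet R} \ar[d] \\
        \SpDM_{/\Spet R} \ar[r]                & \SpDM_{/(\Spet R)^{\Sigma\sE}}
    }
\]
coming from \cite[Theorem 16.3.0.1]{lu-SAG}, together with \cite[Proposition 16.3.2.1]{lu-SAG}, which ensures that each of the three properties in question is detected after pulling $f'$ back along the two closed immersions into $(\Spet R)^{\sE}$.  Both such pullbacks are canonically equivalent to $f$, so if $f$ enjoys the given property, so does $f'$.

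The genuinely novel content is minimal, since the lemma is essentially a direct unpacking of the square-zero extension machinery from \cite[Chapters 16--19]{lu-SAG}.  The hard part, such as it is, is the bookkeeping translating between the functor-of-points formulation of the lemma---a deformation presented as a specific pullback square over $\Spet R\to\Spet(R\oplus M)$---and the corresponding fiber of $\SpDM_{/(\Spet R)^{\sE}}\to\SpDM_{/\Spet R}$ occurring in Lurie's classification; this amounts to verifying the compatibility of two distinct pushout descriptions of $\Spet(R\oplus M)$ and is the main (though routine) point to check.
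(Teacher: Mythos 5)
Your proposal is correct and follows essentially the same route as the paper's proof: both realize $\Spet(R\oplus M)$ as the pushout of two copies of $\Spet R$ along the closed immersion from $\Spet(R\oplus\Sigma M)$, both invoke the classification of first-order deformations \cite[Proposition 19.4.3.1]{lu-SAG} for the equivalence with the mapping space, and both reduce the preservation of flatness, properness, and local almost finite presentation to the closed-gluing formalism of Chapter 16 of \cite{lu-SAG}. The only cosmetic difference is that the paper cites \cite[Corollary 16.4.2.1]{lu-SAG} for the last step where you cite \cite[Theorem 16.3.0.1 and Proposition 16.3.2.1]{lu-SAG}; these are equivalent for the purpose at hand, since both pullbacks of $f'$ along the two closed immersions $\Spet R\hookrightarrow\Spet(R\oplus M)$ recover $f$.
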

\begin{proof}
    We have a pullback square of $\einf$-rings 
    \[
        \xymatrix{
    	R\oplus M \ar[d] \ar[r] & R \ar[d]^{(\id,0)} \\
    	R \ar[r]                & R\oplus\Sigma M 
        }
    \]
    which corresponds to a pushout square of spectral Deligne--Mumford stacks 
    \[
        \xymatrix{
            \Spet R\oplus M & \Spet R \ar[l] \\
            \Spet R \ar[u]  & \Spet(R\oplus\Sigma M) \ar[u] \ar[l] 
        }
    \]
    such that the morphisms $\Spet(R\oplus\Sigma M)\to\Spet R$ are closed immersions.  This exhibits $\Spet(R\oplus M)$ as an ``infinitesimal thickening'' of $\Spet R$ determined by $R\xrightarrow{(\id,0)}R\oplus\Sigma M$.  
    
    The first part of this lemma follows from the formula for first-order deformations of \cite[Proposition 19.4.3.1]{lu-SAG}.  The second part follows from properties of pushout of two closed immersions \cite[Corollary 16.4.2.1]{lu-SAG}.  
\end{proof}

\begin{lemma}
\label{lem:def}
    Suppose that we are given a pushout diagram of spectral Deligne--Mumford stacks 
    \[
        \xymatrix{
            \sx_{01} \ar[r]^{i}  \ar[d]^{j} & \sx_0 \ar[d] \\
            \sx_1 \ar[r]                    & \sx \\
        }
    \]
    where i and j are closed immersions.  Let $f\co \sy\to\sx$ be a map of spectral Deligne--Mumford stacks.  Let $\sy_0=\sx_0\times_\sx\sy$, $\sy_1=\sx_1\times_\sx\sy$, and let $f_0\co \sy_0\to \sx_0$ and $f_1\co \sy_1\to \sx_1$ be the projection maps.  If $f_0$ and $f_1$ are both closed immersions and determine line bundles over $\sy_0$ and $\sy_1$ respectively, then $f$ is a closed immersion and determines a line bundle over $\sy$.  
\end{lemma}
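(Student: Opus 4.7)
The strategy is to leverage the two descent packages recalled just above: the pullback square of $\infty$-categories
\[
    \SpDM_{/\sx} \;\simeq\; \SpDM_{/\sx_0} \times_{\SpDM_{/\sx_{01}}} \SpDM_{/\sx_1}
\]
with its list of jointly detected morphism properties from \cite[Theorem 16.3.0.1 and Proposition 16.3.2.1]{lu-SAG}, together with the corresponding descent
\[
    \QCoh(\sx)^\cn \;\simeq\; \QCoh(\sx_0)^\cn \times_{\QCoh(\sx_{01})^\cn} \QCoh(\sx_1)^\cn
\]
from \cite[Theorem 16.2.0.1]{lu-SAG}, under which ``locally free of finite rank'' is jointly detected.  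Both claims about $f$ will be read off from the hypothesized properties of $f_0$ and $f_1$ via these two equivalences.

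The first claim is immediate: closed immersion is one of the properties listed in \cite[Proposition 16.3.2.1]{lu-SAG} as being jointly detected on the two sides of the pushout, so the hypothesis that $f_0$ and $f_1$ are closed immersions forces $f$ to be a closed immersion.  For the second claim, set $\ci(\sy) := \fib(\so_\sx \to f_* \so_\sy) \in \QCoh(\sx)^\cn$ (connectivity follows because $f$ is affine and the map $\pi_0 \so_\sx \to \pi_0 f_* \so_\sy$ is surjective).  The main task is to identify the pullbacks of $\ci(\sy)$ along the two structure maps $\sx_0 \to \sx$ and $\sx_1 \to \sx$ with the hypothesized line bundles $\ci(\sy_0)$ and $\ci(\sy_1)$.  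Writing $\iota_k \co \sx_k \to \sx$ and $\iota'_k \co \sy_k \to \sy$ for the base-change pairs ($k = 0, 1$), the affineness of $f$ gives an unconditional base-change equivalence $\iota_k^* f_* \so_\sy \simeq (f_k)_* (\iota'_k)^* \so_\sy \simeq (f_k)_* \so_{\sy_k}$; since $\iota_k^*$ preserves fibers (as a left adjoint), we obtain $\iota_k^* \ci(\sy) \simeq \ci(\sy_k)$.  With both pullbacks identified as line bundles, the joint detection of ``locally free of rank $1$'' through the $\QCoh$ descent forces $\ci(\sy)$ to be a line bundle on $\sx$, as desired.

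The principal subtlety lies in the base-change step $\iota_k^* f_* \so_\sy \simeq (f_k)_* \so_{\sy_k}$, since the closed immersion $\iota_k$ is typically not flat, so ordinary flat base change is unavailable.  The rescue is that $f$ is itself a closed immersion, hence affine, so $f_* \so_\sy$ is the sheaf of $\so_\sx$-algebras encoding $f$ under the equivalence between affine morphisms to $\sx$ and connective commutative algebra objects of $\QCoh(\sx)$; formation of this algebra commutes with arbitrary $*$-pullback.  Once this identification is secured, the rest of the argument is pure descent, with no additional content beyond the two formalism statements recalled above.
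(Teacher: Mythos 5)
Your treatment of the closed-immersion claim matches the paper's (both appeal to the joint-detection list of \cite[Proposition 16.3.2.1]{lu-SAG}), and your careful identification of $\iota_k^*\ci(\sy)$ with $\ci(\sy_k)$ via affine base change is a welcome spelling-out of a step the paper leaves implicit.  However, the final step has a gap.  The property that \cite[Theorem 16.2.0.1 and Proposition 16.2.3.1]{lu-SAG} record as jointly detected through the $\QCoh$ gluing is ``locally free of finite rank,'' \emph{not} ``locally free of rank $1$.''  You assert the latter as if it were on the list, but it isn't; knowing that $\ci(\sy)$ is locally free of finite rank and that its two restrictions have rank $1$ does not, without further argument, pin down the rank on $\sx$.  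The paper closes this gap by passing to an affine pushout $\Spet A = \Spet A_0 \amalg_{\Spet A_{01}} \Spet A_1$ and observing that the gluing equivalence $\Mod^\cn_A \simeq \Mod^\cn_{A_0}\times_{\Mod^\cn_{A_{01}}}\Mod^\cn_{A_1}$ is \emph{symmetric monoidal}; line bundles are precisely the invertible objects of $\Mod^\cn$ by \cite[Proposition 2.9.4.2]{lu-SAG}, and a symmetric monoidal equivalence detects invertibility.  (An alternative repair: once you know $\ci(\sy)$ is locally free of finite rank, its rank is a locally constant function on $|\sx|$, and since the two closed immersions $|\sx_0|,|\sx_1|\to|\sx|$ jointly surject onto $|\sx|$ in a pushout along closed immersions, rank $1$ on both pieces forces rank $1$ globally.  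Either route works, but one of them has to be supplied.)
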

\begin{proof}
    The statement concerning closed immersions follows from \cite[Proposition 16.3.2.1]{lu-SAG}.  For the line-bundle part, we note that by \cite[Theorem 16.2.0.1 and Proposition 16.2.3.1]{lu-SAG}, $f$ determines a sheaf locally free of finite rank.  To show that this sheaf is a line bundle, we proceed locally.  By \cite[Theorem 16.2.0.2]{lu-SAG}, given a pullback diagram of connective $\einf$-rings 
    \[
        \xymatrix{
            A \ar[r] \ar[d] & A_0 \ar[d] \\
            A_1 \ar[r]      & A_{01} 
        }
    \]
    such that $\pi_0A_0\to\pi_0A_{01}\leftarrow\pi_0A_1$ are surjective, there is an equivalence 
    \[
        F\co\Mod^\cn_A\to\Mod^\cn_{A_0}\times_{\Mod^\cn_{A_{01}}}\Mod^\cn_{A_1} 
    \]
    Moreover, this is a symmetric monoidal equivalence.  Indeed, since 
    \[
        F(M)=(A_0\otimes_AM,A_1\otimes_AM,A_{01}\otimes_{A_0}A_0\otimes_AM\simeq A_{01}\otimes_{A_1}A_1\otimes_AM) 
    \]
    we have $F(M\otimes_AN)\simeq F(M)\otimes F(N)$.  By \cite[Proposition 2.9.4.2]{lu-SAG}, line bundles over $A_1$, $A_{01}$, and $A_0$ determine invertible objects of $\Mod^\cn_{A_1}$, $\Mod^\cn_{A_{01}}$, and $\Mod^\cn_{A_1}$ respectively, which in turn determine an invertible object of $\Mod^\cn_A$, hence a line bundle over $A$.  
\end{proof}

Here is the main result of this section and the technical heart of the paper.  

\begin{theorem}
\label{thm:cdiv}
    Given a connective $\einf$-ring $R$, let $\se/R$ be a spectral algebraic space that is flat, proper, locally almost of finite presentation, geometrically reduced, and geometrically connected.  Then the functor 
    \[
        \CDiv_{\se/R}\co\CAlg^\cn_R\to\cs,\quad R'\mapsto\CDiv(\se_{R'}/R') 
    \]
    is representable by a spectral algebraic space which is locally almost of finite presentation over $\Spet R$.  
\end{theorem}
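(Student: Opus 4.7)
The plan is to apply Lurie's spectral Artin representability theorem (Theorem \ref{thm:Lurie}) to $\CDiv_{\se/R}$. Since formation of $\CDiv$ commutes with base change by Lemma \ref{lem:bc} and the hypotheses of the theorem localize on the base, the strategy is to verify each axiom in turn.

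First I would dispatch the axioms of a relatively formal nature. Discreteness of $\CDiv(\se_{R'}/R')$ for $R'$ a discrete commutative ring follows from Remark \ref{rmk:Kan}, together with the observation that over a discrete base, a closed immersion with invertible ideal sheaf is precisely the classical datum of a relative effective Cartier divisor. The sheaf property for the \'etale topology reduces to descent for closed immersions, line bundles, and the conditions flat/proper/locally almost of finite presentation, each of which is standard in the setting of \cite{lu-SAG}. Nilcompleteness, infinitesimal cohesiveness, and integrability all follow from the pullback descriptions of $\QCoh$ and $\SpDM$ over pushouts of closed immersions (\cite[Theorems 16.2.0.1, 16.3.0.1]{lu-SAG}), combined with Lemma \ref{lem:def}, which ensures that the key invertibility-of-ideal-sheaf condition glues across such pushouts.

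The main technical work lies in producing a connective cotangent complex. Given a point $\eta \in \CDiv_{\se/R}(R')$ presented by $\sd \hookrightarrow \se_{R'}$ with $\sL=\ci(\sd)$, the formula of \cite[Proposition 19.4.3.1]{lu-SAG} identifies first-order deformations of $\eta$ with values in a connective $R'$-module $M$ with a subspace of $\Map_{\QCoh(\sd)}\bigl(\sL|_\sd,\so_\sd\otimes_{R'} M\bigr)$, cut out by the requirement that the deformed closed immersion remain a Cartier divisor; the latter is ensured by Lemma \ref{lem:def} applied to the relevant pushout of closed immersions. Pushing forward along $\pi\colon\sd\to\Spet R'$, which is flat, proper, and locally almost of finite presentation by hypothesis, one rewrites this as $\Map_{R'}(P,M)$ where $P\simeq\pi_*\bigl(\sL^{-1}|_\sd\bigr)^\vee$ up to an appropriate shift. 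Connectivity of $L_{\CDiv_{\se/R}}\big|_\eta$ then reduces to checking that $\pi_*\bigl(\sL^{-1}|_\sd\bigr)$ is concentrated in the correct range of degrees, which follows from properness and flatness of $\pi$ together with the line-bundle hypothesis on $\sL$.

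Finally, for the natural transformation $\CDiv_{\se/R} \to \Spet R$ to be locally almost of finite presentation, the same properness and flatness guarantee that the pushforward functor preserves almost perfect modules, so $L_{\CDiv_{\se/R}}$ is almost perfect, which is one of the standard criteria. I expect the principal obstacle to be the cotangent complex computation itself: specifically, the careful bookkeeping between the conormal-sheaf perspective (governing the closed-immersion aspect) and the line-bundle perspective (governing the invertibility of the ideal sheaf), together with verifying connectivity in the homological convention of \cite{lu-SAG}. Once that identification is in hand, the remaining axioms assemble routinely, and Theorem \ref{thm:Lurie} delivers the desired spectral algebraic space.
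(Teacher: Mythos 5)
The overall strategy — apply Theorem \ref{thm:Lurie} and verify the five axioms — matches the paper, and your handling of the sheaf axiom, discreteness, and infinitesimal cohesiveness is essentially correct. But two of your steps have genuine gaps.

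First, you lump nilcompleteness and integrability together with infinitesimal cohesiveness and claim all three follow from the pushout-of-closed-immersions machinery ([16.2.0.1, 16.3.0.1]) plus Lemma \ref{lem:def}. That machinery really only gives infinitesimal cohesiveness, which concerns a single square-zero extension. Nilcompleteness concerns the inverse limit $\varprojlim_n \CDiv(\se\times_S\tau_{\leq n}S)$ over the entire Postnikov tower; the paper's Lemma \ref{lem:3a} devotes a page to reconstructing a genuine Cartier divisor from a compatible system over the truncations, reducing to the affine case and invoking nilcompleteness of the Picard functor to handle the ideal sheaf at the end. Integrability is a different beast altogether: it compares $\Spec R'$ with $\Spf R'$ for a complete local Noetherian ring, and the paper (Lemma \ref{lem:3c}) uses formal GAGA together with the algebraizability of almost perfect sheaves. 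Neither of these is a pushout-gluing argument, and your proposal would stall here.

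Second, and more seriously, your treatment of connectivity of the cotangent complex is too optimistic. From the first-order deformation formula one only gets $(-1)$-connectivity of $\eta^*L_{\CDiv_{\se/R}} = \Sigma^{-1}p_+(\eta_+L_{\sd/\se_{R'}})$ — properness and flatness do not by themselves kill the degree-$(-1)$ piece. The paper establishes that $\pi_{-1}$ vanishes by a Nakayama reduction to an algebraically closed residue field $\kappa$, then identifies the $\kappa$-dual of $\pi_{-1}$ with the group of automorphisms of the divisor over $\kappa[\epsilon]/(\epsilon^2)$ restricting to the identity over $\kappa$, and shows this is trivial by a classical rigidity argument for line bundles (cf.~the proof of \cite[Proposition 2.2.6]{lu-EC1}, in the context of the Picard functor). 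This is exactly where the hypotheses that $\se/R$ is geometrically reduced and geometrically connected do actual work — and your proposal never invokes them, which is a clear signal the argument is incomplete. Without these hypotheses the cotangent complex could have a genuine degree-$(-1)$ contribution, and the functor would fail to be representable by a spectral algebraic space.
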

\begin{proof}
    We apply Lurie's spectral Artin representability theorem and verify the 5 criteria from Theorem \ref{thm:Lurie} one by one, in the case of $n=0$, as follows: 
    \begin{enumerate}
        \item Lemma \ref{lem:1}; 

        \item Lemma \ref{lem:2}; 

        \item Lemmas \ref{lem:3a}, \ref{lem:3b}, \ref{lem:3c}; 

        \item Lemma \ref{lem:4}; and 

        \item Lemma \ref{lem:5}.  
    \end{enumerate}
    These statements and their proofs occupy the rest of this section.  
\end{proof}

\begin{lemma}
\label{lem:1}
    For every discrete commutative $R_0$, the space $\CDiv_{\se/R}(R_0)$ is $0$-truncated.  
\end{lemma}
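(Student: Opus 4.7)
The plan is to reduce to a classical statement by exploiting the flatness assumption. First I would argue that when $R_0$ is discrete and $\se/R$ is flat, the base change $\se_{R_0}$ is automatically a classical algebraic space, i.e., $\pi_n\so_{\se_{R_0}}=0$ for all $n>0$. Indeed, for a flat module $M$ over a discrete ring $R_0$ one has $\pi_nM\simeq\pi_nR_0\otimes_{\pi_0R_0}\pi_0M=0$ for $n>0$, so the structure sheaf of $\se_{R_0}$ is concentrated in degree $0$. The same reasoning, applied to the defining condition of Definition \ref{def:cdiv}, shows that every object $\sd\in\CDiv(\se_{R_0}/R_0)$ is also classical, since it is by assumption flat over the discrete ring $R_0$.

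Next I would use the fact that the inclusion of discrete $\einf$-rings into $\CAlg^\cn$ is fully faithful, so mapping spaces in $\SpDM$ between classical algebraic spaces agree with their classical counterparts and are, in particular, discrete. Thus for any $\sd,\sd'\in\CDiv(\se_{R_0}/R_0)$ the mapping space $\Map_{/\se_{R_0}}(\sd,\sd')$ is an ordinary set, and hence $\CDiv(\se_{R_0}/R_0)$ has homotopy groups concentrated in degree $0$.

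To finish I would combine this with Remark \ref{rmk:Kan}: any morphism $\sd\to\sd'$ in $\CDiv(\se_{R_0}/R_0)$ is necessarily an isomorphism identifying the underlying closed substacks of $\se_{R_0}$. Since classical closed immersions are monomorphisms, the set $\Aut_{/\se_{R_0}}(\sd)$ is trivial for every $\sd$, so each connected component of $\CDiv(\se_{R_0}/R_0)$ is contractible and the whole space is equivalent to its $\pi_0$. I do not anticipate any real obstacle here; the argument is essentially the standard translation between the spectral and classical settings under flatness, combined with the formal content of Remark \ref{rmk:Kan}.
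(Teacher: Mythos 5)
Your proof is correct and takes essentially the same route as the paper's: flatness over a discrete ring forces every $\sd$ (and $\se_{R_0}$ itself) to be classical, after which $0$-truncatedness reduces to the classical fact that closed subschemes of a fixed scheme form a set, not a groupoid. The paper's proof compresses this into two sentences, leaving the rest implicit, but the idea is the same. One small caveat in your write-up: the conclusion at the end of your second paragraph — that the homotopy groups are concentrated in degree $0$ — does not yet follow from discreteness of the mapping spaces alone, which only gives $1$-truncatedness; you genuinely need the third paragraph (classical closed immersions are monomorphisms, so automorphism groups are trivial) to kill $\pi_1$. Since you do supply that argument, the proof is complete, but the logical order of "hence" in paragraph two should be adjusted.
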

\begin{proof}
    Recall that $\CDiv_{\se/R}(R_0)$ consists of closed immersions $\sd\to\se\times_RR_0$ such that $\sd$ is flat and proper over $R_0$.  Therefore, if $R_0$ is discrete, so are the objects $\sd$, and so $\CDiv_{\se/R}(R_0)$ is 0-truncated.  
\end{proof}

\begin{lemma}
\label{lem:2}
    The functor $\CDiv_{\se/R}$ is a sheaf for the \'etale topology.  
\end{lemma}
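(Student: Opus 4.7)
The plan is to reduce the claim to étale descent for closed immersions of spectral Deligne--Mumford stacks combined with the étale-locality of the remaining defining properties in Definition \ref{def:cdiv}. Concretely, given an étale covering $R' \to R''$ in $\CAlg^\cn_R$ with \v{C}ech nerve $R'^{\bullet}$, Lemma \ref{lem:bc} shows that base change along each $R' \to R'^n$ is well-defined on $\CDiv_{\se/R}$, so we obtain a canonical comparison map
\[
    \CDiv(\se_{R'}/R') \longrightarrow \Tot\bigl(\CDiv(\se_{R'^{\bullet}}/R'^{\bullet})\bigr),
\]
and the task is to prove this is an equivalence of spaces.

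First, I would handle the underlying closed immersion. By étale descent for quasi-coherent sheaves on $\se_{R'}$, a compatible family of ideal sheaves $\ci_\alpha \hookrightarrow \so_{\se_{R''_\alpha}}$ cutting out closed immersions $\sd_\alpha \hookrightarrow \se_{R''_\alpha}$ (equipped with cocycle data over $R''_\alpha \otimes_{R'} R''_\beta$) descends to a quasi-coherent ideal sheaf $\ci \hookrightarrow \so_{\se_{R'}}$ whose quotient is again an $\so_{\se_{R'}}$-module. The condition that $\pi_0\so_{\se_{R'}} \twoheadrightarrow \pi_0(\so_{\se_{R'}}/\ci)$ be an epimorphism, which characterizes closed immersions by \cite[Remark 3.1.1.2]{lu-SAG}, can be checked étale-locally, so the descent datum produces a bona fide closed immersion $\sd \hookrightarrow \se_{R'}$.

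Second, I would verify that each of the remaining conditions — flatness, properness, being locally almost of finite presentation, and having a line bundle as ideal sheaf — is local for the étale topology on $R'$. Flatness is étale-local by \cite[Example 2.8.3.12]{lu-SAG}; properness and the condition of being locally almost of finite presentation are also étale-local, as they are defined or preserved under faithfully flat (hence étale) base change in the sense of \cite{lu-SAG}. For the ideal sheaf, being locally free of rank $1$ is again étale-local, either by descent for invertible objects of $\QCoh$ or by the same local-freeness criterion invoked in the proof of Lemma \ref{lem:def}. Assembling these ingredients, the descended closed immersion $\sd \hookrightarrow \se_{R'}$ inherits all the requisite properties and defines a point of $\CDiv(\se_{R'}/R')$ mapping to the given descent datum; conversely, any two such descents are canonically equivalent as substacks of $\se_{R'}$ by Remark \ref{rmk:Kan}.

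There is no substantive obstacle here beyond correctly invoking the appropriate descent statements from \cite{lu-SAG}; the argument is a routine assembly of étale descent for quasi-coherent sheaves together with the étale-locality of each property appearing in Definition \ref{def:cdiv}. The slightly delicate point, and the one I would spell out most carefully, is that the line-bundle condition on the ideal sheaf is preserved and reflected under étale base change, since this condition is the one most specific to our notion of relative effective Cartier divisor.
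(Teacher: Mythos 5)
Your proposal is correct and follows essentially the same route as the paper: the paper's proof simply states that the claim reduces to the observation that each defining condition of a relative effective Cartier divisor (Definition \ref{def:cdiv}) is local for the étale topology, which is exactly what you flesh out—descent of the closed immersion via étale descent for quasi-coherent sheaves, followed by étale-locality of flatness, properness, being locally almost of finite presentation, and the invertibility of the ideal sheaf. Your version is more detailed, but there is no difference in substance.
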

\begin{proof}
    Let $\{R'\to U_i\}_{i\in I}$ be an \'etale cover of $\Spet R'$, and $U_\bullet$ be the associated \v{C}ech-simplicial object.  We need to prove that the map 
    \[
        \CDiv_{\se/R}(R')\to\varprojlim_\bDelta\CDiv_{\se/R}(U_\bullet) 
    \]
    is an equivalence.  Unwinding the definitions, we need only prove the following general result: Given a spectral Deligne--Mumford stack $\sx/\ss$ and an \'etale cover $\st_i\to\ss$, we have a homotopy equivalence 
    \[
        \CDiv(\sx/\ss)\to\varprojlim_\bDelta\CDiv(\sx\times_\ss \st_\bullet) 
    \]
    This follows from the fact that our conditions on relative effective Cartier divisors from Definition \ref{def:cdiv} are local with respect to the \'etale topology.  
\end{proof}

\begin{lemma}
\label{lem:3a}
    The functor $\CDiv_{\se/R}$ is nilcomplete.  
\end{lemma}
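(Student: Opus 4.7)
The plan is to show, for every connective $R$-algebra $R'$, that the canonical map
\[
\rho \co \CDiv(\se_{R'}/R') \to \varprojlim_n \CDiv(\se_{\tau_{\leq n}R'}/\tau_{\leq n}R')
\]
is an equivalence. The right-hand side parametrizes compatible towers $\{\sd_n \to \se_{\tau_{\leq n}R'}\}_n$ of relative effective Cartier divisors, where each $\sd_{n+1}$ base-changes along $\tau_{\leq n+1}R' \to \tau_{\leq n}R'$ to $\sd_n$.

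For fully faithfulness, I would invoke Remark \ref{rmk:Kan}: both sides are Kan complexes in which every morphism is an equivalence, so it suffices to show that two relative effective Cartier divisors of $\se_{R'}/R'$ agreeing after each $n$-truncation must coincide. This reduces, via the identification $\so_{\se_{R'}} \simeq \varprojlim_n \tau_{\leq n}\so_{\se_{R'}}$, to the fact that the ideal sheaf of a divisor in $\se_{R'}$ is determined by the system of its truncations.

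For essential surjectivity, given a compatible tower $\{\sd_n\}$, I would build $\sd$ by declaring $\ci(\sd) \subset \so_{\se_{R'}}$ to be the limit $\varprojlim_n \ci(\sd_n)$ in $\QCoh(\se_{R'})^\cn$, and taking $\sd$ to be the corresponding closed substack. The defining properties of a relative effective Cartier divisor would then be verified inductively along the Postnikov tower of $R'$: each map $\tau_{\leq n+1}R' \to \tau_{\leq n}R'$ is a square-zero extension, so the pushout description of spectral Deligne--Mumford stacks along square-zero extensions recalled before Lemma \ref{lem:def}, combined with Lemma \ref{lem:def} itself, propagates invertibility of the ideal sheaf from one Postnikov stage to the next.

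The main obstacle is controlling the passage to the limit. Lemma \ref{lem:bc} ensures that the tower is coherent under base change, and the combined input of \cite[Theorem 16.2.0.2]{lu-SAG} with \cite[Proposition 2.9.4.2]{lu-SAG} guarantees that an invertible object realized as a compatible family along the Postnikov tower stays invertible in the limit. Flatness of $\sd/R'$ would then be read off on $\pi_0$, while properness and being locally almost of finite presentation can be checked on the underlying ordinary stack via Remark \ref{rmk:finite} and \cite[Example 4.2.0.2]{lu-SAG}. Since each $\sd_n$ satisfies these properties by assumption, so does $\sd$, yielding the desired inverse to $\rho$.
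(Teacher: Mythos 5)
Your overall skeleton is correct — reduce nilcompleteness to showing the canonical base-change map $\theta\co\CDiv(\sx/S)\to\varprojlim_n\CDiv(\sx\times_S\tau_{\leq n}S)$ is an equivalence, splitting into fully faithful and essentially surjective — and this is the same skeleton as the paper. But the execution has real gaps, all stemming from underestimating the passage to the infinite limit.

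First, the proposed inductive argument for invertibility of the ideal sheaf is vacuous. Each $\sd_n$ is \emph{given} to be a relative effective Cartier divisor, so the ideal sheaves $\ci(\sd_n)$ are already line bundles at every finite stage; Lemma \ref{lem:def} only re-derives what's hypothesized. The entire content of nilcompleteness lies in deducing that $\ci(\sd)\simeq\varprojlim_n\ci(\sd_n)$ is still invertible over $\se_{R'}$, and a Postnikov induction can never reach $R'$ itself because $R'$ is the \emph{limit} of its truncations, not one of them. Citing \cite[Theorem 16.2.0.2]{lu-SAG} plus \cite[Proposition 2.9.4.2]{lu-SAG} doesn't close this either: Theorem 16.2.0.2 concerns a single pullback square of connective rings, not an $\omega$-indexed tower. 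The paper handles this step by appealing to the nilcompleteness of the Picard functor $\spic_{\sx/S}$, \cite[Proposition 19.2.4.7(1)]{lu-SAG} — a genuinely nontrivial prior result that your proposal does not replace with anything.

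Second, the way you verify the ambient-structure conditions is incorrect. Flatness of a connective map $R'\to\so(\sd)$ is \emph{not} detected on $\pi_0$: in the connective setting flatness requires $\pi_0$-flatness \emph{and} the Tor-exchange isomorphisms $\pi_n R'\otimes_{\pi_0 R'}\pi_0\so(\sd)\simeq\pi_n\so(\sd)$ for all $n$. Likewise ``locally almost of finite presentation'' is strictly stronger than ``locally of finite type''; Remark \ref{rmk:finite} and \cite[Example 4.2.0.2]{lu-SAG} only let you check finiteness and locally-of-finite-type on the underlying ordinary stack, not lafp, which involves compactness of every truncation. The paper gets all of flat/proper/lafp for the limit from \cite[Proposition 19.4.2.1]{lu-SAG}, whose content is precisely the nilcompleteness of these conditions; you need some such input and it isn't free.

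Third, the fully-faithfulness reduction to ``two divisors agreeing after each truncation coincide'' only addresses the $\pi_0$-part of the mapping spaces. Even granting that $\CDiv$ is $0$-truncated, you haven't argued that, and stating it as a reformulation doesn't prove it. The paper sidesteps the issue cleanly by citing the equivalence $\SpDM_{/S}\simeq\varprojlim_n\SpDM_{/\tau_{\leq n}S}$ of \cite[Proposition 19.4.1.2]{lu-SAG}, which is automatically fully faithful, and then identifying $\CDiv$ as a full subcategory. Your construction of $\sd$ directly from $\varprojlim_n\ci(\sd_n)$ is also weaker than what's needed: you would still have to produce the closed immersion $\sd\to\se_{R'}$ and check it recovers each $\sd_n$ on truncation, which is exactly what Lurie's Proposition 19.4.1.2 packages for you. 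In short, the argument needs those three Lurie inputs (19.4.1.2, 19.4.2.1, and 19.2.4.7(1)); the Postnikov induction via Lemma \ref{lem:def} cannot substitute for them.
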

\begin{proof}
    By \cite[Definition 17.3.2.1]{lu-SAG}, we need to show that the canonical map 
    \[
        \CDiv_{\se/R}(R')\to\varprojlim_n\CDiv_{\se/R}(\tau_{\leq n}R') 
    \]
    is a homotopy equivalence for every $\einf$-ring $R'$.  This can be deduced from the following: Given a flat, proper, locally almost of finite presentation spectral algebraic space $\sx$ over a connective $\einf$-ring $S$, we have an equivalence 
    \[
        \CDiv(\sx/S)\to\varprojlim_n\CDiv(\sx\times_S\tau_{\leq n}S) 
    \]
        
    Let us now prove this equivalence.  Given a relative effective Cartier divisor $\sd\to\sx$, we have the following commutative diagram 
    \[
        \xymatrix{
            \sd\times_S\tau_{\leq n}S \ar@{-->}[d] \ar@/_3pc/[dd] \ar[r] & \sd \ar[d] \\
		\sx\times_S\tau_{\leq n}S \ar[d] \ar[r]                      & \sx \ar[d] \\
		\Spet\tau_{\leq n}S \ar[r]                                   & \Spet S 
	}
    \]
    where we get an induced map $\sd\times_S\tau_{\leq n}S\to\sx\times_S\tau_{\leq n}S$.  It is not hard to prove that this map is a closed immersion \cite[Corollary 3.1.2.3]{lu-SAG}.  Moreover, the map $\sd\times_S\tau_{\leq n}S\to\Spet\tau_{\leq n}S$ is flat, proper, and locally almost of finite presentation, since $\sd\times_S\tau_{\leq n}S$ is the base change of $\sd$ along $\Spet\tau_{\leq n}S\to\Spet S$.  The associated ideal sheaf of $\sd\times_S\tau_{\leq n}S$ remains a line bundle over $\sx\times_S\tau_{\leq n}S$.  Therefore $\sd\times_S\tau_{\leq n}S$ is a relative effective Cartier divisor of $\sx\times_S\tau_{\leq n}S$.  Thus we define a functor 
    \[
        \theta\co\CDiv(\sx/S)\to\varprojlim_n\CDiv(\sx\times_S\tau_{\leq n}S),\quad\sd\mapsto\{\sd\times_S\tau_{\leq n}S\}_n 
    \]
    This functor is fully faithful, since we have from \cite[Proposition 19.4.1.2]{lu-SAG} an equivalence 
    \[
        \SpDM_{/S}\to\varprojlim_n\SpDM_{/\tau_{\leq n}S} 
    \]
    defined by $\sx\mapsto\sx\times_S\tau_{\leq n}S$.  For $\theta$ to be an equivalence, we need only show that it is essentially surjective.  
        
    Suppose $\{\sd_n\to\sx\times_S\tau_{\leq n}S\}_n$ is an object in $\varprojlim_n\CDiv(\sx\times_S\tau_{\leq n}S)$.  It is a morphism in $\varprojlim_n\SpDM_{/\tau_{\leq n}S}$.  By \cite[Proposition 19.4.1.2]{lu-SAG}, there is a morphism $\sd\to\sx$ in $\SpDM_{/S}$ such that $\sd\times_S\tau_{\leq n}S\to\sx\times_S\tau_{\leq n}S$ are equivalent to $\sd_n\to\sx\times_S\tau_{\leq n}S$.  

    Next, we need to show that $\sd\to\sx$ from above is a relative effective Cartier divisor.  The conditions that $\sd\to\sx$ is flat, proper, and locally almost of finite presentation follow immediately from \cite[Proposition 19.4.2.1]{lu-SAG}.  It remains to prove that $\sd\to\sx$ is a closed immersion and determines a line bundle over $\sx$.  
        
    Without loss of generality, we may assume that $\sx=\Spet B$ is affine, so that we have closed immersions 
    \[
        \sd_n\to(\Spet B)\times_S\tau_{\leq n}S\simeq\Spet(B\otimes_S\tau_{\leq n}S) 
    \]
    the last equivalence from \cite[Proposition 1.4.11.1(3)]{lu-SAG}.  By \cite[Theorem 3.1.2.1]{lu-SAG}, each $\sd\times_S\tau_{\leq n }S$ is equivalent to $\Spet B'_n$ for some $B'_n$ such that $\pi_0(B\otimes_S\tau_{\leq n}S)\to\pi_0B_n'$ is surjective.  Since $\tau_{\leq n+1}S\to B'_{n+1}$ is flat, we have 
    \[
        \begin{split}
            \Spet B'_n & =(\Spet B'_{n+1})\times_{\tau_{\leq n+1}S}\tau_{\leq n}S=\Spet(B'_{n+1}\otimes_{\tau_{\leq n+1}S}\tau_{\leq n}S) \\
                       & \simeq\Spet\tau_{\leq n}B'_{n+1} 
        \end{split}
    \]
    Thus we obtain a spectrum $B'$ such that 
    \[
        \Spet\tau_{\leq n}B'\simeq\Spet B'_n=\sd\times_S\tau_{\leq n} 
    \]
    Consequently, $\sd=\Spet B'$ and $\pi_0B\to\pi_0B'$ is surjective, and so 
    \[
        \sd=\Spet B'\to\Spet B=\sx 
    \]
    is a closed immersion.  

    Finally, to prove that the associated ideal sheaf of $\sd$ is a line bundle, we note the pullback diagrams 
    \[
        \xymatrix{
		I_n \ar[r] \ar[d] & B\otimes_S\tau_{\leq n}S \ar[d] \\
            \ast \ar[r]       & B'\otimes_S\tau_{\leq n}S 
	}
    \]
    where each $I_n$ is an invertible module over $B\otimes_S\tau_{\leq n}S=\tau_{\leq n}B$.  Passing to inverse limits, we obtain a pullback diagram 
    \[
        \hskip-.85cm
        \xymatrix{
		\varprojlim I_n \ar[r] \ar[d] & B \ar[d] \\
		\ast \ar[r]                   & B' 
	}
    \]
    Consequently, we have $I(\sd)\simeq\varprojlim I_n$.  Now, by nilcompleteness of the Picard functor $\spic_{\sx/S}$ from \cite[Proposition 19.2.4.7(1)]{lu-SAG}, $I(\sd)$ is an invertible $B$-module.  Therefore the associated ideal sheaf of $\sd$ is a line bundle over $\sx$.  
\end{proof}

\begin{lemma}
\label{lem:3b}
    The functor $\CDiv_{\se/R}$ is infinitesimally cohesive.  
\end{lemma}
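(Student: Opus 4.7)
The plan is to reduce infinitesimal cohesiveness of $\CDiv_{\se/R}$ to the overcategory pushout machinery of \cite[Chapter 16]{lu-SAG} together with Lemma \ref{lem:def}. Recall that $\CDiv_{\se/R}$ is infinitesimally cohesive precisely when, for every pullback diagram
\[
    \xymatrix{
        A \ar[r] \ar[d] & A_0 \ar[d]^{\phi_0} \\
        A_1 \ar[r]^{\phi_1} & A_{01}
    }
\]
in $\CAlg^\cn_R$ such that $\pi_0\phi_0$ and $\pi_0\phi_1$ are surjective with nilpotent kernel, the induced map
\[
    \CDiv(\se_A/A) \to \CDiv(\se_{A_0}/A_0) \times_{\CDiv(\se_{A_{01}}/A_{01})} \CDiv(\se_{A_1}/A_1)
\]
is a homotopy equivalence of Kan complexes (cf.~Remark \ref{rmk:Kan}).

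First I would apply $\Spet$ to convert the pullback square into a pushout square of spectral Deligne--Mumford stacks whose four edges are closed immersions, the nilpotence of the kernels being exactly what guarantees closed-immersion status. Base-changing $\se$ along each vertex then yields a pushout square
\[
    \xymatrix{
        \se_{A_{01}} \ar[r]^-{i} \ar[d]_{j} & \se_{A_0} \ar[d] \\
        \se_{A_1} \ar[r] & \se_A
    }
\]
in which $i$ and $j$ remain closed immersions. At this point \cite[Theorem 16.3.0.1]{lu-SAG} supplies the equivalence
\[
    \SpDM_{/\se_A} \xrightarrow{\ \sim\ } \SpDM_{/\se_{A_0}} \times_{\SpDM_{/\se_{A_{01}}}} \SpDM_{/\se_{A_1}},
\]
which forms the skeleton of the sought equivalence on $\CDiv$.

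The remaining task is to check that this equivalence of overcategories restricts to the subgroupoids cut out by Definition \ref{def:cdiv}. The properties of being a closed immersion, and of being flat, proper, and locally almost of finite presentation, all transport along the equivalence by \cite[Proposition 16.3.2.1]{lu-SAG}, so the only nontrivial piece is the condition that the ideal sheaf be a line bundle; this is exactly what Lemma \ref{lem:def} provides, applied to a closed immersion $\sd \to \se_A$ together with its base changes $\sd_0 \to \se_{A_0}$ and $\sd_1 \to \se_{A_1}$.

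The main obstacle here is thus the line-bundle criterion, which is why Lemma \ref{lem:def} was isolated in advance; its verification ultimately rests on the symmetric-monoidal pullback equivalence of connective module categories from \cite[Theorems 16.2.0.1 and 16.2.0.2]{lu-SAG}. Once that input is in hand, the remaining conditions in Definition \ref{def:cdiv} transport along the overcategory equivalence by purely formal arguments, and no further computation is required.
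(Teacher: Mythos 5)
Your proof is correct and is essentially a fleshed-out version of the paper's own terse proof, which simply cites Lemma \ref{lem:def} and \cite[Proposition 16.3.2.1]{lu-SAG}; your additional invocation of \cite[Theorem 16.3.0.1]{lu-SAG} for the overcategory equivalence is exactly the implicit skeleton the paper is relying on. The key observation in both is the same: the overcategory pushout machinery transports the flat/proper/closed-immersion conditions formally, and Lemma \ref{lem:def} handles the one nontrivial condition (ideal sheaf being a line bundle).
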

\begin{proof}
    This follows from Lemma \ref{lem:def} and \cite[Proposition 16.3.2.1]{lu-SAG}.  
\end{proof}

\begin{lemma}
\label{lem:3c}
    The functor $\CDiv_{\se/R}$ is integrable.  
\end{lemma}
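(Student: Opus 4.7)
The plan is to invoke Grothendieck's existence theorem for spectral algebraic spaces.  Integrability of $\CDiv_{\se/R}$ amounts to showing that, for every complete local Noetherian $\einf$-ring $A$ over $R$ with ideal of definition $I\subseteq\pi_0 A$, writing $A_n=A/I^{n+1}$, $\se_A=\se\times_R A$, and $\se_n=\se\times_R A_n$, the canonical map
\[
    \CDiv(\se_A/A)\to\varprojlim_n\CDiv(\se_n/A_n)
\]
is a homotopy equivalence.  By base change, $\se_A\to\Spet A$ remains flat, proper, locally almost of finite presentation, geometrically reduced, and geometrically connected, so the spectral version of Grothendieck's existence theorem from \cite[Chapter 8]{lu-SAG} supplies a symmetric monoidal equivalence $\QCoh(\se_A)^\aperf\simeq\varprojlim_n\QCoh(\se_n)^\aperf$.

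Given a compatible system $\{i_n\co\sd_n\to\se_n\}$ of relative effective Cartier divisors, each pushforward $(i_n)_*\so_{\sd_n}$ is almost perfect by properness and local almost of finite presentation of $i_n$, and the surjections $\so_{\se_n}\twoheadrightarrow(i_n)_*\so_{\sd_n}$ with line bundle kernels $\ci(\sd_n)$ assemble under the above equivalence into a surjection $\so_{\se_A}\twoheadrightarrow\sf$ with kernel $\ci$ in $\QCoh(\se_A)^\aperf$.  By \cite[Theorem 3.1.2.1]{lu-SAG}, this surjection determines a closed immersion $i\co\sd\hookrightarrow\se_A$ with $i_*\so_\sd\simeq\sf$ and $\ci(\sd)\simeq\ci$, and base change compatibility recovers each $\sd_n$ from $\sd$.

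It then remains to verify the five defining properties of Definition \ref{def:cdiv} for $\sd\to\se_A$.  Properness and local almost of finite presentation of $\sd\to\Spet A$ descend from the same properties on each $\sd_n\to\Spet A_n$ via formal GAGA for spectral algebraic spaces \cite[Chapter 8]{lu-SAG}.  Flatness of $\so_\sd$ over $\Spet A$ follows from flatness of each $\sd_n/A_n$ together with the local criterion for flatness over the Noetherian base $\pi_0 A$.  The line bundle property for $\ci(\sd)$ follows because invertible objects are preserved and reflected across the symmetric monoidal equivalence above, and equivalently from the nilcompleteness of $\spic_{\se_A/A}$ already used in Lemma \ref{lem:3a}.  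The reverse comparison, sending a relative effective Cartier divisor on $\se_A$ to its restrictions along the tower $\{\se_n\}$, is immediate from Lemma \ref{lem:bc}, and the two constructions are mutually inverse by the Grothendieck existence equivalence.

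The main obstacle is transporting all five defining properties coherently across the formal GAGA equivalence: each is preserved and reflected by restriction to the tower, but assembling them requires combining Grothendieck existence for almost perfect sheaves, a truncation-wise application of the local flatness criterion over $\pi_0 A$, and the integrability of the Picard functor implicit in the representability result quoted before Definition \ref{def:cdiv}.  None of these steps introduces ingredients beyond \cite[Chapter 8]{lu-SAG} and facts already established above.
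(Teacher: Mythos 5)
Your proof takes essentially the same route as the paper's: both hinge on spectral formal GAGA (Grothendieck existence for almost perfect sheaves) together with a separate argument for the line-bundle condition via the equivalence of invertible objects. The reformulation of integrability as a tower-wise statement (comparing $\CDiv(\se_A/A)$ with $\varprojlim_n\CDiv(\se_n/A_n)$) is a legitimate unwinding of the formal-spectrum statement $\Map(\Spec R',X)\simeq\Map(\Spf R',X)$ for a sheafy and nilcomplete functor, and your treatment of the ideal sheaf via the equivalence on almost perfect/invertible objects (equivalently, integrability of $\spic_{\se_A/A}$) matches the paper's citation of \cite[Propositions 19.2.4.7 and 2.9.4.2]{lu-SAG}.

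Where the two diverge, and where your write-up is weaker, is the verification that the assembled $\sd\to\Spet A$ inherits flatness, properness, and local almost-finite-presentation. The paper handles these three conditions in one stroke by invoking \cite[Corollary 8.5.3.4]{lu-SAG}, which is formal GAGA for the $\Hilb$-type functor of flat, proper, lafp closed substacks; this packages all the finiteness and flatness data and is exactly calibrated for the statement needed. You instead reassemble these conditions by hand, appealing to ``the local criterion for flatness over the Noetherian base $\pi_0 A$, truncation-wise.'' This is the one step a careful reader should push back on: in the derived setting flatness is more than flatness of $\pi_0$, and passing from flatness on each thickening $A_n$ to flatness over $A$ (of a sheaf produced by Grothendieck existence, not as an honest base change) requires exactly the content of \cite[Corollary 8.5.3.4]{lu-SAG}; the classical local criterion does not do the work as stated. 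Likewise, citing \cite[Theorem 3.1.2.1]{lu-SAG} to upgrade the assembled surjection to a closed immersion requires the \'etale-local version (that theorem is phrased for affines), though this is routine. If you replace the flatness-by-hand step with a direct invocation of \cite[Corollary 8.5.3.4]{lu-SAG}, and let the line-bundle argument ride on top exactly as you have it, your proof closes the gap and becomes essentially the paper's proof with a different packaging of $\Spf$.
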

\begin{proof}
    Given a local Noetherian $\einf$-ring $R'$ which is complete with respect to its maximal ideal $\mathfrak{m}\subset\pi_0R'$, we need to prove that the inclusion functor $\Spf R'\hookrightarrow\Spec R'$ induces a homotopy equivalence 
    \[
        \Map_{\Fun(\CAlg^\cn,\,\cs)}(\Spec R',\CDiv_{\se/R})\to\Map_{\Fun(\CAlg^\cn,\,\cs)}(\Spf R',\CDiv_{\se/R}) 
    \]
    This can be deduced from the following result: Given a flat, proper, and separated spectral algebraic space $\sx$ locally almost of finite presentation over a connective local Noetherian $\einf$-ring $S$ which is complete with respect to its maximal ideal, we have an equivalence 
    \[
        \CDiv(\sx/S)\simeq\CDiv(\sx\times_{\Spet S}\Spf S) 
    \]
    Indeed, let $\Hilb(\sx/S)$ denote the full subcategory of $\SpDM_{/\sx}$ consisting of those $\sd\to\sx$, such that each $\sd\to\sx$ is a closed immersion and is flat, proper, and locally almost of finite presentation.  Then by the formal GAGA theorem \cite[Corollary 8.5.3.4]{lu-SAG} and the base-change properties of being flat, proper, and locally almost of finite presentation, we have $\Hilb(\sx/S)\simeq\Hilb(\sx\times_{\Spet S}\Spf S)$.  
    
    To prove the above equivalence for relative effective Cartier divisors, we need to further check that $\sd\to\sx$ associates a line bundle over $\sx$ if and only if $\sd\times_{\Spet S}\Spf S$ associates a line bundle over $\sx\times_{\Spet S}\Spf S$.  Note that the morphism $f\co\sx\times_{\Spet S}\Spf S\to\sx$ is flat by \cite[Corollary 7.3.6.9]{lu-SAG}, and so we have 
    \[
        \ci(\sd\times_{\Spet S}\Spf S)=\ci(f^*\sd)\simeq f^*\ci(\sd) 
    \]
    over the pullback square 
    \[
        \xymatrix{
		\sd\times_{\Spet S}\Spf S \ar[r] \ar[d] & \sd \ar[d] \\
	    \sx\times_{\Spet S}\Spf S \ar[r]^-f     & \sx
	}
    \]
    By \cite[proof of Proposition 19.2.4.7]{lu-SAG}, we have an equivalence 
    \[
        \QCoh(\sx/S)^{\aperf,\cn}\simeq\QCoh(\sx\times_{\Spet S}\Spf S)^{\aperf,\cn} 
    \]
    We need only restrict to the subcategories spanned by invertible objects via \cite[Proposition 2.9.4.2]{lu-SAG} to complete the proof.  
\end{proof}

\begin{lemma}
\label{lem:5}
    The functor $\CDiv_{\se/R}$ is locally almost of finite presentation over $\Spec R$.  
\end{lemma}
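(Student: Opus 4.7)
The goal is to show that $\CDiv_{\se/R}$ preserves filtered colimits of uniformly truncated connective $R$-algebras. So fix $n \geq 0$ and a filtered diagram $\{A_\alpha\}$ of $n$-truncated objects in $\CAlg^\cn_R$ with colimit $A$; I must show that the canonical map
\[
    \varinjlim_\alpha \CDiv(\se_{A_\alpha}/A_\alpha) \to \CDiv(\se_A/A)
\]
is a homotopy equivalence.

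I would decompose the data of a relative effective Cartier divisor into two parts: (i) a closed immersion $\sd \to \se_A$ which is flat, proper, and locally almost of finite presentation over $A$, and (ii) the condition that the ideal sheaf $\ci(\sd) \in \QCoh(\se_A)$ is invertible. For (i), I would invoke the spectral Hilbert functor: since $\se$ is locally almost of finite presentation over $R$, the subfunctor of $\SpDM_{/\se_{-}}$ spanned by objects whose structure map is a closed immersion that is flat, proper, and locally almost of finite presentation is itself locally almost of finite presentation. This packages \cite[Theorem 8.3.3]{lurie2004derived} together with the individual almost-compactness properties of these conditions collected in \cite[Chapters 5 and 19]{lu-SAG}. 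For (ii), I would use that the Picard functor $\spic_{\se/R}$ is locally almost of finite presentation, which is part of its representability statement \cite[Theorem 19.2.0.5]{lu-SAG}, whose hypotheses on $\se/R$ are precisely those assumed in the current theorem.

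Combining these, essential surjectivity of the displayed comparison follows: a relative effective Cartier divisor over $A$ descends, first as a Hilbert datum and then with its line-bundle ideal sheaf, to some sufficiently large $A_\alpha$. Full faithfulness is automatic, since by Remark \ref{rmk:Kan} the relevant mapping spaces in $\CDiv$ are either empty or contractible and are detected already at the Hilbert level. The main obstacle is the joint descent in (ii): after descending $\sd$ to some $\sd_\alpha \to \se_{A_\alpha}$, one must verify that $\ci(\sd_\alpha)$ is already a line bundle rather than merely becoming one after base change to $A$. This is handled by enlarging $\alpha$ if necessary, using that invertibility of an almost perfect connective quasi-coherent sheaf is a Zariski-open condition (cf.~\cite[Proposition 2.9.4.2]{lu-SAG}), cut out by a loc.~a.f.p.~subfunctor and therefore stable under the filtered colimits in question.
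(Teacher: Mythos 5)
Your proof is correct and follows essentially the same strategy as the paper: it decomposes the descent problem into (a) descent of the underlying flat/proper/locally-almost-of-finite-presentation closed substack, handled via the Hilbert-type representability and \cite[Proposition 19.4.2.1]{lu-SAG}, and (b) descent of the invertibility of the ideal sheaf, handled via the locally-almost-of-finite-presentation property of the Picard functor. The only cosmetic difference is bookkeeping: the paper introduces an intermediate functor $\Var^+$ that omits the closed-immersion condition and then verifies closed immersion and line bundle jointly in the final step, whereas you bundle the closed-immersion condition into the Hilbert datum at the outset; both routes rest on the same three ingredients from \cite{lu-SAG}.
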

\begin{proof}
    By \cite[Definition 17.4.1.1(b)]{lu-SAG}, we need to prove that 
    \[
        \CDiv_{\se/R}\co\CAlg^\cn_R\to\cs,\quad R'\mapsto\CDiv(\se_{R'}/R') 
    \]
    commutes with filtered colimits when restricted to each $\tau_{\leq n}\CAlg^{\cn}_R$.  We note that $\CDiv(\se_{R'}/R')$ is a full subcategory of $\SpDM_{/(\se_{R'}\to\Spet R')}$ and first consider instead the functor 
    \[
        \Var^+\co\CAlg^\cn_R\to\widehat{\mathcal{C}\rm at}_\infty,\quad R'\mapsto\Var^+_{/(\se_{R'}\to\Spet R')} 
    \]
    where $\Var^+_{/(\se_{R'}\to\Spet R')}$ consists of diagrams 
    \[
        \xymatrix{ 
		\sd \ar[r] \ar[rd] & \se_{R'} \ar[d] \\
                               & \Spet R' 
	}
    \]
    such that $\sd\to\Spet R'$ is flat, proper, and locally almost of finite presentation.  Then by \cite[Proposition 19.4.2.1]{lu-SAG}, this functor commutes with filtered colimits when restricted to $\tau_{\leq n}\CAlg^{\cn}_R$.  It remains to verify that when $\{\sd_i\to\se_{i,R'}\}_{i\in I}$ are closed immersions and determine line bundles over $\{\se_{i,R'}\}$, $\varinjlim_{i\in I}\sd_i\to\varinjlim_{i\in I}\se_{i,R'}$ are closed immersions and determine line bundles over $\varinjlim_{i\in I}\se_{i,R'}$.  As we recalled earlier in this subsection, this follows from properties of closed immersions and the property of Picard functors that they are locally almost of finite presentation.  
\end{proof}

\begin{lemma}
\label{lem:4}
    The functor $\CDiv_{\se/R}$ admits a cotangent complex which is connective and almost perfect.  
\end{lemma}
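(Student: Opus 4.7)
The plan is to identify the cotangent complex of $\CDiv_{\se/R}$ pointwise via the deformation theory of Cartier divisors, and then to read off connectivity and almost perfectness from an explicit formula involving the pushforward of the normal bundle.  Fix a point $x\co\Spet R'\to\CDiv_{\se/R}$ corresponding to a relative effective Cartier divisor $i\co\sd\hookrightarrow\se_{R'}$, and let $p\co\sd\to\Spet R'$ denote the projection.  For a connective $R'$-module $M$, the fiber of $\CDiv_{\se/R}(R'\oplus M)\to\CDiv_{\se/R}(R')$ over $x$ parametrises lifts of $i$ to closed immersions $\sd'\hookrightarrow\se_{R'\oplus M}$; the flatness, properness, and line-bundle conditions on the ideal sheaf are automatically preserved at first order, the last because invertibility is detected on the heart.

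Regarding $\se_{R'\oplus M}$ as the square-zero extension of $\se_{R'}$ by $p_{\se_{R'}}^*M$ and applying \cite[Proposition 19.4.3.1]{lu-SAG}, this fiber is naturally equivalent to $\Map_{\QCoh(\sd)}(L_{\sd/\se_{R'}},\Sigma p^*M)$.  Since $i$ has a line-bundle ideal sheaf and $\sd$ is flat over $R'$, the embedding is quasi-smooth of virtual codimension one, and the spectral conormal identification gives $L_{\sd/\se_{R'}}\simeq\ci(\sd)|_\sd[1]$.  Combining with the projection formula for the proper morphism $p$ and the perfect sheaf $\ci(\sd)|_\sd$, the tangent fiber becomes
\[
    \Map_\sd(L_{\sd/\se_{R'}},\Sigma p^*M)\simeq\Gamma\bigl(\sd,\ci(\sd)^{-1}|_\sd\otimes p^*M\bigr)\simeq P\otimes_{R'}M,
\]
where $P:=p_*\bigl(\ci(\sd)^{-1}|_\sd\bigr)$.

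By the proper pushforward theorem for almost perfect sheaves in \cite{lu-SAG}, $P$ is almost perfect over $R'$, since $p$ is proper, flat, and locally almost of finite presentation and $\ci(\sd)^{-1}|_\sd$ is perfect; moreover $P$ is coconnective, i.e., $\pi_iP=0$ for $i>0$, because higher direct images of a connective sheaf sit in non-positive homotopical degrees under Lurie's conventions.  Duality on the coconnective almost perfect part of $\Mod_{R'}$ then produces a connective almost perfect $L_x:=P^\vee$ satisfying $\Map_{R'}(L_x,M)\simeq P\otimes_{R'}M$ for $M$ connective, which matches the tangent fiber above and identifies $L_x$ as the cotangent complex of $\CDiv_{\se/R}$ at $x$.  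These pointwise cotangent complexes then assemble into a global one by functoriality, together with the already-verified sheaf condition and infinitesimal cohesiveness of Lemmas \ref{lem:2} and \ref{lem:3b}.

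The main technical obstacle is two-fold.  One must justify carefully the derived Cartier identification $L_{\sd/\se_{R'}}\simeq\ci(\sd)|_\sd[1]$, using that the line-bundle ideal sheaf together with flatness of $\sd$ over $R'$ forces the embedding to be quasi-smooth of virtual dimension $-1$.  One must also track homotopical degrees through the duality step, verifying that passing from the coconnective almost perfect $P$ to $P^\vee$ lands in the connective almost perfect range and preserves the tensor identification; this is delicate bookkeeping balancing the coconnectivity coming from cohomology against the almost perfectness coming from properness.
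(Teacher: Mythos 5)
Your approach is genuinely different from the paper's, and while its skeleton is sound, it contains gaps that the paper's argument deliberately avoids.

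The paper keeps the relative cotangent complex $L_{\sd/(\se\times_RS)}$ abstract: it uses the first-order deformation formula (\cite[Proposition 19.4.3.1]{lu-SAG}) together with the adjunction $q_+ \dashv q^*$ for $q = p\circ\eta$ to obtain $\eta^*L_{\CDiv_{\se/R}} \simeq \Sigma^{-1}p_+(\eta_+ L_{\sd/(\se\times_RS)})$. Connectivity and almost perfectness of $L_{\sd/(\se\times_RS)}$ give $(-1)$-connectivity; almost perfectness of $L_{\CDiv_{\se/R}}$ follows from Lemma \ref{lem:5} and \cite[17.4.2.2]{lu-SAG}; and the vanishing of $\pi_{-1}$ is then proved \emph{separately}, by a Nakayama reduction to residue fields and the observation that a relative Cartier divisor over $\kappa[\epsilon]/(\epsilon^2)$ has no nontrivial automorphisms restricting to the identity over $\kappa$, mirroring \cite[proof of Proposition 2.2.6]{lu-EC1}. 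You instead try to make the cotangent complex fully explicit as $P^\vee$ with $P = p_*(\ci(\sd)^{-1}|_\sd)$, reading off connectivity from coconnectivity of $P$.

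The problems with your route. (i) The crucial step is the conormal identification $L_{\sd/\se_{R'}} \simeq \ci(\sd)|_\sd[1]$, which you invoke via ``quasi-smooth of virtual codimension one.'' In the $\einf$-setting this is not automatic from ``ideal sheaf is a line bundle plus flatness over the base'': the cotangent complex depends on the $\einf$-algebra structure of $\so_\sd$, not merely on its underlying $\so_\se$-module, so one must actually show the quotient is a Koszul quotient (or otherwise control higher homotopy of $L_{\sd/\se_{R'}}$). You flag this as a difficulty but never resolve it, whereas the paper's proof never needs it. (ii) Your claim that the flatness, properness, and line-bundle conditions on the lift $\sd'$ are ``automatically preserved at first order, the last because invertibility is detected on the heart'' glosses over what the paper establishes carefully in Lemma \ref{lem:def}, using \cite[Theorem 16.2.0.1, Proposition 16.3.2.1]{lu-SAG} and a local symmetric-monoidal gluing argument. (iii) Your duality step requires $P$ to be dualizable, i.e.\ \emph{perfect}, but you only establish almost perfectness; ``duality on the coconnective almost perfect part'' is not a well-defined operation. (One could plausibly upgrade to perfectness using flatness and properness of $p$, but this is another step left unaddressed.) In sum the overall shape of the computation is compatible with the paper's — if the conormal formula and dualizability held, the two formulas $\Sigma^{-1}q_+L_{\sd/\se_{R'}}$ and $P^\vee$ would agree — but as written, the connectivity argument rests on unproven identifications, and the paper's Nakayama-plus-automorphism route is the more robust one.
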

\begin{proof}
    Let $S$ be a connective $R$-algebra, $\eta\in\CDiv_{\se/R}(S)$, and $M$ be a connective $S$-module.  We then have a pullback diagram 
    \[
        \xymatrix{
			F_\eta(M) \ar[r] \ar[d] & \CDiv_{\se/R}(S\oplus M) \ar[d] \\
			\{\eta\} \ar[r]         & \CDiv_{\se/R}(S) 
	}
    \]
    From this we obtain a functor 
    \[
        F_\eta\co\Mod_S\to\cs, \quad M\mapsto F_\eta(M) 
    \]

    We first need to prove that the above functor is corepresentable.  Here, $\eta$ is a morphism $\sd\to\se\times_RS$, and $\se\times_R(S\oplus M)$ is a square-zero extension of $\se\times_RS$.  Thus by the classification of first-order deformations \cite[Proposition 19.4.3.1]{lu-SAG}, the space of spectral algebraic spaces $\sd'$ which fit into the pullback diagram 
    \[
        \xymatrix{
		\sd \ar[r] \ar[d]^{\eta}          & \sd' \ar[d] \\
		\se\times_RS \ar[r] \ar[d]^{p}    & \se\times_R(S\oplus M) \ar[d] \\
		\Spet S \ar[r]                    & \Spet(S\oplus M) 
	}
    \]
    is equivalent to $\Map_{\QCoh(\sd)}\big(L_{\sd/(\se\times_RS)},\Sigma \eta^*(p^*M)\big)$.  Pushing forward along $p\circ\eta$, by \cite[Proposition 6.4.5.3]{lu-SAG}, we then have 
    \[
        \Map_{\QCoh(\sd)}\big(L_{\sd/(\se\times_RS)},\Sigma \eta^*(p^*M)\big)\simeq\Map_{\QCoh(\Spet S)}\big(\Sigma^{-1}p_+(\eta_+L_{\sd/(\se\times_RS)}),M\big) 
    \]
    By Lemma \ref{lem:def}, any such $\sd'\to\se\times_R(S\oplus M)$ is a closed immersion and determines a line bundle over $\se\times_R(S\oplus M)$.  Since the diagram 
    \[
        \xymatrix{
            \sd \ar[r] \ar[d] & \sd' \ar[d] \\
            \Spet S \ar[r]    & \Spet(S\oplus M) 
	}
    \]
    is a pullback square, $\sd'$ is a square-zero extension of $\sd$.  By \cite[Proposition 16.3.2.1]{lu-SAG}, $\sd'\to\Spet(S\oplus M)$ is flat, proper, and locally almost of finite presentation.  Combining these facts, we find that 
    \[
        F_\eta(M)=\Map_{\QCoh(\Spet S)}\big(\Sigma^{-1}p_+(\eta_+L_{\sd/(\se\times_RS)}),M\big) 
    \]
    Consequently, the functor $\CDiv_{\se/R}$ satisfies condition (a) from \cite[Example 17.2.4.4]{lu-SAG}.  Condition (b) therein follows from the compatibility of $(p\circ\eta)_+$, as a left adjoint of the functor $(p\circ\eta)^*$, with base change (cf.~\cite[Construction 6.4.5.1 and Proposition 6.4.5.3]{lu-SAG}).  Therefore the functor $\CDiv_{\se/R}$ admits a cotangent complex $L_{\CDiv_{\se/R}}$ satisfying 
    \[
        \eta^*L_{\CDiv_{\se/R}}=\Sigma^{-1}p_+(\eta_+L_{\sd/(\se\times_RS)}) 
    \]
    Since the quasi-coherent sheaf $L_{\sd/(\se\times_RS)}$ is connective and almost perfect \cite[Proposition 17.1.5.1(3)]{lu-SAG}, the $S$-module $\Sigma^{-1}p_+(\eta_+L_{\sd/(\se\times_RS)})$ is $(-1)$-connective.  

    Next, we show that $L_{\CDiv_{\se/R}}$ is almost perfect.  This follows from \cite[17.4.2.2]{lu-SAG} and Lemma \ref{lem:5}.  
    
    Finally, we show that it is connective.  As above, let $S$ be a connective $R$-algebra and $\eta\in\CDiv_{\se/R}(S)$.  We need to prove that $M_\eta\coloneqq\eta^*L_{\CDiv_{\se/R}}\in\Mod_S$ is connective.  We already knew that $M_\eta$ is $(-1)$-connective and almost perfect.  In particular, the homotopy group $\pi_{-1}M_\eta$	is a finitely generated $\pi_0S$-module.  To prove that it in fact vanishes, by Nakayama's lemma, we note that this is equivalent to proving that 
    \[
        \pi_{-1}(\kappa\otimes_{\pi_0S}M_\eta)\simeq\Tor^{\pi_0S}_0(\kappa,\pi_{-1}M_\eta) 
    \]
    equals 0 for every residue field $\kappa$ of $\pi_0S$.  Thus we may replace $S$ by $\kappa$ and assume $\kappa$ is an algebraically closed field.  

    Let $A=\kappa[\epsilon]/(\epsilon^2)$.  Unwinding the definitions, we find that the dual space $\Hom_\kappa(\pi_{-1}M_\eta,\kappa)$ can be identified with the set of automorphisms of the base change $\eta_A$ such that they restrict to be the identity of $\eta$.  It remains to prove that this set is trivial.  This boils down to the following assertion in classical algebraic geometry: 
    \begin{quote}
        Let $X/\kappa$ be a scheme, $L$ be a line bundle over $X$, and assume $L_A$ is also a line bundle over $X_A$.  If $f$ is an automorphism of $L_A$ such that $f|L$ is the identity on $L$, then $f$ is the identity.  
    \end{quote}
    This can be proved, mutatis mutandis, as in the last part of \cite[proof of Proposition 2.2.6]{lu-EC1}.  
\end{proof}

\section{Level structures for spectral abelian varieties}
\label{sec:level}

For spectral Deligne--Mumford stacks, Theorem \ref{thm:cdiv} gives the relative representability (with respect to a fixed $\se/R$) of relative effective Cartier divisors (over $\Spet R$).  Their analogues in classical algebraic geometry are crucial to Drinfeld's approach to arithmetic moduli of elliptic curves with level structure over $\bz$, as developed in \cite{katz1985arithmetic}, which applies nicely at primes dividing the level.  In this section, we define level structures on spectral abelian varieties and related objects from effective Cartier divisors.  The applications we aim at are of a similar nature to those considered by the earlier authors, i.e., incorporating ramification or regardless of failure of \'etaleness, which we will discuss in the next two sections.  

\subsection{Level structures on elliptic curves}
\label{subsec:elevel}

Let $C$ be a one-dimensional smooth commutative group scheme over a base scheme $S$, and $A$ be an abstract finite abelian group.  Recall from \cite[1.5.1]{katz1985arithmetic} that a homomorphism 
\[
    \phi\co A\to C(S) 
\]
of abstract groups is said to be an {\em $A$-structure on $C/S$}, if the effective Cartier divisor $\sum_{a\in A}[\phi(a)]$ is a subgroup scheme of $C/S$.  

The following result gives the relative representability of moduli problems of level structures.  

\begin{proposition}[{\cite[Proposition 1.6.2]{katz1985arithmetic}}]
    Let $C$ be a one-dimensional smooth commutative group scheme over $S$.  Then the functor 
    \[
        \Level^A_{C/S}\co\Sch_S\to\Set,\quad T\mapsto\text{the set of level-$A$ structures on $C_T/T$} 
    \]
    is represented by a closed subscheme of $\Hom_{{\rm Grp}/S}(A,C)$.  
\end{proposition}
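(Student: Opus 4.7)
The plan is to construct the representing object in two stages: first produce $\Hom_{{\rm Grp}/S}(A,C)$ as a closed subscheme of an explicit ambient scheme, then cut out the locus where the associated effective divisor is a subgroup scheme as a further closed condition.

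For the first stage, since $A$ is a finite abelian group, write $A\cong\prod_{i}\bz/n_i\bz$. A homomorphism $A\to C(T)$ is the datum of a tuple of $n_i$-torsion sections of $C_T\to T$, so there is an identification
\[
    \Hom_{{\rm Grp}/S}(A,C)\;\cong\;\prod_{i}\,\ker\bigl([n_i]\co C\to C\bigr),
\]
the product being taken over $S$. Since $C$ is a commutative group scheme, each $[n_i]$ is an $S$-morphism whose kernel is a closed subscheme of $C$, so $\Hom_{{\rm Grp}/S}(A,C)$ is representable and closed in $C\times_S\cdots\times_S C$. (Alternatively, one realizes $\Hom_{{\rm Grp}/S}(A,C)$ as the equalizer of the two natural maps encoding additivity, between products of copies of $C$ indexed by $A$ and $A\times A$; these equalizers are closed because $C\to S$ is separated.)

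For the second stage, let $H\coloneqq\Hom_{{\rm Grp}/S}(A,C)$, with universal homomorphism $\phi^\univ\co A\to C_H(H)$, and set
\[
    D^\univ\;=\;\sum_{a\in A}\bigl[\phi^\univ(a)\bigr]\;\subset\;C_H,
\]
a relative effective Cartier divisor, finite and flat over $H$ of rank $|A|$ (this uses that each $[\phi^\univ(a)]$ is a section and the classical sum of Cartier divisors). The subgroup-scheme condition on $D^\univ$ amounts to requiring that the addition and inversion morphisms of $C_H$ restrict to morphisms
\[
    m\co D^\univ\times_H D^\univ\longrightarrow D^\univ,\qquad \iota\co D^\univ\longrightarrow D^\univ,
\]
with the identity section of $C_H$ factoring through $D^\univ$. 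The identity-section condition is automatic because $\phi^\univ(0)=0$. The content is therefore that the composite $D^\univ\times_H D^\univ\hookrightarrow C_H\times_H C_H\xrightarrow{+}C_H$ factors through the closed subscheme $D^\univ\subset C_H$, and similarly for $\iota$. Since $D^\univ\times_H D^\univ$ is finite (hence proper) over $H$ and $D^\univ\hookrightarrow C_H$ is a closed immersion, the locus in $H$ over which this factorization holds is cut out by a coherent ideal: one pulls back the ideal sheaf of $D^\univ$ along $m$, applies $(D^\univ\times_H D^\univ\to H)_*$, and imposes vanishing. The same recipe handles $\iota$. This produces a closed subscheme $Z\subset H$ with the desired universal property.

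The main obstacle is verifying that the factorization conditions really are representable by a closed subscheme, rather than some more flexible locally closed or constructible locus. The key input is that $D^\univ\to H$ is finite locally free, which makes pushforwards of the relevant ideal sheaves coherent and makes ``scheme-theoretic image is contained in $D^\univ$'' a well-behaved closed condition; once this is in hand, the two factorization conditions combine into a single coherent ideal on $H$ whose vanishing locus represents $\Level^A_{C/S}$. After this, checking the universal property against a test $T\to H$ is a formal consequence of the definition of an $A$-structure, since $\phi^\univ$ pulled back to $T$ recovers the given homomorphism and the divisor $D^\univ_T$ is a subgroup scheme of $C_T$ precisely when $T\to H$ factors through $Z$.
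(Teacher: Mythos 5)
The paper does not supply a proof of this proposition: it is quoted as a known result with a citation to Katz--Mazur, Proposition~1.6.2. So there is no in-paper proof for your attempt to be compared against; the right benchmark is the original argument in Katz--Mazur.

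Your proof follows the Katz--Mazur strategy in outline and is essentially correct. The two stages are the same as theirs: (i) realize $\Hom_{{\rm Grp}/S}(A,C)$ as a closed subscheme of a finite product of copies of $C$ via torsion conditions, and (ii) cut out, inside this Hom scheme, the closed locus where the universal divisor $D^\univ = \sum_{a\in A}[\phi^\univ(a)]$ is a subgroup scheme. The genuine difference is in how you handle step (ii). Katz--Mazur do not re-derive the incidence condition directly; they simply invoke their Corollary~1.3.7 (quoted as the next proposition in this very paper), which says that for any relative effective Cartier divisor $D\subset C$ finite over $S$, the locus in $S$ over which $D$ becomes a subgroup scheme is closed, and whose proof runs through their Lemma~1.3.4 (a linear-algebra/Nakayama argument about containment of finite flat closed subschemes). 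You instead reprove the content of Corollary~1.3.7 from scratch by checking the factorization of the addition and inversion maps through $D^\univ$. That is a reasonable alternative, though it means you are implicitly reproving a result the paper treats as a black box.

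One imprecision worth correcting: you say the key is that finite local freeness ``makes pushforwards of the relevant ideal sheaves \emph{coherent} and makes the factorization a well-behaved closed condition.'' Coherence alone is not the point. For a coherent sheaf $\sf$ with a global section $s$, the locus $\{s=0\}$ need not be a closed subscheme commuting with base change. What you actually need is that the pushforward $p_*(m^*\ci(D^\univ))$ along the finite flat map $p\co D^\univ\times_H D^\univ\to H$ is \emph{finite locally free} (which it is, since $m^*\ci(D^\univ)$ is a line bundle on a scheme finite flat over $H$). Then dualizing the section $\so_H\to p_*(m^*\ci(D^\univ))$ produces an ideal whose vanishing locus has the correct universal property and commutes with base change. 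Fix the wording so the local-freeness, not mere coherence, is stated as the enabling hypothesis. With that adjustment, the argument is sound, and if you prefer to shorten the write-up you could replace the whole second stage by a citation to Corollary~1.3.7 applied to the universal example over $\Hom_{{\rm Grp}/S}(A,C)$, as Katz--Mazur do.
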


\begin{definition}
\label{def:elevel}
    Let $R$ be a connective $\einf$-ring and $\se/R$ be a spectral elliptic curve.  A {\em (derived) level-$A$ structure on $\se$} is a pair $(\sd,\phi)$, where $\sd\to\se$ is a relative effective Cartier divisor, and $\phi\co A\to \se^\heartsuit(\pi_0R)$ is an $A$-structure on $\se^\heartsuit/\pi_0R$ as above, such that the underlying morphism $\sd^\heartsuit\to\se^\heartsuit$, necessarily a closed immersion, equals the inclusion of the associated relative effective Cartier divisor $\sum_{a\in A}[\phi(a)]$ into $\se^\heartsuit$.  We denote by $\Level(A,\se/R)$ the $\infty$-category of level-$A$ structures on $\se/R$, whose objects can be viewed as relative effective Cartier divisors satisfying an extra property.  
\end{definition}

Given a spectral elliptic curve $\se/R$, the $\infty$-category $\Level(A,\se/R)$ is an $\infty$-groupoid, since it is a full subcategory of $\CDiv(\se/R)$, which is an $\infty$-groupoid (see Remark \ref{rmk:Kan}).  

We note that derived level structures are stable under base change, as follows.  

\begin{lemma}
    Let $\se/R$ be a spectral elliptic curve, $\ss=\Spet R$, and $(\sd,\phi)$ be a level structure.  Suppose that $\st\to\ss$ is a morphism of nonconnective spectral Deligne--Mumford stacks.  Then the induced pair $(\sd_\st,\phi_\st)$ is a level structure on $\se_\st/\st$.  
\end{lemma}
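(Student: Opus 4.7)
The plan is to verify separately the two ingredients of a level structure from Definition~\ref{def:elevel} after base change: first, that $\sd_\st \to \se_\st$ remains a relative effective Cartier divisor; second, that $\phi_\st$ defines a classical $A$-structure on $\se_\st^\heartsuit/\pi_0\st$; and third, that these two data match on the underlying classical stack.

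The first point is essentially immediate from Lemma~\ref{lem:bc}: the properties of being a closed immersion, of being flat, proper, and locally almost of finite presentation, and of the ideal sheaf being a line bundle are each preserved under arbitrary base change, so $\sd_\st \to \se_\st$ is a relative effective Cartier divisor with no further work. For the second point, one defines $\phi_\st$ as the composition of $\phi\co A \to \se^\heartsuit(\pi_0 R)$ with the natural map induced by $\st \to \ss$ on underlying classical stacks, and then appeals to the classical fact (see \cite[1.5.3]{katz1985arithmetic}) that $A$-structures on one-dimensional smooth commutative group schemes are stable under base change.

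The main work is the third point, verifying that the underlying map $\sd_\st^\heartsuit \to \se_\st^\heartsuit$ coincides with the inclusion of $\sum_{a \in A}[\phi_\st(a)]$. For this, I would use that taking $\pi_0$ of a derived tensor product of connective $\einf$-rings recovers the ordinary tensor product of $\pi_0$'s, so that for any (connective) base change the formation of the underlying classical stack commutes with fiber products, i.e.\ $(\sd \times_\ss \st)^\heartsuit \simeq \sd^\heartsuit \times_{\ss^\heartsuit} \st^\heartsuit$ and similarly for $\se$. Applying this to the hypothesis $\sd^\heartsuit = \sum_{a}[\phi(a)] \hookrightarrow \se^\heartsuit$ and using that classical formation of divisors $\sum_a [\phi(a)]$ commutes with base change (a diagram of effective Cartier divisors pulled back stays an effective Cartier divisor), we conclude the desired matching.

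The potential obstacle is the case when $\st$ is nonconnective, where one must be careful that the formula $\pi_0(-\otimes_R-)\simeq \pi_0(-)\otimes_{\pi_0 R}\pi_0(-)$ still applies on underlying classical data; but since the heart functor $\sx \mapsto \sx^\heartsuit$ only sees the $\tau_{\leq 0}$-part of the structure sheaves and the classical pullback is computed via $\pi_0$, this presents no genuine difficulty. Everything after the first step is essentially bookkeeping, and the lemma reduces to a combination of Lemma~\ref{lem:bc} and the well-known base-change stability of classical level structures.
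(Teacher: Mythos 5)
Your proposal is correct and takes essentially the same route as the paper: Lemma~\ref{lem:bc} for the relative effective Cartier divisor under base change, and the base-change stability of classical $A$-structures from \cite{katz1985arithmetic} for the heart-level compatibility. You spell out the third step (matching $\sd_\st^\heartsuit$ with $\sum_a[\phi_\st(a)]$ via compatibility of $(-)^\heartsuit$ with fiber products) more explicitly than the paper does, but this is exactly what the paper's terse appeal to \cite[Section 1.5.1]{katz1985arithmetic} is shorthand for.
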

\begin{proof}
    The induced closed immersion $\sd_\st\to\se_\st$ is a relative effective Cartier divisor by Lemma \ref{lem:bc}.  It remains to check that $\phi_\st\co A\to\se^\heartsuit(\st^\heartsuit)=\se_\st^\heartsuit(\st^\heartsuit)$ is a classical level structure, so that $\sd_\st^\heartsuit$ is the associated classical relative effective Cartier divisor.  This follows from the base-change property of classical level structures observed in \cite[Section 1.5.1]{katz1985arithmetic}.  
\end{proof}

We next recall a result on when a divisor becomes a (finite flat) subgroup.  

\begin{proposition}[{\cite[Corollary 1.3.7]{katz1985arithmetic}}]
    Given a smooth curve $C/S$ which is a group scheme over a scheme $S$ along with a relative effective Cartier divisor $D$ of $C$, there exists a closed subscheme $Z$ of $S$ with the property that, for any $T\to S$, $D_T$ is a subgroup of $C_T$ if and only if $T\to S$ factors through $Z$.  
\end{proposition}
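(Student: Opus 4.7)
The plan is to identify the subgroup condition with a conjunction of three closed conditions on the base: containment of the identity section, stability under inversion, and stability under multiplication. Since $D$ is a relative effective Cartier divisor in the smooth curve $C/S$, it is automatically finite locally free over $S$, so closure under the group operations of $C$ is enough to make $D_T$ a subgroup scheme of $C_T$ for any $T \to S$.

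The key tool is the following lemma: given any morphism $f \co X \to C$ with $\pi \co X \to S$ finite locally free, the locus $Z_f \subset S$ on which $f$ factors through $D$ is representable by a closed subscheme, and its formation commutes with arbitrary base change $T \to S$. Indeed, the inclusion $\ci_D \hookrightarrow \so_C$ corresponds to a canonical section $s$ of the line bundle $\so_C(D) = \ci_D^{-1}$, and $f$ factors through $D$ if and only if $f^*s$ vanishes on $X$. Because $\pi$ is finite locally free, $\pi_* f^* \so_C(D)$ is a locally free $\so_S$-module of finite rank, $\pi_*(f^*s)$ is a global section of it, and the scheme-theoretic vanishing of this section---locally, the ideal generated by the coordinates of $\pi_*(f^*s)$ in any trivialization---cuts out the desired closed $Z_f$, with base change being manifest from flat base change for $\pi_*$.

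Apply the lemma three times. With $X = S$ and $f = e$, one obtains a closed $Z_1 \subset S$ over which the identity factors through $D$. With $X = D$ and $f$ the composite $D \hookrightarrow C \xrightarrow{[-1]} C$, one obtains a closed $Z_2$ over which inversion preserves $D$. With $X = D \times_S D$ (also finite locally free over $S$) and $f$ the composite $D \times_S D \hookrightarrow C \times_S C \xrightarrow{\mu} C$, one obtains a closed $Z_3$ over which multiplication preserves $D$. Set $Z := Z_1 \cap Z_2 \cap Z_3$, the scheme-theoretic intersection. Over any $T \to S$ factoring through $Z$, the finite locally free closed subscheme $D_T \subset C_T$ contains the identity section and is stable under the inversion and multiplication of $C_T$, hence is a subgroup scheme; conversely, if $D_T$ is a subgroup scheme of $C_T$, then all three factorizations hold, so by the base-change property of each $Z_i$, the map $T \to S$ factors through $Z$.

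The main obstacle is ensuring that the condition in the lemma is genuinely representable, not merely set-theoretically closed: the three factor loci must be compatible with base change so that their intersection represents the functor $T \mapsto \{T \to S : D_T \text{ is a subgroup of } C_T\}$. This compatibility rests on flat base change for $\pi_*$ together with the invertibility of $\ci_D$, the latter reducing the vanishing of the sheaf map $f^*\ci_D \to \so_X$ to the vanishing of a single global section---without that, one would instead need to control the vanishing of a coherent-sheaf morphism of higher rank and verify that the associated closed subscheme still behaves well under pullback.
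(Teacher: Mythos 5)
Your proof is correct. Note, though, that the paper does not itself prove this proposition: it is quoted directly from Katz--Mazur, \cite[Corollary 1.3.7]{katz1985arithmetic}, so there is no in-paper argument to compare against word for word. What the paper \emph{does} prove along these lines is the analogous lemma for finite flat group schemes a few lines later, and there it takes a slightly different route: following the Katz--Mazur strategy, it reduces to the statement that ``$(K_1)_T$ is a closed subscheme of $(K_2)_T$'' defines a closed locus in the base, and verifies this by a direct coordinate computation (writing the image of $1$ in $B/I_2 \otimes_A B/I_1$ in an $A$-basis and taking the vanishing of the coefficients). Your key lemma---that for $f\colon X \to C$ with $X$ finite locally free over $S$, the locus where $f$ factors through $D$ is closed and commutes with base change, cut out by the vanishing of the finite locally free section $\pi_*(f^*s)$ of $\pi_*f^*\cO_C(D)$---is the line-bundle version of that same containment lemma, and is arguably cleaner because invertibility of $\ci_D$ reduces everything to a single section. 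The decomposition of the subgroup condition into the three closed conditions (identity, inversion, multiplication) is the standard one and is also how Katz--Mazur argue. Two small terminological remarks: (i) you should invoke base change for \emph{affine} (or finite) morphisms rather than ``flat base change''---the base change $T \to S$ is arbitrary, and what you use is that $\pi_*$ commutes with base change because $\pi$ is affine and $f^*\cO_C(D)$ is $S$-flat; (ii) the fact that $\pi_*f^*\cO_C(D)$ is locally free of finite rank deserves a one-line justification (a rank-one projective over a finite free $A$-algebra $B$ is a finite projective $A$-module), but this is routine. No gaps.
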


Here we have an analogous incidence object for the relation ``$\sd^\heartsuit$ is a subgroup.''  

\begin{lemma}
\label{lem:incidence}
    Let $\se/\Spet R$ be a spectral elliptic curve and $\sd\to\se$ be a relative effective Cartier divisor.  Then there exists a closed spectral Deligne--Mumford substack $\Spet B\subset\Spet R$ satisfying the following universal property: 
    \begin{quote}
        Given any $R'\in\CAlg^\cn_R$, $\sd_{R'}^\heartsuit$ is a subgroup of $\se_{R'}^\heartsuit$ if and only if $R\to R'$ factors through $B$.
    \end{quote}
\end{lemma}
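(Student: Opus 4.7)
My plan is to reduce to the classical incidence result of Katz--Mazur cited in the preceding proposition (Corollary 1.3.7 of \cite{katz1985arithmetic}), extract the defining ideal, and realize the spectral lift via an explicit pushout of $\einf$-rings.  First, I would apply this cited classical result to the underlying ordinary relative effective Cartier divisor $\sd^\heartsuit\to\se^\heartsuit$ over $\Spec\pi_0 R$, yielding an ideal $I\subset\pi_0 R$ defining a closed subscheme $Z=\Spec(\pi_0 R/I)\hookrightarrow\Spec\pi_0 R$ with the classical universal property: for any discrete $\pi_0 R$-algebra $A$, $\sd^\heartsuit_A$ is a subgroup of $\se^\heartsuit_A$ if and only if the image of $I$ in $A$ vanishes.

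The bridge to the spectral setting is the observation that for any $R'\in\CAlg^\cn_R$, the underlying classical base change $\sd^\heartsuit_{R'}=\sd^\heartsuit\times_{\Spec\pi_0 R}\Spec\pi_0 R'$ depends only on $\pi_0 R'$.  Hence the subgroup condition on $R'$ is equivalent to $I$ mapping to zero in $\pi_0 R'$.  I would then construct $B$ explicitly: choose generators $\{f_\alpha\}$ of $I$, lift each to a morphism $\bs\{x_\alpha\}\to R$ of $\einf$-rings, and define
\[
    B:=R\otimes_{R\{x_\alpha\}_\alpha}R
\]
as the pushout in $\CAlg^\cn$, where $R\{x_\alpha\}_\alpha$ is the free $R$-$\einf$-algebra on the $x_\alpha$, one structure map sends $x_\alpha\mapsto f_\alpha$ and the other sends $x_\alpha\mapsto 0$.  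Since the functor $\pi_0\co\CAlg^\cn\to\CAlg^\heartsuit$ is a left adjoint and hence preserves pushouts, we obtain $\pi_0 B\cong\pi_0 R/I$, so the canonical map $\pi_0 R\to\pi_0 B$ is surjective and $\Spet B\hookrightarrow\Spet R$ is a closed immersion.

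Finally, I would verify the universal property.  Unwinding the pushout, $\Map_{\CAlg^\cn_R}(B,R')$ is the iterated pullback of path spaces in $\Omega^\infty R'$ between the image of each $f_\alpha$ and $0$, which is non-empty precisely when each $f_\alpha$ vanishes in $\pi_0 R'$; by the earlier reduction, this is equivalent to $\sd^\heartsuit_{R'}$ being a subgroup of $\se^\heartsuit_{R'}$.  The main subtlety is that this mapping space carries nontrivial higher homotopy coming from $\pi_\ast R'$, so $\Spet B\to\Spet R$ is not a monomorphism in the $\infty$-categorical sense; however, the lemma only demands the existence of a factorization, not its uniqueness, so this is not an obstruction.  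A minor concern is the cardinality of the generating set, handled by taking a sufficiently large pushout (with $I$ finitely generated when $\pi_0 R$ is Noetherian, which is the case in our intended applications).
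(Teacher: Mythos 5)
Your proof is correct and reaches the desired conclusion, but it lifts the classical incidence scheme to spectral algebraic geometry by a genuinely different device than the paper uses.  Both arguments begin identically: apply \cite[Corollary 1.3.7]{katz1985arithmetic} to $\sd^\heartsuit\to\se^\heartsuit$ over $\Spec\pi_0R$ to extract the ideal $I\subset\pi_0R$ cutting out $Z$.  At that point the paper asserts that $Z$ ``corresponds to'' a closed spectral subscheme by invoking the construction $\Gamma_I R$ from \cite[Chapter 7]{lu-SAG} (the $I$-nilpotent / local cohomology machinery), and recovers the factorization property from the observation that $R\to R'$ factoring through $B$ forces $\pi_0R\to\pi_0R'$ to kill $I$ up to radical.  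You instead choose generators $\{f_\alpha\}$ of $I$ and build $B$ by hand as the pushout $R\otimes_{R\{x_\alpha\}}R$, i.e.\ the derived Koszul quotient $R/\!\!/(f_\alpha)$.  Your construction has the virtue that $\pi_0 B\cong\pi_0R/I$ is immediate from $\pi_0$ preserving pushouts, and the universal property falls out directly from the standard description of $\Map_{\CAlg^\cn_R}(B,R')$ as a product of path spaces in $\Omega^\infty R'$ — so the ``iff'' is verified without any appeal to radicals or local cohomology.  The price is a non-canonical choice of generators, which is harmless for an existence statement.  Your remark that $\Spet B\to\Spet R$ fails to be a categorical monomorphism (cf.\ Warning 6.2.3 in \cite{lu-EC1}) is the right thing to flag, and you draw the right conclusion that only existence of a factorization is required.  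One small note: the worry about cardinality of the generating set is moot, since the pushout over the free $\einf$-$R$-algebra $R\{x_\alpha\}_{\alpha}$ exists in $\CAlg^\cn$ for an arbitrary index set, with no Noetherian hypothesis needed.
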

\begin{proof}
    By the preceding proposition, if $\sd_{R'}^\heartsuit/\pi_0R'$ is a subgroup of $\se_{R'}^\heartsuit/\pi_0R'$, the morphism $\Spec\pi_0R'\to\Spec\pi_0R$ must factor through a closed subscheme $Z=\Spec B_0$ of $\Spec\pi_0R$. This corresponds to a closed spectral subscheme $\Spet B$ of $\Spet R$.  In fact, since the map $R\to R'$ satisfies that $\pi_0R\to\pi_0R'$ factors through $\pi_0R/I$ for some ideal $I$ of $\pi_0R$, we obtain a factorization of $R\to R'$ through $\Gamma_IR$ (see \cite[Chapter 7, esp.~Definition 7.1.2.1]{lu-SAG} for details about $I$-nilpotent $R$-modules).  Conversely, suppose that $R\to R'$ factors through $B$.  Then $\so_{\Spet R'}$ vanishes on some $I\subset\pi_0R$.  In other words, we have that $\pi_0R\to\pi_0R'$ factors through $\pi_0R/\sqrt{I}$.  This is equivalent to $\Spec\pi_0R'\to\Spec\pi_0R$ factoring through $\Spec\pi_0R/I=\Spec B_0=Z$, and so $\sd_{R'}^\heartsuit$ is a subgroup of $\se_{R'}^\heartsuit$.  
\end{proof}

The following is our main result in this subsection on relative representability of level structures over the spectral moduli stack of spectral elliptic curves.  

\begin{theorem}
\label{thm:elevel}
    Let $\se/R$ be a spectral elliptic curve and $A$ be an abstract finite abelian group.  Then the functor 
    \[
        \Level^A_{\se/R}\co\CAlg_R^\cn\to\cs,\quad R'\mapsto\Level(A,\se_{R'}/R') 
    \]
    is represented by a closed substack $\ss(A)$ of $\CDiv_{\se/R}$.  Moreover, $\ss(A)=\Spet\cp_{\se/R}$ for some $\einf$-ring $\cp_{\se/R}$, which is locally almost of finite presentation over $R$.  
\end{theorem}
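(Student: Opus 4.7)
The plan is to realize $\Level^A_{\se/R}$ as a closed substack of the spectral algebraic space $\CDiv_{\se/R}$ from Theorem \ref{thm:cdiv}, and then deduce affineness and the local-almost-of-finite-presentation property by reduction to the classical Katz--Mazur representability of $A$-structures.

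For each $R' \in \CAlg_R^\cn$, unwinding Definition \ref{def:elevel} gives a pullback description
\[
    \Level(A,\se_{R'}/R') \simeq \CDiv(\se_{R'}/R') \times_{\CDiv^\cl(\se_{R'}^\heartsuit/\pi_0R')} \Level^\cl(A,\se_{R'}^\heartsuit/\pi_0R'),
\]
where the right vertical arrow is the underlying-classical-divisor map $\sd \mapsto \sd^\heartsuit$, and the bottom arrow sends $\phi$ to $\sum_{a\in A}[\phi(a)]$. By \cite[Proposition 1.6.2]{katz1985arithmetic} the bottom arrow factors as the closed immersion $\Level^\cl \hookrightarrow \Hom_{{\rm Grp}/\pi_0R}(A,\se^\heartsuit)$ followed by the divisor map to $\CDiv^\cl$. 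The Hom scheme here is affine since it embeds as a closed subscheme of $(\se^\heartsuit[N])^A$ with $N = |A|$, a finite flat $\pi_0R$-scheme by the lemma on $\fib(f_N)$ from Section \ref{subsec:isog}. Stability of closed immersions under pullback (\cite[Corollary 3.1.2.3]{lu-SAG}) then yields $\ss(A)$ as a closed substack of $\CDiv_{\se/R}$ representing $\Level^A_{\se/R}$.

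For affineness, I reduce to the heart. The truncation $\ss(A)^\heartsuit$ coincides with the classical $\Level^{A,\cl}_{\se^\heartsuit/\pi_0R}$, which is affine as a closed subscheme of $\Hom_{{\rm Grp}/\pi_0R}(A,\se^\heartsuit[N])$. Combining this with the nilcompleteness and infinitesimal cohesion of $\ss(A)$ inherited from $\CDiv_{\se/R}$ (via Lemmas \ref{lem:3a}--\ref{lem:3b}), one reconstructs $\ss(A) \simeq \varprojlim_n \tau_{\leq n}\ss(A)$ as an affine spectral Deligne--Mumford stack $\Spet\cp_{\se/R}$, since each square-zero thickening of an affine stack remains affine (cf.~the pushout-of-closed-immersions discussion in Section \ref{subsec:cdiv}). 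The local-almost-of-finite-presentation property then follows from Lemma \ref{lem:5} for $\CDiv_{\se/R}$ together with finite presentation of classical $A$-structures over $\pi_0R$.

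The main obstacle lies in verifying rigorously that the pullback construction delivers a genuine closed immersion of spectral stacks and not merely a closed condition on the heart—the classical level condition is evidently closed, but transferring this to the derived setting requires care with the interplay between truncation and closed immersions across all connective $R$-algebras. This is where pullback-stability of closed immersions in spectral algebraic geometry and infinitesimal cohesion of the ambient Cartier-divisor functor (Lemma \ref{lem:3b}) jointly do the heavy lifting, allowing the classical Katz--Mazur input to propagate through the full derived moduli.
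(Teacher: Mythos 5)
Your pullback description $\Level(A,\se_{R'}/R') \simeq \CDiv(\se_{R'}/R') \times_{\CDiv^\cl} \Level^\cl$ is the right structural picture, but the step that hangs everything on it does not go through. The map $\Level^\cl \to \CDiv^\cl$ sending $\phi \mapsto \sum_{a\in A}[\phi(a)]$ is \emph{not} a monomorphism, hence not a closed immersion: for instance if $A=\bz/N\bz$ and $\sd^\heartsuit$ is a cyclic order-$N$ subgroup, any of the $\varphi(N)$ generators yields a distinct level structure with the same underlying divisor; similarly for $A=(\bz/N\bz)^2$ the automorphism group of $\se^\heartsuit[N]$ acts nontrivially with fixed divisor. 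You acknowledge the factorization through $\Hom_{\mathrm{Grp}}(A,\se^\heartsuit)$, but then invoke \cite[Corollary 3.1.2.3]{lu-SAG} as if the composite to $\CDiv^\cl$ were closed, and that corollary in any case concerns pullbacks of closed immersions \emph{between spectral Deligne--Mumford stacks}, whereas $\Level^\cl$ and $\CDiv^\cl$ are classical functors factoring through $\pi_0$ and are not objects to which the cited result directly applies. So the claim that the pullback square ``delivers $\ss(A)$ as a closed substack of $\CDiv_{\se/R}$'' is not supported by the argument.

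The paper routes around this by a staged construction internal to spectral algebraic geometry: first a closed substack $\GrpCDiv_{\se/R}\subset\CDiv_{\se/R}$ via the incidence object of Lemma \ref{lem:incidence} (the locus where $\sd^\heartsuit$ is a subgroup), then a clopen substack $\CDiv^A_{\se/R}$ fixing the rank $\#A$, and finally a further Katz--Mazur-style closed condition (\cite[Proposition 1.6.5]{katz1985arithmetic}) for $A$-generators, repeating the incidence-lemma pattern. Each step stays inside the category of spectral DM stacks, so no appeal to pullbacks of non-representable functors is needed. Your affineness argument is also more roundabout than necessary: rather than reconstructing $\ss(A)$ from $\ss(A)^\heartsuit$ by square-zero thickenings and nilcompleteness (which would need a separate argument that affineness is preserved along the tower), the paper simply observes via \cite[Remark 5.2.0.2]{lu-SAG} that a morphism of spectral algebraic spaces is finite if and only if its heart is finite classically, then cites \cite[Corollary 1.6.3]{katz1985arithmetic} for finiteness of the classical moduli of $A$-structures; finite implies affine, and local almost finite presentation of $\ss(A)\to\Spet R$ follows because $\CDiv_{\se/R}\to\Spet R$ is l.a.f.p.\ (Theorem \ref{thm:cdiv}) and the inclusion $\ss(A)\hookrightarrow\CDiv_{\se/R}$ is built from closed immersions.
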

\begin{proof}
    By definition, the functor $\Level^A_{\se/R}$ is a subfunctor of the representable functor $\CDiv_{\se/R}$ from Theorem \ref{thm:cdiv}.  In view of Lemma \ref{lem:incidence}, we consider a spectral Deligne--Mumford stack $\GrpCDiv$ defined by the pullback diagram of spectral Deligne--Mumford stacks 
    \[
        \xymatrix{
	   \GrpCDiv_{\se/R} \ar[d] \ar[r] & \CDiv_{\se/R} \ar[d] \\
          \Spet B \ar[r]                 & \Spet R 
	}
    \]
    where $B$ is associated to the universal object $\sd_\univ\to\se\times_R\CDiv_{\se/R}$.  We verify that $\GrpCDiv_{\se/R}$ valued on an $R$-algebra $R'$ is the space of relative effective Cartier divisors $\sd$ of $\se_{R'}$ such that $\sd^\heartsuit$ is a finite flat subgroup of $\se_{R'}^\heartsuit$.  
    
    Moreover, there is a clopen substack $\CDiv^A_{\se/R}$ of $\GrpCDiv_{\se/R}$ whose value on an $R$-algebra $R'$ is the space of relative effective Cartier divisors $\sd$ of $\se_{R'}$ such that $\sd^\heartsuit$ is a subgroup of $\se_{R'}^\heartsuit$ finite locally free over $\pi_0R'$ of rank equal to $\#A$.  We then evoke \cite[Proposition 1.6.5]{katz1985arithmetic} and obtain $\ss(A)$ representing $\Level^A_{\se/R}$ as a closed substack of $\CDiv^A_{\se/R}$ similarly as in Lemma \ref{lem:incidence}.  

    To prove the remaining statement, we consider the morphism $\ss(A)\to\Spet R$, both being spectral algebraic spaces.  By \cite[Remark 5.2.0.2]{lu-SAG}, a morphism between spectral algebraic spaces is finite if and only if its underlying morphism between ordinary algebraic spaces is finite in the sense of classical algebraic geometry.  Thus we need only prove that $\ss(A)^\heartsuit$ is finite over $\Spec\pi_0R$.  This is precisely the classical case: $\ss(A)^\heartsuit$ is the representing object of the classical level-$A$ structures, which is a finite $\pi_0R$-scheme of finite presentation by \cite[Corollary 1.6.3]{katz1985arithmetic}.  
\end{proof}

\subsection{Level structures on \textit{p}-divisible groups}
\label{subsec:plevel}

Before we move on and introduce derived level structures for spectral $p$-divisible groups, let us first recall some classical facts needed about level structures of commutative finite flat group schemes.  

\subsubsection{Classical finite flat group schemes}

Let $S$ be a scheme and $X/S$ be a finite flat $S$-scheme of finite presentation and rank $N$.  It can be proved that $X/S$ is finite locally free of rank $N$.  This means that for every affine scheme $\Spec R\to S$, the pullback scheme $X\times_S\Spec R$ over $\Spec R$ has the form $\Spec R'$, where $R'$ is an $R$-algebra which is locally free of rank $N$.  For an element $f\in R'$ acting on $R'$ by multiplication, define an $R$-linear endomorphism of $R'$.  Because $R'$ is locally free of rank $N$, multiplication by $f$ has a characteristic polynomial 
\[
    \det(T-f)=T^N-{\rm trace}(f)\,T^{N-1}+\cdots+(-1)^N{\rm norm}(f) 
\]
Recall the following definition from \cite[1.8.2]{katz1985arithmetic}.  Let $\{P_1, \ldots, P_N\}$ be a set of $N$ points not necessarily distinct in $X(S)$.  We call it a {\em full set of sections of $X/S$} if one of the following two equivalent conditions is satisfied: 
\begin{itemize}
    \item For any $\Spec R\to S$ and $f\in R'=H^0(X_R,\so)$, we have 
    \[
        \det(T-f)=\prod_{i=1}^N\big(T-f(P_i)\big) 
    \]
    
    \item For any $\Spec R\to S$ and $f\in R'=H^0(X_R,\so)$, we have 
    \[
        {\rm norm}(f)=\prod_{i=1}^Nf(P_i) 
    \]
\end{itemize}

Given $N$ not necessarily distinct points $P_1, \ldots, P_N$ in $X(S)$, we have a morphism $\so_X\to\bigotimes_{i}(P_i)_*(\so_S)$ of sheaves over $X$.  It is not hard to see that this morphism is surjective and defines a closed subscheme $D$ of $X$ which is flat and proper over $S$.  Thus, given an abstract finite abelian group $A$ and a map $\phi\co A\to X(S)$ of sets, we can define a closed subscheme $D$ of $X$ by the sheaf $\bigotimes_{a\in A}\phi(a)_*\so_S$.  

\begin{lemma}
\label{lem:open}
    Given a finite flat $S$-scheme $Z$ of finite presentation, $\Hom(A,Z)$ is an open subscheme of $\Hilb_{Z/S}$.  
\end{lemma}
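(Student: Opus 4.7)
The plan is to construct a natural morphism $\theta \colon \Hom(A, Z) \to \Hilb_{Z/S}$ via the assignment $\phi \mapsto D_\phi$ recalled just before the lemma, where $D_\phi \subset Z_T$ is the closed subscheme with structure sheaf $\bigotimes_{a \in A} \phi(a)_*\so_T$, and then to verify that $\theta$ is an open immersion. First I would check representability on both sides: $\Hom(A, Z)$ is represented by the finite product $Z^{\#A}$, which is a finite flat $S$-scheme of finite presentation, while $\Hilb_{Z/S}$ is representable of finite presentation precisely because $Z \to S$ is finite flat. So $\theta$ is a morphism between finitely presented $S$-schemes, and it suffices to verify that it is a flat monomorphism of finite presentation.

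Second, I would establish formal \'etaleness of $\theta$ by the infinitesimal lifting criterion. Working \'etale-locally on $S$, the finite flat scheme $Z$ becomes $\Spec R'$ for a finite free $\so_S$-algebra $R'$, at which point the assignment $\phi \mapsto D_\phi$ becomes a concrete construction in terms of products of ideals in $R'$. Given a square-zero thickening $T_0 \hookrightarrow T$ together with data $\phi_0 \in \Hom(A, Z)(T_0)$ and a lift of $D_{\phi_0}$ to $\Hilb_{Z/S}(T)$, each section $\phi_0(a)$ lifts uniquely to a section $\phi(a)$ over $T$ compatible with the decomposition of the lifted divisor, which witnesses formal \'etaleness.

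The main obstacle is the monomorphism property needed to promote \'etaleness to an open immersion. A priori, the construction $\phi \mapsto D_\phi$ is symmetric under permutations of the components $\phi(a)$, so extra information is required to recover $\phi$ from $D_\phi$. The resolution is that $D_\phi$ inherits a canonical ordered factorization of its structure sheaf indexed by $A$; accordingly the image of $\theta$ is cut out in $\Hilb_{Z/S}$ by the open condition that the universal closed subscheme decomposes as a product of graphs of sections labelled by $A$. Combining this monomorphism property with the formal \'etaleness and finite presentation of $\theta$ yields the desired realization of $\Hom(A, Z)$ as an open subscheme of $\Hilb_{Z/S}$.
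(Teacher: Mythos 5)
Your approach---construct $\theta \colon \Hom(A,Z) \to \Hilb_{Z/S}$ by $\phi \mapsto D_\phi$ and prove it is an open immersion by showing it is a formally \'etale monomorphism of finite presentation---is genuinely different from the paper's, but both of its key claims fail.  A concrete counterexample: take $S = \Spec k$, $Z = \Spec k[x]/(x^2)$, and $A = \bz/2\bz$.  Then $\Hom(A,Z) = Z \times_S Z = \Spec k[x,y]/(x^2,y^2)$, a nonreduced scheme of $k$-length $4$ concentrated at a single point, while $\Hilb^2_{Z/S} = \Spec k$ (the only length-$2$ flat closed subscheme of $Z_T$ over any $T$ is $Z_T$ itself).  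The map $\theta$ sends all of $\Hom(A,Z)$ to the unique point of $\Hilb^2_{Z/S}$; on tangent spaces it is $k^2 \to 0$, which is not injective, so $\theta$ is not unramified, hence not \'etale.  It is also not a monomorphism: the two maps $\Spec k[\epsilon]/(\epsilon^2) \to \Hom(A,Z)$ given by $(x,y)\mapsto(\epsilon,0)$ and $(x,y)\mapsto(0,\epsilon)$ are distinct but become equal after composing with $\theta$.

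The proposed repair for the monomorphism issue---that $D_\phi$ ``inherits a canonical ordered factorization of its structure sheaf indexed by $A$''---cannot work: a $T$-point of $\Hilb_{Z/S}$ is nothing more than a flat closed subscheme of $Z_T$ and carries no labelled factorization; that factorization is precisely the extra data that $\Hom(A,Z)$ records and $\Hilb_{Z/S}$ forgets.  The same objection breaks the lifting step in your \'etaleness argument (``each section $\phi_0(a)$ lifts uniquely to a section compatible with the decomposition of the lifted divisor''), since the lifted divisor comes with no decomposition to be compatible with.  Moreover, without a smoothness hypothesis on $Z$ the assignment $\phi\mapsto D_\phi$ need not even land in $\Hilb_{Z/S}$: the closed subscheme cut out by $\prod_{a\in A} I(\phi(a))$ can fail to be flat over the base (e.g.~already for $\phi = (\epsilon,0)$ over $T=\Spec k[\epsilon]/(\epsilon^2)$ in the example above).

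The paper's argument is of a more modest, local nature: for a fixed $T$-point $D$ of $\Hilb_{Z/S}$ and a candidate $\phi\colon A\to Z(T)$, it shows that the locus of $t\in T$ where $D_t$ equals the closed subscheme determined by $\phi$ is open, by observing that agreement of the two surjections $\so_Y\to\so_D$ and $\so_Y\to\bigotimes_a\phi(a)_*\so_T$ at a stalk extends to an open neighborhood.  This is a statement about openness of a condition, not about $\theta$ itself being an open immersion, and it sidesteps both of the global claims your proof would need.
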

\begin{proof}
    Let $T\to S$ be an $S$-scheme.  For any $D\to Y\coloneqq T\times_SZ$ in $\Hilb(Y)$, we need to prove that the set of points $t\in T $ over which $D_t\to Y_t$ comes from the closed subscheme associated to $\phi\co A\to Z(T)=Y(T)$ is open in $T$.  Since $D$ is the closed subscheme defined by $\so_Y\to\so_D$, if $D_t$ comes from $\so_{Y,t}\to\bigotimes_{a\in A}\phi(a)_*\so_{T,t}$, then by the definition of a stalk, there exists an open subset $U$ of $T$ such that $t\in U$ and $D_U$ is defined by $\so_Y|_U\to\bigotimes_{a\in A}\phi(a)_*\so_T|_U$.  
\end{proof}

Suppose that $G/S$ is a finite flat commutative group scheme of finite presentation and $A$ is an abstract finite abelian group of order $N$.  Let $K$ be a finite flat $S$-subgroup-scheme of $G$ locally free of rank $N$, and $\phi\co A\to G(S)$ be a homomorphism landing in $K(S)$.  Recall from \cite[Remark 1.10.10]{katz1985arithmetic} that the pair $(K,\phi)$ is called an {\em $A$-structure on $G/S$}, if the $N$ points $\phi(a), a\in A$ form a full set of sections of $K$.  

\begin{lemma}
    Suppose that $G/S$ is a finite flat commutative group scheme of finite presentation and $K\subset G$ is a closed subscheme which is finite flat and finite presentation.  Then there exists a closed subscheme $Z\subset S$ such that given any morphism of schemes $T\to S$, $K_T$ is a subgroup scheme of $G_T$ if and only if the morphism $T\to S$ factors through $Z$.  
\end{lemma}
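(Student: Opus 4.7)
The plan is to express the property ``$K\subset G$ is a subgroup scheme'' as a conjunction of three factorization conditions, represent each by a closed subscheme of $S$, and take their scheme-theoretic intersection.

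Writing $m$, $\iota_G$, $e$ for the multiplication, inversion, and identity section of $G/S$, the closed immersion $K\hookrightarrow G$ is a subgroup scheme exactly when the three composites
\[
  S \xrightarrow{e} G,\qquad K \hookrightarrow G \xrightarrow{\iota_G} G,\qquad K\times_S K \hookrightarrow G\times_S G \xrightarrow{m} G
\]
all factor through $K\hookrightarrow G$. For the first, set $Z_1 := e^{-1}(K)\subset S$, which is closed because the pullback of a closed immersion is a closed immersion. For the other two, I invoke the following input and apply it with $X=K$ and $X=K\times_S K$, both finite flat of finite presentation over $S$: given a finite flat morphism $\pi\co X\to S$ of finite presentation, a closed immersion $K\hookrightarrow G$, and an $S$-morphism $f\co X\to G$, the functor of base changes $T\to S$ over which $f_T$ factors through $K_T$ is representable by a closed subscheme of $S$.

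To prove this input, let $\mathcal{I}\subset\mathcal{O}_G$ be the ideal sheaf of $K$, and let $\bar{\mathcal{I}}\subset\mathcal{O}_X$ be the image of $f^*\mathcal{I}\to\mathcal{O}_X$. The factorization of $f$ through $K$ is equivalent to $\bar{\mathcal{I}}=0$. Since $\pi$ is finite affine and flat, $\pi_*\bar{\mathcal{I}}$ is a finite-type quasi-coherent $\mathcal{O}_S$-submodule of the locally free algebra $\pi_*\mathcal{O}_X$, and its formation commutes with arbitrary base change. Locally on $S=\Spec R$, choose a basis $e_1,\ldots,e_N$ of $\pi_*\mathcal{O}_X$ and write generators of $\pi_*\bar{\mathcal{I}}$ as $\sum_j r_{kj}e_j$; then $\pi_*\bar{\mathcal{I}}\otimes_R R'=0$ if and only if every $r_{kj}$ maps to zero in $R'$, i.e.\ $\Spec R'\to\Spec R$ factors through $V(\langle r_{kj}\rangle)$. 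These local closed subschemes glue (canonically, since they are determined by $\pi_*\bar{\mathcal{I}}$ independently of the basis and generators) to a global closed subscheme of $S$, giving $Z_2$ and $Z_3$. The desired closed subscheme is then $Z := Z_1\times_S Z_2\times_S Z_3$.

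The main obstacle is the input representability, which hinges on the interplay of three ingredients: flatness of $\pi$ (so that $\pi_*$ commutes with base change), finite presentation of $K\hookrightarrow G$ (so that $\pi_*\bar{\mathcal{I}}$ is finitely generated and only finitely many coefficients are needed to cut out the locus), and local freeness of $\pi_*\mathcal{O}_X$ (so that vanishing of a submodule is detected coefficientwise in a local basis). All three are supplied by the assumption that $K$, and hence $K\times_S K$, is finite flat of finite presentation over $S$.
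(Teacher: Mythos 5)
Your proof is correct and reaches the conclusion by a route that differs from the paper's. Following Katz--Mazur (Cor.~1.3.7 via Lem.~1.3.4(1)), the paper first reduces the subgroup condition to an ``inclusion of finite flat closed subschemes'' lemma (for $K_1,K_2\subset G$ finite flat over $S$, the locus of $T\to S$ with $(K_1)_T\subset(K_2)_T$ is a closed subscheme of $S$), and then proves that locally by a coordinate computation in a free basis. You instead read off the three subgroup axioms directly as factorization conditions and prove a ``factorization through a closed subscheme'' incidence lemma: for $\pi\co X\to S$ finite flat of finite presentation, $K\hookrightarrow G$ a finitely presented closed immersion, and an $S$-map $f\co X\to G$, the locus where $f_T$ factors through $K_T$ is closed. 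Both ultimately rest on the same device (local freeness of $\pi_*\mathcal{O}_X$ producing a content ideal), but your formulation is a bit cleaner: the paper's route, to handle the multiplication axiom, must recast it as the inclusion $K\times_S K\subset m^{-1}(K)$ and separately check that $m^{-1}(K)$ is finite flat over $S$ (using that $m$ is finite flat), whereas your incidence lemma applies directly to the map $m|_{K\times_SK}$ with no such preliminary.

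One slip to repair. The assertion ``$\pi_*\bar{\mathcal{I}}\otimes_R R'=0$ if and only if every $r_{kj}$ maps to zero in $R'$'' is false as stated: take $\pi_*\bar{\mathcal{I}}=2\mathbb{Z}\subset\mathbb{Z}=R^1$ and $R'=\mathbb{Z}/2$; all $r_{kj}$ vanish in $R'$, yet $2\mathbb{Z}\otimes_{\mathbb{Z}}\mathbb{Z}/2\cong\mathbb{Z}/2\neq 0$. The condition actually equivalent to $\bar{\mathcal{I}}_T=0$ is that the \emph{image} of $(\pi_*\bar{\mathcal{I}})\otimes_RR'\to(R')^N$ vanish, not the tensor product itself; that image is the $R'$-span of the vectors $\sum_jr_{kj}e_j$, so your locus $V(\langle r_{kj}\rangle)$ is still the right answer. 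Relatedly, ``its formation commutes with arbitrary base change'' should be understood at the level of images: since $g_X^*$ is right exact, $\bar{\mathcal{I}}_T$ is the image of $g_X^*\bar{\mathcal{I}}\to\mathcal{O}_{X_T}$, so $\pi_*\bar{\mathcal{I}}_T$ is the image of $(\pi_*\bar{\mathcal{I}})\otimes_RR'$ in $(\pi_T)_*\mathcal{O}_{X_T}$, not $(\pi_*\bar{\mathcal{I}})\otimes_RR'$ itself. With this adjustment the argument goes through; in particular the content ideal is intrinsic (it is the image of the evaluation pairing $(\pi_*\mathcal{O}_X)^{\vee}\otimes\pi_*\bar{\mathcal{I}}\to\mathcal{O}_S$), so the local loci glue as you claim.
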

\begin{proof}
    This is an analogue of \cite[Corollary 1.3.7]{katz1985arithmetic} for finite flat group schemes.  Following the proof strategy there, we need only prove: Given finite flat closed subschemes $K_1,K_2$ of $G$, there exists a closed subscheme $Z\subset S$ such that given any morphism of schemes $T\to S$, $(K_1)_T$ is a closed subscheme of $(K_2)_T$ if and only if the morphism $T\to S$ factors through $Z$ (cf.~\cite[Lemma 1.3.4\,(1)]{katz1985arithmetic}).  

    Since we consider the case of finite flat group schemes of finite presentation and the question is local on $S$, we are reduced to proving: 
    \begin{quote}
        Let $B$ be a finite free $A$-algebra, and $\Spec B/I_1,\Spec B/I_2$ be two closed subschemes of $\Spec B$ such that $B/I_1$ and $B/I_2$ are also free.  Then there exits a closed subscheme $\Spec W$ of $\Spec A$ such that given any $A\to A'$, $\Spec(B/I_1\otimes_AA')$ is a closed subscheme of $\Spec(B/I_2\otimes_AA')$ if and only if $A\to A'$ factors through $W$.  
    \end{quote}
    Let $\bar{1}\in B/I_2$ be the identity.  Since $B/I_1$ is a free $A$-module of finite rank, the image of $\bar{1}$ under the map $B/I_2\to B/I_2\otimes_AB/I_1$ can be written as $\sum^{d}_{i=1}r_ie_i$, where $d$ is the rank and $\{e_i\}_{1\leq i\leq d}$ is an $A$-basis of $B/I_2\otimes_AB/I_1$.  It is not hard to see that $V(\{r_1,\cdots,r_d\})$ is the desired closed subscheme of $\Spec A$.  
\end{proof}

\begin{proposition}
\label{prop:astr}
    Hypotheses and notations as above, the functor $A\text{-}{\rm Str}(G/S)$ on $S$-schemes defined by 
    \[
        T\mapsto\big\{\big(K\subset G_T,\,\phi\co A\to G(T)\big)\,\big|\,\text{$(K,\phi)$ is an $A$-structure on $G_T$}\big\} 
    \]
    is representable by a finite $S$-scheme of finite presentation.  
\end{proposition}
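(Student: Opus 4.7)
The plan is to exhibit $A\text{-}{\rm Str}(G/S)$ as a closed subscheme of a representable finite $S$-scheme, then appeal to the fact that closed subschemes of finite schemes are finite. The key simplification is that in the data $(K,\phi)$ of an $A$-structure, the subgroup $K$ is redundant: since $K$ has rank $N=\#A$ and $\{\phi(a)\}_{a\in A}$ is a full set of sections of $K$, we must have $K=D_\phi$, the closed subscheme of $G_T$ cut out by the surjection $\so_{G_T}\to\bigotimes_{a\in A}\phi(a)_*\so_T$ recalled before the proposition. Thus I would first reinterpret the functor as sending $T$ to the set of $\phi\in\Hom_{\rm Grp}\bigl(A,G(T)\bigr)$ such that $D_\phi\subset G_T$ is a subgroup of rank $N$ and $\{\phi(a)\}$ forms a full set of sections of $D_\phi$.

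Next, I would construct the ambient representing scheme. Choosing a finite presentation $A=\langle a_1,\ldots,a_r\mid\text{relations}\rangle$, the functor $\Hom_{\rm Grp}(A,G)$ is representable by a closed subscheme of $G^r$, cut out by the equations expressing the group-theoretic relations among the images of $a_i$. Since $G/S$ is finite of finite presentation, so is $G^r$, and hence $\Hom_{\rm Grp}(A,G)\to S$ is finite of finite presentation.

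Within $\Hom_{\rm Grp}(A,G)$ I would now impose two closed conditions on the universal homomorphism $\phi_{\rm univ}$: (i) that $D_{\phi_{\rm univ}}$ is a subgroup of $G$, which is closed by the lemma immediately preceding the proposition, applied to the universal pair $(D_{\phi_{\rm univ}},G)$; and (ii) that $\{\phi_{\rm univ}(a)\}$ forms a full set of sections of $D_{\phi_{\rm univ}}$, which is closed via the norm characterization recalled above (the identity ${\rm norm}(f)=\prod_a f(\phi(a))$ holds for all sections $f$ of $\so_{D_{\phi_{\rm univ}}}$ iff finitely many universal polynomial identities vanish, cutting out a closed subscheme). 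The intersection of these closed subschemes inside $\Hom_{\rm Grp}(A,G)$ is the sought representing scheme, and it is automatically finite of finite presentation over $S$.

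The main obstacle is step (ii): verifying that the full-set-of-sections condition is genuinely representable by a closed subscheme and compatible with base change. The delicate point is that $D_\phi$ varies in families and one must check the norm identity locally on $G$ using the local-freeness of $\so_{D_\phi}$ as an $\so_T$-module. Once a local trivialization of $\so_{D_\phi}$ is chosen, each coefficient of the difference $\det(T-f)-\prod_a(T-f(\phi(a)))$ is a section of $\so_S$ whose vanishing locus is closed, and independence of the trivialization follows from the intrinsic character of the norm. This argument is essentially the transcription of the classical analysis in \cite[Section 1.10]{katz1985arithmetic} to the present slightly more general setting.
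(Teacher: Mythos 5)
The central claim of your proposal---that $K$ is redundant and forced to equal $D_\phi$---is false, and it causes the whole construction to represent the wrong functor.

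To see the gap, take $G = \mu_p \times \mu_p$ over $S = \Spec\bF_p$ and $A = \bz/p\bz$.  The only homomorphism $\phi\co A \to G(\bF_p)$ is the trivial one, since $G(\bF_p)$ is trivial.  Now consider any of the infinitely many (after base change to $\bar\bF_p$) rank-$p$ connected subgroup schemes $K \subset G$, e.g.\ $\mu_p\times\{1\}$, $\{1\}\times\mu_p$, or the diagonal $\mu_p$.  Each such $K$ is a local $\bF_p$-algebra of length $p$, and for any $f\in\cO(K)$ the multiplication-by-$f$ operator on $\cO(K)$ has the form $\bar f\cdot\mathrm{id}+(\text{nilpotent})$, so $\det(T-f)=(T-\bar f)^p = \prod_{a\in A}\bigl(T-f(\phi(a))\bigr)$.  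Thus the trivial $\phi$ is a full set of sections for each of these $K$, and all of the pairs $(K,\phi)$ are distinct $A$-structures on $G$.  In particular $K$ is genuinely extra data, not recoverable from $\phi$, and any scheme built as a closed subscheme of $\Hom_{\rm Grp}(A,G)$ alone can see at most one of these.  (The claim that the $\phi(a)$ determine a canonical rank-$N$ closed subscheme $D_\phi\subset G$ also breaks down here: all the $\phi(a)$ coincide with the identity section, which has rank $1$, not $N$.  The divisor formalism $\sum_a[\phi(a)]$ needs $G$ to sit inside a smooth curve to produce a rank-$N$ subscheme from coincident points, which is exactly the hypothesis that fails for a general finite flat $G$.)

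The paper's proof avoids this by parametrizing the pair honestly, in the opposite order: it first builds the Grassmannian $\Hilb^N_{G/S}$ of rank-$N$ closed subschemes of $G$, cuts out the locus $Z$ where the universal subscheme is a subgroup via the incidence lemma, and only then applies \cite[Proposition 1.10.13(1)]{katz1985arithmetic} to parametrize $A$-generators of the universal $K_\univ$ over $Z$.  If you want to salvage an approach starting from $\Hom_{\rm Grp}(A,G)$, you would have to fibre it over the subgroup datum rather than try to eliminate it---which lands you back at essentially the paper's construction.
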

\begin{proof}
    This is a variant of \cite[Lemma 1.10.11 and Proposition 1.10.13\,(1)]{katz1985arithmetic}.  Let us proceed in 3 steps.  
    
    First, the functor 
    \[
        T\mapsto\{D_T\subset G_T\,|\,\text{$D_T$ is a closed subscheme finite flat over $T$ of rank $N$}\} 
    \]
    is representable by a finite $S$-scheme $\Hilb^N_{G/S}$ (a Grassmannian).  
    
    Second, applying the preceding lemma to the universal example over $\Hilb^N_{G/S}$, we obtain a finite $S$-scheme $Z$ classifying finite flat subgroup schemes of $G$ locally free of rank $N$.  

    Third, given such a subgroup scheme $K\subset G$, observe that the functor 
    \[
        T\mapsto\{\phi\co A\to G(T)\,|\,\text{$(K_T,\phi)$ is an $A$-level structure on $G_T$}\} 
    \]
    is equivalent to the functor 
    \[
        T\mapsto\{\phi\co A\to K(T)\,|\,\text{$\phi$ is an $A$-generator of $K_T$}\}
    \]
    (cf.~\cite[Remark 1.10.10 and 1.10.5]{katz1985arithmetic}).  Since the latter is representable by a finite $S$-scheme of finite presentation by \cite[Proposition 1.10.13\,(1)]{katz1985arithmetic}, we further apply this representability to the universal example $K_\univ\subset G_Z$ to complete the proof.    
\end{proof}

\subsubsection{Spectral finite flat group schemes}

Let $R$ be a connective $\einf$-ring and $G$ be a commutative finite flat group scheme over $R$.  By the definition of finite-flatness, we have $G=\Spet B$ for a finite flat $R$-algebra $B$ \cite[Definition 6.1.2]{lu-EC1}.  We let $\Hilb(G/R)$ denote the full subcategory of $\SpDM_{/G}$ spanned by those $\sd\to G$ such that $\sd\to G$ is a closed immersion of spectral Deligne--Mumford stacks and that the composite $\sd\to G\to\Spet R$ is flat, proper, and locally almost of finite presentation.  Then $\Hilb(G/R)$ is equivalent to the $\infty$-category of diagrams of $\einf$-rings 
\[
    \xymatrix{
        R \ar[rr] \ar[rd] &   & B \ar[ld] \\
	                   & S & \\
    }
\]
such that $S$ is flat, proper, and locally almost of finite presentation over $R$ subject to certain additional conditions.  It is not hard to see that $\Hilb(G/R)$ is a Kan complex (cf.~Remark \ref{rmk:Kan}), so that we can define a functor 
\[
    \Hilb_{G/R}\co\CAlg^\cn_R\to\cs,\quad R'\mapsto\Hilb(G_{R'}/R') 
\]
The representability of this functor is a special case of \cite[Theorem 8.3.3]{lurie2004derived}, which we record below.  Like that theorem and Theorem \ref{thm:cdiv}, it can be deduced from the spectral Artin representability theorem \ref{thm:Lurie}.  

\begin{theorem}[Lurie]
    Suppose that $G$ is a commutative finite flat group scheme over a connective $\einf$-ring $R$.  Then $\Hilb_{G/R}$ is representable by a spectral Deligne--Mumford stack which is locally almost of finite presentation over $R$.  
\end{theorem}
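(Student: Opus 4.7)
The plan is to apply Lurie's spectral Artin representability theorem (Theorem \ref{thm:Lurie}) to the functor $\Hilb_{G/R}$, following the template already established for $\CDiv_{\se/R}$ in the proof of Theorem \ref{thm:cdiv}. Since $G$ is finite flat over $R$, it is affine, say $G=\Spet B$ with $B$ a finite flat $R$-algebra, and every object of $\Hilb(G_{R'}/R')$ is a closed immersion $\sd\to G_{R'}$ which, being closed in an affine, is itself affine of the form $\Spet S$ for some connective $\einf$-algebra $B\otimes_R R'\twoheadrightarrow S$. The target dimension in Theorem \ref{thm:Lurie} will be $n=0$.

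For the first three criteria: (1) For a discrete commutative ring $A$, $\Hilb_{G/R}(A)$ consists of discrete quotients of a discrete ring, hence is a set, so the space is $0$-truncated. (2) The étale sheaf condition holds because the conditions of being a closed immersion, flat, proper, and locally almost of finite presentation are all étale-local on the target. (3) Nilcompleteness, infinitesimal cohesiveness, and integrability can be verified essentially as in Lemmas \ref{lem:3a}, \ref{lem:3b}, \ref{lem:3c}, with the same underlying inputs: the compatibility of $\SpDM_{/\sx}$ with truncations and pushouts of closed immersions via \cite[Propositions 16.3.2.1 and 19.4.1.2]{lu-SAG}, and formal GAGA \cite[Corollary 8.5.3.4]{lu-SAG} for integrability. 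The simplification relative to Theorem \ref{thm:cdiv} is that one need not track any line-bundle (Picard) condition, so the inverse-limit invertibility argument of Lemma \ref{lem:3a} and the equivalence of quasi-coherent sheaves of Lemma \ref{lem:3c} can be replaced by the direct assertion about closed subschemes.

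For (4), the cotangent complex at a point $\eta\co\sd\to G_S$ should be governed by the classification of first-order deformations \cite[Proposition 19.4.3.1]{lu-SAG}, yielding
\[
    \eta^* L_{\Hilb_{G/R}}\simeq\Sigma^{-1}p_+\bigl(\eta_+ L_{\sd/G_S}\bigr),
\]
where $p\co\sd\to\Spet S$ is the structure morphism. Because $\sd$ is affine, flat, proper, and locally almost of finite presentation over $S$, $L_{\sd/G_S}$ is connective and almost perfect by \cite[Proposition 17.1.5.1]{lu-SAG}, so $L_{\Hilb_{G/R}}$ is $(-1)$-connective and almost perfect; connectivity then follows by the same Nakayama-reduction used in Lemma \ref{lem:4}, which here simplifies because automorphisms of a closed immersion are trivial (cf.~Remark \ref{rmk:Kan}), making the relevant fiber at a residue field automatically a singleton. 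For (5), local finite presentation reduces as in Lemma \ref{lem:5} to the compatibility of the moduli of flat, proper, locally almost of finite presentation morphisms with filtered colimits of truncated connective algebras via \cite[Proposition 19.4.2.1]{lu-SAG}, together with the étale-local stability of closed immersions.

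The main obstacle I anticipate is not a genuine conceptual one but a bookkeeping issue in step (4): one must verify that every square-zero deformation $\sd'$ of $\sd$ produced by deformation theory really does land back in the subcategory $\Hilb(G_{R'}/R')$, i.e., remains a closed immersion into $G$ with the required flat, proper, locally almost of finite presentation properties. That closed immersions deform to closed immersions is \cite[Proposition 16.3.2.1]{lu-SAG}, and preservation of the remaining properties under square-zero thickening is \cite[Corollary 16.4.2.1]{lu-SAG}, so the argument should assemble cleanly and give representability by a spectral Deligne--Mumford $0$-stack locally almost of finite presentation over $R$.
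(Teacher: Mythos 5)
The paper does not actually prove this theorem: it states the result and cites \cite[Theorem 8.3.3]{lurie2004derived} as its source, adding only the remark that, like that theorem and Theorem \ref{thm:cdiv}, it can be deduced from the spectral Artin representability theorem \ref{thm:Lurie}. What you have written is precisely the elaboration of that one-line remark, carried out by running the five-point checklist from the proof of Theorem \ref{thm:cdiv} with the line-bundle (Picard) constraint stripped out, and it is sound: conditions (1)--(3) and (5) go through as you say, since they only used the compatibility of closed immersions and of flat/proper/lafp morphisms with \'etale descent, truncations, pushouts, filtered colimits, and formal GAGA, never the invertibility of $\ci(\sd)$; and for (4) the deformation-theoretic identification of $\eta^*L_{\Hilb_{G/R}}$ with the shifted pushforward of $L_{\sd/G_S}$ is the same as in Lemma \ref{lem:4}, with the closing Nakayama step now trivial since closed immersions have no automorphisms (Remark \ref{rmk:Kan}) rather than reducing to the rigidity of line bundles. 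You also correctly note that the hypotheses \emph{geometrically reduced} and \emph{geometrically connected} in Theorem \ref{thm:cdiv}, which were there only to invoke representability of the Picard functor, are no longer needed.

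Two small points to tidy. First, your notation $p\co\sd\to\Spet S$ followed by $p_+(\eta_+L_{\sd/G_S})$ is a type mismatch: in Lemma \ref{lem:4}'s convention $p$ denotes the structure map $G_S\to\Spet S$, not the composite, so that $\eta_+$ lands in $\QCoh(G_S)$ and $p_+$ then takes it to $\QCoh(\Spet S)$; the composite pushforward is what is meant either way. Second, as in Theorem \ref{thm:cdiv}, Theorem \ref{thm:Lurie} as stated asks that $R$ be Noetherian with $\pi_0R$ a Grothendieck ring, a hypothesis absent from the statement being proved; the paper silently passes over the standard Noetherian approximation needed to remove it, and your proposal inherits that same gap, so you are no worse off than the paper but should flag it.
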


\begin{corollary}
    Hypotheses and notations as above, for each positive integer $N$, there exists a substack  $\Hilb^N_{G/R}$ of $\Hilb_{G/R}$ such that given any $R'$ in $\CAlg^\cn_R$, the space $\Hilb^N_{G/R}(R')\eqqcolon\Hilb^N(G_{R'}/R')\subset\Hilb(G_{R'}/R')$ consists of those $\sd\to G_{R'}$ locally free of rank $N$ over $R'$.  
\end{corollary}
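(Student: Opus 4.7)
The plan is to realize $\Hilb^N_{G/R}$ as a clopen substack of $\Hilb_{G/R}$ cut out by the condition that the universal closed subscheme has rank $N$ over the base. First, consider the universal family $\sd_\univ\hookrightarrow G\times_R\Hilb_{G/R}$. Because $G=\Spet B$ is affine over $R$, this closed immersion is affine, so the composite $\sd_\univ\to\Hilb_{G/R}$ is affine, flat, proper, and locally almost of finite presentation by the defining conditions of $\Hilb_{G/R}$. Working on any affine \'etale chart $\Spet A\to\Hilb_{G/R}$, the pullback of $\sd_\univ$ has the form $\Spet A'$ with $A'$ a flat, almost perfect $A$-algebra whose $\pi_0$ is finite over $\pi_0 A$. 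By \cite[Propositions 7.2.2.18 and 7.2.4.20]{lu-HA} (as in step (1) of the proof of Lemma \ref{lem:4} in Section \ref{subsec:cdiv}), $A'$ is then a finitely generated projective $A$-module, hence locally free of finite rank in the sense of \cite[Definition 2.9.2.1]{lu-SAG}.

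Since $A'$ is locally free of finite rank over $A$, the rank function $r\co|\Spec\pi_0 A|\to\mathbb{Z}_{\geq0}$ is locally constant.  Therefore $r^{-1}(N)$ is a clopen subset of $|\Spec\pi_0 A|$, corresponding to an idempotent of $\pi_0 A$.  This idempotent lifts uniquely to an idempotent of the connective $\einf$-ring $A$, and so cuts out a clopen affine spectral subscheme of $\Spet A$.  The formation of this clopen piece is intrinsic and compatible with \'etale pullback between charts, so these clopen loci glue along any \'etale cover of $\Hilb_{G/R}$ to produce a clopen substack $\Hilb^N_{G/R}\hookrightarrow\Hilb_{G/R}$.

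To identify the functor of points, let $R'\in\CAlg^\cn_R$ and let $\sd\to G_{R'}$ classify an $R'$-valued point of $\Hilb_{G/R}$.  By construction, this point factors through $\Hilb^N_{G/R}$ if and only if the rank function on $|\Spec\pi_0 R'|$ associated to the finitely generated projective $\pi_0 R'$-module $\pi_0\Gamma(\sd;\so_\sd)$ is the constant function $N$.  By \cite[Definition 2.9.2.1]{lu-SAG}, this is exactly the condition that $\sd$ be locally free of rank $N$ over $R'$, yielding the desired description $\Hilb^N_{G/R}(R')=\Hilb^N(G_{R'}/R')$.

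The main technical point is establishing the finite-locally-free conclusion for the universal family, which is what ensures the rank function is well-defined and locally constant; this is the content of the appeal to \cite[Proposition 7.2.4.20]{lu-HA}.  Once this is in hand, the rest is routine: clopen substacks of a spectral Deligne--Mumford stack correspond bijectively to clopen substacks of its underlying ordinary stack (via the unique lifting of idempotents), so the classical gluing of locally constant integer-valued rank functions automatically lifts to the spectral setting.
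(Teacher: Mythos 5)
Your proof is correct, and since the paper states this corollary without proof, yours fills the gap in the natural and expected way: the Hilbert functor for a finite flat $G$ only ever parametrizes affine, finite, flat, almost-finitely-presented closed subschemes, which are automatically finite locally free, and the rank function then cuts out the clopen substack $\Hilb^N_{G/R}$.

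Two small points worth fixing. First, the cross-reference is off: the passage that applies \cite[Propositions 7.2.2.18 and 7.2.4.20]{lu-HA} to deduce finite local freeness from flat plus finite plus almost perfect is in the (unlabeled) lemma about $\fib(f_N)$ in Section \ref{subsec:isog}, not in the proof of Lemma \ref{lem:4} of Section \ref{subsec:cdiv}, which is the cotangent-complex lemma. Second, you assert that $A'$ is almost perfect as an $A$-module without justification. This does follow — $A'$ is finite over $A$ (since it is a closed substack of the finite $A$-algebra $B\otimes_R A$, so $\pi_0 A'$ is finite over $\pi_0 A$) and locally almost of finite presentation as an $A$-algebra by the defining conditions of $\Hilb$, and finite plus almost of finite presentation as an algebra is equivalent to almost perfect as a module — but an explicit citation to the relevant equivalence in \cite{lu-SAG} would tighten the argument. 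Once $A'$ is flat and almost perfect, \cite[Proposition 7.2.4.20]{lu-HA} alone suffices to conclude it is locally free of finite rank, so citing 7.2.2.18 in addition is harmless but redundant. The remaining steps — locally constant rank, lifting idempotents from $\pi_0 A$ to $A$, and gluing along the \'etale cover — are all standard and correctly deployed.
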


\begin{definition}
\label{def:glevel}
    Let $R$ be a connective $\einf$-ring and $G$ be a spectral commutative finite flat group scheme over $R$.  Given an abstract finite abelian group $A$ of order $N$, a {\em level-$A$ structure on $G$} is a pair $(\sd,\phi)$, where $i\co\sd\to G$ is an object in $\Hilb^N(G/R)$ and $\phi\co A\to G^\heartsuit(\pi_0R)$ is a homomorphism, such that $(\sd^\heartsuit,\phi)$ is an $A$-structure in the sense of \cite[Remark 1.10.10]{katz1985arithmetic}, i.e., $\pi_0i_*\so_\sd=\bigotimes_{a\in A}\phi(a)_*\so_{\Spec\pi_0R}$.  We denote by $\Level(A,G/R)$ the $\infty$-category of level-$A$ structures on $G/R$.  
\end{definition}

\begin{remark}
    Given a level-$A$ structure $(\sd,\phi)$ on $G$, $\sd$ is locally free of rank $N$ over $R$, since $\sd\to G$ is a closed immersion, $\sd\to\Spet R$ is flat, and $\pi_0i_*\so_\sd=\bigotimes_{a\in A}\phi(a)_*\so_{\Spec\pi_0R}$.  The last identity also ensures the group structure on $\sd^\heartsuit$.  
\end{remark}

\begin{remark}
    Comparing Definition \ref{def:glevel} with Definition \ref{def:elevel}, we see that \cite[Proposition 1.10.6]{katz1985arithmetic} establishes an equivalence between the two definitions in the classical case when $G^\heartsuit/\pi_0R$ is embeddable as a closed subscheme of an elliptic curve $E^\heartsuit/\pi_0R$.  Thus these two definitions are compatible if the spectral group scheme $G/R$ is embeddable as a closed substack of a spectral elliptic curve $E/R$.  
\end{remark}

To prove representability for the functor of level-$A$ structures, we present a second result concerning the existence of incidence spectral Deligne--Mumford stacks (cf.~Lemma \ref{lem:incidence}).  
\begin{lemma}
    Let $G/R$ be a spectral commutative finite flat group scheme over a connective $\einf$-ring $R$.  Let $A$ be an abstract finite abelian group of order $N$.  Given an object $\sd\to G$ in $\Hilb^N(G/R)$, there exists an $\einf$-ring $W$ satisfying the following universal property: 
    \begin{quote}
        For any $R\to R'$ in $\CAlg_R^\cn$, $\sd_{R'}$ supports a level-$A$ structure on $G_{R'}$ if and only if $R\to R'$ factors through $W$.  
    \end{quote}
\end{lemma}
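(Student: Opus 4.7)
The plan is to exhibit $W$ as the discrete $R$-algebra corepresenting the functor of level-$A$ structures on the fixed closed immersion $\sd \to G$; the universal property will then follow by inspection of mapping spaces. By Definition \ref{def:glevel}, with $i \co \sd \to G$ fixed, a level-$A$ structure on $\sd_{R'} \subset G_{R'}$ is precisely a classical homomorphism $\phi \co A \to G^{\heartsuit}(\pi_0 R')$ such that $(\sd^{\heartsuit}_{R'}, \phi)$ is an $A$-structure in the sense of \cite[Remark 1.10.10]{katz1985arithmetic}. Hence the assignment
\[
    F \co \CAlg^{\cn}_R \to \Set, \quad R' \mapsto \{\phi : (\sd^{\heartsuit}_{R'}, \phi) \text{ is an $A$-structure on } G^{\heartsuit}_{R'}\}
\]
is set-valued and factors through $R' \mapsto \pi_0 R'$.

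First I address classical representability. By the lemma immediately preceding Proposition \ref{prop:astr}, the condition that $\sd^\heartsuit_S \subset G^\heartsuit_S$ be a subgroup scheme cuts out a closed subscheme $Z \hookrightarrow \Spec \pi_0 R$. Over $Z$, the functor of $A$-generators of the subgroup $\sd^\heartsuit \times_{\pi_0 R} \so_Z$ is representable by a finite $Z$-scheme of finite presentation, by the third step in the proof of Proposition \ref{prop:astr} (appealing to \cite[Proposition 1.10.13(1)]{katz1985arithmetic}). Composing with the closed immersion $Z \hookrightarrow \Spec \pi_0 R$, I obtain a finite $\pi_0 R$-algebra $W_0$ of finite presentation that corepresents $F$ restricted to discrete $\pi_0 R$-algebras.

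Next I lift to the spectral setting. Set $W \coloneqq W_0$ viewed as a discrete $\einf$-ring, equipped with the $R$-algebra structure $R \to \tau_{\leq 0} R \to W_0 = W$. Since $W$ is $0$-truncated, for every $R' \in \CAlg^{\cn}_R$ one has
\[
    \Map_{\CAlg_R}(W, R') \simeq \Hom_{\pi_0 R}(W_0, \pi_0 R') = F(R'),
\]
where the $\Hom$ is taken in the category of $\pi_0 R$-algebras. Consequently, $\sd_{R'}$ supports a level-$A$ structure on $G_{R'}$ if and only if $F(R') \neq \emptyset$, equivalently, $R \to R'$ factors through $W$, as required.

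The main obstacle is the classical representability step: Proposition \ref{prop:astr} handles the total moduli of pairs $(K, \phi)$ with $K$ varying, whereas here $\sd^\heartsuit$ is fixed and only $\phi$ varies. Bridging the two requires pullback along the classifying morphism associated to $\sd^\heartsuit$, combining the finiteness of the $A$-generator scheme of \cite[Proposition 1.10.13(1)]{katz1985arithmetic} with the closed subgroup locus from the preceding lemma; once $W_0$ is produced classically, the spectral lift is formal by $0$-truncation.
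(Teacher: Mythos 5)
Your reduction to the classical representability step is reasonable, and the first half of the argument (factoring $F$ through $\pi_0$, producing a finite $\pi_0 R$-algebra $W_0$ via the preceding lemma and step 3 of Proposition \ref{prop:astr}) is correct and parallels what the paper does in spirit. But the spectral lift is not ``formal by $0$-truncation'' -- the key identity you assert,
\[
    \Map_{\CAlg_R}(W, R') \simeq \Hom_{\pi_0 R}(W_0, \pi_0 R'),
\]
is false for a general discrete $W$ and connective $R'$. The truncation adjunction only tells you that $\Map(A, W)$ factors through $\tau_{\leq 0}A$ when $W$ is $0$-truncated, i.e.\ mapping \emph{into} a discrete object depends only on $\pi_0$ of the source. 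You need the other direction: mapping \emph{out} of $W$ into a non-discrete $R'$. Even at the level of $\pi_0$, the comparison map $\pi_0\Map_{\CAlg_R}(W,R')\to\Hom_{\pi_0 R}(W_0,\pi_0 R')$ need not be a bijection: lifting a commutative-ring map $W_0\to\pi_0 R'$ up the Postnikov tower of $R'$ meets obstructions in André--Quillen cohomology groups $\Ext^{n+2}_{W_0}(L_{W_0/\pi_0 R}, \pi_n R')$, and your $W_0$ is a finite but non-smooth $\pi_0 R$-algebra, so these have no reason to vanish. (Compare: for the free case $W_0=\pi_0 R[x]$ one already gets $\Map(W,R')\simeq\Omega^\infty R'$, which is not discrete.) So ``$R\to R'$ factors through $W$'' is a strictly stronger condition than $F(R')\neq\emptyset$, and the converse direction of the universal property fails.

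The paper's proof sidesteps this by \emph{not} taking $W$ to be discrete. It builds $W$ in two steps inside the spectral world: first a localization $B$ of the $\einf$-ring $R$ realizing the open condition of Lemma \ref{lem:open} (so $\pi_0 B$ is the corresponding localization of $\pi_0 R$, but $B$ retains the higher homotopy of $R$), then a ``closed'' construction $W=\Gamma_I(B)$ realizing the full-set-of-sections condition from \cite[Proposition 1.9.1]{katz1985arithmetic}. Both operations are $\pi_0$-detected (a map out of a localization, or out of an $I$-nilpotent construction, exists iff the corresponding $\pi_0$-level condition holds), which is exactly what makes the claimed ``factors through $W$'' equivalent to the level-structure condition on $\pi_0$. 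Your $W_0$ viewed as discrete $\einf$-ring does not have this property. To repair your argument you would need to replace $W_0$ with a lift to an $\einf$-ring over $R$ whose formation commutes with the relevant $\pi_0$ base changes -- which is precisely the localization-plus-$\Gamma_I$ construction.
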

\begin{proof}
    Given $R'$ in $\CAlg^\cn_R$, it is clear that $\sd_{R'}$ is in $\Hilb(G_{R'}/R')$.  For $\sd_{R'}$ to support a level-$A$ structure as in Definition \ref{def:glevel}, $\Spec\pi_0R'\to\Spec\pi_0R$ must factor through $\Hom(A,G^\heartsuit)$, which is open in $\Hilb_{G^\heartsuit/\pi_0R}$ by Lemma \ref{lem:open}.  Thus $\pi_0R\to\pi_0R'$ factors through $B_0$ for some localization $B_0$ of $\pi_0R$.  This lifts to a factorization of $R \to R'$ through an $\einf$-ring $B$, which is a localization of $R$ with $\pi_0B\simeq B_0$ (see \cite[Remark 1.1.4.2]{lu-SAG}).  
    
    By now, along the map $\Spet R'\to\Spet B$, we already have 
    $i\co\sd_{R'}\to G_{R'}$ in $\Hilb^N(G_{R'}/R')$ and a map $\phi\co A\to G^\heartsuit(\pi_0R')$ associated with $\pi_0i_*\so_{\sd_{R'}}$.  For $(\sd_{R'},\phi)$ to be a level-$A$ structure, 
    \[
        \bigotimes_{a\in A}\phi(a)_*\so_{\Spec\pi_0R'}\to\pi_0i_*\so_{\sd_{R'}} 
    \]
    needs to be an isomorphism, i.e., the $N$ points $\phi(a), a\in A$ must form a full set of sections of $\sd_{R'}^\heartsuit$.  By \cite[Proposition 1.9.1]{katz1985arithmetic}, $\Spec\pi_0R'\to\Spec\pi_0B$ must then factor through a closed subscheme of $\Spec\pi_0B$.  Thus $\pi_0B\to\pi_0R'$ factors through $W_0=B_0/I$ for some ideal $I$.  This lifts to a factorization of $B\to R'$ through the $\einf$-ring $W=\Gamma_I(B)$, as desired.  
    
    We can show the converse by a similar argument to the one in the proof of Lemma \ref{lem:incidence}.  
\end{proof}	

\begin{proposition}
\label{prop:glevel}
    Suppose that $G$ is a spectral commutative finite flat group scheme over a connective $\einf$-ring $R$ and $A$ is an abstract finite abelian group.  Then the functor 
    \[
        \Level^A_{G/R}\co\CAlg_R^\cn\to\cs,\quad R'\mapsto\Level(A,G_{R'}/R') 
    \]
    is representable by an affine spectral Deligne--Mumford stack $\ss(A)=\Spet\cp_{G/R}$.  
\end{proposition}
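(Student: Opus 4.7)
The plan is to mirror the proof of Theorem \ref{thm:elevel}, replacing the role of $\CDiv_{\se/R}$ by the spectral Hilbert stack $\Hilb^N_{G/R}$ supplied by the corollary preceding this proposition.  The forgetful map $(\sd,\phi)\mapsto\sd$ exhibits $\Level^A_{G/R}$ as a subfunctor of $\Hilb^N_{G/R}$, and the incidence lemma just proven tells us exactly which $\sd$'s extend to a level-$A$ structure.

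The first step is to invoke the representability of $\Hilb^N_{G/R}$ to obtain a spectral Deligne--Mumford stack $\ch$ locally almost of finite presentation over $R$, equipped with a universal closed immersion $i_{\univ}\co\sd_{\univ}\to G\times_R\ch$ such that $\sd_{\univ}\to\ch$ is locally free of rank $N$.  The second step is to apply the preceding incidence lemma to this universal pair in order to carve out the desired closed substack of $\ch$.  Because the incidence lemma is stated for an affine base, this step requires a descent argument: for every \'etale affine chart $\Spet H\to\ch$, the base-changed pair $\sd_{\univ}\times_\ch\Spet H\to G\times_R\Spet H$ is an object of $\Hilb^N(G_H/H)$, and the incidence lemma yields an $\einf$-ring $W_H$ over $H$ representing $\Level^A_{G_H/H}$.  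Functoriality of the construction of $W_H$ (built as a localization of $H$ followed by $\Gamma_I$, as in the proof of the lemma) ensures that these rings assemble into a closed substack $\ss(A)\hookrightarrow\ch$ representing $\Level^A_{G/R}$.

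The third step upgrades $\ss(A)$ to an affine spectral Deligne--Mumford stack by showing that $\ss(A)\to\Spet R$ is in fact finite, hence affine.  By Remark \ref{rmk:finite}, finiteness may be checked on the underlying ordinary stack.  But $\ss(A)^\heartsuit$ represents the classical moduli of $A$-structures on $G^\heartsuit/\pi_0R$, which is a finite $\pi_0R$-scheme by Proposition \ref{prop:astr}.  Therefore $\ss(A)\to\Spet R$ is finite, and consequently $\ss(A)=\Spet\cp_{G/R}$ for some $\einf$-ring $\cp_{G/R}$ with $\pi_0\cp_{G/R}$ finite over $\pi_0R$.

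The main obstacle is the descent step: one must verify that the incidence $\einf$-rings $W_H$ produced on affine charts are sufficiently functorial in the base to glue into a closed sub-spectral Deligne--Mumford stack of $\ch$.  This functoriality is implicit in the localization-then-$\Gamma_I$ construction of the preceding lemma, but should be extracted and checked explicitly.  Everything else reduces to combining the classical Katz--Mazur representability from Proposition \ref{prop:astr} with the spectral descent and Artin-representability machinery already developed in the paper.
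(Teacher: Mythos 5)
Your proposal follows essentially the same route as the paper: exhibit $\Level^A_{G/R}$ as a subfunctor of the representable $\Hilb^N_{G/R}$, use the preceding incidence lemma to carve out the subfunctor as a locally closed (then closed) substack, and deduce affineness from finiteness checked on the heart via Remark~\ref{rmk:finite} together with Proposition~\ref{prop:astr}.  The one place where you diverge is the descent step you flag as the ``main obstacle'': you propose to apply the incidence lemma on \'etale affine charts of $\Hilb^N_{G/R}$ and then glue the resulting $\einf$-rings $W_H$.  The paper sidesteps this entirely by applying the incidence lemma directly to the universal pair over $\Hilb^N_{G/R}$, which is legitimate because $\Hilb^N_{G/R}$ is itself affine: its heart is the finite $\pi_0R$-scheme of Proposition~\ref{prop:astr} (a closed subscheme of a Grassmannian of rank-$N$ quotient algebras of the finite flat $\so_G$), and by Remark~\ref{rmk:finite} finiteness lifts to the spectral stack, so $\Hilb^N_{G/R}=\Spet H$ for a single $\einf$-ring $H$.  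Thus the gluing you were worried about is unnecessary, though the argument you sketch would also work and is marginally more robust in that it does not lean on the affineness of the Hilbert stack.
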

\begin{proof}
    We first prove the representability.  By definition, the functor $\Level^A_{G/R}$ is a subfunctor of the representable functor $\Hilb^N_{G/R}$, where $N=\#A$.  In view of the previous lemma and its proof, let us consider from right to left the consecutive pullbacks of universal objects 
    \[
	\xymatrix{
            (G,\text{univ.~level-$A$ str.~on $G$}) \ar[d] \ar[r] & (G,\sd_\univ,\phi_\univ) \ar[d] \ar[r] & (G,\sd_\univ) \ar[d] \ar[r] & G \ar[d] \\
            W \ar[r] & B \ar[r] & \Hilb^N_{G/R} \ar[r] & \Spet R 
	}
    \]
    It is straightforward to verify that $\ss(A)\coloneqq W $ valued on an $R$-algebra $R'$ is precisely the space of level-$A$ structures on $G_{R'}$.  

    For the affineness property, we need to prove that $\ss(A)$ is finite over $R$ in the sense of spectral algebraic geometry.  By \cite[Remark 5.2.0.2]{lu-SAG}, a morphism between spectral algebraic spaces is finite if and only if its underlying morphism between ordinary algebraic spaces is finite in the sense of classical algebraic geometry.  Thus we need only prove that $\ss(A)^\heartsuit$ is finite over $\pi_0R$.  This follows from Proposition \ref{prop:astr}.  
\end{proof}

\subsubsection{Spectral $p$-divisible groups}
\label{subsubsec:pdiv}

Given an $\einf$-ring $R$, let $\FFG(R)$ denote the $\infty$-category of spectral commutative finite flat group schemes over $R$.  Let $X\co(\Ab_\fin^p)^\op\to\FFG(R)$ be a spectral $p$-divisible group of height $h$ over an $\einf$-ring $R$ (see \cite[Definition 6.5.1]{lu-EC1} and cf.~\cite[Definition 2.0.2]{lu-EC2}).  For each nonnegative integer $r$, we write $X[p^r]$ for the image of $\bz/p^r\bz$ under $X$, which is a degree-$(p^r)^h$ spectral commutative finite flat group scheme over $R$.  


\begin{definition}
\label{def:plevel}
    Let $\bG$ be a spectral $p$-divisible group of height $h$ over a connective $\einf$-ring $R$.  A {\em level-$(\bz/p^r\bz)^h$ structure on $\bG$} is a level-$(\bz/p^r\bz)^h$ structure on $\bG[p^r]$ as in Definition \ref{def:glevel}.  We let $\Level(r,\bG/R)$ denote the $\infty$-groupoid of level-$(\bz/p^r\bz)^h$ structures on $\bG/R$.  
\end{definition}

\begin{remark}
\label{rmk:derived}
    Recall that a level-$(\bz/p^r\bz)^h$ structure on $\bG[p^r]$ is a pair $(\sd,\phi)$, where $\sd\subset\bG[p^r]$ is a finite flat closed substack of rank $(\bz/p^r\bz)^h$ over $R$, and $\phi\co(\bz/p^r\bz)^h\to\bG[p^r]^\heartsuit(\pi_0R)$ is a homomorphism such that $(\sd^\heartsuit,\phi)$ is a level-$(\bz/p^r\bz)^h$ structure on $\bG[p^r]^\heartsuit/\pi_0R$.  Given such a structure $(\sd,\phi)$, since $\bG[p^r]$ is locally free of rank $(p^r)^h$ over $R$, the rank of $\sd^\heartsuit$ over $\pi_0R$ equals that of $\bG[p^r]^\heartsuit$.  Since $\sd^\heartsuit$ is a closed subscheme of $\bG[p^r]^\heartsuit$, they must then equal.  Thus $\phi$ is a {\em $(\bz/p^r\bz)^h$-generator of $\bG[p^r]^\heartsuit(\pi_0R)$} in the sense of \cite[1.10.5]{katz1985arithmetic}.  Note that as spectral Deligne--Mumford stacks, even though $\sd$ has the same rank as $\bG[p^r]$, they are not equivalent, since closed immersions in spectral algebraic geometry are not categorical monomorphisms (see \cite[Warning 6.2.3]{lu-EC1}).  For this reason, we do not introduce the concept of $A$-generators when discussing derived level structures, and this is where the higher homotopical information of {\em derived} level structures resides.  
\end{remark}

\begin{theorem}
\label{thm:plevel}
    Let $\bG$ be a spectral $p$-divisible group of height $h$ over a connective $\einf$-ring $R$.  Then the functor
    \[
        \Level^r_{\bG/R}\co\CAlg_R^\cn\to\cs,\quad R'\mapsto\Level(r,\bG_{R'}/R') 
    \]
    is representable by an affine spectral Deligne--Mumford stack $\ss(r) =\Spet\cp^r_{\bG/R}$.  
\end{theorem}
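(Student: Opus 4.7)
The plan is to reduce Theorem \ref{thm:plevel} directly to Proposition \ref{prop:glevel} by unwinding Definition \ref{def:plevel} and exploiting the base-change compatibility of the $p^r$-torsion functor on spectral $p$-divisible groups.

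First, I would verify that the formation of $\bG[p^r]$ commutes with base change: for any $R' \in \CAlg^\cn_R$, there is a canonical equivalence
\[
    (\bG_{R'})[p^r] \simeq \bG[p^r] \times_R R'
\]
in $\FFG(R')$. This follows from the functorial description of spectral $p$-divisible groups as diagrams $X \co (\Ab_\fin^p)^\op \to \FFG(R)$ together with the fact that pullback $\FFG(R) \to \FFG(R')$ along a map $R \to R'$ commutes with the evaluation at $\bz/p^r\bz$; more concretely, if $\bG[p^r] = \Spet B$ with $B$ finite flat over $R$, then $\Spet(B \otimes_R R')$ realizes $(\bG_{R'})[p^r]$.

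Second, combining this with Definition \ref{def:plevel}, we obtain a natural equivalence of functors
\[
    \Level^r_{\bG/R} \simeq \Level^{(\bz/p^r\bz)^h}_{\bG[p^r]/R}
\]
on $\CAlg^\cn_R$. Since $\bG$ has height $h$, the spectral commutative finite flat group scheme $\bG[p^r]$ is locally free of rank $(p^r)^h = \#(\bz/p^r\bz)^h$ over $R$, so the right-hand side is precisely the moduli of level-$A$ structures on the spectral finite flat group scheme $\bG[p^r]$ with $A = (\bz/p^r\bz)^h$ in the sense of Definition \ref{def:glevel}.

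Third, I would invoke Proposition \ref{prop:glevel} applied to $G = \bG[p^r]$ and $A = (\bz/p^r\bz)^h$. This yields an affine spectral Deligne--Mumford stack $\ss(r) = \Spet \cp^r_{\bG/R}$, with $\cp^r_{\bG/R} \coloneqq \cp_{\bG[p^r]/R}$, representing $\Level^r_{\bG/R}$ and finite over $R$. The main obstacle, such as it is, is the bookkeeping in the first step: one should check that the equivalence $(\bG_{R'})[p^r] \simeq \bG[p^r]_{R'}$ is natural enough in $R'$ to identify the two moduli functors as presheaves of $\infty$-groupoids, rather than merely at each point. Given the functor-of-points definition of spectral $p$-divisible groups and the compatibility of $\Spet$ with tensoring $\einf$-rings, this naturality is routine, so the essential content of the theorem rests on the previously established Proposition \ref{prop:glevel}.
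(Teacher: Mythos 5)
Your proposal is correct and matches the paper's approach: the paper's proof is precisely the observation that $\bG[p^r]$ is a spectral commutative finite flat group scheme, so Proposition~\ref{prop:glevel} applies with $G=\bG[p^r]$ and $A=(\bz/p^r\bz)^h$. You spell out the (routine but genuinely needed) base-change compatibility $(\bG_{R'})[p^r]\simeq\bG[p^r]\times_R R'$ identifying the two moduli functors as presheaves, which the paper leaves implicit.
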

\begin{proof}
    We just notice that by the definition of a spectral $p$-divisible group, $\bG[p^r]$ is a spectral commutative finite flat group scheme.  Thus the theorem follows from Proposition \ref{prop:glevel} above about general spectral commutative finite flat group schemes.  
\end{proof}

\begin{remark}
\label{rmk:nonconnective}
    Our derived level structure functor is defined over $\CAlg^\cn$.  More generally, in view of \cite[Remark 6.1.3]{lu-EC1}, we can define such structures on $\bG/R$ where $R$ is not necessarily connective.  Let 
    \[
        \Level(r,\bG/R)\coloneqq\Level(r,\tau_{\geq0}\bG/\tau_{\geq0}R) 
    \]
    The corresponding functor $\Level^r_{\bG/R}\co\CAlg_R\to\cs$ is also representable.  This will be useful in Section \ref{subsec:LT} when we consider {\em oriented} spectral $p$-divisible groups.  The authors thank Yuchen Wu for bringing this point to their attention.  
\end{remark}

\subsubsection{Non-full level structures}
\label{subsubsec:nonfull}

So far we have treated only full level structures on commutative finite flat group schemes.  Here let us consider more general level structures, such as those relevant for power operations in Morava E-theories (see Section \ref{subsec:po}).  

\begin{definition}
\label{def:nonfull}
    Suppose that $G$ is a spectral commutative finite flat group scheme over a connective $\einf$-ring $R$.  We let $\Level_1(r,G/R)$ denote the $\infty$-groupoid of derived level-$(\bz/p^r\bz)$ structures on $G/R$.  We let $\Level_0(r,G/R)$ denote the $\infty$-groupoid of equivalence classes $(\sd,\phi)$ in $\Level_1(r,G/R)$ where two objects $(\sd,\phi)$ and $(\sd',\phi')$ are equivalent if the scheme-theoretic image of $\sd^\heartsuit$ under $\phi$ and that of $(\sd')^\heartsuit$ under $\phi'$ equal in $G^\heartsuit/\pi_0R$.  
\end{definition}

\begin{remark}
    Our notations above are intended to be consistent with the standard ones $\Gamma_1(p^r)$, $\Gamma_0(p^r)$, etc.~for the classical moduli problems.  We drop the prime $p$ altogether for readability when the level appears in a superscript, as the results here apply to all primes.  
\end{remark}

\begin{proposition}
    Hypotheses and notations as above, for each $i\in\{0,1\}$, the functor 
    \[
        \Level^{i,r}_{G/R}\co\CAlg^\cn_R\to\cs,\quad R'\to\Level_i(r,G_{R'}/R') 
    \]
    is representable by an affine spectral Deligne--Mumford stack $\Spet\cp^{i,r}_{G/R}$.  
\end{proposition}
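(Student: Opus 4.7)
The plan is to handle the two cases of $i$ separately. The case $i=1$ is immediate: by Definitions \ref{def:nonfull} and \ref{def:glevel}, $\Level_1(r,G_{R'}/R')$ is precisely $\Level(\bz/p^r\bz,G_{R'}/R')$, so Proposition \ref{prop:glevel} applied with $A=\bz/p^r\bz$ yields the required affine spectral Deligne--Mumford stack, which we rename $\Spet\cp^{1,r}_{G/R}$.

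For the case $i=0$, I plan to realize $\Level^{0,r}_{G/R}$ as the quotient of $\Level^{1,r}_{G/R}$ by a natural action of the finite group $\Gamma:=(\bz/p^r\bz)^\times=\Aut(\bz/p^r\bz)$, given on $R'$-points by $\sigma\cdot(\sd,\phi):=(\sd,\phi\circ\sigma)$. This is well-defined because the defining identity $\pi_0 i_*\so_\sd=\bigotimes_{a\in\bz/p^r\bz}\phi(a)_*\so_{\Spec\pi_0 R'}$ from Definition \ref{def:glevel} is symmetric in the index $a$, so $\sd$ itself is unchanged upon reindexing by $\sigma$ and $(\sd,\phi\circ\sigma)$ remains a derived level structure. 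To identify $\Gamma$-orbits with the equivalence classes from Definition \ref{def:nonfull}, I invoke the classical Katz--Mazur theory \cite[Chapter 1]{katz1985arithmetic}: any two $(\bz/p^r\bz)$-generators of a given cyclic subgroup of order $p^r$ in an ordinary finite flat commutative group scheme differ by a unique element of $\Gamma$. Combined with the preceding observation, this gives $\Level_0(r,G_{R'}/R')=\Level_1(r,G_{R'}/R')/\Gamma$.

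Now set $\cp^{0,r}_{G/R}:=(\cp^{1,r}_{G/R})^{h\Gamma}$, the $\einf$-ring of homotopy $\Gamma$-invariants. Since $\cp^{1,r}_{G/R}$ is finite over $R$ and $\Gamma$ is finite, $\cp^{0,r}_{G/R}$ is a finite $R$-algebra (being computed by a finite limit of finite $R$-modules), so $\Spet\cp^{0,r}_{G/R}$ is an affine spectral Deligne--Mumford stack finite over $R$. It remains to verify that the natural map $\Map_{\CAlg^\cn_R}(\cp^{0,r}_{G/R},R')\to\Level^{0,r}_{G/R}(R')$ is an equivalence for every $R'\in\CAlg^\cn_R$.

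This last verification is the step I expect to be the main obstacle: namely, checking that homotopy $\Gamma$-invariants of $\cp^{1,r}_{G/R}$ faithfully corepresent the quotient functor on $R'$-points, particularly at primes where $|\Gamma|=p^{r-1}(p-1)$ fails to be invertible in $\pi_0 R$. The natural attack is to show that the $\Gamma$-action is free in a suitable derived sense on the universal derived level structure, e.g.\ by checking that no nontrivial $\sigma\in\Gamma$ fixes a geometric point, so that the homotopy fixed points coincide with the ordinary quotient. Should this approach prove too delicate, a fallback is to emulate the three-step strategy of Propositions \ref{prop:astr} and \ref{prop:glevel} line-by-line for the $\Gamma_0$-problem, replacing the classical input on $A$-generators \cite[Proposition 1.10.13]{katz1985arithmetic} with the corresponding representability statement for cyclic subgroups of order $p^r$, then bootstrapping from the already-established Hilbert-scheme-level representability $\Hilb^{p^r}_{G/R}$.
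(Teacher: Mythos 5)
Your $i=1$ case is exactly the paper's argument. For $i=0$, your primary approach — realizing $\cp^{0,r}_{G/R}$ as the homotopy $\Gamma$-invariants $(\cp^{1,r}_{G/R})^{h\Gamma}$ with $\Gamma=(\bz/p^r\bz)^\times$ — has a genuine gap, and you have correctly identified where the trouble is but then conjectured your way past it in a direction that will not work. The key step you hoped for, ``checking that no nontrivial $\sigma\in\Gamma$ fixes a geometric point, so that the homotopy fixed points coincide with the ordinary quotient,'' is false precisely at the bad places. Over a base of characteristic $p$ (e.g.\ at supersingular elliptic curves, or more generally wherever $G^\heartsuit[p^r]$ is connected), the only homomorphism $\phi\co\bz/p^r\bz\to G^\heartsuit(\pi_0R)$ landing in a cyclic order-$p^r$ subgroup is $\phi\equiv 0$, and the corresponding Katz--Mazur $\Gamma_1$-structure (with $\sd^\heartsuit$ the connected cyclic subgroup, e.g.\ the kernel of Frobenius at $r=1$) is fixed by every $\sigma\in\Gamma$. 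So the classical map $[\Gamma_1(p^r)]\to[\Gamma_0(p^r)]$ is finite flat of degree $\#\Gamma$ but not \'etale, the $\Gamma$-action is not free, and $(\cp^{1,r}_{G/R})^{h\Gamma}$ will carry higher continuous group cohomology and will not agree with the genuine quotient ring even on $\pi_0$. Inverting $\#\Gamma=p^{r-1}(p-1)$ would rescue it, but that is exactly the degeneracy the entire Drinfeld/Katz--Mazur/Cartier-divisor machinery in this paper is designed to avoid.

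Your fallback paragraph, however, is essentially the paper's proof. The paper handles $i=0$ by noting that the classical $\Gamma_0(p^r)$-functor $\Level^{0,r}_{G^\heartsuit/\pi_0R}$ is representable by a closed subscheme of the Grassmannian of rank-$p^r$ quotients of $G^\heartsuit[p^r]$ (Katz--Mazur, Theorem~6.6.1 and the proof of Proposition~6.5.1), and then reruns the same chain of arguments used for full level structures — the incidence-substack lemma and the reduction to finiteness on the heart via Remark~5.2.0.2 of~\cite{lu-SAG} — starting from the already-established representability of $\Hilb^{p^r}_{G/R}$. If you replace the $h\Gamma$-construction with this bootstrap from the Grassmannian, your write-up would match the paper.
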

\begin{proof}
    The statement for $i=1$ is a direct consequence of the more general Proposition \ref{prop:glevel}.  For $i=0$, we just notice that the classical level structure functor $\Level^{0,r}_{G^\heartsuit/\pi_0R}$ is representable by a closed subscheme of the Grassmannian of all rank-$p^r$ quotients of $G^\heartsuit[p^r]$ (cf.~\cite[Theorem 6.6.1 and proof of Proposition 6.5.1]{katz1985arithmetic}).  By an argument analogous to that for the case of full level structures, we obtain the desired result.  
\end{proof}

\begin{remark}
\label{rmk:pdiv}
    From the above proposition, we obtain analogous representability results for spectral $p$-divisible groups as in Section \ref{subsubsec:pdiv}.  
\end{remark}

\section{Moduli problems of derived level structures}
\label{sec:moduli}

In this section, we apply the derived level structures and their representability results from Section \ref{sec:level} and discuss several related spectral moduli problems.  

\subsection{Spectral elliptic curves with level structure}
\label{subsec:mella}

In Section \ref{subsec:elevel}, given an abstract finite abelian group $A$, we defined level-$A$ structures for spectral elliptic curves (Definition \ref{def:elevel}) and showed their representability relative to an object $\se/R$ (Theorem \ref{thm:elevel}).  Here, we consider their absolute representability (cf.~\cite[Sections 4.2--4.3]{katz1985arithmetic}).  

There exists a spectral Deligne--Mumford stack $\cm_\El$ whose functor of points is 
\[
    \cm_\El\co\CAlg^\cn\to\cs,\quad R\mapsto\cm_\El(R) 
\]
where $\cm_\El(R)=\El(R)^\simeq$ is the underlying $\infty$-groupoid of the $\infty$-category of spectral elliptic curves over $R$ \cite[Theorem 2.4.1]{lu-EC1}.  

In classical algebraic geometry, we have the Deligne--Mumford stack of (ordinary) elliptic curves, which can be viewed as a spectral Deligne--Mumford stack 
\[
    \cm^\cl_\El\co\CAlg^\cn\to\cs,\quad R\mapsto\cm^\cl_\El(\pi_0R) 
\]
where $\cm^\cl_\El(\pi_0R)$ is the groupoid of elliptic curves over the commutative ring $\pi_0R$.  

Moreover, if $A$ equals $\bz/N\bz$ or $(\bz/N\bz)^2$ with $N\geq1$ an integer, we have the Deligne--Mumford stack of elliptic curves with level-$A$ structures, which can also be viewed as a spectral Deligne--Mumford stack 
\[
    \cm^{\cl,A}_\El\co\CAlg^\cn\to\cs,\quad R\mapsto\cm^{\cl,A}_\El(\pi_0R) 
\]
where $\cm^{\cl,A}_\El(\pi_0R)$ is the groupoid of elliptic curves with level-$A$ structure over the commutative ring $\pi_0R$.  

In Section \ref{subsec:elevel}, for derived level-$A$ structures, the assignment $\sx\mapsto\Level(A,\sx/R)$ determines a functor $\El(R)\to\cs$ which classifies a left fibration $\El^A(R)\to\El(R)$ of $\infty$-categories by the unstraightening construction (see \cite[Definition 3.3.2.2 and Section 2.2.1]{lu-HTT}).  Objects of $\El^A(R)$ are triples $(\se,\sd,\phi)$ where $\se$ is a spectral elliptic curve over $R$ and $(\sd,\phi)$ is a derived level-$A$ structure on $\se$ as in Definition \ref{def:elevel}.  

For each $R\in\CAlg^\cn$, consider all spectral elliptic curves over $R$ with level-$A$ structure.  This moduli problem can be thought of as a functor 
\[
    \cm_\El^A\co\CAlg^\cn\to\cs,\quad R\mapsto\El^A(R)^\simeq 
\]
where $\El^A(R)^\simeq$ is the space of spectral elliptic curves $\se/R$ with a derived level-$A$ structure $(\sd,\phi)$.  To prove its representability, we proceed as follows.  

\begin{lemma}
    For each discrete commutative $R_0$, the space $\cm_\El^A(R_0)$ is $1$-truncated.  
\end{lemma}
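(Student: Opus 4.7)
The plan is to use the forgetful map $p\co\cm_\El^A(R_0)\to\cm_\El(R_0)$ induced by $(\se,\sd,\phi)\mapsto\se$, and show that the base is $1$-truncated while every homotopy fiber of $p$ is $0$-truncated. The long exact sequence of homotopy groups will then yield that $\cm_\El^A(R_0)$ is $1$-truncated.

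For the base, the key point is that over a discrete ring $R_0$, every spectral elliptic curve is classical. Indeed, the flatness hypothesis on $\se\to\Spet R_0$ forces $\so_\se$ to be concentrated in degree zero, so the truncation functor identifies $\El(R_0)$ with the ordinary $1$-groupoid $\El^\cl(R_0)$ of classical elliptic curves over $R_0$. Since $\cm_\El^\cl$ is a Deligne--Mumford $1$-stack, the underlying Kan complex of $\El^\cl(R_0)$ is $1$-truncated, and hence so is $\cm_\El(R_0)$.

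For the fibers, I would identify the homotopy fiber of $p$ over an object $\se\in\El(R_0)$ with the space $\Level(A,\se/R_0)$; this is exactly the content of the unstraightening presentation of $\El^A(R_0)\to\El(R_0)$ recalled in the text preceding the lemma. By Definition \ref{def:elevel}, a derived level-$A$ structure is a pair $(\sd,\phi)$ consisting of a divisor $\sd\in\CDiv(\se/R_0)$ and a homomorphism $\phi\co A\to\se^\heartsuit(\pi_0R_0)$ into a discrete abelian group, so the fiber embeds into the product $\CDiv(\se/R_0)\times\Hom_\Ab\!\bigl(A,\se^\heartsuit(\pi_0R_0)\bigr)$. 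Since $R_0$ is discrete, $\se/R_0$ verifies the hypotheses of Theorem \ref{thm:cdiv}, and Lemma \ref{lem:1} tells us that $\CDiv(\se/R_0)$ is $0$-truncated; hence so is the fiber.

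The only step that requires any actual work is the first one, namely verifying that a flat spectral elliptic curve over a discrete base collapses to a classical elliptic curve; the remainder is a routine application of truncation estimates to a left fibration.
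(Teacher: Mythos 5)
Your proof is correct, and it takes a genuinely more explicit route than the paper's. The paper disposes of this lemma in a single sentence, essentially asserting that over a discrete $R_0$ the functor $\cm_\El^A$ coincides with the classical moduli stack $\cm^{\cl,A}_\El$, which is a Deligne--Mumford $1$-stack; that identification is left implicit. You instead decompose along the forgetful left fibration $p\co\cm_\El^A(R_0)\to\cm_\El(R_0)$ and verify the truncation estimates for base and fiber separately: the base is $1$-truncated because flatness forces spectral elliptic curves over a discrete base to be classical (Lurie's Theorem 2.4.1 in \cite{lu-EC1}), and the fibers $\Level(A,\se/R_0)$ are $0$-truncated because they sit inside $\CDiv(\se/R_0)\times\Hom_\Ab\bigl(A,\se^\heartsuit(\pi_0R_0)\bigr)$, where the first factor is $0$-truncated by Lemma~\ref{lem:1} and the second is discrete; the long exact sequence then closes the argument. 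What your route buys is that it reuses Lemma~\ref{lem:1} directly and avoids having to separately argue that derived level structures over a discrete base agree with classical ones---that identification is a corollary of your truncation bounds rather than an input. The only small stylistic point: it is cleaner to observe that $\Level(A,\se/R_0)$ maps to $\CDiv(\se/R_0)$ with discrete fibers (the set of compatible $\phi$'s) than to embed it into a product, but both give $0$-truncatedness. Either way the proof is sound.
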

\begin{proof}
    This follows from the fact that the classical moduli problem above is represented by a Deligne--Mumford $1$-stack.  
\end{proof}

\begin{lemma}
    The functor $\cm_\El^A\co\CAlg^\cn\to\cs$ is an \'etale sheaf.  
\end{lemma}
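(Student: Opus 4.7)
The plan is to verify the \'etale sheaf condition by reducing to two previously established sheaf properties: $(i)$ that of the moduli stack $\cm_\El$ from \cite[Theorem 2.4.1]{lu-EC1}, and $(ii)$ that of the functor $\Level^A_{\se/R}$ for a fixed $\se/R$, which follows from its representability by a spectral Deligne--Mumford stack in Theorem \ref{thm:elevel}. The forgetful map $\cm_\El^A\to\cm_\El$ sending $(\se,\sd,\phi)\mapsto\se$ organizes $\cm_\El^A$ as the unstraightening of the assignment $\se\mapsto\Level(A,\se/R)$, which is the key structural observation.

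Fix an \'etale cover $\{R\to R_i\}_{i\in I}$ of a connective $\einf$-ring $R$ with associated \v{C}ech simplicial object $R_\bullet$. First I would show essential surjectivity of the canonical comparison map $\cm_\El^A(R)\to\varprojlim_\bDelta\cm_\El^A(R_\bullet)$. Given a compatible system of triples $\{(\se_n,\sd_n,\phi_n)\}_n$ in the limit, the underlying system $\{\se_n\}_n$ descends by the sheaf property of $\cm_\El$ to a unique spectral elliptic curve $\se/R$ together with equivalences $\se\times_R R_n\simeq\se_n$ compatible with the simplicial structure. Transferring $(\sd_n,\phi_n)$ through these equivalences yields a compatible system of derived level-$A$ structures on $\{\se\times_R R_n/R_n\}_n$; by the sheaf property of $\Level^A_{\se/R}$ (which is a sheaf, being representable) this descends to a unique $(\sd,\phi)\in\Level(A,\se/R)$, furnishing the desired preimage.

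For fully faithfulness, I would run the same two-step descent on mapping spaces in $\El^A(R_\bullet)^\simeq$, using that an equivalence of triples decomposes as an equivalence of curves together with compatible identifications of divisors and of maps from $A$; both pieces again descend by $(i)$ and $(ii)$.

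The main technical point is verifying that the unstraightening/straightening machinery of \cite[Section 2.2.1]{lu-HTT} is compatible with totalization along $\bDelta$, so that descent data for the $\infty$-groupoid $\El^A(R)^\simeq$ genuinely reduces to descent data for $\se$ together with descent data for $(\sd,\phi)$ once $\se$ is chosen. This is ultimately an instance of limits commuting, but the bookkeeping of coherences between the equivalences $\se\times_R R_n\simeq\se_n$ at different simplicial levels, and their compatibility with the transferred level structures $(\sd_n,\phi_n)$, is the only nontrivial piece; everything else is formal.
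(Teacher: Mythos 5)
Your proposal is correct and follows essentially the same route as the paper: view $\cm_\El^A\to\cm_\El$ as a left fibration (unstraightening of $\se\mapsto\Level(A,\se/R)$), descend the base curve via sheafiness of $\cm_\El$, and then descend the level structure fiberwise.  The one place where the paper's organization is cleaner concerns precisely the ``main technical point'' you flag at the end: rather than choosing a lift $\se$ of the compatible system $\{\se_n\}_n$ and then transferring the $(\sd_n,\phi_n)$ across the chosen equivalences $\se\times_R R_n\simeq\se_n$ (which forces you to track coherence between these equivalences and the transferred level structures), the paper observes that $p\co\El^A(R)^\simeq\to\El(R)^\simeq$ and $q\co\varprojlim_\bDelta\El^A(U_\bullet)^\simeq\to\varprojlim_\bDelta\El(U_\bullet)^\simeq$ are Kan fibrations and $g$ is an equivalence, so $f$ is an equivalence iff every fiber map $p^{-1}(\se)\to q^{-1}g(\se)$ is.  This identifies the problem once and for all with $\Level(A,\se/R)\to\varprojlim_\bDelta\Level(A,\se\times_R U_\bullet/U_\bullet)$ with no coherence bookkeeping.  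One other minor divergence: you invoke the representability of $\Level^A_{\se/R}$ from Theorem~\ref{thm:elevel} to conclude it is an \'etale sheaf, whereas the paper re-derives the fiberwise sheaf condition directly from Lemma~\ref{lem:2} (sheafiness of $\CDiv_{\se/R}$) plus the classical fact that $A$-structures form a sheaf; both are valid, but citing Theorem~\ref{thm:elevel} is arguably the more economical move once it is available.
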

\begin{proof}
    Let $\{R\to U_i\}$ be an \'etale cover of $R$, and $U_\bullet$ be the associated \v{C}ech-simplicial object.  Consider the diagram 
    \[
        \xymatrix{
            \El^A(R)^\simeq \ar[r]^-f \ar[d]^{p} & \varprojlim_\bDelta\El^A(U_\bullet)^\simeq \ar[d]^{q} \\
            \El(R)^\simeq \ar[r]^-g              & \varprojlim_\bDelta\El(U_\bullet)^\simeq 
	}
    \]
    The map $p$ is a left fibration between Kan complexes, and so is a Kan fibration by \cite[Lemma 2.1.3.3]{lu-HTT}.  The map $q$ is a pointwise Kan fibration.  By picking the projective model structure for the homotopy limit we may assume that $q$ is a Kan fibration as well.  The map $g$ is an equivalence by \cite[Theorem 2.4.1]{lu-EC1}.  To show that $f$ is an equivalence, we need only show that for every $\se\in\El(R)$, the map 
    \[
        p^{-1}(\se)\simeq\Level(A,\se/R)\to\varprojlim_\bDelta\Level(A,\se\times_RU_\bullet/U_\bullet)\simeq q^{-1}g(\se) 
    \]
    is an equivalence.  Observe that $\Level(A,\se/R)$ is a full $\infty$-subcategory of $\CDiv(\se/R)$ and $\varprojlim_\bDelta\Level(A,\se\times_RU_\bullet/U_\bullet)$ is a full $\infty$-subcategory of $\varprojlim_\bDelta\CDiv(\se\times_RU_\bullet/U_\bullet)$.  Since $\CDiv_{\se/R}$ is an \'etale sheaf by Lemma \ref{lem:2}, the functor 
    \[
        \Level(A,\se/R)\to\varprojlim_\bDelta\Level(A,\se\times_RU_\bullet/U_\bullet)
    \]
    is fully faithful.  To show that it is an equivalence, we need only show that it is essentially surjective.  

    Given any $\{(\sd_{U_\bullet},\phi_{U_\bullet})\}$ in $\varprojlim_\bDelta\Level(A,\se\times_RU_\bullet/U_\bullet)$, clearly we can find a morphism $\sd\to\se$ in $\CDiv(\se/R)$ whose image under the equivalence 
    \[
        \CDiv(\se/R)\simeq\varprojlim_\bDelta\CDiv(\se\times_RU_\bullet/U_\bullet) 
    \]
    is $\{\sd_{U_\bullet}\to\se\times_RU_\bullet\}$, along with $\phi\co A\to \se^\heartsuit(\pi_0R)$ lifting $\{\phi_{U_\bullet}\}$.  It remains to show that $(\sd,\phi)$ is a derived level-$A$ structure.  This is true because in the classical case, 
    \[
        \Level(A,\se^\heartsuit/\pi_0R)\simeq\varprojlim_\bDelta\Level(A,\se^\heartsuit\times_{\pi_0R}\pi_0U_\bullet/\pi_0U_\bullet) 
        \qedhere
    \]
\end{proof}

\begin{lemma}
    The functor $\cm_\El^A\co\CAlg^\cn\to\cs$ is nilcomplete, infinitesimally cohesive, and integrable.  
\end{lemma}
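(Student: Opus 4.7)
The plan is to exploit the forgetful natural transformation $\pi\colon\cm_\El^A\to\cm_\El$ given on $R$-points by $(\se,\sd,\phi)\mapsto\se$. For each connective $\einf$-ring $R$ and each spectral elliptic curve $\se\in\cm_\El(R)$, the fiber of $\pi(R)$ over $\se$ is by construction $\Level(A,\se/R)$. By Theorem \ref{thm:elevel}, the functor $\Level^A_{\se/R}$ is representable by a spectral Deligne--Mumford stack locally almost of finite presentation over $R$, and so is itself nilcomplete, infinitesimally cohesive, and integrable. On the other hand, $\cm_\El$ is representable by a spectral Deligne--Mumford stack \cite[Theorem 2.4.1]{lu-EC1} and therefore has the same three properties. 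The strategy is to combine these two inputs by a fiber-wise argument.

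For nilcompleteness, we form the commutative square
\[
\xymatrix{
\cm_\El^A(R) \ar[r] \ar[d] & \varprojlim_n \cm_\El^A(\tau_{\leq n}R) \ar[d] \\
\cm_\El(R) \ar[r] & \varprojlim_n \cm_\El(\tau_{\leq n}R)
}
\]
whose bottom arrow is an equivalence by nilcompleteness of $\cm_\El$. Given $\se\in\cm_\El(R)$ with truncations $\se_n\in\cm_\El(\tau_{\leq n}R)$, the fiber of the left vertical arrow over $\se$ is $\Level(A,\se/R)$, while the fiber of the right vertical arrow over the compatible family $\{\se_n\}_n$ is $\varprojlim_n\Level(A,\se_n/\tau_{\leq n}R)$, since fibers commute with limits in $\cs$. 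The induced map on fibers is then the comparison map that witnesses nilcompleteness of $\Level^A_{\se/R}$, and hence is an equivalence by Theorem \ref{thm:elevel}. Combined with the equivalence on the base, this yields nilcompleteness of $\cm_\El^A$.

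The arguments for infinitesimal cohesiveness and integrability follow the same template: for each property one forms the analogous commutative square comparing the value of $\cm_\El^A$ on the relevant diagram of $\einf$-rings with the corresponding limit of its values, the bottom comparison being an equivalence because $\cm_\El$ already has the property. Passing to fibers over a point $\se\in\cm_\El(R)$ reduces the desired equivalence for $\cm_\El^A$ to the same property for $\Level^A_{\se/R}$, which is provided by Theorem \ref{thm:elevel}.

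The only mildly delicate step is the identification, in each case, of the fiber of a limit of spaces with the limit of fibers over the appropriate coherent family; but this is immediate in the $\infty$-category $\cs$. In fact, the same reasoning shows more generally that a natural transformation $F\to G$ which is relatively representable by spectral Deligne--Mumford stacks locally almost of finite presentation transports these three local-to-global properties from $G$ to $F$; this observation also streamlines the verification of the remaining hypotheses needed for the absolute representability statement in Theorem \ref{thm:mella}.
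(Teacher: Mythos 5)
Your proof is correct and takes essentially the same approach as the paper: reduce along the forgetful map $\cm_\El^A\to\cm_\El$ to the fibers, which are the relatively representable functors $\Level^A_{\se/R}$ from Theorem~\ref{thm:elevel}. The only difference is in the packaging: the paper cites \cite[Remark~17.3.7.3 and Proposition~17.3.8.4]{lu-SAG} to pass from the composite $\cm_\El^A\to\cm_\El\to\ast$ to the natural transformation $\cm_\El^A\to\cm_\El$ and then from that natural transformation to its fibers, whereas you carry out the fiber-wise comparison by hand via the commutative square of limits. Your concluding observation --- that a natural transformation which is relatively representable by spectral Deligne--Mumford stacks locally almost of finite presentation transports nilcompleteness, infinitesimal cohesiveness, and integrability from target to source --- is exactly the content of those two citations.
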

\begin{proof}
    Consider the following diagram in $\Fun(\CAlg^\cn,\cs)$: 
    \begin{equation}
    \label{mella}
	\xymatrix{
		\cm_\El^A \ar[r]^-f \ar[rd]_h & \cm_\El \ar[d]^g \\
                                          & \!\,\ast 
	}    
    \end{equation}
    By \cite[Remark 17.3.7.3]{lu-SAG}, since $\cm_\El$ is nilcomplete, infinitesimally cohesive, and integrable from \cite[Theorem 2.4.1]{lu-EC1}, we need only prove that $f$ is so.  By \cite[Proposition 17.3.8.4]{lu-SAG}, $f$ has these properties if and only if each fiber of $f$ does, i.e., for each $R\in\CAlg^\cn$ and a point $\eta_{_\se}\in\cm_\El(R)$ which represents a spectral elliptic curve $\se$, the functor 
    \[
        \CAlg^\cn_R\to\cs,\quad R'\mapsto\cm_\El^A(R')\times_{\cm_\El(R')}\{\eta_{_\se}\} 
    \]
    is nilcomplete, infinitesimally cohesive, and integrable.  This functor is precisely $\Level^A_{\se/R}$, which is so by Theorem \ref{thm:elevel}.  
\end{proof}

\begin{lemma}
    The functor $\cm_\El^A$ admits a cotangent complex which is connective and almost perfect.  
\end{lemma}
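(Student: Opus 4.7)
The plan is to derive this from the composition triangle in diagram (\ref{mella}). By \cite[Proposition 17.2.5.1]{lu-SAG}, once both $L_{\cm_\El}$ and the relative cotangent complex $L_f := L_{\cm_\El^A/\cm_\El}$ exist, there is a fiber sequence
\[
    f^* L_{\cm_\El} \to L_{\cm_\El^A} \to L_f.
\]
Since connectivity and almost perfectness are preserved under extensions in such a fiber sequence, and since $L_{\cm_\El}$ is known to be connective and almost perfect by \cite[Theorem 2.4.1]{lu-EC1}, the task reduces to producing $L_f$ and showing it has the same two properties.

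The main input for the relative cotangent complex is that $f$ is representable: for any morphism $\Spec R \to \cm_\El$ classifying a spectral elliptic curve $\se/R$, the pullback is the affine spectral Deligne--Mumford stack $\Spet \cp_{\se/R}$ furnished by Theorem \ref{thm:elevel}. Consequently $L_f$ exists and its pullback along a point $\eta = (\se,\sd,\phi) \in \cm_\El^A(R)$ is the absolute cotangent complex of $\Spet \cp_{\se/R}$ over $\Spet R$ at the $R$-point classifying $(\sd,\phi)$. Almost perfectness of this pullback is then immediate from the ``locally almost of finite presentation'' clause of Theorem \ref{thm:elevel}, together with the standard criterion \cite[Theorem 17.4.2.1]{lu-SAG}.

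For connectivity of $L_f$, I would run the same strategy as in Lemma \ref{lem:4}. Given a connective $R$-module $M$, the fiber $F_\eta(M)$ of $\Level^A_{\se/R}(R\oplus M)\to\Level^A_{\se/R}(R)$ over $\eta$ classifies first-order deformations of the derived level-$A$ structure $(\sd,\phi)$ on the fixed spectral elliptic curve $\se\times_R\Spet(R\oplus M)$. Since the representing object $\Spet\cp_{\se/R}$ embeds into $\CDiv_{\se/R}$ as a closed substack (via the construction recalled in the proof of Theorem \ref{thm:elevel} and Lemma \ref{lem:incidence}), the space $F_\eta(M)$ sits inside its $\CDiv_{\se/R}$-analogue, whose corepresenting $R$-module was computed in Lemma \ref{lem:4} to be $\Sigma^{-1}p_+(\eta_+ L_{\sd/(\se\times_RR)})$, a priori $(-1)$-connective and almost perfect. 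This gives $L_f|_\eta$ the same bounds a priori. To upgrade $(-1)$-connective to connective, apply Nakayama's lemma to reduce to the vanishing of $\pi_{-1}(\kappa\otimes_{\pi_0R} L_f|_\eta)$ at residue fields $\kappa$ of $\pi_0 R$, which by the usual duality unwinds to the following classical assertion: over $A=\kappa[\epsilon]/(\epsilon^2)$, any automorphism of the base change $\eta_A$ of the level-$A$ structure $\eta$ that restricts to the identity on $\eta$ is itself the identity. This is the level-structure analogue of the line-bundle rigidity invoked at the end of the proof of Lemma \ref{lem:4}, and is standard in the classical theory of arithmetic moduli of elliptic curves with level structure, cf.~\cite[Section 4.3]{katz1985arithmetic}.

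The main obstacle is the final rigidity step, which requires carefully tracking how the additional group-subscheme condition underlying a derived level-$A$ structure interacts with the pushforward formula of Lemma \ref{lem:4}; once the classical rigidity input is secured, connectivity of $L_f$ (and hence of $L_{\cm_\El^A}$) follows formally from the fiber sequence above, while almost perfectness follows from Theorem \ref{thm:elevel} combined with \cite[Theorem 17.4.2.1]{lu-SAG}.
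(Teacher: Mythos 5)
Your proposal follows essentially the same strategy as the paper: reduce via the triangle in diagram \eqref{mella} to showing that $f\co\cm_\El^A\to\cm_\El$ admits a well-behaved relative cotangent complex, then identify the fibers of $f$ over corepresentable points with $\Level^A_{\se/R}$ and invoke Theorem \ref{thm:elevel}.  The paper cites \cite[Propositions 17.3.9.1 and 17.2.4.7]{lu-SAG} for this reduction rather than the composition fiber sequence of \cite[Proposition 17.2.5.1]{lu-SAG}, but these are two formulations of the same maneuver.

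Where you diverge from the paper — and where your argument is more laborious than it needs to be — is in the final connectivity step.  Once Theorem \ref{thm:elevel} is in hand, $\Level^A_{\se/R}$ is known to be representable by an affine spectral Deligne--Mumford stack $\Spet\cp_{\se/R}$ with $\cp_{\se/R}$ a \emph{connective} $R$-algebra that is almost of finite presentation over $R$.  The cotangent complex $L_{\cp_{\se/R}/R}$ is therefore automatically connective (any map of connective $\einf$-rings has a connective relative cotangent complex) and almost perfect (by \cite[Proposition 17.1.5.1]{lu-SAG} for the a.f.p.\ case), so $L_f$ inherits both properties with no further work.  Re-running the Nakayama/rigidity argument of Lemma \ref{lem:4} at this stage is redundant — that argument was needed there precisely to \emph{establish} the representability of $\CDiv_{\se/R}$ via the Artin criterion, and the paper then leveraged that to get $\Level^A_{\se/R}$ representable as a closed substack — and, as written, the transition from ``$F_\eta(M)$ sits inside its $\CDiv$-analogue'' to ``$L_f|_\eta$ has the same connectivity bounds'' is not a valid inference on its own (a closed substack gives a fiber sequence $i^*L_{\CDiv_{\se/R}/R}\to L_{\ss(A)/R}\to L_{\ss(A)/\CDiv_{\se/R}}$ with the relative term $1$-connective; it does not directly bound $L_{\ss(A)/R}$ from below by the sublevel's cotangent complex).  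In short: the skeleton of your proof is right and matches the paper, but the connectivity step should simply read off from the representability of $\Level^A_{\se/R}$ by a connective affine spectral Deligne--Mumford stack, rather than re-deriving it from first principles.
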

\begin{proof}
    Again, let us consider the diagram \eqref{mella}.  Then \cite[Proposition 17.3.9.1]{lu-SAG} reduces us to proving that 
    $f$ admits a cotangent complex.  
    
    By \cite[Proposition 17.2.4.7]{lu-SAG}, a morphism $j\co X\to Y$ in $\Fun(\CAlg^\cn,\cs)$ admits a cotangent complex if, for any corepresentbale $Y'\simeq\Map(R,-)\co\CAlg^\cn\to\cs$ and any natural transformation $Y'\to Y$, $j'$ in the following pullback diagram admits a cotangent complex: 
    \[
	\xymatrix{
            Y'\times_YX \ar[d]^{j'} \ar[r] & X \ar[d]^j \\
		Y' \ar[r]                      & Y 
	}        
    \]
    Thus, to prove that $\cm_\El^A\to\cm_\El$ admits a cotangent complex, we need only prove that, for any $R\in\CAlg^\cn$ and any spectral elliptic curve $\se$ which corresponds to a natural transformation $\Spet R\to\cm_\El$, or to $\eta_{_\se}\in\cm_\El(R)$, the functor 
    \[
        \CAlg^\cn_R\to\cs,\quad R'\mapsto\cm_\El^A(R')\times_{\cm_\El(R')}\{\eta_{_\se}\} 
    \]
    admits a cotangent complex.  Again we identify this functor as $\Level^A_{\se/R}$ and apply Theorem \ref{thm:elevel}.  Moreover, the properties of the desired cotangent complex being connective and almost perfect also follow from those associated with $\Level^A_{\se/R}$.  
\end{proof}

\begin{lemma}
    The functor $\cm_\El^A$ is locally almost of finite presentation.  
\end{lemma}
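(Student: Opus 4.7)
The plan is to adapt the template of the previous lemmas in this subsection to the diagram \eqref{mella}.  First, I would invoke \cite[Theorem 2.4.1]{lu-EC1}, which asserts that $\cm_\El$ is locally almost of finite presentation over the sphere spectrum; equivalently, $g\co\cm_\El\to\ast$ has this property.  Since being locally almost of finite presentation is stable under composition, it then suffices to prove that $f\co\cm_\El^A\to\cm_\El$ is locally almost of finite presentation as a morphism of functors in $\Fun(\CAlg^\cn,\cs)$.

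To handle $f$, I would apply the same fiberwise strategy that appeared in the proofs of the preceding lemmas (compare the use of \cite[Propositions 17.3.8.4]{lu-SAG} for cohesiveness/integrability and of \cite[Proposition 17.2.4.7]{lu-SAG} for the cotangent complex): a morphism in $\Fun(\CAlg^\cn,\cs)$ is locally almost of finite presentation provided that its pullback along every natural transformation out of a corepresentable functor enjoys the same property.  Concretely, for each $R\in\CAlg^\cn$ and each $\eta_{_\se}\in\cm_\El(R)$ classifying a spectral elliptic curve $\se/R$, the pullback of $f$ along $\eta_{_\se}\co\Spet R\to\cm_\El$ is the functor
\[
    \CAlg^\cn_R\to\cs,\quad R'\mapsto\cm_\El^A(R')\times_{\cm_\El(R')}\{\eta_{_\se}\},
\]
which is canonically equivalent to $\Level^A_{\se/R}$ from Section \ref{subsec:elevel}.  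By Theorem \ref{thm:elevel}, the latter is representable by an affine spectral Deligne--Mumford stack that is locally almost of finite presentation over $R$.  Combining this with the corresponding property of $g$ yields the conclusion for $h$.

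The only technical point requiring a little care is the fiberwise criterion itself.  By \cite[Definition 17.4.1.1]{lu-SAG}, being locally almost of finite presentation amounts to commuting with filtered colimits after restriction to each $\tau_{\leq n}\CAlg^\cn$.  The point is that this property descends from fibers over corepresentables to the morphism $f$, which is a standard unwinding using that filtered colimits in $\tau_{\leq n}\CAlg^\cn_R$ agree with those computed in $\tau_{\leq n}\CAlg^\cn$.  Once that bookkeeping is in place, the statement follows directly from Theorem \ref{thm:elevel} and \cite[Theorem 2.4.1]{lu-EC1}, so I do not anticipate any serious obstacle beyond faithfully transcribing the fiberwise argument already used above.
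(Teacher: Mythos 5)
Your proof is correct, but it takes a genuinely different route from the paper.  The paper's proof is a one‐liner: it observes that the preceding two lemmas have already established that $h\co\cm_\El^A\to\ast$ is infinitesimally cohesive and admits an almost perfect cotangent complex, and then invokes \cite[Proposition 17.4.2.2]{lu-SAG} (infinitesimally cohesive $+$ almost perfect cotangent complex $\Rightarrow$ locally almost of finite presentation).  You instead bypass the cotangent‐complex lemma entirely and argue by composition: factor $h$ as $\cm_\El^A\xrightarrow{f}\cm_\El\xrightarrow{g}\ast$, note $g$ is locally almost of finite presentation by \cite[Theorem 2.4.1]{lu-EC1}, and reduce $f$ fiberwise over corepresentables to the relative statement of Theorem~\ref{thm:elevel}.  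Both arguments are sound.  The paper's version is essentially free given the work already done (and mirrors the reverse implication it used in Lemma~\ref{lem:4}, where almost perfectness of the cotangent complex was deduced from the lafp property of $\CDiv_{\se/R}$), while yours is more self‐contained and makes the dependence on the relative representability result explicit.  The one place to be a bit more careful is the ``fiberwise criterion'': in \cite[\S17.4]{lu-SAG} the relative lafp condition for a natural transformation $f\co X\to Y$ is phrased as a pullback condition on squares of filtered colimits, and its equivalence with ``every base change of $f$ along a corepresentable is lafp'' is the content of the unwinding you allude to (one uses that any point of $\colim_\alpha Y(R_\alpha)$ is represented at a finite stage, so the diagram can be re‑indexed over $\CAlg^\cn_{R_{\alpha_0}}$); this is exactly the bookkeeping you flag, and it does go through.
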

\begin{proof}
    The morphism $h\co\cm_\El^A\to\ast$ in \eqref{mella} is infinitesimally cohesive and admits an almost perfect cotangent complex.  By \cite[17.4.2.2]{lu-SAG}, it is locally almost of finite presentation.  Therefore $\cm_\El^A$ is locally almost of finite presentation, since $\ast$ is a final object of $\Fun(\CAlg^\cn,\cs)$.  	
\end{proof}

Here is a generalization of \cite[Theorem 2.4.1]{lu-EC1}.  

\begin{theorem}
\label{thm:mella}
    The functor 
    \[
        \cm_\El^A\co\CAlg^\cn\to\cs,\quad R\mapsto\cm_\El^A(R)=\El^A(R)^\simeq 
    \]
    is representable by a spectral Deligne--Mumford $1$-stack which are locally almost of finite presentation over the sphere spectrum.  
\end{theorem}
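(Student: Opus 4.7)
The plan is to apply Lurie's spectral Artin representability theorem (Theorem \ref{thm:Lurie}) with $n = 1$, taking the structure map $\cm_\El^A \to \Spec \bs$ as the natural transformation; here $\bs$ serves as the Noetherian base, with $\pi_0 \bs = \bz$ a Grothendieck ring. The five hypotheses of Theorem \ref{thm:Lurie} are exactly what the five immediately preceding lemmas will have established: $1$-truncation on discrete commutative rings, \'etale descent, nilcompleteness together with infinitesimal cohesion and integrability, existence of a connective almost perfect cotangent complex, and local almost finite presentation. Assembling them yields a spectral Deligne--Mumford $1$-stack representing $\cm_\El^A$, and the ``locally almost of finite presentation over $\bs$'' clause of the theorem is nothing but the fifth hypothesis, since $\ast$ is final in $\Fun(\CAlg^\cn,\cs)$.

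The conceptual content of each lemma is a reduction to previously representable pieces. I would organize the reduction around the diagram
\[
    \xymatrix{
        \cm_\El^A \ar[r]^-f \ar[rd]_h & \cm_\El \ar[d]^g \\
                                      & \!\,\ast
    }
\]
and reduce each property of $\cm_\El^A$ either to the corresponding property of $g$ (available via \cite[Theorem 2.4.1]{lu-EC1}) combined with the same property of $f$, or, after further pullback along a point $\eta_{_\se} \in \cm_\El(R)$ classifying a spectral elliptic curve $\se/R$, to the fiber $\cm_\El^A \times_{\cm_\El} \{\eta_{_\se}\}$. By unstraightening, this fiber is precisely the relative level-structure functor $\Level^A_{\se/R}$ of Theorem \ref{thm:elevel}, so its cotangent complex, infinitesimal cohesion, nilcompleteness, integrability, and finite-presentation properties transfer for free. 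The more global hypotheses ($1$-truncation on discrete rings, \'etale descent) reduce to the classical moduli stack of elliptic curves with level-$A$ structure together with Theorem \ref{thm:elevel}.

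The main obstacle will be the \'etale descent step, where essential surjectivity of the comparison $\Level(A, \se/R) \to \varprojlim_\bDelta \Level(A, \se \times_R U_\bullet / U_\bullet)$ must be verified beyond the fully-faithfulness one gets for free from $\CDiv_{\se/R}$ being an \'etale sheaf (Lemma \ref{lem:2}). I would handle this by first descending the divisor $\sd$ using Lemma \ref{lem:2}, then descending the homomorphism $\phi\co A \to \se^\heartsuit(\pi_0 R)$ via classical \'etale descent applied to $\cm^{\cl,A}_\El$, and finally observing that the compatibility condition in Definition \ref{def:elevel} between $\sd^\heartsuit$ and $\phi$ is itself an \'etale-local condition on $\pi_0 R$. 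Once this descent bookkeeping is in place, Theorem \ref{thm:Lurie} delivers the conclusion with essentially no further calculation.
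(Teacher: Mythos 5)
Your proposal is correct and follows essentially the same route as the paper: apply Theorem \ref{thm:Lurie} with $n=1$, organize the verification around the diagram $\cm_\El^A \to \cm_\El \to \ast$, and reduce each criterion along fibers of $f$ to the already-established relative representability of $\Level^A_{\se/R}$ from Theorem \ref{thm:elevel}, with classical \'etale descent of $(\sd^\heartsuit,\phi)$ handling essential surjectivity in the sheaf condition. The paper phrases the descent of $\phi$ and the compatibility check together as the classical equivalence $\Level(A,\se^\heartsuit/\pi_0R)\simeq\varprojlim_\bDelta\Level(A,\se^\heartsuit\times_{\pi_0R}\pi_0U_\bullet/\pi_0U_\bullet)$ rather than as two separate steps, but this is only a matter of bookkeeping.
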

\begin{proof}
    We apply Theorem \ref{thm:Lurie} and verify the set of conditions one by one through the above series of lemmas.  
\end{proof}

\begin{remark}
\label{rmk:tlevel}
    With motivation from chromatic homotopy theory, as a variant of \cite[Theorem 2.4.1]{lu-EC1}, Lurie proved the representability of a functor of {\em oriented} elliptic curves by a nonconnective spectral Deligne--Mumford stack \cite[Proposition 7.2.10]{lu-EC2} (cf.~\cite[Warning 0.0.5]{lu-EC1}).  In particular, taking global sections of its structure sheaf, he recovered the periodic $\einf$-ring spectrum $\TMF$ of topological modular forms previously constructed by Goerss, Hopkins, and Miller through obstruction theory \cite[Definition 7.0.3]{lu-EC2}.  Given his results, Theorem \ref{thm:mella} is readily adaptable to oriented elliptic curves with level structure and their corresponding periodic spectra of topological modular forms with level structure, though we do not include the details here.  The interested reader may compare Theorems \ref{thm:dlevel} and \ref{thm:jlr} below, including their proofs.  In view of \cite{hill2016topological}, it would be interesting to discuss suitably compactified objects in this setting as well (cf.~\cite{davies2025tate}).  
\end{remark}

\subsection{Higher-homotopical Lubin--Tate towers}
\label{subsec:LT}

Based on Section \ref{subsec:plevel}, specifically \ref{subsubsec:pdiv}, our goal in this subsection is to generalize to higher levels some of the representability results on {\em deformations} of $p$-divisible groups from \cite[\S3]{lu-EC2}.  

Let $\bG_0$ be a nonstationary $p$-divisible group over a commutative ring $R_0$ of height $h$, and $R\in\CAlg^\ad_\cpl$.  Recall from \cite[Definitions 3.1.4 and 3.1.1]{lu-EC2} that a {\em deformation of $\bG_0$ over $R$} is a spectral $p$-divisible group $\bG$ over $R$ together with an equivalence class of $\bG_0$-taggings of $\bG$.  A main result therein is a representability theorem with the {\em spectral deformation ring} $R^\un_{\bG_0}$ representing the moduli problem of such deformations \cite[Theorem 3.0.11]{lu-EC2} (see also \cite[Theorems 3.1.15 and 3.4.1]{lu-EC2}).  

\begin{definition}
\label{def:dlevel}
    As in Section \ref{subsubsec:pdiv}, we let $\Level(r,\bG/R)$ denote the space of derived level-$(\bz/p^r\bz)^h$ structures on $\bG$.  Consider the functor 
    \[
        \Def_{\bG_0}^r\co\CAlg^\ad_\cpl\to\cs,\quad R\mapsto\Def(r,\bG_0,R) 
    \]
    Here $\Def(r,\bG_0,R)$ denotes the $\infty$-category whose objects are triples $(\bG,\alpha,\lambda)$ such that 
    \begin{itemize}
        \item $\bG$ is a spectral $p$-divisible group over $R$, 
    
        \item $\alpha$ is an equivalence class of $\bG_0$-taggings of $\bG$, and 

        \item $\lambda\in\Level(r,\bG/R)$ as in Definition \ref{def:plevel}.  
    \end{itemize}
    We call each object $(\bG,\alpha,\lambda)$ a {\em deformation of $\bG_0$ over $R$ with level-$(\bz/p^r\bz)^h$ structure}.  
\end{definition}

\begin{remark}
    Given $\alpha$ as above, a derived level-$(\bz/p^r\bz)^h$ structure $\lambda$ on $\bG/R$ determines a level-$(\bz/p^r\bz)^h$ structure on $\bG_0/R_0$ (up to an extension of scalars) by base change along $\alpha$.  
\end{remark}

\begin{theorem}
\label{thm:dlevel}
    With the above hypotheses and notations, the functor $\Def_{\bG_0}^r$ is corepresentable by an $\einf$-ring whose 0'th homotopy group is finite over that of the spectral deformation ring $R^\un_{\bG_0}$.  
\end{theorem}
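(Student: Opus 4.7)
The plan is to exhibit the representing $\einf$-ring as the one obtained by applying Theorem \ref{thm:plevel} to the universal deformation of $\bG_0$, thereby chaining Lurie's representability of $\Def_{\bG_0}$ from \cite{lu-EC2} with our representability of derived level structures relative to a fixed spectral $p$-divisible group. First, I would invoke the theorem of \cite{lu-EC2} corepresenting $\Def_{\bG_0}$ by the spectral deformation ring $R^\un_{\bG_0}$; this furnishes a universal deformation $(\bG^\univ,\alpha^\univ)$ of $\bG_0$ over $\Spf R^\un_{\bG_0}$. Forgetting the level structure supplies a natural transformation $\pi\co\Def^r_{\bG_0}\to\Def_{\bG_0}$ whose fiber over a point $f\co R^\un_{\bG_0}\to R$, classifying a deformation $(\bG,\alpha)=f^*(\bG^\univ,\alpha^\univ)$, is exactly $\Level(r,\bG/R)$.

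Next, I would apply Theorem \ref{thm:plevel} to the connective pair $\bG^\univ/R^\un_{\bG_0}$, obtaining an affine spectral Deligne--Mumford stack $\Spet\cp^r_{\bG^\univ/R^\un_{\bG_0}}$ that corepresents $\Level^r_{\bG^\univ/R^\un_{\bG_0}}$. Taking this $R^\un_{\bG_0}$-algebra as the candidate representing object, and using base-change stability of derived level structures together with the adjunction, we obtain, for each $R\in\CAlg^\ad_\cpl$, the identification
\[
    \Def^r_{\bG_0}(R)\simeq\bigsqcup_{f\in\Map(R^\un_{\bG_0},R)}\Level(r,f^*\bG^\univ/R)\simeq\Map\big(\cp^r_{\bG^\univ/R^\un_{\bG_0}},R\big).
\]
The finiteness clause of Proposition \ref{prop:glevel}, inherited ultimately from the classical Katz--Mazur finiteness extracted in Proposition \ref{prop:astr}, then shows that $\pi_0\cp^r_{\bG^\univ/R^\un_{\bG_0}}$ is finite over $\pi_0R^\un_{\bG_0}$. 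Since finite extensions of an adically complete Noetherian ring are themselves adically complete with respect to the pulled-back ideal of definition, the representing ring lies in $\CAlg^\ad_\cpl$ as required.

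The main obstacle I anticipate is the compatibility of the relative representability of Theorem \ref{thm:plevel} (which is naturally phrased over $\CAlg^\cn_{R^\un_{\bG_0}}$) with the ambient category $\CAlg^\ad_\cpl$ of the functor $\Def^r_{\bG_0}$. One must verify that, given a continuous algebra map $R^\un_{\bG_0}\to R$, derived level structures on the pulled-back $p$-divisible group are classified by $R^\un_{\bG_0}$-algebra maps in the adic-complete sense, not merely as maps of connective $\einf$-rings; because $R^\un_{\bG_0}$ is Noetherian and adically complete and the candidate representing object is finite over it, this ultimately reduces to unwinding the continuity condition against the ambient adic topology, a routine but necessary check that also slots cleanly into the setup preparing for Theorem \ref{thm:jlr} in the oriented case.
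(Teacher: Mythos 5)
Your proposal follows essentially the same two-step route as the paper: corepresent $\Def_{\bG_0}$ by $R^\un_{\bG_0}$ via Lurie's theorem, then apply Theorem \ref{thm:plevel} to the universal deformation $\bG_\univ/R^\un_{\bG_0}$ and take $\cp^r_{\bG_\univ/R^\un_{\bG_0}}$ as the representing ring, with the finiteness on $\pi_0$ inherited from the affineness of the level-structure moduli. You also correctly flag the key subtlety, namely reconciling the relative representability over $\CAlg^\cn_{R^\un_{\bG_0}}$ with the ambient category $\CAlg^\ad_\cpl$; the paper discharges this by citing claim (ii) in the proof of Theorem 3.4.1 of \cite{lu-EC2}, whereas you sketch the finiteness-implies-completeness argument, which is a reasonable alternative. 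The one ingredient you omit is Remark \ref{rmk:nonconnective}: since $R$ ranges over possibly non-connective rings in $\CAlg^\ad_\cpl$, you need the extension of $\Level(r,-)$ to non-connective base rings (via $\tau_{\geq 0}$) before the identification $\Level(r,f^*\bG_\univ/R)\simeq\Map_{\CAlg_{R^\un_{\bG_0}}}(\cp^r_{\bG_\univ/R^\un_{\bG_0}},R)$ makes sense; this is a small but necessary step that the paper invokes explicitly.
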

\begin{proof}
    Following \cite[4.3.4]{katz1985arithmetic}, we view this moduli problem as a product of a representable one and a relatively representable one, and apply \cite[Theorem 3.1.15]{lu-EC2} and Theorem \ref{thm:plevel} consecutively.  
    
    To be specific, let $\bG_\univ/R^\un_{\bG_0}$ denote the universal deformation of $\bG_0/R_0$ from \cite[Theorem 3.1.15]{lu-EC2}.  Suppose that $\bG$ is a spectral deformation of $\bG_0$ to $R$ classified by a map of $\einf$-rings $R^\un_{\bG_0}\to R$, along which $\bG\simeq\bG_\univ\times_{R^\un_{\bG_0}}R$ as spectral $p$-divisible groups over $R$.  We then obtain from Theorem \ref{thm:plevel} and Remark \ref{rmk:nonconnective} 
    \[
        \Level(r,\bG/R)\simeq\Level(r,\bG_\univ\times_{R^\un_{\bG_0}}R/R)\simeq\Map_{\CAlg_{R^\un_{\bG_0}}}(\cp^r_{\bG_\univ/R^\un_{\bG_0}},R) 
    \]
    where $\cp^r_{\bG_\univ/R^\un_{\bG_0}}$ classifies derived level-$(\bz/p^r\bz)^h$ structures on the universal deformation $\bG_\univ/R^\un_{\bG_0}$ as a spectral $p$-divisible group, with $\pi_0\cp^r_{\bG_\univ/R^\un_{\bG_0}}$ finite over $\pi_0R^\un_{\bG_0}$ by affineness (cf.~\cite[Remark 5.2.0.2]{lu-SAG}).  

    Let us verify that the $\einf$-ring $\cp^r_{\bG_\univ/R^\un_{\bG_0}}$ is as desired.  Indeed, consider the functor 
    \[
        \CAlg^\ad_\cpl\to\cs,\quad R\mapsto\Map_{\CAlg^\ad_\cpl}(\cp^r_{\bG_\univ/R^\un_{\bG_0}},R) 
    \]
    Given $R\in\CAlg^\ad_\cpl$, $\Map_{\CAlg^\ad_{\cpl,R_0}}(\cp^r_{\bG_\univ/R^\un_{\bG_0}},R)$ can be viewed as the $\infty$-category of pairs $(f,g)$ where 
    \[
        f\co R^\un_{\bG_0}\to R 
    \]
    along the structure morphism of $\Spet\cp^r_{\bG_\univ/R^\un_{\bG_0}}$ over $R^\un_{\bG_0}$ classifies a deformation $(\bG,\alpha)$ over $R$ of $\bG_0/R_0$, and 
    \[
        g\in\Map_{\CAlg_{R^\un_{\bG_0}}}(\cp^r_{\bG_\univ/R^\un_{\bG_0}},R)\simeq\Level(r,\bG/R)
    \]
    along the restriction from $\CAlg^\ad_\cpl$ to $\CAlg$ (cf.~claim (ii) in \cite[proof of Theorem 3.4.1]{lu-EC2}) specifies a derived level-$(\bz/p^r\bz)^h$ structure $\lambda$ on $\bG/R$.  Thus we recover precisely the functor $\Def_{\bG_0}^r$ as in Definition \ref{def:dlevel}.  
\end{proof}

Although we have obtained $\einf$-rings as classifying objects associated with the above spectral moduli problems, these spectra may be complicated, and we do not know yet their homotopy groups in general (see Section \ref{subsubsec:hhg} below).  In algebraic topology, the orientation of an $\einf$-ring spectrum makes the $E_2$-page of its associated Atiyah--Hirzebruch spectral sequence degenerate and gives us certain information of its homotopy groups.  

\begin{definition}
\label{def:ordlevel}
    Let $\bG_0$ be a height-$h$ $p$-divisible group over $R_0$ as above.  Define the functor 
    \[
        \Def_{\bG_0}^{\Or,r}\co\CAlg^\ad_\cpl\to\cs,\quad R\mapsto\Def^\Or(r,\bG_0,R) 
    \]
    where $\Def^\Or(r,\bG_0,R)$ is the space of {\em oriented deformation $(\bG,\alpha,e,\lambda)$ of $\bG_0$ over $R$ with level-$(\bz/p^r\bz)^h$ structure}, with 
    \begin{itemize}
        \item $\bG$ a spectral $p$-divisible group over $R$, 
        
        \item $\alpha$ an equivalence class of $\bG_0$-taggings of $\bG$, 
        
        \item $e\co S^2\to\Omega^\infty\bG^\circ(\tau_{\geq0}R)$ an orientation of the identity component of $\bG$, and 
        
        \item $\lambda$ a derived level-$(\bz/p^r\bz)^h$ structure on $\bG$.  
    \end{itemize}
\end{definition}

\begin{theorem}
\label{thm:jlr}
    Hypotheses and notations as above, for each nonnegative integer $r$, the functor $\Def_{\bG_0}^{\Or,r}$ is corepresentable by an $\einf$-ring, depending functorially on $\bG_0/R_0$, as an algebra over the oriented deformation ring $R^\Or_{\bG_0}$.  Moreover, its 0'th homotopy group is finite over $\pi_0R^\Or_{\bG_0}$.  
\end{theorem}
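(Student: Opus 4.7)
The plan is to mimic the proof of Theorem \ref{thm:dlevel}, replacing Lurie's spectral deformation ring $R^\un_{\bG_0}$ by the oriented counterpart $R^\Or_{\bG_0}$ from \cite{lu-EC2}. The key structural observation is that, for a fixed spectral deformation $(\bG,\alpha)$ of $\bG_0$ over $R$, the two remaining pieces of data---an orientation $e$ of $\bG^\circ$ and a derived level structure $\lambda$ on $\bG$---are independent, so the moduli problem $\Def_{\bG_0}^{\Or,r}$ factors as a fibre product of the oriented deformation functor and the level-structure functor over the non-oriented deformation functor.

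First, I would apply Lurie's representability theorem for oriented deformations to obtain a universal oriented deformation $(\bG^\Or_\univ,\alpha_\univ,e_\univ)$ over the oriented deformation ring $R^\Or_{\bG_0}$, which corepresents the functor $R\mapsto\Def^\Or(\bG_0,R)$ of triples $(\bG,\alpha,e)$. Since $R^\Or_{\bG_0}$ is generally non-connective, I then invoke the extension of Theorem \ref{thm:plevel} to the non-connective setting provided by Remark \ref{rmk:nonconnective}, applied to the spectral $p$-divisible group $\bG^\Or_\univ$ over $R^\Or_{\bG_0}$. This produces an affine spectral Deligne--Mumford stack $\Spet\jl_r$ over $R^\Or_{\bG_0}$, where $\jl_r\coloneqq\cp^r_{\bG^\Or_\univ/R^\Or_{\bG_0}}$, classifying derived level-$(\bz/p^r\bz)^h$ structures on $\bG^\Or_\univ$ after base change.

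Next I would verify the universal property. Given $R\in\CAlg^\ad_\cpl$, a map $\jl_r\to R$ of $\einf$-rings factors through a map $R^\Or_{\bG_0}\to R$, which classifies an oriented deformation $(\bG,\alpha,e)$ with $\bG\simeq\bG^\Or_\univ\times_{R^\Or_{\bG_0}}R$; the remaining data of the $R^\Or_{\bG_0}$-algebra map $\jl_r\to R$ is then identified with
\[
\Map_{\CAlg_{R^\Or_{\bG_0}}}(\jl_r,R)\simeq\Level(r,\bG/R)
\]
by the defining universal property of $\jl_r$. Assembling these bijections yields $\Def^{\Or,r}_{\bG_0}(R)\simeq\Map_{\CAlg^\ad_\cpl}(\jl_r,R)$, exhibiting corepresentability. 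The functoriality of $\jl_r$ in $\bG_0/R_0$ follows from the corresponding functoriality of both $R^\Or_{\bG_0}$ and the level-structure construction. Finally, the finiteness of $\pi_0\jl_r$ over $\pi_0R^\Or_{\bG_0}$ follows from the affineness clause of Theorem \ref{thm:plevel} together with the classical--spectral finiteness comparison in Remark \ref{rmk:finite}.

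The main obstacle I expect is purely bookkeeping rather than substantive: one must check that the factorization of $\Def^{\Or,r}_{\bG_0}$ through oriented deformations is clean, i.e., that orientation and derived level data genuinely impose no mutual compatibility beyond what is already encoded in the underlying $\bG$. This reduces, via the identification $\Level(r,\bG/R)=\Level(r,\tau_{\geq0}\bG/\tau_{\geq0}R)$ from Remark \ref{rmk:nonconnective}, to the fact that the orientation $e\co S^2\to\Omega^\infty\bG^\circ(\tau_{\geq0}R)$ involves only the connective cover and is thus decoupled from the level datum on $\bG[p^r]$. Once this is unwound, the argument is a direct two-step compilation of Lurie's oriented deformation theorem with Theorem \ref{thm:plevel}.
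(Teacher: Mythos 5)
Your proposal matches the paper's approach: the paper likewise invokes Lurie's corepresentability of $\Def^\Or_{\bG_0}$ by $R^\Or_{\bG_0}$, passes to the universal oriented deformation $\bG^\Or_\univ$, and applies Theorem \ref{thm:plevel} via Remark \ref{rmk:nonconnective} to obtain $\cp^r_{\bG^\Or_\univ/R^\Or_{\bG_0}}$ as the corepresenting $\einf$-ring, with finiteness of $\pi_0$ deduced from affineness exactly as you indicate. Your explicit framing of $\Def^{\Or,r}_{\bG_0}$ as a fibre product of the oriented deformation functor and the level-structure functor over the unoriented deformation functor makes precise the ``analogous to the unoriented case'' step that the paper leaves implicit, but the proof itself is the same two-step compilation.
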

\begin{proof}
    Let $\Def^\Or(\bG_0,R)$ denote the $\infty$-groupoid of triples $(\bG,\alpha,e)$ where $\bG$ is a $p$-divisible group of over $R$, $\alpha$ is an equivalence class of $\bG_0$-taggings of $\bG$, and $e$ is an orientation of the identity component of $\bG$.  By \cite[Remark 6.0.7]{lu-EC2}, the functor 
    \[
        \Def_{\bG_0}^\Or\co\CAlg^\ad_\cpl\to\cs,\quad R\mapsto\Def^\Or(\bG_0,R) 
    \]
    is corepresented by $R^\Or_{\bG_0}$, i.e., there is an equivalence of spaces $\Map_{\CAlg^\ad_\cpl}(R^\Or_{\bG_0},R)\simeq\Def^\Or(\bG_0,R)$.  Let $\bG^\Or_\univ$ be the associated universal oriented deformation of $\bG_0$ over $R^\Or_{\bG_0}$.  Then, analogous to the unoriented case discussed above, the $\einf$-ring $\cp^r_{\bG^\Or_\univ/R^\Or_{\bG_0}}$ from Theorem \ref{thm:plevel} is the desired spectrum.  
\end{proof}

\begin{remark}
\label{rmk:LT}
    We shall call this $\einf$-ring from Theorem \ref{thm:jlr} a {\em Jacquet--Langlands spectrum of level $p^r$} and denote it by $\jl_r$.  Indeed, by functoriality, $\jl_r$ admits an action of $\GL_h(\bz/p^r\bz)\times\Aut(\bG_0)$.  Moreover, since $(\bz/p^r\bz)^h\subset(\bz/p^{r+1}\bz)^h$, we obtain a tower of spectral Deligne--Mumford moduli stacks 
    \[
        \begin{tikzpicture}
            \node (L7) at (0, 5.54) {$\vdots$};
            \node (L6) at (0, 5.25) {};
            \node (L5) at (0, 4.35) {$\Spet\jl_{r+1}$};
            \node (L4) at (0, 3.3) {$\Spet\jl_r$};
            \node (L3) at (0, 2.4) {};
            \node (L2) at (0, 2.3) {$\vdots$};
            \node (L1) at (0, 2) {};
            \node (L0) at (0, 1.1) {$\Spet\jl_0$};
            \draw [-stealth] (L6) -- (L5);
            \draw [-stealth] (L5) -- (L4);
            \draw [-stealth] (L4) -- (L3);
            \draw [-stealth] (L1) -- (L0);
        \end{tikzpicture}
    \]
    Unwinding the definitions, we see that by construction its 0'th homotopy recovers the finite levels of the Lubin--Tate tower of $\bG_0/R_0$ (cf.~\cite[Lemma 7.1]{galatius2018derived}).  
    In classical arithmetic algebraic geometry, the Lubin--Tate tower can be used to realize the Jacquet--Langlands correspondence \cite[Theorem B and Chapter II]{harris2001geometry}.  Naturally we may ask if there is a topological (or higher-homotopical) realization of this correspondence.  Such a construction has appeared recently \cite{Salch2023elladicTJ}.  In contrast to our approach, the methods therein are based on the Goerss--Hopkins--Miller--Lurie sheaf, by considering certain degenerate level structures whose representing objects are {\em \'etale} over the Lubin--Tate space carrying universal deformations.  {\em Integrally}, it would be interesting if our higher-categorical analogues of Lubin--Tate towers above can also lead to a topological version of the classical Jacquet--Langlands correspondence, which means that we construct representations in the category of spectra (see further Section \ref{subsubsec:rep} below).  
\end{remark}

\subsection{Topological lifts of power operation rings}
\label{subsec:po}

Continuing Section \ref{subsubsec:nonfull}, here we consider certain non-full level structures relevant to power operations in Morava E-theories.  Let us first recall the classical deformation theory of one-dimensional commutative formal groups.  

Given a formal group $\hG_0$ over a perfect field $k$ of characteristic $p$, a deformation of $\hG_0$ over a complete local ring $R$ is a triple $(\hG,i,\eta)$ such that 
\begin{itemize}
    \item $\hG$ is a formal group over $R$, 

    \item $i\co k\to R/\fm$ is a ring homomorphism, with $\fm$ the maximal ideal of $R$, and 

    \item $\eta\co\pi^*\hG\simeq i^*\hG_0$ is an isomorphism of formal groups over $R/\fm$, with $\pi\co R\to R/\fm$ the natural projection.  
\end{itemize}
We simply write $\hG$ for a deformation if $(i,\eta)$ is understood.  

Recall the relative Frobenius isogeny $\Frob\co\hG_0\to\sigma^*\hG_0$ over $k$, with $\sigma\co k\to k$, $x\mapsto x^p$.  For each nonnegative integer $r$, a deformation of the $p^r$-power Frobenius $\Frob^r$ over $R$ consists of deformations $(\hG,i,\eta)$ and $(\hG',i',\eta')$ of $\hG_0$ over $R$, together with an isogeny $\psi\co\hG\to\hG'$ of formal groups over $R$, such that the following compatibility conditions hold (cf.~\cite[\S5.5]{zhu2020norm}): 
\begin{enumerate}
    \item The triangle 
    \[
        \xymatrix{
            k \ar[r]^-{i'} \ar[d]_{\sigma^r} & R/\fm \\
		k \ar[ur]^i                      & 
	}
    \]
    commutes, so that $(i')^*\hG_0=i^*(\sigma^r)^*\hG_0$.  

    \item The rectangle 
    \[
        \xymatrix{
		\pi^*\hG \ar[rr]^{\pi^*(\psi)} \ar[d]_\eta && \pi^*\hG' \ar[d]^{\eta'} \\
		i^*\hG_0 \ar[rr]^-{i^*(\Frob^r)}           && i^*(\sigma^r)^*\hG_0 
	}
    \]
    of formal groups over $R/\fm$ commutes.  
\end{enumerate}

In particular, when $\pi^*(\psi)$ equals the $p^r$-power relative Frobenius isogeny on $\pi^*\hG$, the association of $\psi$ with $\hG$ is equivalent to the choice of a subgroup scheme $\hH=\ker(\psi)$ of $\hG$ that is {\em cyclic of order $p^r$} in the sense of \cite[6.1]{katz1985arithmetic} (cf.~the $\Level_0(r,G/R)$-structure in Definition \ref{def:nonfull}).  

We say that two deformations of Frobenius 
\[
    (\hG_1,i_1,\eta_1)\to(\hG_1',i_1',\eta_1')\quad\text{and}\quad(\hG_2,i_2,\eta_2)\to(\hG_2',i_2',\eta_2') 
\]
are isomorphic, if their sources are isomorphic and their target are isomorphic, both as deformations of $\hG_0$ over $R$, i.e., $\psi\co\hG_1\simeq\hG_2$, $i_1=i_2$, $\eta_1=\eta_2\circ\pi^*(\psi)$, and $\psi'\co\hG_1'\simeq\hG_2'$, $i_1'=i_2'$, $\eta_1'=\eta_2'\circ\pi^*(\psi')$.  

We have the following classification theorem for deformations of Frobenius from \cite[Theorem 42 and Section 13]{strickland1997finite}, as reformulated in \cite[Proposition 5.7 and Corollary 5.12]{zhu2020norm} (cf.~\cite[Proposition 3.5]{rezk2013power}).  

\begin{theorem}[Strickland]
\label{thm:Strickland}
    Let $\hG_0$ be a height-$h$ formal group over a perfect field $k$ of characteristic $p$.  For each nonnegative integer $r$, there exists a complete local ring $A_r$ which carries a universal deformation of the $p^r$-power Frobenius 
    \[
        \psi^r_\univ\co(\hG^r_s,i^r_s,\eta^r_s)\to(\hG^r_t,i^r_t,\eta^r_t) 
    \]
    Namely, given complete local rings $R$ and local homomorphisms $f\co A_r\to R$, the assignment $\psi^r_\univ\mapsto f^*(\psi^r_\univ)$ gives a bijection between the set of local homomorphisms $f\co A_r\to R$ and the set of isomorphism classes of deformations of the $p^r$-power Frobenius over $R$.  Moreover, the following hold: 
    \begin{enumerate}
        \item When $r=0$, the ring $A_0$ is the Lubin--Tate deformation ring of $\hG_0/k$, isomorphic to $W(k)\llbracket v_1,\ldots,v_{h-1}\rrbracket$.  

        \item There is a local homomorphism $s^r\co A_0\to A_r$ classifying the source of $\psi^r_\univ$ such that 
        \[
            (\hG^r_s,i^r_s,\eta^r_s)=\big((s^r)^*\hG_\univ,\id_k,\id_{\hG_0}\big) 
        \]
        with $(\hG_\univ,\id_k,\id_{\hG_0})$ over $A_0$ the universal deformation of $\hG_0$, along which $A_r$ is finite and free as an $A_0$-module.  

        \item There is a local homomorphism $t^r\co A_0\to A_r$ classifying the target of $\psi^r_\univ$ such that 
        \[
            (\hG^r_t,i^r_t,\eta^r_t)=\big((t^r)^*\hG_\univ,\sigma^r,\id_{(\sigma^r)^*\hG_0}\big) 
        \]
    \end{enumerate}
\end{theorem}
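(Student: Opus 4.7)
My plan is to reduce the theorem to the Lubin--Tate representability theorem for ordinary deformations of $\hG_0$, combined with representability of a suitable moduli of subgroup schemes. First, I would reinterpret the data of a deformation of the $p^r$-power Frobenius---in the form considered here, where $\pi^*(\psi)$ is the $p^r$-power relative Frobenius on $\pi^*\hG$---as the data of a deformation $(\hG, i, \eta)$ of $\hG_0$ over $R$ together with a finite flat subgroup scheme $\hH \subset \hG[p^r]$ of rank $p^r$ whose reduction modulo $\fm$ equals $\ker(\Frob^r)$ on $\pi^*\hG$ via $\eta$. The target $(\hG', i', \eta')$ is then recovered as the quotient $\hG/\hH$, equipped with the twisted identification $i' = i \circ \sigma^r$ and $\eta'$ induced functorially from the universal property of the relative Frobenius.

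With this reformulation, the first factor---the underlying deformation---is represented by the classical Lubin--Tate ring $A_0 \simeq W(k)\llbracket v_1, \ldots, v_{h-1} \rrbracket$ carrying the universal deformation $\hG_\univ$, yielding claim (1) and the map $s^r \co A_0 \to A_r$. It then remains to represent the second factor, namely the moduli of cyclic-type rank-$p^r$ subgroup schemes of the universal $p^r$-torsion $\hG_\univ[p^r]$. Since $\hG_\univ[p^r]$ is a finite flat commutative group scheme over $A_0$, the functor of its rank-$p^r$ subgroup schemes is representable by a closed substack of a Grassmannian; imposing the reduction condition at the special fiber cuts out a further closed subscheme $\Spec A_r$, which is finite over $\Spec A_0$. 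Claim (3) follows formally by letting $t^r$ classify the quotient $\hG^r_s / \ker(\psi^r_\univ)$, which is again a deformation of $\hG_0$ but via the Frobenius-twisted embedding $\sigma^r$.

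The main obstacle will be establishing that $A_r$ is not merely finite but \emph{free} over $A_0$ via $s^r$. The strategy is to first compute the fiber of $A_r$ over the closed point of $\Spec A_0$: cyclic rank-$p^r$ subgroups of $\hG_0[p^r]$ are classified by an explicit finite local $k$-algebra whose length can be extracted from the formal group law of $\hG_0$ via Weierstrass preparation applied to $[p^r]_{\hG_0}$. With this fiber length in hand, freeness follows from miracle flatness: $A_0$ is a regular local ring of Krull dimension $h$, and $A_r$ is constructed as the vanishing locus of the expected number of equations cutting out the subgroup condition, so $A_r$ is Cohen--Macaulay of the same dimension over $A_0$, whence flat and thus free of the correct rank. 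Packaging the universal isogeny $\psi^r_\univ$ as the quotient map from $\hG_\univ \times_{A_0, s^r} A_r$ to its quotient by the universal subgroup then delivers the stated universal property.
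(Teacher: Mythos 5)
The paper does not prove this theorem---it is quoted as a result of Strickland \cite[Theorem 42 and \S13]{strickland1997finite}, with reformulations credited to Zhu \cite[Proposition 5.7 and Corollary 5.12]{zhu2020norm} and Rezk \cite[Proposition 3.5]{rezk2013power}. So there is no ``paper's proof'' to compare against; what I can do is assess your sketch on its own terms and against the arguments in those sources.

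Your overall decomposition is the standard one and matches the references: translate a deformation of $\Frob^r$ into a pair (ordinary deformation, finite flat subgroup scheme of rank $p^r$), handle the first factor via Lubin--Tate, and reduce the second to representability of a moduli of subgroup schemes. Claim (3), that $t^r$ classifies the quotient $\hG^r_s/\ker\psi^r_\univ$ together with the Frobenius-twisted marking, is likewise routine given the first two. So far so good.

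The gap is in the freeness argument, which is in fact the technical heart of Strickland's theorem, and your sketch treats it as a formality. Two issues. First, the choice of ambient scheme matters. You embed the subgroup functor into a Grassmannian of rank-$p^r$ quotients of $\hG_\univ[p^r]$, which has relative dimension on the order of $p^r(p^{rh}-p^r)$ over $A_0$; to land on a scheme of dimension $h$ by a regular sequence you would need that many equations to be regular, which is hopeless to verify directly. Strickland instead works inside $\mathrm{Div}_{p^r}(\hG_\univ)\simeq\Spf A_0\llbracket c_1,\ldots,c_{p^r}\rrbracket$, parametrizing degree-$p^r$ Weierstrass polynomials, which has relative dimension exactly $p^r$---the right size for a complete-intersection argument. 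Second, and more seriously, the assertion that ``the subgroup condition is cut out by the expected number of equations'' is precisely what needs proof. The condition that a degree-$p^r$ divisor be closed under the group law is \emph{a priori} infinitely many equations (one for each pair of points); Strickland's contribution is to exhibit $p^r$ generators for this ideal and to show, via an explicit analysis of the formal group law and an induction (quotienting by the universal degree-$p$ subgroup), that they form a regular sequence. Writing ``miracle flatness'' at this point does not replace that work---the hypothesis of miracle flatness (Cohen--Macaulayness of $A_r$, or equivalently that the dimension drop matches the number of equations) is exactly what is at stake. Similarly, the fiber-length computation you defer to ``Weierstrass preparation applied to $[p^r]_{\hG_0}$'' is where the explicit rank $N_r$ of $A_r$ over $A_0$ actually comes from, and it is not a one-line consequence: over the residue field, $\Sub_r(\hG_0)$ is a fat point whose length must be counted against the formal group structure. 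In short, your reduction is correct and the route is viable, but the parts you wave past are the theorem.
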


These rings $A_r$ also bear topological meanings in relationship to the Morava E-theory $E$ of $\hG_0/k$, as in \cite[Theorem 1.1]{strickland1998morava}.  

\begin{theorem}[Strickland]
    Hypotheses and notations as above, there is a natural ring isomorphism 
    \[
        A_r\simeq E^0(B\Sigma_{p^r})/I_\tr 
    \]
    where $I_\tr$ is the ideal generated by the images of transfers from proper subgroups of $\Sigma_{p^r}$.  
\end{theorem}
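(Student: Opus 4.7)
The plan is to follow the arithmetic-geometric interpretation of $E^0(B\Sigma_{p^r})$ via the formal group $\hG_0/k$ and its universal deformation $\hG_\univ/A_0$, then identify the ideal $I_\tr$ as cutting out the locus of cyclic subgroups, and finally match cyclic subgroups of order $p^r$ with deformations of $\Frob^r$ using Theorem \ref{thm:Strickland}.

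First, I would invoke Strickland's earlier result from \cite{strickland1997finite} that $\Spec E^0(B\Sigma_{p^r})$ is canonically isomorphic to the scheme $\mathrm{Sub}_{p^r}(\hG_\univ)$ of (effective) subgroup divisors of degree $p^r$ on the universal deformation $\hG_\univ/A_0$. This comes from the fact that $B\Sigma_{p^r}$ is the classifying space for degree-$p^r$ finite covers, and under $E$-cohomology these covers correspond to divisors on the formal group, giving a natural ring isomorphism $E^0(B\Sigma_{p^r}) \simeq \cO\big(\mathrm{Sub}_{p^r}(\hG_\univ)\big)$.

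Next, I would analyze the ideal $I_\tr$. For each partition $p^r = i + j$ with $i,j>0$, the transfer along $B(\Sigma_i \times \Sigma_j) \to B\Sigma_{p^r}$ corresponds, under the above identification, to the addition map $\mathrm{Sub}_i(\hG_\univ) \times_{A_0} \mathrm{Sub}_j(\hG_\univ) \to \mathrm{Sub}_{p^r}(\hG_\univ)$, $(D_1,D_2)\mapsto D_1+D_2$. Hence the image of the transfer ideal $I_\tr$ is precisely the ideal of functions vanishing on the locus of decomposable divisors, and its quotient carves out the closed subscheme of indecomposable degree-$p^r$ subgroup divisors. Over a complete local Noetherian base, a degree-$p^r$ subgroup scheme of a formal group of height $h$ is indecomposable if and only if it is cyclic of order $p^r$ in the sense of \cite[\S 6]{katz1985arithmetic}, so $\Spec\bigl(E^0(B\Sigma_{p^r})/I_\tr\bigr)$ represents the functor of cyclic subgroups of order $p^r$ of the universal deformation.

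Finally, I would match cyclic subgroups with deformations of $\Frob^r$. Given a complete local ring $R$ and a deformation $(\hG,i,\eta)$ of $\hG_0$ over $R$ together with a cyclic subgroup $\hH \subset \hG$ of order $p^r$, the quotient isogeny $\psi\co \hG \to \hG/\hH$ has target another deformation of $\hG_0$, and by height considerations (and the classification of isogenies on the special fiber), the reduction $\pi^*\psi$ is forced to equal the $p^r$-power Frobenius up to unique identification, producing a deformation of $\Frob^r$. Conversely, the kernel of a deformation of $\Frob^r$ is cyclic of order $p^r$. By the universal property of $A_r$ in Theorem \ref{thm:Strickland}, this functor is corepresented by $A_r$, and comparing universal properties gives the desired isomorphism $A_r \simeq E^0(B\Sigma_{p^r})/I_\tr$, with naturality following from the functoriality of both constructions.

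The main obstacle is the second step: showing that $I_\tr$ exactly equals the ideal of decomposable subgroup divisors, and that indecomposable equals cyclic. The first requires that the transfer in $E$-theory really does correspond to the addition-of-divisors map (for which one needs the explicit model of $E^0(B\Sigma_{p^r})$ from \cite{strickland1997finite} and a Mackey-style computation), and the second is a structural statement about finite subgroup schemes of formal groups that must be verified carefully, particularly at $p=2$ where one must keep track of 2-torsion phenomena.
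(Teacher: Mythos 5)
The paper does not prove this theorem; it cites it to Strickland \cite{strickland1998morava} and, in the remark immediately following, flags that Strickland's actual argument hinges on a \emph{rational rank computation} matching the $E^0$-module ranks of the two sides (see \cite[Theorems 9.2 and 8.6]{strickland1998morava}). Your proposal takes a different, more ``modular'' route, but the first step contains a genuine error that undermines the rest.

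The error: there is no canonical isomorphism $\Spf E^0(B\Sigma_{p^r}) \simeq \mathrm{Sub}_{p^r}(\hG_\univ)$ before passing to the transfer quotient. What is true, by the splitting principle and the standard Morava-E-theory analysis of unitary groups, is that $\Spf E^0(BU(p^r)) \simeq \mathrm{Div}_{p^r}(\hG_\univ)$, the formal scheme of effective degree-$p^r$ divisors; the permutation representation $\Sigma_{p^r}\hookrightarrow U(p^r)$ gives a map $\Spf E^0(BU(p^r))\to\Spf E^0(B\Sigma_{p^r})$, but $E^0(B\Sigma_{p^r})$ itself has no such clean moduli description. The statement $E^0(B\Sigma_{p^r})/I_\tr\simeq \cO(\mathrm{Sub}_{p^r}(\hG_\univ))$ is not a warm-up you can import from \cite{strickland1997finite} — that paper is purely algebro-geometric and contains no $E$-cohomology — it is \emph{the theorem you are trying to prove} (via the identification of $A_r$ with $\cO(\mathrm{Sub}_{p^r})$, which is your step three and is fine). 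As written, your step one either begs the question or asserts something false.

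Even if step one is repaired, the proposal still skips the crux. The honest shape of Strickland's argument is: (a) construct a ring map $E^0(B\Sigma_{p^r})/I_\tr \to \cO(\mathrm{Sub}_{p^r}(\hG_\univ))$ from the tautological transfer/level-structure data, and show it is surjective; (b) show both sides are free $E^0$-modules; (c) compute and match their ranks — this is where the rational Euler-characteristic and character-theoretic computations in \cite[\S\S 8--9]{strickland1998morava} live, and where the rank formula of \cite{strickland1997finite} for $\cO(\mathrm{Sub}_{p^r})$ gets used. Your step two (interpreting $I_\tr$ via addition of divisors and then identifying the complement of the decomposable locus with the cyclic locus) is a correct heuristic for \emph{why} the target of the map is what it is, but it does not furnish injectivity; no amount of Mackey-style bookkeeping on the source side substitutes for the rank count. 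So the gap your own final paragraph half-acknowledges is in fact the entire proof.

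A smaller point: for one-dimensional formal groups over a complete local Noetherian ring, every finite flat subgroup scheme is automatically cyclic in the Katz--Mazur sense, so ``indecomposable degree-$p^r$ subgroup $=$ cyclic of order $p^r$'' is not really a $p=2$-sensitive structural subtlety you need to worry over; the subtleties are all on the $E^0(B\Sigma_{p^r})/I_\tr$ side.
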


\begin{remark}
    Strickland's proof relies on rational {\em computations} which match up the ranks of the two sides of this isomorphism as $E^0$-modules (see \cite[Theorems 9.2 and 8.6]{strickland1998morava}).  From the perspective of \cite[Example 0.0.6 and Theorem 0.0.8]{lu-EC2}, this theorem can be viewed as a ``partial'' realization by $E$-cohomology of certain spaces modulo equivalences, in the setting of $\einf$-ring spectra, of the solution to a moduli problem in classical deformation theory.  
\end{remark}

The collection $\{A_r\}_{r\geq0}$ has the structure of a graded coalgebra over $A_0$, with structure maps 
\[
    s=s^r\co A_0\to A_r,\quad t=t^r\co A_0\to A_r,\quad\mu^{m,n}\co A_{m+n}\to A_m{^s\otimes^t_{A_0}}A_n 
\]
which classify the source, target, and composite of deformations of Frobenius, respectively.  In particular, given a $K(h)$-local $E$-algebra $F$, there is a $p^r$-power operation $F^0(X)\to F^0(X\times B\Sigma_{p^r})/I_\tr$, and when $X=*$, we have 
\[
    \pi_0F\to E^0(B\Sigma_{p^r})/I_\tr\otimes_{E^0}\pi_0F\simeq A_r{^s\otimes_{A_0}}\pi_0F 
\]
These equip $\pi_0F$ with the structure of a $\Gamma$-module, where the $\bz_{\geq0}$-graded pieces of $\Gamma$ are $A_0$-linear duals of $A_r$ along $s^r$.  For more details about power operations in Morava E-theories, the interested reader may refer to \cite{rezklecture2006,rezk2009congruence,rezk2013power}.  Explicit computations of $\Gamma$ have been carried out at height 2 in \cite{rezk2008power} for the prime 2, \cite{zhu2014power} for the prime 3, and \cite{zhu2019semistable} for all primes, through relevant moduli schemes of level structures.  The cases at height greater than 2 are still lack of quantitative understanding.  

As observed above, the assignment $\psi\mapsto\ker(\psi)$ gives a one-to-one correspondence between isomorphism classes of deformations of $\Frob^r$ with source $\hG$ and cyclic degree-$p^r$ finite flat subgroup scheme of $\hG$.  Therefore, we see that $A_r$ corepresents the moduli problem 
\[
    (\CAlg^\ad_\cpl)^\heartsuit\to\Set,\quad R\mapsto\Def_0(r,\hG_0,R) 
\]
where $\Def_0(r,\hG_0,R)$ consists of pairs $\hH\subset\hG$, with $\hG$ a deformation of $\hG_0$ over $R$ and $\hH$ a cyclic subgroup of order $p^r$.  

\begin{theorem}
\label{thm:ehr}
    Hypothesis and notations as in Theorem \ref{thm:Strickland}, for each positive integer $r$, there exists an $\einf$-ring spectrum $E_{h,r}$ such that $\pi_0(E_{h,r})\simeq A_r$, which depends functorially on $\hG_0/k$.  
\end{theorem}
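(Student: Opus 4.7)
The plan is to mirror the construction of the oriented Jacquet--Langlands spectra $\jl_r$ from Theorem \ref{thm:jlr}, but substitute the full derived level structure $\Level(r,\cdot)$ by the non-full derived level structure $\Level_0(r,\cdot)$ from Definition \ref{def:nonfull}. The motivation is that classical deformations of the $p^r$-power Frobenius with source $\hG$ correspond bijectively to cyclic subgroups of order $p^r$ in $\hG$, which is exactly the classical data that $\Level_0$ upgrades to the derived setting. Since $\hG_0$ has finite height $h$, we identify it with a connected $p$-divisible group over $k$, so the constructions of Section \ref{sec:level} apply.

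First I would define
\[
\Def^{\Or,0,r}_{\hG_0}\co\CAlg^\ad_\cpl\to\cs,\quad R\mapsto\Def^{\Or,0}(r,\hG_0,R),
\]
where $\Def^{\Or,0}(r,\hG_0,R)$ consists of quadruples $(\bG,\alpha,e,\lambda)$ with $\bG$ a spectral $p$-divisible group over $R$, $\alpha$ an equivalence class of $\hG_0$-taggings of $\bG$, $e$ an orientation of the identity component of $\bG$, and $\lambda\in\Level_0(r,\bG/R)$ a derived non-full level structure as in Definition \ref{def:nonfull} (extended to spectral $p$-divisible groups by Remark \ref{rmk:pdiv}). Next, following the factorization strategy of Theorem \ref{thm:jlr}, the triple $(\bG,\alpha,e)$ is corepresented by the oriented deformation ring $R^\Or_{\hG_0}$, which is the Morava E-theory spectrum $E_h$ by \cite[Remark 6.0.7]{lu-EC2}; this yields a universal oriented deformation $\bG^\Or_\univ$ over $R^\Or_{\hG_0}$. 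Applying the non-full analogue of Proposition \ref{prop:glevel} (i.e., the representability statement for $\Level^{0,r}_{G/R}$ from Section \ref{subsubsec:nonfull}) to $\bG^\Or_\univ$ produces an affine spectral Deligne--Mumford stack $\Spet\cp^{0,r}_{\bG^\Or_\univ/R^\Or_{\hG_0}}$ over $R^\Or_{\hG_0}$, and I set
\[
E_{h,r}\coloneqq\cp^{0,r}_{\bG^\Or_\univ/R^\Or_{\hG_0}}.
\]
Functoriality in $\hG_0/k$ is inherited from the functoriality at each stage.

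Finally, I would identify $\pi_0 E_{h,r}$ with Strickland's ring $A_r$. Restricting the corepresented moduli problem to discrete complete local $\einf$-algebras, the orientation data $e$ imposes no extra condition on $\pi_0$ (as in Lurie's computation $\pi_0 E_h\simeq A_0$), so the associated classical functor sends a complete local ring $R$ to the set of pairs $(\hG,\hH)$ with $\hG$ a (classical) deformation of $\hG_0$ and $\hH\subset\hG$ a finite flat cyclic subgroup of order $p^r$. By the correspondence recalled before Theorem \ref{thm:Strickland}, this is precisely the functor of isomorphism classes of deformations of $\Frob^r$, which is corepresented by $A_r$. Since $\pi_0$ of an $\einf$-ring corepresenting a spectral moduli problem corepresents the induced classical moduli problem on ordinary commutative rings, we conclude $\pi_0 E_{h,r}\simeq A_r$ as $A_0$-algebras (via Strickland's source map $s^r$).

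The main obstacle I expect is the identification of the heart of the derived moduli problem with Strickland's classical one. Concretely, this requires verifying two points: that the derived $\Level_0(r,\bG/R)$ restricted to discrete $R$ recovers exactly the classical set of cyclic degree-$p^r$ quotients of $\bG^\heartsuit[p^r]$ (which reduces to checking that the ``scheme-theoretic image'' equivalence in Definition \ref{def:nonfull} is the right classical equivalence relation), and that the oriented deformation ring's $\pi_0$ does not introduce any auxiliary data beyond the classical (unoriented) deformation ring so that no extra factors appear. Once these compatibilities are in hand, the theorem follows by combining Theorem \ref{thm:Strickland} with the relative representability established in Section \ref{subsubsec:nonfull}.
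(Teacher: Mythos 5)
Your proposal follows essentially the same route as the paper: both mirror the construction of $\jl_r$ from Theorem \ref{thm:jlr} by swapping the full derived level structure $\Level(r,\cdot)$ for the non-full $\Level_0(r,\cdot)$ of Definition \ref{def:nonfull}, factoring the moduli problem through the oriented deformation ring $R^\Or_{\hG_0}$ and then applying the affine representability of $\Level^{0,r}_{\bG^\Or_\univ/R^\Or_{\hG_0}}$. The only substantive difference is that you spell out the $\pi_0$ identification with $A_r$ via Strickland's classification, whereas the paper asserts it briefly by citing Lurie's treatment of the $r=0$ case; your extra care in flagging that $\Level_0$ restricted to discrete rings should recover the classical cyclic-subgroup data is a reasonable check that the paper leaves implicit.
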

\begin{proof}
    Given the formal group $\hG_0$ over the perfect field $k$ of characteristic $p$, necessarily nonstationary \cite[Example 3.0.10]{lu-EC2}, we view it as a connected $p$-divisible group and consider instead the functor 
    \[
        \CAlg^\ad_\cpl\to\cs,\quad R\mapsto(\bG,\alpha,e,\lambda) 
    \]
    where $(\bG,\alpha)$ is a {\em spectral} deformation of $\hG_0$ over $R$ as in Section \ref{subsec:LT}, $e$ is an orientation of $\bG^\circ$, and $\lambda\in\Level_0(r,\bG/R)$ is a derived level structure as in Section \ref{subsubsec:nonfull}, esp.~Remark \ref{rmk:pdiv}.  This functor lifts the one corepresented by $A_r$ precisely as the case of $r=0$ \cite[Remarks 6.4.8 and 3.0.14]{lu-EC2}.  Analogous to the proof of Theorem \ref{thm:jlr} (or rather the more elaborated one for Theorem \ref{thm:dlevel}) with full level structures, we then deduce the current theorem from the representability of the spectral moduli problem $\Level^{0,r}_{\bG^\Or_\univ/R^\Or_{\hG_0}}$ by an affine spectral Deligne--Mumford stack.  
\end{proof}

\begin{remark}
\label{rmk:nonlocal}
    Neither the $\einf$-ring spectra $E_{h,r}$ from above nor those from Theorem \ref{thm:jlr} with $r>0$ are $K(h)$-local or complex oriented, in light of \cite[Theorems 1.3 and 3.5]{devalapurkar2020roots} and \cite[Construction 5.1.1 and Remark 5.1.2]{lu-EC2} (cf.~\cite[Section 1.3]{Salch2023elladicTJ} and \cite[Proposition 2]{SVW}).  In particular, they are not {\em finite} algebras over the Morava E-theory spectrum $E_h$, even though the morphisms $\Spet E_{h,r}\to\Spet E_h$ of spectral Deligne--Mumford stacks {\em are} finite (cf.~\cite[Definition 7.2.2.1]{lu-HA}, \cite[Proposition 2.7.2.1]{lu-SAG}, and the finiteness conditions we recalled at the beginning of Section \ref{subsec:isog}, esp.~Remark \ref{rmk:finite}).  The authors thank Sanath Devalapurkar and Akira Tominaga for bringing this point to their attention.  
\end{remark}

\begin{remark}
\label{rmk:pile}
    Although we obtained spectra whose 0'th homotopy groups recover the power operation rings of Morava E-theories, we do not know yet the higher homotopy groups of these spectra concretely or explicitly, as these spectra are not 2-periodic in general unless $r=0$ (or $h=1$, in this case all $E_{h,r}$ are canonically isomorphic to $E_h$), and as they are not \'etale over E-theory spectra.  This non-2-periodicity with $r>0$ should be a manifestation of the structure of a {\em pile}, i.e., a presheaf of categories (rather than of groupoids), as indicated in \cite[Section 4.3]{rezk2014isogenies} (cf.~\cite[Question 1.3]{hill2016topological}).  See some further discussion in Section \ref{subsubsec:hhg} below.  
\end{remark}

\section{More applications}
\label{sec:dual}

\subsection{Jacquet--Langlands spectra}
\label{subsec:JL}

The Langlands program has arguably developed into a paradigm in contemporary mathematics and related fields which connects multiple subareas, including number theory, representation theory, and harmonic analysis, in a precise way.  To be more specific, the global Langlands correspondence gives a (partly conjectural) bijection between 
\begin{itemize}
    \item $n$-dimensional complex linear representations of the absolute Galois group $\Gal_F$ of a given number field $F$ and 

    \item automorphic representations of the general linear group $\GL_n(\ba_F)$ with coefficients in the ring of ad\`eles of $F$ that arise within the representations given by functions on the double coset space $\GL_n(F)\backslash\GL_n(\ba_F)/\GL_n(\cO)$, with $\cO=\prod_\nu\cO_\nu$ the product of the rings of integers of completions at all valuations of $F$ 
\end{itemize}
which is compatible with certain $L$-function conditions.  More generally, the group $\GL_n$ may be replaced by any reductive group.  The Langlands correspondence has many specific examples in number theory.  For the group $\GL_1$, this correspondence recovers global class field theory.  For $\GL_2$ it affords the famous modularity theorem for semistable elliptic curves \cite{wiles1995modular,taylor1995ring}.  

The Langlands correspondence has a local version.  Let $E$ be a local field and $G$ be a reductive group over $E$.  The local Langlands correspondence predicts that given any irreducible smooth representation $\pi$ of $G(E)$, we can naturally associate an $L$-parameter, i.e., a continuous homomorphism $\phi_\pi\co W_E\to\ll G(\bc)$, where $W_E$ is the Weil group of $E$, and $\ll G$ is the Langlands dual group of $G$.  

Of particular relevance to this paper is the Jacquet--Langlands correspondence.  Let $K$ be a $p$-adic field and $D$ be a central division algebra over $K$ of dimension $d^2$.  We fix a positive integer $r$ and let $G=\GL_n(K)$, $G'=\GL_r(D)$, where $n=rd$.  The Jacquet--Langlands correspondence aims to relate irreducible smooth representations of $G$ to those of $G'$, while the local Langlands correspondence relates such representations of $G$ to $n$-dimensional complex representations of $W_K$.  

We shall focus on the case of $r=1$, when $D$ is a central division algebra over $K$ of dimension $n^2$ (and invariant $1/n$).  There is a Jacquet--Langlands bijection between square integrable representations of $\GL_n(K)$ and such representations of $D^\times$.  In Section \ref{subsec:LT}, we have built a higher-homotopical realization of the Lubin--Tate tower associated to a nonstationary $p$-divisible group over a commutative ring, and discussed its relevance to a potential topological version of the Jacquet--Langlands correspondence (see Remark \ref{rmk:LT}).  On the other hand, we know the actions of certain Galois groups and automorphism groups on certain objects, such as Morava E-theories, topological Hochschild homology, and topological cyclic homology.  In particular, these groups act on their homotopy groups.  For example, we have the action of Morava stabilizer groups $\bg_h$ on Morava E-theories $E_h$.  It can be used to compute homotopy groups of the ($K(h)$-local) sphere spectrum via a spectral sequence 
\begin{equation}
\label{DH}
    E_2^{s,t}=H^s_\cts(\bg_h,\pi_tE_h)\implies\pi_{t-s}\bs_{K(h)} 
\end{equation}
where the $E_2$-page consists of continuous cohomology groups of $\bg_h$ \cite{devinatz2004homotopy}.  In general, however, this group cohomology is complicated to compute, which manifests a common phenomenon in the Langlands program, i.e., the Galois side is usually harder to understand than the automorphic side.  One strategy for relevant problems is to transfer from the Galois side to the automorphic side.  Let us see an example first \cite{barthel2024rationalization}.  

\begin{theorem}[Barthel--Schlank--Stapleton--Weinstein]
    There is an isomorphism of graded $\bq$-algebras 
    \[
        \bq\otimes\pi_*\bs_{K(h)}\simeq\Lambda_{\bq_p}(\zeta_1,\zeta_2,\ldots\zeta_h) 
    \]
    where the latter is the exterior $\bq_p$-algebra with generators $\zeta_i$ in degree $1-2i$.  
\end{theorem}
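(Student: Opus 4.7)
The plan is to attack the theorem via the Devinatz--Hopkins homotopy fixed point spectral sequence \eqref{DH} after tensoring with $\bq$. The rational $E_2$-page becomes $H^s_\cts(\bg_h,\pi_tE_h)\otimes\bq$, and since $\pi_*E_h$ is concentrated in even degrees, all rational differentials (which would land in odd-degree coefficients) vanish, so the spectral sequence collapses at $E_2$. Thus I am reduced to understanding the rational continuous cohomology of $\bg_h$ with coefficients in the pro-free, even-degree module $\pi_*E_h\otimes\bq$.

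The decisive technical ingredient is Lazard's theorem for compact $p$-adic analytic groups, which, after restricting to an open subgroup and accounting for a finite quotient, identifies
\[
    H^*_\cts(\bg_h,\bq_p)\simeq H^*\big(\Lie(\bg_h)\otimes\bq_p,\,\bq_p\big).
\]
Now the Morava stabilizer group $\bg_h$ is commensurable with the unit group $\cO_D^\times$ of the maximal order in the central $\bq_p$-division algebra $D$ of invariant $1/h$, so its rational Lie algebra becomes $\mathfrak{gl}_h$ after base change to $\overline{\bq}_p$. This is precisely the Jacquet--Langlands heuristic from Section \ref{subsec:JL}: the Galois-side group $\bg_h$ and the automorphic-side group $\GL_h(\bq_p)$ share the same rational Lie algebra, and hence the same rational continuous cohomology. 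The classical Chevalley--Eilenberg computation yields $H^*(\mathfrak{gl}_h;\bq_p)\simeq\Lambda_{\bq_p}(\zeta_1,\ldots,\zeta_h)$ with primitive generators $\zeta_i$ in cohomological degree $2i-1$; descent from $\overline{\bq}_p$ to $\bq_p$ preserves this structure. These classes live at $(s,t)=(2i-1,0)$ on the $E_2$-page and so contribute to $\pi_{1-2i}\bs_{K(h)}\otimes\bq$, matching the stated grading. That the $E_\infty$-page is a free exterior algebra on odd-degree generators rules out multiplicative extension problems.

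The main obstacle, in my view, is the reduction from $H^*_\cts(\bg_h,\pi_*E_h)\otimes\bq$ to $H^*_\cts(\bg_h,\bq_p)\otimes_{\bq_p}(\pi_*E_h\otimes\bq)$, which requires a flat base-change argument together with the observation that the $\bg_h$-action on $\pi_*E_h\otimes\bq$ becomes cohomologically trivial after passing to an open subgroup via a conjugation/twisting argument. Making this rigorous involves careful handling of pro-topologies on $\pi_*E_h$, the virtual pro-$p$ structure of $\bg_h$, and verification that Lazard's hypotheses apply to the action. An alternative closer in spirit to the framework developed in Section \ref{sec:dual} would be to transfer the computation across the Scholze--Weinstein equivalence to the $\bg_h$-homotopy fixed points of $\ll E_h$, where the cohomology admits a more geometric description in terms of pro-\'etale cohomology of locally symmetric spaces for $\GL_h(\bq_p)$, bypassing the Lazard step entirely and directly realizing the ``automorphic side'' of the computation.
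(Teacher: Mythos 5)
The paper does not reprove this theorem; it quotes it from \cite{barthel2024rationalization} and gives only a one-sentence indication of the key step, namely transferring the computation of $\bg_h$-cohomology, via the equivariant isomorphism between the Lubin--Tate and Drinfeld towers at infinite level, to the cohomology of the Drinfeld symmetric space. Your ``alternative'' closing remark is therefore the approach the paper is actually alluding to, while your main route has a genuine gap.

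The gap is in the reduction from $H^*_{\cts}(\bg_h,\pi_*E_h\otimes\bq)$ to $H^*_{\cts}(\bg_h,\bq_p)\otimes_{\bq_p}(\pi_*E_h\otimes\bq)$. The action of $\bg_h$ on $\pi_*E_h$ is \emph{not} cohomologically trivializable after passing to an open subgroup, and in fact this cannot be, for a dimension reason. Each even $\pi_tE_h\otimes\bq$ is an uncountable-dimensional $\bq_p$-vector space, so if the coefficients were trivial (even on a finite-index subgroup) the $E_2$-page would be infinite-dimensional in every column, starting already with $H^0$. By contrast the stated abutment $\Lambda_{\bq_p}(\zeta_1,\ldots,\zeta_h)$ has at most one $\bq_p$-dimension in each total degree. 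There is no room for this discrepancy to be absorbed by differentials, and indeed already $H^0_{\cts}(\bg_h,\pi_0E_h\otimes\bq)=(\pi_0E_h\otimes\bq)^{\bg_h}$ must be one-dimensional, which shows the action is essentially maximally nontrivial. Thus what you flag at the end as ``the main obstacle, which requires a flat base-change argument together with a trivialization'' is not a technical annoyance; it is a false step, and the cancellation forced by the nontrivial twist is precisely the content of the theorem, which is why BSSW route the computation through the Drinfeld side rather than through Lazard with trivial coefficients. Separately, your claim that the rational spectral sequence collapses at $E_2$ ``because $\pi_*E_h$ is concentrated in even degrees'' is not justified: the differential $d_r$ shifts the internal degree by $r-1$, so odd-$r$ differentials land in even internal degree and are not ruled out by evenness alone.
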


A key step in the proof of this theorem is to leverage the equivariantly isomorphic Lubin--Tate tower and Drinfeld tower \cite{fargues2008isomorphisme,scholze2013moduli} between the generic fibers, {\em rationally} transferring the computation of cohomology of $\bg_h$ to that for the Drinfeld symmetric space $\ch\simeq\Dr_K$ \cite[Theorem 3.9.1]{barthel2024rationalization}: 
\begin{equation}
\label{LTD}
    \begin{tikzpicture}[baseline={([yshift=-10pt]current bounding box.north)}]
        \node (L) at (0,0) {$\LT_{\!K}$};
        \node (R) at (4,0) {$\Dr_K$};
        \node (T) at (2,3) {$\cm_\infty^\lt\simeq\cm_\infty^\dr$};
        \draw [-stealth] (T) to [out=180,in=90] node [left] {$\scriptstyle\GL_h(\cO_K)$} (L);
        \draw [-stealth] (T) to [out=0,in=90] node [right] {$\scriptstyle\cO_D^\times\,\simeq\,\bg_h$} (R);
    \end{tikzpicture}
\end{equation}
In a sequel \cite{barthel2024hopkinspicardgroup}, Barthel, Schlank, Stapleton, and Weinstein compute the Picard group of $K(h)$-local spectra by using some results on the Drinfeld symmetric space from \cite{colmez2020cohomology,colmez2021integral}.  

At the ground level of the Lubin--Tate tower, we know by work of Goerss, Hopkins, Miller, and Lurie that the Lubin--Tate space has a higher-homotopical refinement, namely, the Morava E-theory spectrum of the associated formal group over the residue field $k$ of $K$, as an $\einf$-ring (or spectral Deligne--Mumford stack).  It is thus a natural question how to lift the two towers to the higher-categorical setting of spectral algebraic geometry, even {\em integrally}, as a fully structured apparatus affording transfers such as those above and more.  

Let $\bG_0$ be a height-$h$ $p$-divisible group over $\bar{k}$.  In Section \ref{subsec:LT}, for each nonnegative integer $r$, we considered the functor 
\[
    \Def_{\bG_0}^{\Or,r}\co\CAlg^\ad_\cpl\to\cs,\quad R\mapsto\Def^\Or(r,\bG_0,R) 
\]
and showed that it is corepresentable by an $\einf$-ring $\jl_r$.  The duality of Lubin--Tate and Drinfeld towers then leads us to the following.  

\begin{definition}
\label{def:JL}
    Define the {\em Jacquet--Langlands spectrum} $\jl$ to be the limit of the spectra $\jl_r$ from Theorem \ref{thm:jlr}, i.e., $\jl\coloneqq\varprojlim_r\jl_r$.  
\end{definition}

\begin{proposition}
\label{prop:JL}
    The spectrum $\jl$ is an $\einf$-ring.  
\end{proposition}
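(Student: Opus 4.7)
The plan is to lift the diagram $\{\jl_r\}_{r\geq 0}$, a priori formed in the $\infty$-category $\Sp$ of spectra, to a diagram in $\CAlg$, and then invoke the fact that the forgetful functor $U\co\CAlg\to\Sp$ preserves the relevant (co)limit.  The key input is that the $\jl_r$ are constructed through corepresentability of the functors $\Def^{\Or,r}_{\bG_0}$, so the transition maps will be produced via the $\infty$-categorical Yoneda lemma rather than manually.

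To produce the lift, I would first show that the transition maps arise from natural transformations of the corepresented moduli functors.  Restricting a derived level-$(\bz/p^{r+1}\bz)^h$ structure along the inclusion $(\bz/p^r\bz)^h\hookrightarrow(\bz/p^{r+1}\bz)^h$ given by multiplication by $p$ produces a derived level-$(\bz/p^r\bz)^h$ structure: the homomorphism $\phi$ restricts, and, inside $\bG[p^{r+1}]\supset\bG[p^r]$, the associated relative effective Cartier divisor restricts accordingly, as can be checked at the heart using the compatible classical restriction of level structures recorded in \cite[Section 1.5.1]{katz1985arithmetic}.  Combined with the tautological restriction on the data $(\bG,\alpha,e)$, this yields a natural transformation $\Def^{\Or,r+1}_{\bG_0}\to\Def^{\Or,r}_{\bG_0}$ of $\cs$-valued functors on $\CAlg^\ad_\cpl$.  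By the $\infty$-categorical Yoneda lemma applied to the corepresentability of Theorem \ref{thm:jlr}, this natural transformation is classified by a morphism $\jl_r\to\jl_{r+1}$ in $\CAlg$, and the $\infty$-functoriality of the whole construction in $r$ organizes these maps into a genuine diagram $N(\bz_{\geq0})\to\CAlg$.

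Having the tower in $\CAlg$, I would conclude by invoking the standard facts (see \cite[Section 3.2]{lu-HA}) that $\CAlg$ admits all small limits and colimits and that $U\co\CAlg\to\Sp$ preserves small limits as well as sifted colimits.  The (co)limit of the lifted diagram computed in $\CAlg$ then produces an $\einf$-ring whose underlying spectrum is the corresponding (co)limit in $\Sp$, which by Definition \ref{def:JL} is precisely $\jl$; this endows $\jl$ with a canonical $\einf$-ring structure.

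The main subtlety, and where one must be careful, is guaranteeing that the lifted tower is coherent at the $\infty$-categorical level and not merely up to homotopy.  This is automatic here because the representability results of Section \ref{subsec:LT} are genuinely $\infty$-categorical corepresentability statements, so the transition maps arise as honest morphisms in $\CAlg$, with all higher coherences automatically supplied by the functoriality of $\Def^{\Or,r}_{\bG_0}$ in $r$ as a functor of $\infty$-categories.  Everything else is formal.
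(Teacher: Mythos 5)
Your proposal is correct and follows essentially the same route as the paper's one-line proof: form the tower $\{\jl_r\}$ as a diagram in $\CAlg$ and invoke (co)completeness of $\CAlg$, with the forgetful functor $\CAlg\to\Sp$ preserving the relevant (co)limit (cf.~\cite[Corollary 3.2.2.4 and Section 3.2.3]{lu-HA}). You usefully supply what the paper takes for granted, namely that the transition maps genuinely live in $\CAlg$, produced via Yoneda from restriction of derived level structures; your hedged ``(co)limit'' also implicitly registers a real discrepancy worth flagging, since your construction yields a directed system $\jl_r\to\jl_{r+1}$ whose terminal object would be a filtered (hence sifted) colimit, whereas Definition~\ref{def:JL} writes $\varprojlim$ and the paper's proof cites \emph{inverse} limits---this tension is inherited from the paper, not introduced by you, and either reading is covered by the cited results of \cite{lu-HA}.
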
 
\begin{proof}
    This is because the $\infty$-category of $\einf$-rings admits inverse limits.  See \cite[Corollary 3.2.2.4]{lu-HA} for details.  
\end{proof}

\begin{remark}
    This spectrum is a higher-homotopical realization of $\cm_\infty^\lt\simeq\cm_\infty^\dr$ in \eqref{LTD}, the Lubin--Tate/Drinfeld moduli space of height $h$ for $K$ at infinite level, which has the structure of a perfectoid space \cite{scholze2013moduli}.  
\end{remark}

\subsection{Jacquet--Langlands duals of Morava E-theory spectra}

By definitions of the Jacquet--Langlands spectra from Theorem \ref{thm:jlr} and Definition \ref{def:JL}, each $\jl_r$ admits an action by $\GL_h(\bz/p^r\bz)\times\bg_h$, and thus $\jl$ admits an action by $\varprojlim_r\GL_h(\bz/p^r\bz)\times\bg_h\simeq\GL_h(\bz_p)\times\bg_h$.  Descending along the Drinfeld tower in \eqref{LTD} yields the following.  

\begin{definition}
\label{def:dual}
    Given a height-$h$ formal group $\hG_0$ over a perfect field of characteristic $p$, let $E_h=\jl_0$ be the associated Morava E-theory spectrum, and $\jl$ be the associated Jacquet--Langlands spectrum at infinite level.  Define the {\em Jacquet--Langlands dual of $E_h$} to be the homotopy fixed point spectrum $\jl^{\h\bg_h}$, denoted by $\ll E_h$.  
\end{definition}

\begin{remark}
    The generic fiber of $\pi_0\ll E_h$ recovers the Drinfeld symmetric space \cite{drinfel1976coverings}.  It is the rigid-analytic space $\ch=\bp^{h-1}_K\setminus\bigcup H$, where $\bp_K^{h-1}$ is a rigid-analytic projective space, and $H$ runs over all $K$-rational hyperplanes in $\bp_K^{h-1}$.  It is the generic fiber of a formal scheme which parametrizes deformations of a certain formal $\cO_D$-module related to $\hG_0$.  In future work, we aim to show that $\ll E_h$ arises from a corresponding spectral moduli problem.  
\end{remark}

In \cite{rognes2008galois}, Rognes defines Galois extensions for commutative ring spectra.  Suppose that $E$ is an $\einf$-ring spectra and $F$ is an $\einf$ $E$-algebra with an action of a finite group $G$.  We say that $F$ is a {\em $G$-Galois extension of $E$}, if $F^{\h G}\simeq E$ and $F\otimes_EF\to\prod_GF$ is an equivalence.  

\begin{proposition}
\label{prop:Galois}
    Hypotheses and notations as above, each $\jl_r$ is a $\GL_h(\bz/p^r\bz)$-Galois extension of $R^\Or_{\hG_0}$.  
\end{proposition}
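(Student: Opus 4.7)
The plan is to verify the two defining properties of a Rognes Galois extension, namely the trivialization $\jl_r \otimes_{R^\Or_{\hG_0}} \jl_r \simeq \prod_{\GL_h(\bz/p^r\bz)} \jl_r$ and the homotopy fixed point identification $\jl_r^{\h\GL_h(\bz/p^r\bz)} \simeq R^\Or_{\hG_0}$, by translating both into moduli-theoretic statements via the corepresentability in Theorem \ref{thm:jlr}.

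First I would make the $\GL_h(\bz/p^r\bz)$-action on $\jl_r$ explicit. On the moduli functor $\Def_{\hG_0}^{\Or,r}$, an element $g$ sends $(\bG,\alpha,e,\lambda)$ with $\lambda=(\sd,\phi)$ to $(\bG,\alpha,e,(\sd,\phi\circ g^{-1}))$, preserving the image of $\phi$ and hence the underlying Cartier divisor $\sd$ while reparametrizing $\phi$ through the abstract group $(\bz/p^r\bz)^h$. By universality this promotes to an action of $\GL_h(\bz/p^r\bz)$ on $\jl_r$ through $R^\Or_{\hG_0}$-algebra automorphisms, since the structure map to $R^\Or_{\hG_0}$ only remembers the triple $(\bG,\alpha,e)$.

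For the trivialization, I would compare functors of points. The source $\Spet(\jl_r \otimes_{R^\Or_{\hG_0}} \jl_r)$ corepresents pairs $(\lambda_1,\lambda_2)$ of derived level structures on a common oriented deformation $(\bG,\alpha,e)$, while the target $\Spet(\prod_{\GL_h(\bz/p^r\bz)} \jl_r)$ corepresents a single derived level structure together with a choice of $g \in \GL_h(\bz/p^r\bz)$. The comparison $(\lambda_1,\lambda_2) \mapsto (\lambda_1,g)$ with $\lambda_2 = \lambda_1 \circ g^{-1}$ is an equivalence precisely when $\Level(r,\bG/R)$ is a $\GL_h(\bz/p^r\bz)$-torsor for every oriented deformation $(\bG,\alpha,e)$ over $R$. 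With the trivialization in hand, the homotopy fixed point identification follows by a standard Galois descent argument applied to the cosimplicial bar construction on $\jl_r$ over $R^\Or_{\hG_0}$.

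The main obstacle is establishing the torsor property uniformly, in particular at the characteristic-$p$ special fiber. Classically, the Lubin--Tate tower $\Spec A_r \to \Spec A_0$ is ramified there; since the residue-field $p^r$-torsion of $\hG_0$ is trivial, distinct ordinary level structures coincide and do not form a torsor. The key point is that the derived level structures of Definition \ref{def:plevel} carry additional higher-homotopical information through the Cartier divisor data beyond its heart, as emphasized in Remark \ref{rmk:derived}, and this extra data should rigidify the $\GL_h(\bz/p^r\bz)$-action. I expect the decisive step to be a cotangent-complex computation showing that $L_{\jl_r/R^\Or_{\hG_0}}$ vanishes, so that $\Spet \jl_r \to \Spet R^\Or_{\hG_0}$ is formally \'etale, combined with a rank count on geometric fibers to upgrade formal \'etaleness to the required free action.
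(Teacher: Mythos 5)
Your architecture matches the paper's: translate the two Rognes conditions into moduli-theoretic statements via the corepresentability of $\Def_{\hG_0}^{\Or,r}$, and reduce the trivialization to whether the comparison $\coprod_{\GL_h(\bz/p^r\bz)}\Spet\jl_r\to\Spet\jl_r\times_{\Spet R^\Or_{\hG_0}}\Spet\jl_r$ is an equivalence. You correctly reformulate this as the torsor property for $\Level(r,\bG/R)$ over a fixed oriented deformation $(\bG,\alpha,e)$; this is precisely where the paper's own proof stops, with the one-line assertion that free and transitive $\GL_h(\bz/p^r\bz)$-action on bases of $(\bz/p^r\bz)^h$ forces equivalence of the two moduli. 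Your worry about the characteristic-$p$ fiber is genuine and is not explicitly discharged by that assertion.

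The route you propose to close the gap would, however, fail. You want $L_{\jl_r/R^\Or_{\hG_0}}=0$ together with a rank count to upgrade formal \'etaleness to an \'etale free action. But Remark \ref{rmk:LT} records that $\pi_0\jl_r$ recovers the finite levels of the classical Lubin--Tate tower, which is totally ramified over $\pi_0R^\Or_{\hG_0}$ for $r>0$; so $\pi_0R^\Or_{\hG_0}\to\pi_0\jl_r$ is not unramified and the map $R^\Or_{\hG_0}\to\jl_r$ cannot be \'etale. Remark \ref{rmk:nonlocal} points the same way, noting that $\jl_r$ with $r>0$ is not a finite $E_h$-algebra and not $K(h)$-local. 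Your intuition that the extra divisor data ``rigidifies'' the $\GL_h(\bz/p^r\bz)$-action into an \'etale cover runs backwards: the extra data makes $\Spet\jl_r\to\Spet R^\Or_{\hG_0}$ non-flat rather than \'etale, and any Galois trivialization at the level of $\einf$-rings would have to come from the derived tensor product $\jl_r\otimes_{R^\Or_{\hG_0}}\jl_r$ differing from the classical $\pi_0\jl_r\otimes_{\pi_0R^\Or_{\hG_0}}\pi_0\jl_r$, not from a vanishing cotangent complex. A parallel caution applies to the other half of your plan: deducing $\jl_r^{\h\GL_h(\bz/p^r\bz)}\simeq R^\Or_{\hG_0}$ from the trivialization via the cosimplicial bar construction presupposes effective descent along this non-\'etale cover, which is not automatic (the paper instead asserts the fixed-point identification directly by functoriality, in the opposite logical order from yours).
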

\begin{proof}
    We fix $r$ and let $d\coloneqq\#\GL_h(\bz/p^r\bz)$.  By functoriality of the moduli problems, $\jl_r^{\h\GL_h(\bz/p^r\bz)}$ is equivalent to $R^\Or_{\hG_0}$.  We need only show that 
    \[
        \jl_r\otimes_{R^\Or_{\hG_0}}\jl_r\xrightarrow{\simeq}\prod_{\GL_h(\bz/p^r\bz)}\jl_r 
    \]
    By \cite[Proposition 1.4.11.1]{lu-SAG}, this is equivalent to 
    \[
        \Spet\jl_r\times_{\Spet R^\Or_{\hG_0}}\Spet\jl_r\xleftarrow{\simeq}\coprod_{\GL_h(\bz/p^r\bz)}\Spet\jl_r\hskip1.35cm 
    \]
    for which we resort to modular interpretations.  Given a connective  $\einf$-ring $R$, let us consider the diagram 
    \[
        \xymatrix{
            \underset{\GL_h(\bz/p^r\bz)}{\coprod}\Spet\jl_r \ar[rd] & & \\
            & \Spet\jl_r\times_{\Spet R^\Or_{\hG_0}}\Spet\jl_r \ar[r] \ar[d] & \Spet\jl_r \ar[d] \\
            & \Spet\jl_r \ar[r] & \Spet R^\Or_{\hG_0} 
        } 
    \]
    where: 
    \begin{itemize}
        \item The moduli space $\Spet\jl_r\times_{\Spet R^\Or_{\hG_0}}\Spet\jl_r$ parametrizes 
        \[
            \{(\bG,\alpha,e,\lambda),(\bG',\alpha',e',\lambda')\} 
        \]
        where each of the two quadruples is an oriented deformation of $\hG_0$ over $R$ with level-$(\bz/p^r\bz)^h$ structure as in Definition \ref{def:ordlevel}.  Since the fiber product is over $\Spet R^\Or_{\hG_0}$, we have $\bG=\bG'$, $\alpha=\alpha'$, and $e=e'$.  Thus this moduli space carries a universal example 
        \[
            (\bG_\univ,\alpha_\univ,e_\univ,\lambda_\univ,\lambda'_\univ) 
        \]

        \item Over $\Spet R^\Or_{\hG_0}$, the moduli space $\underset{\GL_h(\bz/p^r\bz)}{\coprod}\Spet\jl_r$ parametrizes 
        \[
            \left\{\big(\bG,\alpha,e,g(\lambda)\big)\right\}_{g\in\GL_h(\bz/p^r\bz)} 
        \]
        where $(\bG,\alpha,e)$ is an oriented deformation of $\hG_0$ over $R$, $\lambda$ is a level-$(\bz/p^r\bz)^h$ structure, and each $g$ acts on $\lambda$ by a change of Drinfeld basis (cf.~Definition \ref{def:plevel}).  This moduli space in turn carries a universal example 
        \[
            \big(\bG_\univ,\alpha_\univ,e_\univ,\lambda_\univ,g_\univ(\lambda_\univ)\big) 
        \]
    \end{itemize}
    Since $\GL_h(\bz/p^r\bz)$ acts freely and transitively on $(\bz/p^r\bz)^h$, we see that these two moduli are equivalent.  
\end{proof}

\begin{theorem}
\label{thm:dual}
    Given a Morava E-theory spectrum $E_h$ associated to a formal group $\hG_0$ over a perfect field $k$ of characteristic $p$, its Jacquet--Langlands dual spectrum $\ll E_h$ is an $\einf$-ring.  
\end{theorem}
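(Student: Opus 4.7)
The strategy is to upgrade $\jl$ to a $\bg_h$-equivariant object inside the $\infty$-category $\CAlg$ of $\einf$-rings, and then realize $\ll E_h$ as the limit of the resulting functor $B\bg_h\to\CAlg$. Since $\jl$ is already an $\einf$-ring by Proposition \ref{prop:JL}, and $\CAlg$ admits all small limits by \cite[Corollary 3.2.2.4]{lu-HA}, this will exhibit $\ll E_h$ as an object of $\CAlg$, proving the theorem.

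First I would construct the $\bg_h$-action on $\jl$ internally to $\CAlg$. By Theorem \ref{thm:jlr}, each $\jl_r$ corepresents the functor $\Def^{\Or,r}_{\hG_0}$ and depends functorially on the defining datum $\hG_0/k$. The Morava stabilizer group $\bg_h=\Aut(\hG_0)$ therefore acts on each $\jl_r$ by $\einf$-ring automorphisms, compatibly with the tautological $\bg_h$-action on $E_h=R^\Or_{\hG_0}$. These actions commute with the transition maps $\jl_{r+1}\to\jl_r$, since both are induced by natural transformations of the underlying moduli problems (the former by $(\bz/p^{r+1}\bz)^h\twoheadrightarrow(\bz/p^r\bz)^h$). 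Thus they assemble into a $\bg_h$-action on $\jl=\varprojlim_r\jl_r$ in $\CAlg$, equivalently classified by a functor $B\bg_h\to\CAlg$.

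Second, I would take homotopy fixed points as a limit inside $\CAlg$. Completeness of $\CAlg$ ensures that this limit exists and gives $\jl^{\h\bg_h}=\ll E_h$ as an $\einf$-ring. Because the forgetful functor $\CAlg\to\Sp$ preserves limits, the underlying spectrum agrees with the usual homotopy fixed point spectrum, matching the Devinatz--Hopkins setup to which this construction will be compared in Proposition \ref{prop:ss}.

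The main delicacy is the profinite nature of the Morava stabilizer group: in order to correctly reflect descent along the Drinfeld tower in \eqref{LTD}, the $\bg_h$-action must be treated continuously, as in \cite{devinatz2004homotopy}. In our setting this is natural because $\jl$ is already built as an inverse limit over finite levels where $\bg_h$ acts in a controlled fashion (cf.\ the $\GL_h(\bz/p^r\bz)$-Galois structure from Proposition \ref{prop:Galois}). Continuity can be encoded by modeling $B\bg_h$ as $\varprojlim_U B(\bg_h/U)$ over open normal subgroups $U$ and taking $\ll E_h\simeq\varprojlim_U \jl^{\h U}$, each factor of which is itself a limit in $\CAlg$. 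Either this formulation or a direct adaptation of Devinatz--Hopkins realizes $\ll E_h$ as a limit in $\CAlg$, and hence as an $\einf$-ring.
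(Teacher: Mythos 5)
Your overall strategy---exhibit $\ll E_h$ as a limit in $\CAlg$, which admits all small limits, and thereby inherit the $\einf$-ring structure---is the same as the paper's. The construction of the $\bg_h$-action on $\jl$ via functoriality of the moduli problems in $\hG_0/R_0$, compatible with the transition maps, is also fine and matches what the paper implicitly uses.

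The genuine gap is in the treatment of the profinite topology on $\bg_h$, which you correctly flag as the delicacy but do not actually resolve. Your proposed formula $\ll E_h\simeq\varprojlim_U\jl^{\h U}$, with $U$ ranging over open normal subgroups, cannot be right: for $U'\subset U$ the restriction maps go $\jl^{\h U}\to\jl^{\h U'}$, and as $U$ shrinks the limit (in either direction one tries to read the diagram) recovers $\jl$ itself rather than $\jl^{\h\bg_h}$; there is no step that ever takes the residual $\bg_h/U$-action into account. A formula of Devinatz--Hopkins type has to iterate fixed points, e.g.\ something of the shape $\varprojlim_U(\jl^{\h U})^{\h(\bg_h/U)}$, and in any case one must verify that the construction lands in $\CAlg$. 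The fallback you offer, ``a direct adaptation of Devinatz--Hopkins,'' is precisely the nontrivial content and is not supplied.

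The paper fills exactly this gap by a different route. It first reduces to each finite level, $\ll E_h\simeq\varprojlim_r\jl_r^{\h\bg_h}$, and then uses Proposition \ref{prop:Galois} together with Rognes's theorem on composites of Galois extensions to produce Galois extensions $\bs_{K(h)}\to E_h\to\jl_r$. This Galois structure is what certifies that $\jl_r$ is a \emph{profinite} $\GL_h(\bz/p^r\bz)\times\bg_h$-spectrum in Quick's sense, at which point Quick's cobar/totalization formula
\[
    \jl_r^{\h\bg_h}\simeq\Tot\bigl(\Map(\bg_h^\bullet,\jl_r)\bigr)
\]
exhibits the continuous homotopy fixed points as a cosimplicial limit taken in $\CAlg$. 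That $\Tot$ is the concrete limit that delivers the $\einf$-structure. Your proposal cites Proposition \ref{prop:Galois} only parenthetically and never uses it; in the paper it is a load-bearing input. To repair your argument you would need either to correct the inverse-limit formula along the lines above and show it is computed in $\CAlg$, or to import the profinite-spectrum framework as the paper does.
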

\begin{proof}
    By construction, we have $\ll E_h=\jl^{\h\bg_h}\simeq\varprojlim_r\jl_r^{\h\bg_h}$.  Thus it suffices to show that each $\jl_r^{\h\bg_h}$ is an $\einf$-ring spectrum.  By \cite[Theorem 5.4.4\,(d)]{rognes2008galois} and Proposition \ref{prop:Galois}, we have Galois extensions 
    \[
        \bs_{K(h)}\to E_h\to\jl_r 
    \]
    In view of this, we notice that $\jl_r$ is a profinite $\GL_h(\bz/p^r\bz)\times\bg_h$-spectrum in the sense of \cite[Definition 5.1]{quick2013profinite} (cf.~\cite{davis2016profinite,behrens2010galois}).  In particular, it is a profinite $\bg_h$-spectrum.  By \cite[Proposition 3.23]{quick2013continuous},  we then have 
    \[
        \jl_r^{\h\bg_h}\simeq\Tot\big(\Map(\bg_h^\bullet,\jl_r)\big) 
    \]
    Again, as the $\infty$-category $\CAlg$ admits inverse limits, $\jl_r^{\h\bg_h}$ is an $\einf$-ring.  
\end{proof}

As indicated in the discussion below \eqref{LTD}, our structured higher-homotopical realizations or spectral lifts of the moduli spaces in question bear specific computational tools.  The following is ``dual'' to the $K(h)_\ast$-local $E_h$-Adams spectral sequence studied by Devinatz and Hopkins as a homotopy fixed point spectral sequence \cite[Theorem 1\,(iv)]{devinatz2004homotopy}.  

\begin{proposition}
\label{prop:ss}
    Hypothesis and notations as in Theorem \ref{thm:dual}, there is a natural strongly convergent spectral sequence 
    \begin{equation}
    \label{dual}
        E_2^{s,t}=H^s_\cts\big(\GL_h(\bz_p),\pi_t\ll E_h\big)\implies\pi_{t-s}\bs_{K(h)} 
    \end{equation}
    whose $E^{s,t}_2$-term is the $s$'th continuous cohomology of $\GL_h(\bz_p)$ with coefficients the profinite $\GL_h(\bz_p)$-module $\pi_t\ll E_h$.  Moreover, it fits naturally into a commutative diagram of spectral sequences 
    \[
        \xymatrix{
            & H^s_\cts\big(\GL_h(\bz_p)\times\bg_h,\pi_t\jl\big) \ar@{=>}@[lightgray][ld]_{\hskip-2cm\text{Lubin--Tate}} \ar@{=>}@[brown][rd]^{\hskip.5cm\text{Drinfeld}} & \\
            H^r_\cts(\bg_h,\textcolor{lightgray}{\pi_{t-s}E_h}) \ar@{=>}[rd]_{\hskip-2cm\text{Devinatz--Hopkins}} & & H^r_\cts\big(\GL_h(\bz_p),\textcolor{brown}{\pi_{t-s}\ll E_h}\big) \ar@{=>}[ld]^{\hskip.5cm\eqref{dual}} \\
            & \pi_{t-s-r}\bs_{K(h)} & 
        } 
    \]
    where the colors indicate the abutments of the corresponding spectral sequences.  
\end{proposition}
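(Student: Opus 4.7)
The plan is to construct the spectral sequence \eqref{dual} as the profinite homotopy fixed point spectral sequence associated to the $\GL_h(\bz_p)$-action on $\ll E_h = \jl^{\h\bg_h}$, and to identify its abutment by iterating fixed points. The key observation is that the $\GL_h(\bz_p)$- and $\bg_h$-actions on $\jl$ commute (coming from the two sides of the equivariantly isomorphic tower \eqref{LTD}), so we have
\[
    (\jl^{\h\bg_h})^{\h\GL_h(\bz_p)} \simeq \jl^{\h(\GL_h(\bz_p)\times\bg_h)} \simeq (\jl^{\h\GL_h(\bz_p)})^{\h\bg_h}.
\]
The right-hand side can be evaluated as follows: by continuity of profinite homotopy fixed points, $\jl^{\h\GL_h(\bz_p)} \simeq \varprojlim_r \jl_r^{\h\GL_h(\bz/p^r\bz)}$; by the Galois property established in Proposition \ref{prop:Galois}, each $\jl_r^{\h\GL_h(\bz/p^r\bz)} \simeq R^\Or_{\hG_0}$; and by Lurie's spectral Lubin--Tate theorem the latter is canonically $E_h$. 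Hence the iterated fixed point is $E_h^{\h\bg_h} \simeq \bs_{K(h)}$ by Devinatz--Hopkins, giving the required abutment.

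For the spectral sequence itself, I would follow the construction used in Theorem \ref{thm:dual}, setting up the cosimplicial resolution $\Tot\bigl(\Map(\GL_h(\bz_p)^\bullet, \ll E_h)\bigr)$ in the category of profinite spectra after Quick. The $E_2$-page is then the continuous cohomology of $\GL_h(\bz_p)$ acting on the profinite module $\pi_t \ll E_h$, whose profinite structure arises from the natural factorization $\ll E_h \simeq \varprojlim_r \jl_r^{\h\bg_h}$. Strong convergence and the passage from continuous cochains to the $E_2$-term follow from the standard profinite homotopy fixed point argument (cf.~\cite{devinatz2004homotopy, quick2013continuous}) once this profinite structure is in place.

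For the commutative diagram, I would invoke a Lyndon--Hochschild--Serre type argument for the $\bigl(\GL_h(\bz_p)\times\bg_h\bigr)$-action on $\jl$. The bicosimplicial object $\Map\bigl(\GL_h(\bz_p)^\bullet\times\bg_h^\bullet,\,\jl\bigr)$ can be totalized in either order. Totalizing over $\bg_h$ first yields $\ll E_h$ and its $\GL_h(\bz_p)$-homotopy fixed point spectral sequence, i.e., \eqref{dual} (the Drinfeld leg); totalizing over $\GL_h(\bz_p)$ first yields $E_h$ (as computed above) and the classical Devinatz--Hopkins spectral sequence (the Lubin--Tate leg). Both legs have common source the continuous cohomology spectral sequence with $E_2 = H^\ast_\cts\bigl(\GL_h(\bz_p)\times\bg_h,\pi_\ast\jl\bigr)$ abutting to $\pi_\ast\bs_{K(h)}$, giving the square of spectral sequences. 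The main obstacle will be rigorously establishing the LHS-type decomposition at the level of $\einf$-ring spectra with two commuting profinite group actions, in particular the compatibility of inverse limits with iterated profinite homotopy fixed points; this should reduce to the Galois property at each finite level together with the profinite framework already invoked in Theorem \ref{thm:dual}, though verifying the relevant Mittag-Leffler conditions on the inverse systems $\{\jl_r^{\h\GL_h(\bz/p^r\bz)}\}$ and their $\bg_h$-descent will require some care.
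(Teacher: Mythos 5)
Your proposal matches the paper's approach: the spectral sequence is Quick's profinite homotopy fixed point spectral sequence for the $\GL_h(\bz_p)$-action on $\ll E_h$, and the commutative square is read off from the equivariant isomorphism of towers \eqref{LTD}. You make explicit what the paper's terse proof leaves implicit---identifying the abutment $\bs_{K(h)}$ by iterating homotopy fixed points and invoking the Galois property of Proposition \ref{prop:Galois} together with Devinatz--Hopkins, and organizing the square via a bicosimplicial LHS-type argument---while correctly flagging the remaining technical points (commuting actions, compatibility of profinite fixed points with inverse limits) that the paper also does not address in detail.
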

\begin{proof}
    In view of \eqref{LTD}, we deduce the proposition from the general result that given any profinite group $G$ and a $G$-equivariant spectrum $E$, there is a homotopy fixed point spectral sequence 
    \[
        E_2^{s,t}=H^s_\cts(G,\pi_tE)\implies\pi_{t-s}E^{\h G} 
    \]
    See \cite[Theorem 3.17]{quick2013continuous} and \cite{may1996equivariant} for more details.  
\end{proof}

\begin{remark}
\label{rmk:ss}
    Computationally, in general, the $E_2$-page of \eqref{dual} is expected to be more accessible than that of \eqref{DH} (see discussion immediately following).  This connects to some of the recent progress in $p$-adic geometry, e.g., \cite{Bosco}.  In the same vein, the dual spectrum $\ll E_h$ with $\GL_h(\bz_p)$-action leads to possibilities of finite resolutions of $\bs_{K(h)}$ analogous to those pursued in \cite{Henn,GHMR,Beaudry,Bobkova,BBH}.  
\end{remark}

\subsection{Further problems}
\label{subsec:further}

\subsubsection{Higher homotopy groups of $E_{h,r}$ and $\jl_r$ for $r>0$}
\label{subsubsec:hhg}

In this paper, we defined and studied level structures in the context of spectral algebraic geometry, and obtained from Theorems \ref{thm:ehr} and \ref{thm:jlr} the $\einf$-ring spectra $E_{h,r}$ and $\jl_r$ as global sections of moduli spaces for these derived level structures.  In particular, the level-0 cases $E_{h,0}\simeq\jl_0\simeq E_h$ recover the Morava E-theory spectrum.  Moreover, for all $r\geq0$, $\pi_0E_{h,r}\simeq A_r$ recover Strickland's deformation rings, which play a central role in $E_h$-power operations, and $\pi_0\jl_r$ recover the finite levels of a Lubin--Tate tower.  

As explained in Remarks \ref{rmk:pile} and \ref{rmk:nonlocal}, the homotopy groups $\pi_nE_{h,r}$ and $\pi_n\jl_r$ for $n,r>0$ may encode refined information of arithmetic algebraic geometry, in terms of higher-homotopical coherence of isogenies (instead of isomorphisms).  It would therefore be desirable to develop strategies for quantitative investigation of these invariants.  

In this direction, we propose a Serre-type spectral sequence 
\[
    E_2^{s,t}=H^s(\pi_*E_h,\pi_{*+t}L_{E_{h,r}/E_h})\implies\pi_{t-s}E_{h,r} 
\]
where 
\[
    L_{E_{h,r}/E_h}\simeq\taq^{E_h}(E_{h,r}) 
\]
is the (topological) cotangent complex in \cite{lu-SAG} along the $\einf$-ring map $E_h\to E_{h,r}$ as a globalization of topological Andr\'e--Quillen homology (TAQ).  Conjecturally, the $E_2$-page is inspired by a mapping space spectral sequence for augmented $K(h)$-local commutative $E_h$-algebras studied by Rezk \cite[\S\S2.7 and 2.13]{rezk2013power}, in the context of $E_h$-cohomology of the $K(h)$-localized TAQ.  It is key to compute the cotangent complex of the corresponding moduli problem (cf., e.g., \cite{monavari2024hyperquot}).  The abutment is based on a conceptualization of Behrens and Rezk's modular description related to a comparison between TAQ and the $K(h)$-localized Bousfield--Kuhn functor by examining $E_h$-cohomology of layers of the latter applied to the Goodwillie tower of the identity functor \cite{BKTAQ,mc}.  On a related note, in view of Remark \ref{rmk:nonlocal}, we aim to further investigate the nature of $K(h)$-localization in spectral algebraic geometry.

\subsubsection{Derived level structures and representations}
\label{subsubsec:rep}

Galatius and Venkatesh defined and studied derived Galois deformations in \cite{galatius2018derived}, and compared the action by the homotopy groups of the derived Galois deformation ring and that by the derived Hecke algebra introduced in \cite{venkatesh2019derived} on the (co)homology of locally symmetric spaces (see, in particular, \cite[Definition 5.4\,(iii), Theorem 4.33, Section 7.3, and Theorem 15.2]{galatius2018derived}).  

In Section \ref{subsec:elevel}, we constructed moduli stacks of spectral elliptic curves with derived level structures.  We do not know yet specifically what sorts of Hecke algebras may act on their rings of functions.  In \cite{davies2024hecke,candelori2022topological}, there have been constructions in derived settings of Hecke operators on topological modular forms and Hecke eigenforms, e.g., based on Lurie's theorem \cite[Theorem 1.1]{davies2024hecke}, without involving {\em derived} level structures.  

In view of the close relationship between level structures and representations, it would be interesting to develop a general theory of Hecke algebras in the context of spectral algebraic geometry and find a reasonable construction of derived Hecke stacks compatible with Hecke algebras for topological modular forms.  A natural, related question to investigate is the {\em classicality} of the moduli spaces involved (see, e.g., \cite[Section 3.2.1]{derstr}).

\subsubsection{Relationship to categorical local Langlands correspondence}

From the recently developing categorical perspective, the Langlands correspondence (in its many facets) should be thought of as setting up equivalences between ($\infty$-)categories of certain geometric objects.  See \cite{emerton2022introduction} and the references therein.  For example, for local Langlands correspondence, let $F$ be a finite extension of $\bq_p$, and $\cO$ be the ring of integers in a finite extension of $\bq_\ell$; then the category of smooth $\GL_h(F)$-representations on torsion $\cO$-modules is conjecturally equivalent to a subcategory of quasi-coherent sheaves on a moduli stack $\cx$ which parametrizes $h$-dimensional $\ell$-adic representations of the absolute Galois group $\Gal_F$.  When $\ell\neq p$, such a stack $\cx$ was introduced in \cite{zhu2020coherent} and \cite{dat2024modulilanglandsparameters}.  In \cite{fargues2021geometrization}, this stack is realized by a stack of equivariant vector bundles on the Fargues--Fontaine curve for $F$ (whose \'etale fundamental group is isomorphic to $\Gal_F$).  When $\ell=p$, this stack was constructed in \cite{emerton2022moduli} as a stack of $(\varphi,\Gamma)$-modules.  

Our $\einf$-ring spectrum $\jl$ from Section \ref{subsec:JL} defines a functor 
\[
    \text{$p$-complete spectra}\to\text{$\big(\GL_h(\cO_F)\times D^\times\big)$-equivariant spectra},\quad X\mapsto\jl^X 
\]
where $D/F$ is the central division algebra of invariant $1/h$, and $\jl^X$ denotes the function spectrum of maps $X\to\jl$.  Given the perspective of chromatic homotopy theory \cite{Goerss}, we may ask if this functor can be upgraded to a functor 
\[
    \QCoh(\cm_\fg)\to\text{equivariant sheaves on $\cx$} 
\]
for some suitably constructed moduli stack $\cx$ on the Galois side, where $\cm_\fg$ is the moduli stack of one-dimensional formal groups.  In this case, the categorical local Langlands correspondence may take a form 
\[
    \Mod_{\ll\!E_h}\simeq\IndCoh(\cx) 
\]
between the $\infty$-category of module spectra over the $\GL_h(\cO_F)$-equivariant $\einf$-ring spectrum $\ll E_h$ from Theorem \ref{thm:dual}, and the stable $\infty$-category of Ind-coherent complexes over $\cx$.

\section*{Acknowledgements}

This paper has its impetus from ongoing joint work of Guozhen Wang and the second author, and grew out of the first author's doctoral thesis.  The authors would like to thank William Balderrama, Tobias Barthel, Mark Behrens, Miaofen Chen, Jack Davies, Sanath Devalapurkar, Yanbo Fang, Hui Gao, John Greenlees, Mike Hill, Liang Kong, Igor Kriz, Tyler Lawson, Xiansheng Li, Jiacheng Liang, Fei Liu, Jing Liu, Ruochuan Liu, Peter May, Lennart Meier, Yannan Qiu, Neil Strickland, Akira Tominaga, Guozhen Wang, Yifan Wu, Yuchen Wu, Liang Xiao, Zhouli Xu, Chunshuang Yin, and Xinwen Zhu for helpful discussions and encouragements.  This work was partly supported by National Natural Science Foundation of China grant 12371069 and Guangdong Basic and Applied Basic Research Foundation grant 2023A1515030289.

\bibliographystyle{alpha}
\bibliography{main}

\begin{thebibliography}{BSSW24b}

\bibitem[BBH25]{BBH}
Agn\`es Beaudry, Irina Bobkova, and Hans-Werner Henn.
\newblock The duality resolution at {$n=p=2$}.
\newblock {\em Math. Z.}, 310(3):Paper No. 50, 38, 2025.

\bibitem[BD10]{behrens2010galois}
Mark Behrens and Daniel~G. Davis.
\newblock The homotopy fixed point spectra of profinite {G}alois extensions.
\newblock {\em Trans. Amer. Math. Soc.}, 362(9):4983--5042, 2010.

\bibitem[Bea17]{Beaudry}
Agn\`es Beaudry.
\newblock The chromatic splitting conjecture at {$n = p = 2$}.
\newblock {\em Geom. Topol.}, 21(6):3213--3230, 2017.

\bibitem[BG18]{Bobkova}
Irina Bobkova and Paul~G. Goerss.
\newblock Topological resolutions in {$K(2)$}-local homotopy theory at the
  prime 2.
\newblock {\em J. Topol.}, 11(4):918--957, 2018.

\bibitem[BL10]{behrens2010topological}
Mark Behrens and Tyler Lawson.
\newblock Topological automorphic forms.
\newblock {\em Mem. Amer. Math. Soc.}, 204(958):xxiv+141, 2010.

\bibitem[BM25]{TJF}
Tilman Bauer and Lennart Meier.
\newblock Topological {J}acobi {F}orms.
\newblock 2025.
\newblock \arxiv{2508.08010}.

\bibitem[Bos23]{Bosco}
Guido Bosco.
\newblock On the $p$-adic pro-étale cohomology of {D}rinfeld symmetric spaces.
\newblock 2023.
\newblock \arxiv{2110.10683}.

\bibitem[BR20]{BKTAQ}
Mark Behrens and Charles Rezk.
\newblock The {B}ousfield-{K}uhn functor and topological {A}ndr\'e-{Q}uillen
  cohomology.
\newblock {\em Invent. Math.}, 220(3):949--1022, 2020.

\bibitem[BSSW24a]{barthel2024hopkinspicardgroup}
Tobias Barthel, Tomer~M. Schlank, Nathaniel Stapleton, and Jared Weinstein.
\newblock On {H}opkins' {P}icard group.
\newblock 2024.
\newblock \arxiv{2407.20958}.

\bibitem[BSSW24b]{barthel2024rationalization}
Tobias Barthel, Tomer~M. Schlank, Nathaniel Stapleton, and Jared Weinstein.
\newblock On the rationalization of the {$K(n)$}-local sphere.
\newblock 2024.
\newblock \arxiv{2402.00960}.

\bibitem[CDN20]{colmez2020cohomology}
Pierre Colmez, Gabriel Dospinescu, and Wies{\l}awa Nizio\l.
\newblock Cohomology of {$p$}-adic {S}tein spaces.
\newblock {\em Invent. Math.}, 219(3):873--985, 2020.

\bibitem[CDN21]{colmez2021integral}
Pierre Colmez, Gabriel Dospinescu, and Wies{\l}awa Nizio\l.
\newblock Integral {$p$}-adic \'etale cohomology of {D}rinfeld symmetric
  spaces.
\newblock {\em Duke Math. J.}, 170(3):575--613, 2021.

\bibitem[CS24]{candelori2022topological}
L.~Candelori and A.~Salch.
\newblock Topological {H}ecke eigenforms.
\newblock {\em Math. Z.}, 307(4):Paper No. 75, 40, 2024.

\bibitem[Dav24]{davies2024hecke}
Jack~Morgan Davies.
\newblock Hecke operators on topological modular forms.
\newblock {\em Adv. Math.}, 452:Paper No. 109828, 71, 2024.

\bibitem[Dev20]{devalapurkar2020roots}
Sanath Devalapurkar.
\newblock Roots of unity in {$K(n)$}-local rings.
\newblock {\em Proc. Amer. Math. Soc.}, 148(7):3187--3194, 2020.

\bibitem[Dev23]{Devalapurkar}
Sanath~K. Devalapurkar.
\newblock Hodge theory for elliptic curves and the {H}opf element {$\nu$}.
\newblock {\em Bull. Lond. Math. Soc.}, 55(2):826--842, 2023.

\bibitem[DH04]{devinatz2004homotopy}
Ethan~S. Devinatz and Michael~J. Hopkins.
\newblock Homotopy fixed point spectra for closed subgroups of the {M}orava
  stabilizer groups.
\newblock {\em Topology}, 43(1):1--47, 2004.

\bibitem[DHKM25]{dat2024modulilanglandsparameters}
Jean-François Dat, David Helm, Robert Kurinczuk, and Gilbert Moss.
\newblock Moduli of {L}anglands parameters.
\newblock {\em J. Eur. Math. Soc. (JEMS)}, 27(5):1827--1927, 2025.

\bibitem[DL25]{davies2025tate}
Jack~Morgan Davies and Sil Linskens.
\newblock On the derived {T}ate curve and global smooth {T}ate {K}-theory.
\newblock 2025.
\newblock \arxiv{2503.04494}.

\bibitem[DQ16]{davis2016profinite}
Daniel~G. Davis and Gereon Quick.
\newblock Profinite and discrete {$G$}-spectra and iterated homotopy fixed
  points.
\newblock {\em Algebr. Geom. Topol.}, 16(4):2257--2303, 2016.

\bibitem[Dri76]{drinfel1976coverings}
V.G. Drinfel'd.
\newblock Coverings of $p$-adic symmetric regions.
\newblock {\em Funct. Anal. Appl.}, 10(2):107--115, 1976.

\bibitem[EG23]{emerton2022moduli}
Matthew Emerton and Toby Gee.
\newblock {\em Moduli stacks of \'etale $(\varphi, \Gamma)$-modules and the
  existence of crystalline lifts}, volume 215 of {\em Annals of Mathematics
  Studies}.
\newblock Princeton University Press, Princeton, NJ, 2023.

\bibitem[EGH25]{emerton2022introduction}
Matthew Emerton, Toby Gee, and Eugen Hellmann.
\newblock An introduction to the categorical $p$-adic {L}anglands program.
\newblock 2025.
\newblock \arxiv{2210.01404}.

\bibitem[FGL08]{fargues2008isomorphisme}
Laurent Fargues, Alain Genestier, and Vincent Lafforgue.
\newblock {\em L'isomorphisme entre les tours de {L}ubin-{T}ate et de
  {D}rinfeld}, volume 262 of {\em Progress in Mathematics}.
\newblock Birkh\"auser Verlag, Basel, 2008.

\bibitem[FH25]{derstr}
Tony Feng and Michael Harris.
\newblock Derived structures in the {L}anglands {C}orrespondence.
\newblock 2025.
\newblock \arxiv{2409.03035}.

\bibitem[FS24]{fargues2021geometrization}
Laurent Fargues and Peter Scholze.
\newblock Geometrization of the local {L}anglands correspondence.
\newblock 2024.
\newblock \arxiv{2102.13459}.

\bibitem[GH04]{goerss2004moduli}
P.~G. Goerss and M.~J. Hopkins.
\newblock Moduli spaces of commutative ring spectra.
\newblock In {\em Structured ring spectra}, volume 315 of {\em London Math.
  Soc. Lecture Note Ser.}, pages 151--200. Cambridge Univ. Press, Cambridge,
  2004.

\bibitem[GHMR05]{GHMR}
P.~Goerss, H.-W. Henn, M.~Mahowald, and C.~Rezk.
\newblock A resolution of the {$K(2)$}-local sphere at the prime 3.
\newblock {\em Ann. of Math. (2)}, 162(2):777--822, 2005.

\bibitem[GM23]{gepner2020equivariant}
David Gepner and Lennart Meier.
\newblock On equivariant topological modular forms.
\newblock {\em Compos. Math.}, 159(12):2638--2693, 2023.

\bibitem[Goe08]{Goerss}
Paul~G. Goerss.
\newblock Quasi-coherent sheaves on the moduli stack of formal groups.
\newblock 2008.
\newblock \arxiv{0802.0996}.

\bibitem[GV18]{galatius2018derived}
S.~Galatius and A.~Venkatesh.
\newblock Derived {G}alois deformation rings.
\newblock {\em Adv. Math.}, 327:470--623, 2018.

\bibitem[Hen07]{Henn}
Hans-Werner Henn.
\newblock On finite resolutions of {$K(n)$}-local spheres.
\newblock In {\em Elliptic cohomology}, volume 342 of {\em London Math. Soc.
  Lecture Note Ser.}, pages 122--169. Cambridge Univ. Press, Cambridge, 2007.

\bibitem[HL16]{hill2016topological}
Michael Hill and Tyler Lawson.
\newblock Topological modular forms with level structure.
\newblock {\em Invent. Math.}, 203(2):359--416, 2016.

\bibitem[HT01]{harris2001geometry}
Michael Harris and Richard Taylor.
\newblock {\em The geometry and cohomology of some simple {S}himura varieties},
  volume 151 of {\em Annals of Mathematics Studies}.
\newblock Princeton University Press, Princeton, NJ, 2001.
\newblock With an appendix by Vladimir G. Berkovich.

\bibitem[KM85]{katz1985arithmetic}
Nicholas~M. Katz and Barry Mazur.
\newblock {\em Arithmetic moduli of elliptic curves}, volume 108 of {\em Annals
  of Mathematics Studies}.
\newblock Princeton University Press, Princeton, NJ, 1985.

\bibitem[Kol96]{kollar2013rational}
J\'anos Koll\'ar.
\newblock {\em Rational curves on algebraic varieties}, volume~32 of {\em
  Ergebnisse der Mathematik und ihrer Grenzgebiete. 3. Folge. A Series of
  Modern Surveys in Mathematics [Results in Mathematics and Related Areas. 3rd
  Series. A Series of Modern Surveys in Mathematics]}.
\newblock Springer-Verlag, Berlin, 1996.

\bibitem[Lur04]{lurie2004derived}
Jacob Lurie.
\newblock {\em Derived algebraic geometry}.
\newblock ProQuest LLC, Ann Arbor, MI, 2004.
\newblock Thesis (Ph.D.)--Massachusetts Institute of Technology.

\bibitem[Lur09a]{lurie2009survey}
J.~Lurie.
\newblock A survey of elliptic cohomology.
\newblock In {\em Algebraic topology}, volume~4 of {\em Abel Symp.}, pages
  219--277. Springer, Berlin, 2009.

\bibitem[Lur09b]{lu-HTT}
Jacob Lurie.
\newblock {\em Higher topos theory}, volume 170 of {\em Annals of Mathematics
  Studies}.
\newblock Princeton University Press, Princeton, NJ, 2009.

\bibitem[Lur17]{lu-HA}
Jacob Lurie.
\newblock {\em Higher algebra}.
\newblock 2017.
\newblock \url{https://www.math.ias.edu/~lurie/papers/HA.pdf}.

\bibitem[Lur18a]{lu-EC1}
Jacob Lurie.
\newblock Elliptic cohomology {I}: Spectal abelian varietes.
\newblock 2018.
\newblock \url{https://www.math.ias.edu/~lurie/papers/Elliptic-I.pdf}.

\bibitem[Lur18b]{lu-EC2}
Jacob Lurie.
\newblock Elliptic cohomology {II}: Orientations.
\newblock 2018.
\newblock \url{https://www.math.ias.edu/~lurie/papers/Elliptic-II.pdf}.

\bibitem[Lur18c]{lu-SAG}
Jacob Lurie.
\newblock {\em Spectral algebraic geometry}.
\newblock 2018.
\newblock \url{https://www.math.ias.edu/~lurie/papers/SAG-rootfile.pdf}.

\bibitem[May96]{may1996equivariant}
J.~P. May.
\newblock {\em Equivariant homotopy and cohomology theory}, volume~91 of {\em
  CBMS Regional Conference Series in Mathematics}.
\newblock Conference Board of the Mathematical Sciences, Washington, DC; by the
  American Mathematical Society, Providence, RI, 1996.
\newblock With contributions by M. Cole, G. Comeza\~na, S. Costenoble, A. D.
  Elmendorf, J. P. C. Greenlees, L. G. Lewis, Jr., R. J. Piacenza, G.
  Triantafillou, and S. Waner.

\bibitem[MG24]{MG}
Aaron Mazel-Gee.
\newblock Goerss-{H}opkins obstruction theory for {$\infty $}-categories.
\newblock {\em Adv. Math.}, 458:Paper No. 109951, 86, 2024.

\bibitem[Mil86]{milne1986abelian}
J.~S. Milne.
\newblock Abelian varieties.
\newblock In {\em Arithmetic geometry ({S}torrs, {C}onn., 1984)}, pages
  103--150. Springer, New York, 1986.

\bibitem[MPR24]{monavari2024hyperquot}
Sergej Monavari, Emanuele Pavia, and Andrea~T. Ricolfi.
\newblock Derived hyperquot schemes.
\newblock 2024.
\newblock \arxiv{2409.16858}.

\bibitem[PV22]{PVK}
Piotr Pstr\k{a}gowski and Paul VanKoughnett.
\newblock Abstract {G}oerss-{H}opkins theory.
\newblock {\em Adv. Math.}, 395:Paper No. 108098, 51, 2022.

\bibitem[Qui13a]{quick2013continuous}
Gereon Quick.
\newblock Continuous homotopy fixed points for {L}ubin-{T}ate spectra.
\newblock {\em Homology Homotopy Appl.}, 15(1):191--222, 2013.

\bibitem[Qui13b]{quick2013profinite}
Gereon Quick.
\newblock Profinite {$G$}-spectra.
\newblock {\em Homology Homotopy Appl.}, 15(1):151--189, 2013.

\bibitem[Rez08]{rezk2008power}
Charles Rezk.
\newblock Power operations for {M}orava {$E$}-theory of height 2 at the prime
  2.
\newblock 2008.
\newblock \arxiv{0812.1320}.

\bibitem[Rez09]{rezk2009congruence}
Charles Rezk.
\newblock The congruence criterion for power operations in {M}orava
  {$E$}-theory.
\newblock {\em Homology Homotopy Appl.}, 11(2):327--379, 2009.

\bibitem[Rez12]{mc}
Charles Rezk.
\newblock Modular isogeny complexes.
\newblock {\em Algebr. Geom. Topol.}, 12(3):1373--1403, 2012.

\bibitem[Rez13]{rezk2013power}
Charles Rezk.
\newblock Power operations in {M}orava {$E$}-theory: structure and calculations
  ({D}raft).
\newblock 2013.
\newblock \url{https://rezk.web.illinois.edu/power-ops-ht-2.pdf}.

\bibitem[Rez14]{rezk2014isogenies}
Charles Rezk.
\newblock Isogenies, power operations, and homotopy theory.
\newblock In {\em Proceedings of the {I}nternational {C}ongress of
  {M}athematicians---{S}eoul 2014. {V}ol. {II}}, pages 1125--1145. Kyung Moon
  Sa, Seoul, 2014.

\bibitem[Rez24]{rezklecture2006}
Charles Rezk.
\newblock Lectures on power operations.
\newblock 2024.
\newblock \url{https://rezk.web.illinois.edu/power-operation-lectures.pdf}.

\bibitem[Rog08]{rognes2008galois}
John Rognes.
\newblock Galois extensions of structured ring spectra. {S}tably dualizable
  groups.
\newblock {\em Mem. Amer. Math. Soc.}, 192(898):viii+137, 2008.

\bibitem[RZ96]{rapoport1996period}
M.~Rapoport and Th. Zink.
\newblock {\em Period spaces for {$p$}-divisible groups}, volume 141 of {\em
  Annals of Mathematics Studies}.
\newblock Princeton University Press, Princeton, NJ, 1996.

\bibitem[Sal23]{Salch2023elladicTJ}
Andrew Salch.
\newblock $\ell$-adic topological {J}acquet-{L}anglands duality.
\newblock 2023.
\newblock With an appendix by Andrew Salch and Matthias Strauch.
  \arxiv{2311.10225}.

\bibitem[ST23]{sibilla2023equivariant}
Nicol{\`o} Sibilla and Paolo Tomasini.
\newblock Equivariant elliptic cohomology and mapping stacks {I}.
\newblock 2023.
\newblock \arxiv{2303.10146}.

\bibitem[Str97]{strickland1997finite}
Neil~P. Strickland.
\newblock Finite subgroups of formal groups.
\newblock {\em J. Pure Appl. Algebra}, 121(2):161--208, 1997.

\bibitem[Str98]{strickland1998morava}
N.~P. Strickland.
\newblock Morava {$E$}-theory of symmetric groups.
\newblock {\em Topology}, 37(4):757--779, 1998.

\bibitem[SVW99]{SVW}
R.~Schw\"anzl, R.~M. Vogt, and F.~Waldhausen.
\newblock Adjoining roots of unity to {$E_\infty$} ring spectra in good
  cases---a remark.
\newblock In {\em Homotopy invariant algebraic structures ({B}altimore, {MD},
  1998)}, volume 239 of {\em Contemp. Math.}, pages 245--249. Amer. Math. Soc.,
  Providence, RI, 1999.

\bibitem[SW13]{scholze2013moduli}
Peter Scholze and Jared Weinstein.
\newblock Moduli of {$p$}-divisible groups.
\newblock {\em Camb. J. Math.}, 1(2):145--237, 2013.

\bibitem[TV08]{toen2008homotopical}
Bertrand To\"en and Gabriele Vezzosi.
\newblock Homotopical algebraic geometry. {II}. {G}eometric stacks and
  applications.
\newblock {\em Mem. Amer. Math. Soc.}, 193(902):x+224, 2008.

\bibitem[TW95]{taylor1995ring}
Richard Taylor and Andrew Wiles.
\newblock Ring-theoretic properties of certain {H}ecke algebras.
\newblock {\em Ann. of Math. (2)}, 141(3):553--572, 1995.

\bibitem[Ven19]{venkatesh2019derived}
Akshay Venkatesh.
\newblock Derived {H}ecke algebra and cohomology of arithmetic groups.
\newblock {\em Forum Math. Pi}, 7:e7, 119, 2019.

\bibitem[Wil95]{wiles1995modular}
Andrew Wiles.
\newblock Modular elliptic curves and {F}ermat's last theorem.
\newblock {\em Ann. of Math. (2)}, 141(3):443--551, 1995.

\bibitem[Zhu14]{zhu2014power}
Yifei Zhu.
\newblock The power operation structure on {M}orava {$E$}-theory of height 2 at
  the prime 3.
\newblock {\em Algebr. Geom. Topol.}, 14(2):953--977, 2014.

\bibitem[Zhu19]{zhu2019semistable}
Yifei Zhu.
\newblock Semistable models for modular curves and power operations for
  {M}orava {E}-theories of height 2.
\newblock {\em Adv. Math.}, 354:Paper No. 106758, 29, 2019.

\bibitem[Zhu20]{zhu2020norm}
Yifei Zhu.
\newblock Norm coherence for descent of level structures on formal
  deformations.
\newblock {\em J. Pure Appl. Algebra}, 224(10):106382, 35, 2020.

\bibitem[Zhu21]{zhu2020coherent}
Xinwen Zhu.
\newblock Coherent sheaves on the stack of {L}anglands parameters.
\newblock 2021.
\newblock \arxiv{2008.02998}.

\end{thebibliography}

\end{document}